\providecommand{\U}[1]{\protect\rule{.1in}{.1in}}
\newtheorem{theorem}{Theorem}[section]
\newtheorem{lemma}[theorem]{Lemma}
\newtheorem{proposition}[theorem]{Proposition}
\newtheorem{corollary}[theorem]{Corollary}
\newtheorem*{theorem*}{Theorem}
\theoremstyle{remark}
\newtheorem{remark}[theorem]{Remark}
\newtheorem{definition}[theorem]{Definition}
\numberwithin{equation}{section}
\newcommand{\N}{\mathbb{N}}
\newcommand{\R}{\mathbb{R}}
\newcommand{\T}{\mathbb{T}}
\newcommand{\vertiii}[1]{{\left\vert\kern-0.25ex\left\vert\kern-0.25ex\left\vert #1 \right\vert\kern-0.25ex\right\vert\kern-0.25ex\right\vert}}
\begin{document}
\title{A $L^2$ to $L^\infty$ approach for the Landau Equation}

\author{Jinoh Kim}
\address{Department of Mathematics, Pohang University of Science and Technology, South Korea}
\email{hugurk@postech.ac.kr}
\author{Yan Guo}
\address{Division of Applied Mathmatics Brown University, Providence, USA.}
\email{guoy@cfm.brown.edu}
\author{Hyung Ju Hwang}
\address{Department of Mathematics, Pohang University of Science and Technology, South Korea}
\email{hjhwang@postech.ac.kr}

\begin{abstract}
	Consider the Landau equation with Coulomb potential in a periodic box.
	We develop a new $L^{2}\rightarrow L^{\infty }$ framework to construct global unique solutions near Maxwellian with small $L^{\infty }\ $norm.
	The first step is to establish global $L^{2}$ estimates with strong velocity weight and time decay, under the assumption of $L^{\infty }$ bound, which is further controlled by such $L^{2}$ estimates via De Giorgi's method \cite{golse2016harnack} and \cite{mouhot2015holder}.
	The second step is to employ estimates in $S_{p}$ spaces to control velocity derivatives to ensure uniqueness, which is based on Holder estimates via De Giorgi's method \cite{golse2016harnack}, \cite{golse2015holder}, and \cite{mouhot2015holder}. 
\end{abstract}

\maketitle

\section{introduction}

\label{sec:introduction}

We consider the following generalized Landau equation:
\begin{equation}		\label{Landau near vaccum}
	\begin{split}
		\partial_t F + v \cdot \nabla_{x} F &= Q(F,F)
		= \nabla_{v} \cdot \left\{ \int_{\R^3} \phi(v-v')[F(v')\nabla_{v} F(v) - F(v) \nabla_{v} F(v')] dv'\right\},\\
		F(0,x,v)&=F_0(x,v),
	\end{split}
\end{equation}
where $F(t,x,v)\ge 0$ is the spatially periodic distribution function for particles at time $t\ge0$, with spatial coordinates $x=(x_1,x_2,x_3)\in [-\pi, \pi]^3 = \T^3$ and velocity $v=(v_1,v_2,v_3)\in \R^3$. The non-negative matrix $\phi$ is 
\begin{equation}		\label{Eq : phi}
	\phi^{ij}(v)=\left\{  \delta_{i,j}-\frac{v_{i}v_{j}}{|v|^{2}}\right\}|v|^{-1}.
\end{equation}
As in the Boltzmann equation, it is well-known that Maxwellians are steady states to \eqref{Landau near vaccum}.
Let $\mu$ be a normalized Maxwellian
\begin{equation}		\label{Eq : mu}
	\mu(v) = e^{-|v|^2},
\end{equation}
and set
\begin{equation}		\label{Eq : F}
	F(t,x,v) = \mu(v) + \mu^{1/2}(v)f(t,x,v).
\end{equation}
Then the standard perturbation $f(t,x,v)$ to $\mu$ satisfies
\begin{equation}
f_{t}+v\cdot\partial_{x}f+Lf=\Gamma(f,f), \label{Landau}%
\end{equation}%
\begin{equation}
f(0,x,v)=f_{0}(x,v), \label{initial}%
\end{equation}
where $f_{0}$ is the
initial data satisfying the conservation laws:
\begin{equation}
\int_{\mathbb{T}^{3}\times\mathbb{R}^{3}}f_{0}(x,v)\sqrt{\mu}=\int
_{\mathbb{T}^{3}\times\mathbb{R}^{3}}v_{i}f_{0}(x,v)\sqrt{\mu}=\int
_{\mathbb{T}^{3}\times\mathbb{R}^{3}}|v|^{2}f_{0}(x,v)\sqrt{\mu}=0.
\label{conservation laws}%
\end{equation}

The linear operator $L$ and the nonlinear part $\Gamma$ are defined as%
\begin{equation}		\label{Eq : L}
	L = -A -K,
\end{equation}
\begin{equation}		\label{Eq : A}
	\begin{split}
		Af  &  :=\mu^{-1/2}\partial_{i}\left\{  \mu^{1/2}\sigma^{ij}[\partial_{j}f+v_{j}f]\right\} \\
		&  =\partial_{i}[\sigma^{ij}\partial_{j}f]-\sigma^{ij}v_{i}v_{j}f+\partial_{i}\sigma^{i}f,
	\end{split}
\end{equation}
\begin{equation}		\label{Eq : K}
	Kf:=-\mu^{-1/2}\partial_{i}\left\{  \mu\left[  \phi^{ij}\ast\left\{  \mu^{1/2}[\partial_{j}f+v_{j}f]\right\}  \right]  \right\}  ,
\end{equation}
\begin{equation}		\label{Eq : Gamma}
	\begin{split}
		\Gamma\lbrack g,f]  &  :=\partial_{i}\left[  \left\{  \phi^{ij}\ast\lbrack\mu^{1/2}g]\right\}  \partial_{j}f\right]  -\left\{  \phi^{ij}\ast\lbrack	v_{i}\mu^{1/2}g]\right\}  \partial_{j}f\\
		&  \quad-\partial_{i}\left[  \left\{  \phi^{ij}\ast\lbrack\mu^{1/2}%
		\partial_{j}g]\right\}  f\right]  +\left\{  \phi^{ij}\ast\lbrack v_{i}\mu^{1/2}\partial_{j}g]\right\}  f,
	\end{split}
\end{equation}
\begin{equation}		\label{Eq : sigma_u}
	\sigma_{u}^{ij}(v) := \phi^{ij}*u = \int_{\mathbb{R}^{3}}\phi^{ij}(v-v^{\prime})u(v^{\prime})dv^{\prime},
\end{equation}
\begin{equation}		\label{Eq : sigma and sigma^i}
	\sigma = \sigma_{\mu}, \quad \sigma^{i} = \sigma^{ij}v_{j}.
\end{equation}
To get $L^{\infty}$ estimates, we rearrange
(\ref{Landau}) as follows:
\begin{equation}
f_{t}+v\cdot\nabla_{x}f=\bar{A}_{f}f+\bar{K}_{f}f, \label{rearrange Landau}%
\end{equation}%
\begin{equation}		\label{Eq : bar A}
	\begin{split}
		\bar{A}_{g}f  &  :=\partial_{i}\left[  \left\{  \phi^{ij}\ast\lbrack\mu	+\mu^{1/2}g]\right\}  \partial_{j}f\right]\\
		&  \quad-\left\{  \phi^{ij}\ast\lbrack v_{i}\mu^{1/2}g]\right\}  \partial_{j}f-\left\{  \phi^{ij}\ast\lbrack\mu^{1/2}\partial_{j}g]\right\}
		\partial_{i}f\\
		&  =: \nabla_{v}\cdot(\sigma_{G}\nabla_{v}f)+a_{g}\cdot\nabla_{v}f,
	\end{split}
\end{equation}
\begin{equation}		\label{Eq : bar K}
	\begin{split}
	\bar{K}_{g}f  &  :=Kf+\partial_{i}\sigma^{i}f  -\sigma^{ij}v_{i}v_{j}f\\
	&  \quad-\partial_{i}\left\{  \phi^{ij}\ast\lbrack\mu^{1/2}\partial	_{j}g]\right\}  f+\left\{  \phi^{ij}\ast\lbrack v_{i}\mu^{1/2}\partial_{j}g]\right\}  f.
	\end{split}
\end{equation}

Define the weighed norm and weighted energy associated with \eqref{Landau}:
\begin{equation}		\label{Eq : weighted L^p}
	w:=(1+|v|),\quad|f|_{p,\vartheta}^{p}:=\int_{\mathbb{R}^{3}}w^{p\vartheta}f^{p}dv,\quad\Vert f\Vert_{p,\vartheta}^{p}:=\int_{\mathbb{T}^{3}\times\mathbb{R}^{3}}w^{p\vartheta}f^{p}dxdv.
\end{equation}
\begin{equation}		\label{Eq : weighted sigma 1}
	|f|_{\sigma,\vartheta}^{2}:=\int_{\mathbb{R}^{3}}w^{2\vartheta}\left[\sigma^{ij}\partial_{i}f\partial_{j}f+\sigma^{ij}v_{i}v_{j}f^{2}\right]  dv,
\end{equation}
\begin{equation}		\label{Eq : weighted sigma 2}
	\Vert f\Vert_{\sigma,\vartheta}^{2}:=\iint_{\mathbb{T}^{3}\times\mathbb{R}^{3}}w^{2\vartheta}\left[  \sigma^{ij}\partial_{i}f\partial_{j}f+\sigma^{ij}v_{i}v_{j}f^{2}\right]  dvdx,
\end{equation}
\begin{equation}		\label{Rw : weighted L^infty}
	|f|_{\infty,\vartheta}=\sup_{\mathbb{R}^{3}}w^{\vartheta}(v)f(v),\quad\Vert f\Vert_{\infty,\vartheta}=\sup_{\mathbb{T}^{3}\times\mathbb{R}^{3}}w^{\vartheta}(v)f(x,v).
\end{equation}

\[%
\begin{split}
|f|_{2}:=|f|_{2,0},  &  \quad\Vert f\Vert_{2}:=\Vert f\Vert_{2,0},\\
|f|_{\sigma}:=|f|_{\sigma,0},  &  \quad\Vert f\Vert_{\sigma}:=\Vert
f\Vert_{\sigma,0},\\
|f|_{\infty}:=|f|_{\infty,0},  &  \quad\Vert f\Vert_{\infty}:=\Vert
f\Vert_{\infty,0},
\end{split}
\]%
\begin{equation}		\label{Eq : inner L^2}
	\langle f,g\rangle:=\int_{\mathbb{R}^{3}}fgdv,\quad(f,g):=\int_{\mathbb{T}^{3}\times\mathbb{R}^{3}}fgdxdv,
\end{equation}
\begin{equation}		\label{Eq : inner sigma 1}
	\langle f,g \rangle_{\sigma}:=\int_{\mathbb{R}^{3}}\left[\sigma^{ij}\partial_{i}f\partial_{j}g+\sigma^{ij}v_{i}v_{j}fg\right]  dv,
\end{equation}
\begin{equation}		\label{Eq : inner sigma 2}
	( f, g)_{\sigma}:=\iint_{\mathbb{T}^{3}\times\mathbb{R}^{3}}\left[  \sigma^{ij}\partial_{i}f\partial_{j}g+\sigma^{ij}v_{i}v_{j}fg\right]  dvdx,
\end{equation}

\begin{equation}		\label{Eq : E}
	\mathcal{E}_{\vartheta}(f(t)) := \frac{1}{2}\|f(t)\|_{2,\vartheta}^{2} + \int_{0}^{t} \|f(s)\|_{\sigma,\vartheta}^{2} ds.
\end{equation}

\begin{remark}
\label{Rmk : adjoint operator} If $g\in C^{1}_{v}$, $f\in C^{2}_{v}$, and
$\varphi\in C^{2}_{v}$ with a compact support in $\mathbb{R}%
^{3}$, then
\begin{multline*}
	\int_{\R^3} \left(\bar A_{g}(f) + \bar K_{g}(f) \right) \varphi = - \langle f, \varphi \rangle_{\sigma} - \int_{\R^3} \sigma_{\mu^{1/2}g} \partial_{i}f \partial_{j} \varphi - (a_{g} \cdot \nabla_{v} \varphi) f\\
	+ \left(K\varphi + \partial_{i} \sigma^{i} \varphi - \partial_{i} \left\{\phi^{ij} * [\mu^{1/2} \partial_{j} g] \right\} \varphi + \left\{ \phi^{ij}* [v_{i} \mu^{1/2}\partial_{j}g] \right\}\varphi \right) f dv.
\end{multline*}
\end{remark}

We first consider the linearized Landau equation with a given $g$:
\begin{equation}
\partial_{t}f+v\cdot\nabla_{v}f+Lf=\Gamma(g,f). \label{linear landau}%
\end{equation}
If $f$ satisfies \eqref{linear landau}, then for every $\vartheta\in
\mathbb{R}$, $f^{\vartheta}:=w^{\vartheta}f$ satisfies
\[
\partial_{t}f^{\vartheta}+v\cdot\nabla_{x}f^{\vartheta}=\bar{A}_{g}%
^{\vartheta}f^{\vartheta}+\bar{K}_{g}^{\vartheta}f,
\]%
\[
f^{\vartheta}(0)=w^{\vartheta}f_{0}=:f_{0}^{\vartheta},
\]
where
\begin{equation}		\label{Eq : bar K_g^theta}
	\bar{K}_{g}^{\vartheta}f=w^{\vartheta}\bar{K}_{g}f+\left(  2\frac{\partial_{i}w^{\vartheta}\partial_{j}w^{\vartheta}}{w^{2\vartheta}}\sigma_{G}^{ij}-\frac{\partial_{ij}w^{\vartheta}}{w^{\vartheta}}\sigma_{G}^{ij}-\frac{\partial_{j}w^{\vartheta}}{w^{\vartheta}}\partial_{i}\sigma_{G}^{ij}-\frac{\partial_{i}w^{\vartheta}}{w^{\vartheta}}a_{g}^{i}\right)
f^{\vartheta},
\end{equation}
\begin{equation}		
\label{Eq : bar A_g^theta}
\bar{A}_{g}^{\vartheta}:=\bar{A}_{g}-2\frac{\partial_{i}w^{\vartheta}%
}{w^{\vartheta}}\sigma_{G}^{ij}\partial_{j}.	
\end{equation}
Note that $\bar{A}_{g}^{0}=\bar{A}_{g}$ and $f^{0}=f$.
Later we will derive an energy estimate for the equation:
\begin{equation}
h_{t}+v\cdot\nabla_{x}h=\bar{A}_{g}^{\vartheta}h \label{Eq : equation for h}%
\end{equation}

Here we introduce the main result.

\begin{definition}
\label{Def : weak sol} Let $f(t,x,v) \in L^{\infty}((0,\infty)\times
\mathbb{T}^{3} \times\mathbb{R}^{3},w^{\vartheta}(v)dtdxdv)$ be a periodic function in
$x\in\mathbb{T}^{3} = [-\pi,\pi]^{3}$ satisfying
\begin{equation}		
	\int_{0}^{t}\|f(s)\|_{\sigma, \vartheta}^2 ds < \infty.
\end{equation}
We say that $f$ is a weak solution of
the Landau equation \eqref{Landau}, \eqref{initial} on $(0,\infty
)\times\mathbb{T}^{3} \times\mathbb{R}^{3}$ if for all $t\in(0,\infty)$ and
all $\varphi\in C^{1,1,1}_{t,x,v}\left(  (0,\infty)\times\mathbb{T}^{3}
\times\mathbb{R}^{3}\right)  $ such that $\varphi(t,x,v)$ is a periodic
function in $x\in\mathbb{T}^{3} = [-\pi,\pi]^{3}$ and $\varphi(t,x,\cdot)$ is
compactly supported in $\mathbb{R}^{3}$, it satisfies
\begin{equation}
\label{Eq : weak sol}%
\begin{split}
&  \iint_{\mathbb{T}^{3} \times\mathbb{R}^{3}} f(t,x,v) \varphi(t,x,v) dxdv -
\iint_{\mathbb{T}^{3} \times\mathbb{R}^{3}} f_{0}(x,v) \varphi(0,x,v) dxdv\\
& = - (f,\varphi)_{\sigma} + \iiint_{(0,t)\times\mathbb{T}^{3} \times\mathbb{R}^{3}}
f(s,x,v)\bigg(\partial_{s} \varphi(s,x,v) + v \cdot\nabla_{x}\varphi(s,x,v)+a_{f}(s,x,v)\cdot \nabla_{v} \varphi(s,x,v)\\
&\quad \quad + K \varphi(s,x,v) + \partial_{i}\sigma^{i}(s,x,v) \varphi(s,x,v) - \partial_{i}\left\{\phi^{ij}*[\mu^{1/2}\partial_{j}f]\right\}(s,x,v)\varphi(s,x,v)\\
&\quad \quad + \left\{\phi^{ij}*[v_{i}\mu^{1/2}\partial_{j}f]\right\}(s,x,v)\varphi(s,x,v) \bigg) - \sigma_{\mu^{1/2}f}(s,x,v) \partial_{i}f(s,x,v) \partial_{j} \varphi(s,x,v) dsdxdv.
\end{split}
\end{equation}
\end{definition}

\begin{theorem}
[Main results]\label{Thm : main result} There exist $\vartheta^{\prime}$ and 
$0<\varepsilon_{0}\ll 1$ such that for some $\vartheta \ge \vartheta^{\prime}$ if $f_{0}$ satisfies
\begin{equation}
\label{Eq : initial condition}
	\|f_{0}\|_{\infty,\vartheta} \le \varepsilon_0, \quad \|f_{0t}\|_{\infty, \vartheta} + \|D_v f_0\|_{\infty, \vartheta}  < \infty,
\end{equation}
where $f_{0t} := -v\cdot \nabla_x f_{0} + \bar A_{f_{0}}f_{0}$.
\begin{enumerate}
\item Then there exists a unique weak solution $f$ of \eqref{Landau},
\eqref{initial} on $(0,\infty)\times\mathbb{T}^{3} \times\mathbb{R}^{3}$.
\item Let $F(t,x,v) = \mu(v) + \sqrt \mu(v) f(t,x,v)$.
If $F(0) \ge 0$, then $F(t) \ge 0$ for every $t\ge 0$.

\item Moreover for any $t>0$, $\vartheta_{0}\in\mathbb{N}$, and $\vartheta\ge \vartheta'$, there exist $C$, $C_{\vartheta, \vartheta_{0}}$,
$l_0(\vartheta_0)$, and $0<\alpha<1$ such that $f$ satisfies
\begin{equation}
\label{Eq : energy estimate}
	\sup_{0 \le s\le\infty}\mathcal{E}_{\vartheta}(f(s)) \le C 2^{2\vartheta}\mathcal{E}_{\vartheta}(0),
\end{equation}
\begin{equation}
\label{Eq : decay estimate}
	\|f(t)\|_{2,\vartheta} \le C_{\vartheta,\vartheta_{0}} \mathcal{E}_{\vartheta+\vartheta_{0}/2}(0)^{1/2}\left(  1+ \frac{t}{\vartheta_{0}}\right)  ^{-\vartheta_{0}/2},
\end{equation}
\begin{equation}
\label{Eq : L^infty estimate}
	\|f(t)\|_{\infty, \vartheta} \le C_{\vartheta,\vartheta_{0}}(1+t)^{-\vartheta_{0}}\|f_{0}\|_{\infty,\vartheta+l_{0}},
\end{equation}
\begin{equation}
\label{Eq : Holder}%
	\|f\|_{C^{\alpha}(\Omega)} \le C\left(\|f_{0t}\|_{\infty, \vartheta} + \|f_{0}\|_{\infty, \vartheta}\right),
\end{equation}
and
\begin{equation}
\label{Eq : D_v f bddness}%
	\|D_{v} f\|_{L^{\infty}\big((0,\infty)\times\mathbb{T}^{3} \times\mathbb{R}^{3})\big)}  \le C\left(\|f_{0t}\|_{\infty, \vartheta} + \|D_v f_0\|_{\infty, \vartheta} + \|f_{0}\|_{\infty, \vartheta}\right).
\end{equation}
\end{enumerate}
\end{theorem}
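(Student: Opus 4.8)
\section*{Proof proposal for Theorem \ref{Thm : main result}}

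\textbf{Overall strategy.} The plan is to run a bootstrap in $L^\infty$: close the argument on a solution whose $L^\infty$ norm stays below $2\varepsilon_0$, so that the coefficient fields $\sigma_G$, $a_f$ arising from $G = \mu + \mu^{1/2}f$ remain a uniformly small perturbation of $\sigma_\mu$, hence uniformly elliptic with bounded coefficients on the region where $\mu^{1/2}f$ is not yet negligible. I would build the solution by a standard iteration: set $f^{(0)} \equiv 0$ and solve the linear problem \eqref{linear landau} with $g = f^{(n)}$ to get $f^{(n+1)}$, which by \eqref{Eq : equation for h} reduces to a linear Fokker--Planck--type equation for each weighted unknown $h = w^\vartheta f^{(n+1)}$ with drift-diffusion operator $\bar A_{f^{(n)}}^\vartheta$ plus the lower-order term $\bar K_{f^{(n)}}^\vartheta$. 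For this linear equation the two ingredients advertised in the abstract are invoked: (a) weighted $L^2$ energy estimates with strong velocity weight $w^\vartheta$ and polynomial-in-time decay, using that the bad commutator terms in $\bar K_g^\vartheta$ (the $w$-derivative terms in \eqref{Eq : bar K_g^theta}) carry extra negative powers of $w$ and are absorbed by the coercive $|\cdot|_{\sigma,\vartheta}$ part after choosing $\vartheta'$ large; (b) the De Giorgi / Golse--Imbert--Mouhot--Vasseur local $L^2\to L^\infty$ gain and Hölder estimate \cite{golse2016harnack}, \cite{golse2015holder}, \cite{mouhot2015holder}, which upgrades the $L^2$ control into the $L^\infty$ bound \eqref{Eq : L^infty estimate} and the Hölder bound \eqref{Eq : Holder}.

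\textbf{Order of the steps.} \emph{Step 1.} Establish the weighted $L^2$ estimate for \eqref{Eq : equation for h}: multiply by $w^{2\vartheta}h$, integrate over $\mathbb{T}^3\times\mathbb{R}^3$, use Remark \ref{Rmk : adjoint operator} and the smallness of $f^{(n)}$ in $L^\infty$ to get $\frac{d}{dt}\mathcal E_\vartheta \lesssim \mathcal E_\vartheta$ with a coercive $\|h\|_{\sigma,\vartheta}^2$ on the left; together with the spectral gap of $L$ on the complement of the hydrodynamic null space (guaranteed by \eqref{conservation laws}, which is propagated by the equation) this yields \eqref{Eq : energy estimate}. \emph{Step 2.} Upgrade to the polynomial decay \eqref{Eq : decay estimate} by the now-standard interpolation-with-extra-weight trick: give up $\vartheta_0/2$ powers of $w$ to bound $\|f\|_{2,\vartheta}$ by $\|f\|_{\sigma,\vartheta}^{1-} \|f\|_{2,\vartheta+\vartheta_0/2}^{+}$, feed into a differential inequality $\frac{d}{dt}\|f\|_{2,\vartheta}^2 \le -c\|f\|_{2,\vartheta}^{2+4/\vartheta_0}\mathcal E_{\vartheta+\vartheta_0/2}(0)^{-2/\vartheta_0}$, and integrate. \emph{Step 3.} Localize: on any unit parabolic cylinder the rescaled equation for $f$ (or $w^\vartheta f$) is of the form $\partial_t u + v\cdot\nabla_x u = \nabla_v\cdot(\mathbf{A}\nabla_v u) + \mathbf{B}\cdot\nabla_v u + \mathbf{c}\,u + (\text{source})$ with $\mathbf{A}$ uniformly elliptic and $\mathbf{B},\mathbf{c}$, source in the right Lebesgue spaces; apply the De Giorgi $L^2\to L^\infty$ iteration of \cite{golse2016harnack}, \cite{mouhot2015holder} cylinder-by-cylinder, sum the gain against the $L^2$ decay of Step 2, and obtain \eqref{Eq : L^infty estimate} (the $l_0$ being the number of extra weights spent in the localization/source control). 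Also record the $L^\infty$ bound $\|f(t)\|_{\infty,\vartheta}\le 2\varepsilon_0$ that closes the bootstrap, provided $\varepsilon_0$ and $\vartheta'$ are chosen correctly. \emph{Step 4.} Pass to the limit $n\to\infty$: the uniform bounds give weak-$*$ and a.e. limits, and the De Giorgi Hölder estimate \cite{golse2015holder} gives equicontinuity, so $f^{(n)}\to f$ strongly enough locally to pass to the limit in the weak formulation \eqref{Eq : weak sol}; this produces a weak solution satisfying all the listed bounds. \emph{Step 5.} Uniqueness and $D_v f$ control: differentiate the equation in $v$, or equivalently run the $S_p$ (parabolic Sobolev / Bouchut-type) estimates on $\partial_v f$, using the already-proven Hölder regularity of the coefficients to make sense of the Calder\'on--Zygmund-type bound; the difference of two solutions satisfies a linear equation whose coefficients are controlled, and a Gr\"onwall argument in the appropriate weighted space forces it to vanish. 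This also yields \eqref{Eq : D_v f bddness}. \emph{Step 6.} Positivity: this is the standard observation that if $F(0)\ge 0$ then $F$ solves a linear (given $F$) parabolic equation $\partial_t F + v\cdot\nabla_x F = \nabla_v\cdot(\sigma_F\nabla_v F) + (\text{first order})$ in which $v=0$ is not a barrier, so the weak maximum principle (or the De Giorgi machinery applied to $F_- = \max(-F,0)$) gives $F\ge 0$ for all time.

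\textbf{Main obstacle.} The delicate point is making the localized De Giorgi argument genuinely usable at \emph{large velocities}: the equation degenerates as $|v|\to\infty$ (the matrix $\sigma^{ij}\sim |v|^{-1}$ and its anisotropy), so the ``uniform ellipticity with bounded coefficients'' needed by \cite{golse2016harnack}, \cite{mouhot2015holder} only holds after rescaling on cylinders whose size shrinks with $|v|$, and one must track how many powers of the weight $w$ are consumed by each rescaling and by the lower-order terms $\bar K_g^\vartheta$ and the source $\sigma_{\mu^{1/2}f}\partial_i f\partial_j\varphi$. Getting the weighted $L^2$ decay of Step 2 and the weight budget $l_0(\vartheta_0)$ of Step 3 to be mutually consistent --- i.e. choosing $\vartheta'$ large enough that the loss in both steps is affordable --- is where the real work lies; the smallness $\varepsilon_0$ is comparatively cheap, needed only to keep the nonlinear coefficients a perturbation and to absorb $\Gamma(f,f)$-type terms on the right of the energy inequality. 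A secondary technical nuisance is propagating the conservation laws \eqref{conservation laws} and the resulting spectral gap for $L$ through the iteration, since the iterates $f^{(n)}$ need not individually satisfy them exactly; this is handled by projecting off the (five-dimensional, finite) hydrodynamic part at each stage and checking the projection is $O(\varepsilon_0^2)$.
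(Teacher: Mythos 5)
Your architecture coincides with the paper's: iterate on the linearized problem \eqref{linear landau}, prove weighted $L^{2}$ energy and decay (Theorem \ref{Thm : energy estimate}), upgrade to $L^{\infty}$ by the De Giorgi machinery with velocity-dependent rescaling and the maximum principle for the tails (Theorem \ref{Thm : L^infty estimate}), get uniform H\"older regularity of the coefficients (Theorem \ref{Thm : Holder landau}), feed that into the $S_{p}$ theory to bound $\|D_{v}f\|_{\infty}$, and close uniqueness by Gr\"onwall; your ``main obstacle'' paragraph identifies exactly the anisotropic change of coordinates the paper performs via the dilation \eqref{Eq : dilation matrix}. Two remarks on where you diverge or are too vague. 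First, you pass to the limit of the iterates by weak-$*$ compactness plus H\"older equicontinuity, whereas the paper shows the iterates form a Cauchy sequence in $L^{2}(w^{\bar\vartheta})$ using the same difference estimate it needs for uniqueness (Lemma \ref{lemma : iteration}), with the factorial gain $e^{Cnt_{0}}(C't)^{n}/n!$. Your compactness route is workable (local uniform convergence of $f^{(n)}$ paired with weak convergence of $\nabla_{v}f^{(n+1)}$ handles the quadratic term in \eqref{Eq : weak sol}), but it yields only subsequential convergence and you must prove the contraction estimate anyway for uniqueness, so nothing is saved. Second, and this is the substantive omission, your Gr\"onwall step for uniqueness conceals the key mechanism: the difference equation carries $\Gamma(f-g,\,g)$, and the convolutions $\phi^{ij}\ast[\mu^{1/2}\partial_{j}(f-g)]$ can be bounded by the $L^{2}$ norm of $f-g$ only after integrating by parts and applying Cauchy--Schwarz against an integrable weight, which forces the whole estimate to run in a norm with \emph{negative} velocity weight $\bar\vartheta\le-2$ (Theorem \ref{Thm : modified Guo2002 theorem3}, part (2)); ``the appropriate weighted space'' is doing all the work in your write-up, and with a nonnegative weight the step fails. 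Finally, your concern about propagating \eqref{conservation laws} through the iteration is moot: the collision invariants satisfy $\int\psi\sqrt{\mu}\,\Gamma(g,f)\,dv=0$ for every $g$, so each iterate satisfies the conservation laws exactly and no hydrodynamic projection is required.
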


Motivated by the study of global well-posedness for the Landau equation in
a bounded domain with physical boundary conditions, our current study is the
first step to develop a $L^{2}\rightarrow L^{\infty }$ framework with
necessary analytic tools in a simpler periodic domain. There have been many
results for Landau equations in either a periodic box or whole domain
\cite{boblylev2013particle}, \cite{bobylev2015some}, \cite{carrapatoso2016cauchy}, \cite{chen2009smoothing}, \cite{desvillettes2015entropy}, \cite{guo2002landau}, \cite{ha2015l2}, \cite{herau2011anisotropic}, \cite{herau2013global}, \cite{liu2014regularizing}, \cite{luo2016spectrum}, \cite{strain2006almost}, \cite{strain2013vlasov}, and \cite{wu2014global}, in which high-order Sobolev norms can be employed. On the other
hand, in a bounded domain, even with the velocity diffusion, the solutions
can not be smooth up to the grazing set \cite{hwang2014fokker}. New mathematical
tools involving weaker norms are needed to be developed. In the case for
Boltzmann equations, a $L^{2}\rightarrow L^{\infty }$ framework has been
developed to construct unique global solutions in bounded domains \cite{guo2010decay}.

Our work can be viewed as a similar $L^{2}\rightarrow L^{\infty }$ approach
for the Landau equation. Our techniques are inspired by recent remarkable
progresses of \cite{golse2016harnack}, \cite{golse2015holder}, and \cite{mouhot2015holder}, in which a general machinery in the spirit of De
Giorgi, has been developed to the Fokker-Planck equations, even to the Landau
equation \cite{mouhot2015holder}, to bootstrap $L^{\infty }$ and Holder space $C^{0,\alpha }$
from a $L^{2}$ weak solution$.$ Unfortunately, to our knowledge, there is
still no construction for $L^{\infty }$ global weak solutions to the Landau
equation.

Our paper settles the global existence and uniqueness for a $L^{2}$ weak
solution with a small weighted $L^{\infty }$ perturbation of a Maxwellian
initially. Our method is an intricate combination of different tools. Our
starting point is a design of an iterating sequence 
\begin{eqnarray*}
(\partial _{t}+v\cdot \nabla _{x}) f^{n+1} &=&-Lf^{n+1}+\Gamma (f^{n},f^{n+1}) \\
&\equiv &A_{f^{n}}(f^{n+1})+K_{f^{n}}(f^{n+1}),
\end{eqnarray*}%
where $A_{f^{n}}(f^{n+1})$ contains \textit{all} the derivatives and $%
K_{f^{n}}(f^{n+1})$ has \textit{no derivative} of $f^{n}$ and $f^{n+1},$ so
that $f^{n}$ appears in the coefficients of the Landau operator for $f^{n+1}.
$ The crucial lemma states that if $||f^{n}||_{\infty }$ is sufficiently
small, the main part of $A_{f^{n}}(f^{n+1})$ retains the same analytical
properties of the linearized Landau operator $A.$ 

We first establish global energy estimates and time-decay under the
assumption $f^{n}$ is small, in Section \ref{sec:l2 decay}.
\begin{theorem}		\label{Thm : energy estimate}
	Suppose that $\|g\|_{\infty} < \varepsilon$.
	Let $\vartheta\in 2^{-1}\mathbb{N} \cup\{0\}$ and $f$ be a classical solution of \eqref{initial}, \eqref{conservation laws}, and \eqref{linear landau}.
	Then there exist $C$ and $\varepsilon=\varepsilon(\vartheta)>0$ such that 
	\begin{equation}	\label{Eq : energy estimate linear}
		\sup_{0 \le s< \infty}\mathcal{E}_{\vartheta}(f(s)) \le C 2^{2\vartheta} \mathcal{E}_{\vartheta}(0),
	\end{equation}
	and
	\begin{equation}	\label{Eq : decay estimate linear}
		\|f(t)\|_{2,\vartheta} \le C_{\vartheta,k} \left(\mathcal{E}_{\vartheta+k/2}(0) \right)^{1/2}\left(  1+ \frac{t}{k}\right)^{-k/2}%
	\end{equation}
	for any $t>0$ and $k\in\mathbb{N}$.
\end{theorem}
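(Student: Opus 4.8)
The plan is to prove the weighted energy inequality \eqref{Eq : energy estimate linear} first, and then derive the time-decay \eqref{Eq : decay estimate linear} from it by an interpolation-and-iteration argument in the weight $\vartheta$. For the energy estimate, I would test the weighted equation $\partial_t f^\vartheta + v\cdot\nabla_x f^\vartheta = \bar A_g^\vartheta f^\vartheta + \bar K_g^\vartheta f$ against $f^\vartheta$ and integrate over $\mathbb{T}^3\times\mathbb{R}^3$. The transport term drops by periodicity, the principal part $\int (\bar A_g^\vartheta f^\vartheta) f^\vartheta$ produces, up to the weight corrections collected in \eqref{Eq : bar K_g^theta} and \eqref{Eq : bar A_g^theta}, the dissipation $-\|f^\vartheta\|_\sigma^2$ plus terms controlled by $\|g\|_\infty$; here the crucial input is the ``crucial lemma'' alluded to in the introduction — that when $\|g\|_\infty<\varepsilon$ the matrix $\sigma_G = \phi*(\mu+\mu^{1/2}g)$ is a small perturbation of $\sigma=\sigma_\mu$, so coercivity $\sigma_G^{ij}\xi_i\xi_j \gtrsim \sigma^{ij}\xi_i\xi_j$ and the ellipticity bounds of Guo \cite{guo2002landau} survive. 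The lower-order pieces $\bar K_g^\vartheta$ must be absorbed: the $Kf$ part is handled by the classical estimate $|(Kf,f^{2\vartheta})|\le \eta\|f\|_{\sigma,\vartheta}^2 + C_\eta\|f\|_{2,\vartheta}^2$ (or with a small-in-large-velocity splitting), and the $g$-dependent pieces of $\bar K_g^\vartheta$ carry a factor $\|g\|_\infty\le\varepsilon$ or $\|\partial g\|$ which either is small or is again absorbed by the dissipation; the weight-commutator terms $w^{-2\vartheta}\partial w^\vartheta \partial w^\vartheta \sigma_G$ etc. are bounded because $|\partial_i w^\vartheta/w^\vartheta|\lesssim \vartheta\, w^{-1}$ and $\sigma_G \lesssim w^{-1}$, contributing the harmless $2^{2\vartheta}$-type constants. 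The key point to watch is that the spectral gap of $L$ requires the conservation laws \eqref{conservation laws}, so at $\vartheta=0$ one uses $-(Lf,f)\ge \delta\|f\|_\sigma^2$ via the coercivity of $L$ on the orthogonal complement of the hydrodynamic modes, and this propagates in time because \eqref{linear landau} preserves the conservation laws for the linearized flow. Putting these together gives $\frac{d}{dt}\|f^\vartheta\|_2^2 + \|f^\vartheta\|_\sigma^2 \le C(\varepsilon)\|f^\vartheta\|_2^2$ with $C(\varepsilon)$ small, and then a further argument — either a Poincaré-type bound $\|f^\vartheta\|_2 \lesssim \|f^\vartheta\|_\sigma + (\text{lower weight})$ combined with an induction on $\vartheta$, or direct absorption using the gap at $\vartheta=0$ — upgrades this to the uniform-in-time bound \eqref{Eq : energy estimate linear}. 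Tracking the constant through the $\vartheta$-induction is what produces the explicit $2^{2\vartheta}$.

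For the decay estimate \eqref{Eq : decay estimate linear}, the idea is the standard weighted-interpolation trick: from the energy inequality at weight $\vartheta$ one has $\frac{d}{dt}\|f^\vartheta\|_2^2 \le -\|f^\vartheta\|_\sigma^2 + C(\varepsilon)\|f^\vartheta\|_2^2$, and one controls $\|f^\vartheta\|_2^2$ by interpolating between $\|f^\vartheta\|_\sigma^2$ (which contains a $\langle v\rangle^{-1}$-weighted $L^2$ of $f$, i.e. slightly less decay in $v$) and a higher-weight energy $\|f^{\vartheta+1/2}\|_2^2$ or $\mathcal{E}_{\vartheta+k/2}(0)$; concretely, $\|f^\vartheta\|_2^2 \le \big(\|f^{\vartheta-1/2}\|_2^2\big)^{\theta}\big(\|f^{\vartheta+k/2}\|_2^2\big)^{1-\theta}$-type inequalities let one close a differential inequality of the form $\frac{d}{dt}X \le -c X^{1+1/k} M^{-1/k}$ where $M = \sup_s \mathcal{E}_{\vartheta+k/2}(s) \lesssim 2^{2(\vartheta+k/2)}\mathcal{E}_{\vartheta+k/2}(0)$ by the already-proven \eqref{Eq : energy estimate linear}. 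Integrating this ODE gives exactly the polynomial rate $(1+t/k)^{-k/2}$ with constant $C_{\vartheta,k}$ depending on $k$ through the interpolation exponents; one feeds the $g$-dependent error terms in again, small by $\varepsilon(\vartheta+k/2)$, so they do not spoil the dissipation.

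I expect the main obstacle to be the principal-part estimate: showing that $\int_{\mathbb{T}^3\times\mathbb{R}^3}(\bar A_g^\vartheta f^\vartheta)f^\vartheta \le -\tfrac12\|f^\vartheta\|_\sigma^2 + (\text{absorbable})$ uniformly for $\|g\|_\infty<\varepsilon$. This requires (i) the perturbative ellipticity of $\sigma_G$, which in turn needs pointwise control of $\phi*(\mu^{1/2}g)$ and $\phi*(v_i\mu^{1/2}g)$ and $\phi*(\mu^{1/2}\partial_j g)$ by $\|g\|_\infty$ (the convolution with the singular kernel $\phi$ against a Gaussian-localized bounded function is fine, but the term with $\partial_j g$ forces one to move the derivative onto the kernel or onto the Gaussian — an integration by parts in $v'$), and (ii) careful handling of the first-order drift $a_g\cdot\nabla_v f^\vartheta$ and the weight-commutator drift $w^{-\vartheta}\partial_i w^\vartheta \sigma_G^{ij}\partial_j$, which are not symmetric and must be split into a part absorbed by $\eta\|f^\vartheta\|_\sigma^2$ and a part bounded by $C_\eta\|f^\vartheta\|_2^2$ using $|\sigma_G|\lesssim w^{-1}$, $|a_g|\lesssim w^{-2}(1+\|g\|_\infty)$. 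Once this single coercivity-plus-absorption estimate is in hand, the rest is bookkeeping: the Grönwall step, the $\vartheta$-induction for the $2^{2\vartheta}$ constant, and the interpolation ODE for decay are all routine given the analogous computations in \cite{guo2002landau}.
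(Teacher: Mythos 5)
Your overall architecture matches the paper's: a weighted $L^2$ energy identity, coercivity of the collision operator under the smallness $\|g\|_\infty<\varepsilon$, an induction on the weight $\vartheta$ that produces the $2^{2\vartheta}$ constant, and then the weight-interpolation inequality $\|f\|_{2,\vartheta}^2\le\|f\|_{2,\vartheta-1/2}^{2k/(k+1)}\big(\mathcal{E}_{\vartheta+k/2}(0)\big)^{1/(k+1)}$ combined with $\|f\|_{\sigma,\vartheta}\ge\|f\|_{2,\vartheta-1/2}$ to close an ODE $y'\le -c\,M^{-1/k}y^{1+1/k}$ giving the rate $(1+t/k)^{-k/2}$. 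That part is exactly the Strain--Guo scheme the paper follows.

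There is, however, a genuine gap at the base step. You assert that $-(Lf,f)\ge\delta\|f\|_\sigma^2$ follows from the spectral gap of $L$ on the complement of the hydrodynamic modes together with the conservation laws \eqref{conservation laws}, "propagated in time." This is false as stated: the spectral gap only yields $(Lf,f)\ge\delta\|(I-\mathbf{P})f\|_\sigma^2$, and the conservation laws only force the \emph{spatial averages} of the macroscopic coefficients $a_f(t,x)$, $b_f(t,x)$, $c_f(t,x)$ to vanish — they do not control $\|\mathbf{P}f(t)\|_\sigma$ pointwise in $t$ or $x$. Without control of $\mathbf{P}f$, the differential inequality you obtain is only $\frac{d}{dt}\|f\|_2^2+\delta\|(I-\mathbf{P})f\|_\sigma^2\le C(\varepsilon)\|f\|_\sigma^2$, and Gr\"onwall then gives exponential growth rather than the uniform-in-time bound \eqref{Eq : energy estimate linear}; likewise the decay ODE cannot be closed, because the dissipation never dominates the full $\|f\|_{2,\vartheta}^2$. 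The paper closes this with Lemma \ref{Lemma : Espotiso 2013 lemma 6.1}: a time-integrated macroscopic estimate $\int_s^t\|\mathbf{P}f\|_\sigma^2\le \eta(t)-\eta(s)+C\int_s^t\|(I-\mathbf{P})f\|_\sigma^2$, proved by testing the weak formulation against velocity moments multiplied by solutions of auxiliary Poisson problems in $x$ (Steps 1--6), exploiting the transport term $v\cdot\nabla_x$ rather than the conservation laws alone. The boundary term $\eta$, with $|\eta(t)|\le C\|f(t)\|_2^2$, must then be carried through the entire $\vartheta$-induction and absorbed by taking $\delta'$ small (see \eqref{Eq : choice of delta'}); this bookkeeping is not optional, since $\eta$ appears at every weight level. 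Your "Poincar\'e-type bound" or "direct absorption using the gap at $\vartheta=0$" does not exist as a substitute for this macroscopic estimate, so this step needs to be supplied before the rest of your (otherwise correct) argument goes through.
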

It is important to note that, thanks to the nonlinearity,
the velocity weight can be arbitrarily strong. The proof of this step is a
combination of energy estimates with positivity estimates for $\textbf{P}f$ \cite{esposito2013non}, \cite{guo2002landau} and
a time-decay estimate \cite{strain2006almost}, but in the absence of high-order Sobolev
regularity.

We next bootstrap such a $L^{2}$ bound to a $L^{\infty }$ bound.
\begin{theorem}	\label{Thm : L^infty estimate} 
	Suppose that $\|g\|_{\infty} < \varepsilon$.
	Let $f$ be a weak solution of \eqref{initial}, \eqref{conservation laws}, and \eqref{linear landau} in a periodic box and $\vartheta\in\mathbb{N} \cup\{0\}$, $\vartheta_{0}
	\in\mathbb{N}$.
	Then there exist $\varepsilon$, $l_{0}(\vartheta_{0})>0$ and $C_{\vartheta, \vartheta_0}$ such that
	\begin{equation}	\label{Eq : L^infty estimate linear}
		\|f(t)\|_{\infty, \vartheta+\vartheta_{0}}\le C_{\vartheta,\vartheta_0}(1+t)^{-\vartheta_{0}}\|f_{0}\|_{\infty,\vartheta+l_{0}}.
\end{equation}
\end{theorem}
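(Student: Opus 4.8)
The plan is to deduce the $L^\infty$ decay estimate \eqref{Eq : L^infty estimate linear} by combining the global $L^2$ energy/decay bounds from Theorem \ref{Thm : energy estimate} with a De Giorgi--type local $L^2\to L^\infty$ gain, iterated on a geometric sequence of time intervals so that the polynomial time decay is upgraded from the $L^2$ level to the $L^\infty$ level. First I would write $h := f^\vartheta = w^\vartheta f$, which solves $h_t + v\cdot\nabla_x h = \bar A_g^\vartheta h + \bar K_g^{\vartheta} f$, and observe that under the smallness hypothesis $\|g\|_\infty<\varepsilon$ the principal part $\bar A_g^\vartheta = \nabla_v\cdot(\sigma_G\nabla_v\cdot) + (\text{lower order})$ is a uniformly (locally) elliptic diffusion in $v$ with coefficients comparable to those of the linearized Landau operator --- this is exactly the ``crucial lemma'' advertised in the introduction --- while $\bar K_g^\vartheta f$ is a genuinely lower-order (zeroth order in derivatives, integral) term that is controlled by $\|f\|_{2,\vartheta'}$ for a slightly larger weight $\vartheta'$, after using the standard decay of the kernel of $K$ and convolution estimates against $\mu^{1/2}$. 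Thus locally in $(t,x,v)$ the function $h$ is a sub/supersolution of a kinetic Fokker--Planck equation with a right-hand side in $L^2$ (with weight), which is precisely the class treated in \cite{golse2016harnack}, \cite{mouhot2015holder}.

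The core step is a \emph{local boundedness estimate}: on a kinetic cylinder $Q_r(z_0)$ adapted to the scaling $(t,x,v)\mapsto (r^2 t, r^3 x, r v)$ of the transport--diffusion operator, one has
\begin{equation}
\label{Eq : De Giorgi local}
\sup_{Q_{1/2}(z_0)} |h| \le C\left( \|h\|_{L^2(Q_1(z_0))} + \|\text{source}\|_{L^{q}(Q_1(z_0))} \right),
\end{equation}
obtained by the De Giorgi level-set iteration (energy inequality on truncations $h_k = (h-C_k)_+$, gain of integrability via the kinetic Sobolev/averaging embedding of \cite{golse2016harnack}, then summing the geometric iteration). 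To globalize in $v$ one covers $\mathbb{R}^3$ by a countable family of such unit cylinders centered at points with $|v_0|\sim 2^m$; because the diffusion matrix $\sigma_G$ degenerates like $|v|^{-1}$ (resp.\ $|v|^{-3}$ in the $vv$ direction) for large $|v|$, one must rescale each local estimate, and the loss in the weight incurred by this rescaling is precisely absorbed by choosing the extra weight $l_0(\vartheta_0)$ in the hypothesis --- the point being that $\|f\|_{2,\vartheta+l_0}$ on the right dominates the weighted $L^2$ norm of $h=w^\vartheta f$ plus the $\bar K$-source on every far-away cylinder with room to spare. This reduces \eqref{Eq : L^infty estimate linear} to a statement purely about the time-decay of weighted $L^2$ norms, which is supplied by \eqref{Eq : decay estimate linear} of Theorem \ref{Thm : energy estimate}.

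Finally, to get the sharp $(1+t)^{-\vartheta_0}$ rate I would run the local estimate \eqref{Eq : De Giorgi local} on the time cylinder $(t-1,t)\times\mathbb{T}^3\times\mathbb{R}^3$ (or a unit-length subinterval near time $t$), so that
\begin{equation}
\|f(t)\|_{\infty,\vartheta} \le C\left( \sup_{t-1\le s\le t}\|f(s)\|_{2,\vartheta'} + \sup_{t-1 \le s \le t}\|f(s)\|_{2,\vartheta'-\text{lower}} \right) \le C\, \sup_{t-1\le s\le t}\|f(s)\|_{2,\vartheta'},
\end{equation}
and then invoke \eqref{Eq : decay estimate linear} with $k = \vartheta_0$ and weight $\vartheta' = \vartheta+\vartheta_0$ (so that, after the weight loss, what survives is the weight $\vartheta+\vartheta_0$ demanded on the left of \eqref{Eq : L^infty estimate linear}), choosing the threshold weight $l_0(\vartheta_0)$ large enough to cover both the $\vartheta_0/2$ power needed inside $\mathcal{E}$ and the constant weight loss from the De Giorgi rescaling. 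The main obstacle I anticipate is making the De Giorgi iteration genuinely uniform in $|v|$: one has to verify that the constant $C$ in \eqref{Eq : De Giorgi local}, after the anisotropic rescaling that flattens $\sigma_G$ near $|v_0|\sim 2^m$, does not blow up with $m$, and that the lower-order coefficients $a_g$, $\partial_i\sigma^i$, $\sigma^{ij}v_iv_j$, together with the $\bar K$-kernel, all remain in the admissible De Giorgi class on each rescaled cylinder with norms that are summable in $m$ against the extra weight $w^{l_0}$ --- this bookkeeping of weights through the rescaling is the technical heart of the argument, but the smallness of $\|g\|_\infty$ keeps the perturbation terms subordinate to the Maxwellian part $\phi^{ij}*\mu$ throughout.
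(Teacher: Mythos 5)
Your proposal captures the right general philosophy (a De Giorgi local $L^2\to L^\infty$ gain on kinetic cylinders, fed by the weighted $L^2$ decay of Theorem \ref{Thm : energy estimate}, with extra velocity weight to pay for degeneracy at large $|v|$), and your treatment of the bounded-velocity region is essentially the paper's Lemma \ref{Lemma : L^infty estimate for bdd v}. However, there are two genuine gaps at the point where the estimate has to be closed.

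First, the term $\bar K_g^{\vartheta}f$ cannot be treated as a source controlled purely by a weighted $L^2$ norm of $f$: by Lemma \ref{lemma : Kf estimate} its $L^2$ bound is $CN^2\|f^{\vartheta}\|_{2}+\tfrac{C}{N}\|f^{\vartheta}\|_{\infty}$, i.e.\ it unavoidably reintroduces the unknown quantity $\sup_s\|f^{\vartheta}(s)\|_{\infty}$. Consequently your claimed one-shot inequality $\|f(t)\|_{\infty,\vartheta}\le C\sup_{t-1\le s\le t}\|f(s)\|_{2,\vartheta'}$ is not attainable. The paper instead uses the Duhamel formula $f^{\vartheta}(t)=U(t,s-1)f^{\vartheta}(s-1)+\int U(t,\tau)\bar K_g^{\vartheta}f(\tau)\,d\tau$, applies the De Giorgi $L^2\to L^\infty$ gain only to the source-free evolution $U$, uses the maximum principle for $U$ near the endpoint of the time interval (where the De Giorgi constant degenerates like $(1-\tau)^{-m}$), and ends up with a self-referential inequality of the form $\sup_{s\le T}\|f^{\vartheta}(s)\|_{\infty}\le C\|f_0\|_{\ldots}+\varepsilon_0\sup_{s\le T}\|f^{\vartheta}(s)\|_{\infty}$, where $\varepsilon_0$ is made small by choosing $Z,k,N$ large and summing a convergent series over unit time steps. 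This absorption mechanism is the heart of the proof and is absent from your plan.

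Second, your covering of $\mathbb{R}^3_v$ by rescaled cylinders with a constant uniform in $|v_0|$ is not what happens here: the local constant in Lemma \ref{lemma : L^2-L^infty estimate} grows like $\bar C(R_0)^m$ with the velocity radius, and the paper does not apply De Giorgi at all on $\{|v|>Zs^k\}$. There the large-velocity part is handled by the maximum principle together with the smallness $\|\bar K_g^{\vartheta}1_{|v|>M}f\|_{\infty}\le C(1+M)^{-1}\|f^{\vartheta}\|_{\infty}$ (possible because every convolution in $K$ carries a factor $\mu^{1/2}$), with the cutoff radius $M=Zs^k$ growing in time so that the resulting contributions are summable. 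The polynomial blow-up $(Zs^k)^{\beta}$ of the De Giorgi constant on the bounded region is then beaten by taking the $L^2$ decay rate $l$ in \eqref{Eq : decay estimate linear} large relative to $\beta k$, which is where the weight loss $l_0(\vartheta_0)$ actually comes from; the final rate $(1+t)^{-\vartheta_0}$ on $\{|v|\ge 1+t\}$ is obtained by the elementary trade $\|1_{|v|\ge 1+t}f\|_{\infty,\vartheta}\le(1+t)^{-\vartheta_0}\|f\|_{\infty,\vartheta+\vartheta_0}$ rather than by any local regularity estimate.
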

It is important to note that even though
there is a finite loss of velocity weight, we are still able to close the
estimates thanks to the strong gain of velocity weight in \eqref{Eq : decay estimate linear}. The proof  of such a
$L^{\infty }$ estimate locally in $x$ and $v$ \ is an adaptation of recent work of
\cite{golse2015holder}, \cite{mouhot2015holder}. It is well-known that the Landau operator is delicate to study and
estimate for large velocities. Together with the maximum principle of the Landau
operator as well as strong time decay for $L^2$ norm in \eqref{Eq : decay estimate linear}, we are able to control the `tails' of solutions for large velocities, and
obtain global (in $x$ and $v$) $L^{\infty }$ estimate.

Unfortunately, unlike in the Boltzmann case (see \cite{guo2010decay}), in order to establish the
convergence of $\{f^{n}\}$ and more importantly, uniqueness of our solution,
such a $L^{\infty }$ bound is not sufficient due to the presence of velocity
derivative in the nonlinear Landau equation. We need to further control $%
||\nabla _{v}f^{n}||_{\infty }$ as in Lemma \ref{lemma : iteration}, which follows from $S_{p}$ estimates 
established in \cite{polidoro1998sobolev}. One crucial requirement for such $S_{p}$ estimates (as
the classical $W^{2,p}$ estimate in the elliptic theory), is the $%
C^{0,\alpha }$ estimate (uniform in $x$ and $v$) for the coefficients
containing $f^{n}.$ We establish 
\begin{theorem}		\label{Thm : Holder landau}
	Let $f$ be a solution of \eqref{initial}, \eqref{conservation laws}, and \eqref{linear landau}. 
	Then there exist $\varepsilon$, $\vartheta'$, $C$, $\alpha>0$  such that if $g$ satisfies \eqref{Eq : condition for g}, then we have
	\begin{equation*}
		\|f\|_{C^{\alpha}(\Omega)} \le C(f_0),
	\end{equation*}
	where
	\begin{equation}		\label{Eq : C(f_0)}
		C(f_0) = C\left( \|f_{0t}\|_{\infty, \vartheta} + \|f_{0}\|_{\infty, \vartheta}\right).
	\end{equation}
\end{theorem}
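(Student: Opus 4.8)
The plan is to read the weighted perturbation as a bounded weak solution of a kinetic Fokker--Planck equation whose diffusion is rough but locally uniformly elliptic, to run the De Giorgi iteration of \cite{golse2016harnack}, \cite{golse2015holder}, \cite{mouhot2015holder} on small kinetic cylinders, and then to patch the local estimates into a global one using the strong velocity weight and time decay already supplied by Theorems~\ref{Thm : energy estimate} and \ref{Thm : L^infty estimate}. Concretely, fix $\vartheta\ge\vartheta'$ and write $f^{\vartheta}=w^{\vartheta}f$, which solves
\[
\partial_{t}f^{\vartheta}+v\cdot\nabla_{x}f^{\vartheta}=\nabla_{v}\cdot(\sigma_{G}\nabla_{v}f^{\vartheta})+b_{g}\cdot\nabla_{v}f^{\vartheta}+c_{g}\,f^{\vartheta},
\]
with $\sigma_{G}=\phi\ast(\mu+\mu^{1/2}g)$ and with $b_{g},c_{g}$ assembled from $\sigma_{G}$, $a_{g}$ and $\bar K_{g}^{\vartheta}$. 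By the perturbation lemma for $\bar A_{g}$ (ellipticity is retained once $\|g\|_{\infty}$ is small), for every $R>0$ the matrix $\sigma_{G}$ is uniformly elliptic on $\{|v|\le R\}$; and since the convolution kernels appearing in $b_{g},c_{g}$ are integrated against Gaussian factors (and, after integration by parts, against kernels no worse than $|v|^{-2}$ modulo a Dirac mass, using the identities for $\partial_{j}\phi^{ij}$ and $\partial_{i}\partial_{j}\phi^{ij}$), the hypothesis \eqref{Eq : condition for g} makes $b_{g}\in L^{\infty}$ and $c_{g}\in L^{q}_{loc}$ on $\{|v|\le R\}$ for $q$ large. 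By Theorem~\ref{Thm : L^infty estimate}, $f^{\vartheta}$ is globally bounded with $\|f^{\vartheta}(t)\|_{\infty}\le C\|f_{0}\|_{\infty,\vartheta+l_{0}}$, and the weak-solution bound $\int_{0}^{t}\|f(s)\|_{\sigma,\vartheta}^{2}\,ds<\infty$ together with local positivity of $\sigma$ gives $\nabla_{v}f^{\vartheta}\in L^{2}_{loc}$. Hence on any kinetic cylinder $Q\Subset(0,\infty)\times\mathbb{T}^{3}\times\{|v|<R\}$, $f^{\vartheta}$ is a bounded weak solution of a kinetic Fokker--Planck equation with measurable uniformly elliptic diffusion, bounded drift and source $c_{g}f^{\vartheta}\in L^{q}$, so the local Hölder estimate of \cite{golse2016harnack}, \cite{golse2015holder}, \cite{mouhot2015holder} yields, on $Q'\Subset Q$,
\[
\|f^{\vartheta}\|_{C^{\alpha_{0}}(Q')}\le C\big(\|f^{\vartheta}\|_{L^{2}(Q)}+\|c_{g}f^{\vartheta}\|_{L^{q}(Q)}\big)\le C\|f_{0}\|_{\infty,\vartheta''},
\]
with $\alpha_{0},C$ depending only on the ellipticity ratio, the coefficient bounds, $R$, and $\operatorname{dist}(Q',\partial Q)$.

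To make this global in $v$, I would run a scaled covering argument over dyadic velocity shells, in the spirit of the tail analysis indicated in the introduction. Near $v_{0}$ with $|v_{0}|=R\gg1$ the diffusion degenerates like $|v_{0}|^{-1}$; after the kinetic rescaling adapted to this degeneration one obtains, on a unit cylinder, an equation of the same type whose ellipticity constants and coefficient bounds are now uniform in $v_{0}$ (this again uses $\phi^{ij}(v)\sim|v|^{-1}$ and the Gaussian weights). Applying the local estimate above on the rescaled cylinder and undoing the scaling controls the oscillation of $f^{\vartheta}$ over a small neighbourhood of $v_{0}$ by $\|f^{\vartheta}\|_{L^{\infty}}\le C\|f_{0}\|_{\infty,\vartheta''}$, uniformly in $v_{0}$; the time-decay estimates \eqref{Eq : decay estimate linear}, \eqref{Eq : L^infty estimate linear} keep this constant uniform in $t$ as well. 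Chaining these oscillation bounds across the shells and a fixed inner region gives $\|f^{\vartheta}\|_{C^{\alpha}(\Omega)}\le C\|f_{0}\|_{\infty,\vartheta''}$ for some $0<\alpha\le\alpha_{0}$, and since $w^{-\vartheta}$ is globally Lipschitz and bounded on $\mathbb{R}^{3}$ the same bound transfers to $f=w^{-\vartheta}f^{\vartheta}$.

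For control up to the initial time, differentiate \eqref{linear landau} in $t$: the function $f_{t}$ solves a linearized Landau equation of the same structure (with source terms in $g_{t}$ controlled by \eqref{Eq : condition for g}) with datum $f_{0t}$, so Theorem~\ref{Thm : L^infty estimate} yields $\sup_{t}\|f_{t}(t)\|_{\infty,\vartheta}\le C\big(\|f_{0t}\|_{\infty,\vartheta''}+\|f_{0}\|_{\infty,\vartheta''}\big)$; this is precisely where the quantity $C(f_{0})$ of \eqref{Eq : C(f_0)} enters. Boundedness of $\partial_{t}f$ makes $f$ Lipschitz, hence $C^{\alpha}$, in $t$, and, combined with the $(x,v)$-regularity of the previous two steps (now anchored at $t=\delta$ for $\delta$ arbitrarily small, by the same local estimates) and the smoothness of $f_{0}$ already needed to form $f_{0t}$, this closes $\|f\|_{C^{\alpha}(\Omega)}\le C(f_{0})$.

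The main obstacle is the second step: converting the purely local De Giorgi--type Hölder estimate into a genuinely global (in $v$) one in spite of the degeneration of the Landau diffusion at large velocities. The rescaling has to be simultaneously compatible with the anisotropic kinetic (Galilean) scaling of $\partial_{t}+v\cdot\nabla_{x}$ and with the $|v|^{-1}$ decay of $\phi$, and one must check that the rescaled full operator $\bar A_{g}^{\vartheta}+\bar K_{g}^{\vartheta}$ --- including its non-divergence lower-order pieces and the convolution source --- stays in the admissible class of \cite{golse2016harnack}, \cite{mouhot2015holder} with constants independent of the shell, while the weighted $L^{\infty}$ decay provides exactly the smallness that renders the shell-by-shell oscillations summable.
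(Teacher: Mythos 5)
Your overall strategy for the interior, local-in-$v$ part (De Giorgi iteration on kinetic cylinders, rescaling to restore ellipticity, absorbing the shell-dependent constants with the weighted $L^{\infty}$ bound) is the same as the paper's Lemma \ref{Lemma : Uniform Holder for tilde Af}. But the step you yourself flag as "the main obstacle" is exactly the hard content of the proof, and your sketch of it contains an error: you describe the degeneration as isotropic, "like $|v_{0}|^{-1}$", whereas by Lemma \ref{Lemma : Guo2002 lemma3} and Lemma \ref{lemma : eigenvalue estimate} the eigenvalue in the direction of $v$ decays like $(1+|v|)^{-3}$ while the transverse eigenvalues decay like $(1+|v|)^{-1}$. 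An isotropic rescaling therefore leaves an ellipticity ratio of order $(1+|v_{0}|)^{2}$, which is not uniform over the shells. The paper resolves this with a rotation aligning the first axis with $v_{*}$ followed by the anisotropic dilation \eqref{Eq : dilation matrix}, and Lemma \ref{Lemma : change of variables} is needed to check that this dilation is compatible with the Galilean/kinetic cylinders $Q_{r}$ (the radii $r_{0},r_{1},r_{2}$ must be chosen as specific powers of $(2+N)$ so that the transformed cylinders nest correctly). Without this, the claim that the rescaled operator has ellipticity constants uniform in $v_{0}$ is unsubstantiated, and the chaining of oscillation bounds across shells does not close.

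The second gap is your treatment of the initial time, which is where $\|f_{0t}\|_{\infty,\vartheta}$ must enter. Differentiating \eqref{linear landau} in $t$ produces the source $\Gamma(g_{t},f)$, and \eqref{Eq : condition for g} controls only $\|g\|_{\infty}$, not $g_{t}$; moreover $\Gamma(g_{t},f)$ contains $\nabla_{v}f$, which is not yet known to be bounded at this stage (that bound comes later from the $S_{p}$ theory, which itself uses the present Hölder estimate). So Theorem \ref{Thm : L^infty estimate} cannot be applied to $f_{t}$ as you propose, and even granting $\partial_{t}f\in L^{\infty}$, anchoring the interior estimate at $t=\delta$ leaves constants of order $\delta^{-3\alpha}$ that you do not control as $\delta\to0$. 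The paper's mechanism is different and is the reason $f_{0t}$ appears in $C(f_{0})$ at all: it extends the solution backward in time by $f_{0}$ on $[-1,0]$, so that the extended function solves the source-free equation \eqref{bar Af} up to the source $\tilde S$, with $\tilde S=-f_{0t}$ for $t\le0$ and $\tilde S=\bar K_{g}f$ for $t>0$; Duhamel then expresses $f(t)$ through the propagator $U(t,s)$ of \eqref{bar Af}, whose uniform Hölder continuity (Lemma \ref{Lemma : Uniform Holder for tilde Af}) is applied to each slice, with the time decay of $\|\tilde S(s)\|_{\infty,\vartheta}$ making the $s$-integral converge. This also explains a structural difference from your proposal: the De Giorgi machinery is applied only to the homogeneous equation \eqref{bar Af}, not to the full equation with the nonlocal $\bar K_{g}$ term as a source. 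You would need to either adopt the backward extension or supply an independent argument for Hölder continuity uniformly down to $t=0$.
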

Again, we follow the methods in \cite{golse2016harnack}, \cite{golse2015holder}, and \cite{mouhot2015holder} to establish such an estimate locally
in $(x,v),$ and use a delicate change of coordinates (6.16) locally to
capture precisely the isotropic behavior of the Landau operator, thanks to
Lemma \ref{lemma : eigenvalue estimate} and our strong weighted $L^{\infty }$ estimates to obtain uniform $%
C^{0,\alpha }$ estimate$.$ It is well-known that the Landau equation is
degenerate for $|v|\rightarrow \infty$ and our strong energy estimate provides
the control of  velocity (tails) of the Landau solutions. An additional
regularity condition $||f_{0t}||_{\infty ,\theta }<+\infty $ is needed for
such a Holder estimate, but no smallness is required. A further bound $%
||f_{0v}||_{\infty ,\theta }<+\infty $ is needed to apply the $S_{p}$ theory
in a non-divergent form.

Such a $L^{2}\rightarrow L^{\infty }$ framework is robust and is currently
being applied to the study of several other problems in the kinetic theory. 
\section{Basic Estimates} 
\label{sec:basic_estimates}

For the reader's convenience, we summarize and modify some basic estimates.
We will adapt techniques in \cite{guo2002landau}.

For every $v,\nu\in\mathbb{R}^{3}$, define
\begin{equation}		\label{Eq : D_u}
	D_{u}(\nu;v) := \nu^{T} \sigma_{u}(v) \nu
\end{equation}
and $P_{v}$ is the projection onto the vector $v$ as
\begin{equation}
\label{Eq : P_v}P_{v} g := \sum\langle g_{j}, v_{j} \rangle\frac{v_{i}%
}{|v|^{2}}, \quad1\le i \le3.
\end{equation}

\begin{proposition}		\label{Prop : L2Linfty}
	There exists a uniform constant $C$ such that for every function $f$ and a constant $\vartheta\in \R$, we have
	\begin{equation*}
		\|f\|_{2, \vartheta} \le C\|f\|_{\infty, \vartheta+2}.
	\end{equation*}
\end{proposition}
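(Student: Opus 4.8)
The plan is to estimate the weighted $L^2$ norm by the weighted $L^\infty$ norm together with an integrable tail factor. Writing out the definitions, we have
\[
\|f\|_{2,\vartheta}^2 = \iint_{\mathbb{T}^3\times\mathbb{R}^3} w^{2\vartheta}(v) f(x,v)^2\, dx\, dv,
\]
and by definition of $\|f\|_{\infty,\vartheta+2}$ the pointwise bound $|f(x,v)| \le w^{-(\vartheta+2)}(v)\,\|f\|_{\infty,\vartheta+2}$ holds for a.e.\ $(x,v)$. First I would substitute this bound into the integrand, which gives
\[
\|f\|_{2,\vartheta}^2 \le \|f\|_{\infty,\vartheta+2}^2 \iint_{\mathbb{T}^3\times\mathbb{R}^3} w^{2\vartheta}(v) w^{-2(\vartheta+2)}(v)\, dx\, dv = \|f\|_{\infty,\vartheta+2}^2\, |\mathbb{T}^3| \int_{\mathbb{R}^3} w^{-4}(v)\, dv.
\]

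The next step is to observe that $w(v) = 1+|v|$, so $w^{-4}(v) = (1+|v|)^{-4}$, and in spherical coordinates
\[
\int_{\mathbb{R}^3} (1+|v|)^{-4}\, dv = 4\pi \int_0^\infty \frac{r^2}{(1+r)^4}\, dr < \infty,
\]
since the integrand decays like $r^{-2}$ at infinity. Combining with $|\mathbb{T}^3| = (2\pi)^3$, we set $C := \big((2\pi)^3 \cdot 4\pi \int_0^\infty r^2(1+r)^{-4}\,dr\big)^{1/2}$, which is a finite universal constant independent of $f$ and $\vartheta$, and take square roots to conclude $\|f\|_{2,\vartheta} \le C\|f\|_{\infty,\vartheta+2}$.

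There is essentially no obstacle here: the statement is a routine consequence of the fact that $w^{-4}$ is integrable on $\mathbb{R}^3$ and the torus has finite measure. The only mild point worth noting is that the constant must be uniform in $\vartheta$, which is automatic because the weight $w^{2\vartheta}$ from the $L^2$ norm exactly cancels against two of the $\vartheta$ powers in $w^{-2(\vartheta+2)}$, leaving the $\vartheta$-independent factor $w^{-4}$; had the proposition asked for only $w^{-3}$ of decay, the integral would diverge, so the choice of exponent $+2$ (giving a surplus of $4$ powers, hence an $r^{-2}$ tail) is what makes the argument work.
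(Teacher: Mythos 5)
Your proof is correct and follows essentially the same route as the paper's: rewrite $w^{2\vartheta}f^2 = w^{-4}(w^{\vartheta+2}f)^2$, bound the squared factor by $\|f\|_{\infty,\vartheta+2}^2$, and use the integrability of $w^{-4}$ on $\mathbb{T}^3\times\mathbb{R}^3$. Your write-up is in fact slightly more careful than the paper's (which drops a square on $\|f\|_{\infty,\vartheta+2}$ in an intermediate line), and your remark on why exactly two extra weight powers are needed is accurate.
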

\begin{proof}
	\begin{equation*}
		\begin{split}
			\|f\|_{2, \vartheta}^2 & = \iint_{\T^3 \times \R^3} (w^{\vartheta}(v) f(x,v))^2 dxdv\\
			& = \iint_{\T^3 \times \R^3} w^{-4}(v) (w^{\vartheta +2 }(v) f(x,v))^2 dxdv\\
			& \le \|f\|_{\infty, \vartheta+2} \iint_{\T^3 \times \R^3} w^{-4}(v) dxdv\\
			& \le C\|f\|_{\infty, \vartheta+2},
		\end{split}
	\end{equation*}
	for some constant $C>0$.
\end{proof}
\begin{lemma}
[Lemma 2 in \cite{guo2002landau}]\label{Lemma : Guo2002 lemma2} Let
$\vartheta>-3$, $a(v) \in C^{\infty}(\mathbb{R}^{3})$ and $b(v)\in C^{\infty
}(\mathbb{R}^{3}\setminus\{0\})$. Assume for any positive multi-index $\beta$, there is $C_{\beta
}>0$ such that
\[%
\begin{split}
|\partial_{\beta}a(v)|  &  \le C_{\beta}|v|^{\vartheta-|\beta|},\\
|\partial_{\beta}b(v)|  &  \le C_{\beta}e^{-\tau_{\beta}|v|^{2}},
\end{split}
\]
with some $\tau_{\beta}>0$. Then there is $C^{*}_{\beta}>0$ such that
\[
|\partial_{\beta}[a*b](v)| \le C^{*}_{\beta}[1+|v|]^{\vartheta- \beta}.
\]

\end{lemma}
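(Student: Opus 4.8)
The plan is to establish the estimate by a scale‑based splitting of the convolution integral together with a careful transfer of derivatives, in the spirit of \cite{guo2002landau}. First I would record what the hypotheses give at $\beta=0$: the bound $|a(v)|\le C_{0}|v|^{\vartheta}$ together with $\vartheta>-3$ makes $a$ locally integrable on $\R^{3}$, even when $a$ is singular at the origin, while at infinity $|a(v)|\le C_{0}(1+|v|)^{\max(\vartheta,0)}$; and $|b(v)|\le C_{0}e^{-\tau_{0}|v|^{2}}$, together with the analogous bounds on the higher derivatives, gives $b,\partial_{\beta}b\in L^{1}(\R^{3})\cap L^{\infty}(\R^{3})$. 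Hence $a*b$ is a well‑defined smooth function and, differentiating under the integral sign, $\partial_{\beta}(a*b)(v)=(a*\partial_{\beta}b)(v)=\int_{\R^{3}}a(v-v')(\partial_{\beta}b)(v')\,dv'$.

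Next I would prove the statement for $\beta=0$, which already contains the two mechanisms. For $|v|\le1$ one uses directly that $\int_{\R^{3}}|a(v-v')|e^{-\tau_{0}|v'|^{2}}\,dv'$ is bounded uniformly on $\{|v|\le1\}$ (local integrability of $a$ plus the Gaussian tail) and that $(1+|v|)^{\vartheta}$ is bounded below there. For $|v|\ge1$ I would split the integral into $\{|v'|\le|v|/2\}$ and $\{|v'|>|v|/2\}$: on the first region $|v-v'|$ is comparable to $|v|$ and bounded away from the origin, so $|a(v-v')|\le C|v|^{\vartheta}$ and the contribution is $\le C|v|^{\vartheta}\|b\|_{L^{1}}$; on the second region $e^{-\tau_{0}|v'|^{2}}\le e^{-\tau_{0}|v|^{2}/8}e^{-\tau_{0}|v'|^{2}/2}$, and after extracting the factor $e^{-\tau_{0}|v|^{2}/8}$ the remaining integral is the convolution of the locally integrable, polynomially growing $|a|$ against a Gaussian, which is $O((1+|v|)^{\vartheta})$ by the same two‑region splitting; the surviving Gaussian absorbs it. This gives $|(a*b)(v)|\le C^{*}_{0}(1+|v|)^{\vartheta}$.

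For a general multi‑index $\beta$ I would repeat the argument but gain the extra factor $|v|^{-|\beta|}$ only where it is safe to differentiate $a$. Fix a smooth cut‑off $\chi_{v}$ equal to $1$ on $\{|v'|\le|v|/4\}$, supported in $\{|v'|\le|v|/2\}$, with $|\partial^{\gamma}\chi_{v}|\le C_{\gamma}|v|^{-|\gamma|}$, and write $\partial_{\beta}(a*b)(v)=\int a(v-v')\chi_{v}(v')(\partial_{\beta}b)(v')\,dv'+\int a(v-v')(1-\chi_{v}(v'))(\partial_{\beta}b)(v')\,dv'$. In the first integral $v'\mapsto a(v-v')$ is smooth on $\mathrm{supp}\,\chi_{v}$ (there $|v-v'|\ge|v|/2\ge1/2$), so integration by parts moves $\partial_{\beta}$ onto $a(v-v')\chi_{v}(v')$ with no boundary term; by the Leibniz rule every resulting term is bounded on the support by $C|v|^{\vartheta-|\beta|+|\gamma|}\cdot|v|^{-|\gamma|}\,|b(v')|$, so the whole integral is $\le C|v|^{\vartheta-|\beta|}\|b\|_{L^{1}}$. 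In the second integral $|v'|\ge|v|/4$ on the support, so $|\partial_{\beta}b(v')|$ carries a factor $e^{-c|v|^{2}}$, and exactly as in the $\beta=0$ case the integral is $\le Ce^{-c|v|^{2}}(1+|v|)^{\vartheta}\le C(1+|v|)^{\vartheta-|\beta|}$. Together with the bounded‑$v$ case (no transfer of derivatives, local integrability of $a$, and $(1+|v|)^{\vartheta-|\beta|}$ bounded below) this yields the claim.

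The step I expect to require the most care is precisely this transfer of derivatives in the presence of the singularity of $a$ at the origin: one cannot simply write $\partial_{\beta}(a*b)=(\partial_{\beta}a)*b$, because $\partial_{\beta}a\sim|v|^{\vartheta-|\beta|}$ fails to be locally integrable on $\R^{3}$ once $|\beta|\ge\vartheta+3$. The cut‑off $\chi_{v}$ is what confines the integration by parts to the region where $|v-v'|$ is of order $|v|$, where $a$ and all its derivatives are harmless and produce the gain $|v|^{-|\beta|}$, while on the complementary region the Gaussian decay of $\partial_{\beta}b$ more than compensates for keeping the derivative on $b$. The hypothesis $\vartheta>-3$ enters only through the local integrability of $a$.
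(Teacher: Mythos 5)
The paper does not prove this lemma; it is quoted verbatim from Lemma~2 of \cite{guo2002landau}, whose proof uses exactly your strategy: split the convolution at $|v'|\sim |v|/2$, transfer the $\beta$ derivatives onto $a(v-v')$ only on the region where $|v-v'|\gtrsim |v|$ (where $a$ and its derivatives obey the polynomial bounds), and use the Gaussian decay of $\partial_{\beta}b$ on the complementary region, with $\vartheta>-3$ entering only through the local integrability of $a$ near its singularity. Your argument is correct; the only cosmetic difference from the cited proof is that you use a smooth cut-off $\chi_{v}$ to avoid the boundary terms that a sharp truncation at $|v'|=|v|/2$ would produce in the integration by parts.
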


\begin{lemma}
[Lemma 3 in \cite{guo2002landau}]\label{Lemma : Guo2002 lemma3} If $u = \mu$
or $\sqrt\mu$, then
\begin{equation}
\label{Eq : D_mu}D_{u}(\nu;v) = \lambda_{1}(v)|P_{v} \nu|^{2} + \lambda_{2}(v)
|(I-P_{v})\nu|^{2}.
\end{equation}
Moreover, there exists $C$ such that
\[
\frac{1}{C}(1+|v|)^{-3}\le\lambda_{1}(v) \le C(1+|v|)^{-3},
\]
and
\[
\frac{1}{C}(1+|v|)^{-1} \le\lambda_{2}(v) \le C(1+|v|)^{-1}.
\]

\end{lemma}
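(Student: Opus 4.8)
The plan is to exploit the rotational symmetry of the problem to pin down the tensor structure of $\sigma_{u}$, then derive explicit integral representations for the two eigenvalue functions, and finally read off the two‑sided bounds by treating bounded and large velocities separately. Throughout, the only properties of $u$ used are that it is radial, positive, Gaussian‑decaying and has finite moments, all of which hold for $u=\mu$ and $u=\sqrt\mu$.

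\textbf{Step 1 (tensor structure).} From \eqref{Eq : phi} one has $\phi(Ry)=R\,\phi(y)\,R^{T}$ for every $R\in O(3)$, since $I-\widehat{Ry}\otimes\widehat{Ry}=R(I-\hat y\otimes\hat y)R^{T}$ and $|Ry|=|y|$. Because $u$ is radial, the substitution $v'\mapsto Rv'$ in \eqref{Eq : sigma_u} gives $\sigma_{u}(Rv)=R\,\sigma_{u}(v)\,R^{T}$. Taking $R$ in the stabilizer of a fixed $v\neq0$ shows that the symmetric matrix $\sigma_{u}(v)$ commutes with every rotation about the axis $v$, hence
\[
\sigma_{u}(v)=\lambda_{1}(v)\,P_{v}+\lambda_{2}(v)\,(I-P_{v}),\qquad \lambda_{1}(v)=\hat v^{T}\sigma_{u}(v)\hat v,\quad \lambda_{2}(v)=e^{T}\sigma_{u}(v)e\ \ (|e|=1,\ e\perp v).
\]
Feeding this back into $\sigma_{u}(Rv)=R\sigma_{u}(v)R^{T}$ forces $\lambda_{1},\lambda_{2}$ to be invariant under rotations, hence functions of $|v|$ alone. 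Writing $\nu=P_{v}\nu+(I-P_{v})\nu$ and using that $P_{v}$ is an orthogonal projection then yields \eqref{Eq : D_mu}.

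\textbf{Step 2 (integral representations).} Using $\hat v^{T}\phi(y)\hat v=\bigl(1-(\hat v\cdot\hat y)^{2}\bigr)|y|^{-1}$, splitting $v'=v'_{\parallel}\hat v+v'_{\perp}$ with $v'_{\parallel}=v'\cdot\hat v$, and noting $v-v'=(|v|-v'_{\parallel})\hat v-v'_{\perp}$, I obtain
\[
\lambda_{1}(v)=\int_{\R^{3}}\frac{|v'_{\perp}|^{2}}{\bigl[(|v|-v'_{\parallel})^{2}+|v'_{\perp}|^{2}\bigr]^{3/2}}\,u(v')\,dv',\qquad \lambda_{2}(v)=\int_{\R^{3}}\Bigl(1-\frac{(e\cdot v')^{2}}{|v-v'|^{2}}\Bigr)\frac{u(v')}{|v-v'|}\,dv',
\]
where in the second formula $e\cdot(v-v')=-e\cdot v'$ since $e\perp v$. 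Both integrands are nonnegative and strictly positive off a Lebesgue‑null set, and $\sigma_{u}$ is a smooth tensor field (this is where Lemma~\ref{Lemma : Guo2002 lemma2} enters, or one simply differentiates under the integral sign onto the Schwartz factor $u$), so $\lambda_{1},\lambda_{2}$ are continuous and strictly positive on all of $\R^{3}$; in particular they are bounded above and below by positive constants on any ball $\{|v|\le M\}$, which is exactly the asserted estimate for bounded velocities.

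\textbf{Step 3 (large velocities).} It remains to treat $|v|\ge M$ with $M$ large. The lower bounds are easy: restricting the $\lambda_{1}$‑integral to $\{|v'|\le1\}$, where the bracket is $\le C(1+|v|)^{2}$, gives $\lambda_{1}(v)\ge C^{-1}(1+|v|)^{-3}\int_{|v'|\le1}|v'_{\perp}|^{2}u(v')\,dv'$, and the last integral equals a fixed positive constant by the rotational invariance of $u$ and of the ball; likewise $(e\cdot v')^{2}\le|v-v'|^{2}/2$ on $\{|v'|\le1\}$ once $|v|$ is large, so $\lambda_{2}(v)\ge\tfrac12(1+|v|)^{-1}\int_{|v'|\le1}u\,dv'$. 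For the upper bounds I split each integral over $\{|v'|\le|v|/2\}$ and $\{|v'|>|v|/2\}$. On $\{|v'|\le|v|/2\}$ one has $|v|-v'_{\parallel}\ge|v|/2$ and $|v-v'|\ge|v|/2$, which together with $\int|v'_{\perp}|^{2}u<\infty$ and $\|u\|_{L^{1}}<\infty$ produces the contributions $C|v|^{-3}$ and $C|v|^{-1}$ respectively. On $\{|v'|>|v|/2\}$ one has $u(v')\le e^{-c|v|^{2}/8}e^{-c|v'|^{2}/2}$; combined with the elementary facts $\int_{\R^{3}}|v'_{\perp}|^{-1}e^{-c|v'|^{2}/2}dv'<\infty$ and $\sup_{v}\int_{\R^{3}}|v-v'|^{-1}e^{-c|v'|^{2}/2}dv'<\infty$, these remainders are $O(e^{-c|v|^{2}/8})$ and harmless. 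Combining with Step 2 (and taking a common constant $C$) finishes the proof.

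\textbf{Main obstacle.} The only delicate point is the \emph{upper} bound $\lambda_{1}(v)\le C(1+|v|)^{-3}$. Any crude estimate such as $\hat v^{T}\sigma_{u}(v)\hat v\le\int_{\R^{3}}|v-v'|^{-1}u(v')\,dv'$ only yields decay of order $|v|^{-1}$; one must use that the projection in $\phi$ supplies the factor $1-(\hat v\cdot\widehat{v-v'})^{2}=|v'_{\perp}|^{2}/|v-v'|^{2}$, which is $\sim|v'_{\perp}|^{2}/|v|^{2}$ for $|v|$ large, in order to gain the extra two powers. The explicit formula of Step 2 is precisely what makes this cancellation visible; everything else reduces to elementary Gaussian moment bounds.
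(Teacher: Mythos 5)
Your proof is correct. The paper does not reprove this lemma but cites it from \cite{guo2002landau}, and your argument is essentially the standard one there: the symmetry/stabilizer argument pins down the spectral decomposition $\sigma_u=\lambda_1 P_v+\lambda_2(I-P_v)$, and the two-sided bounds follow from the explicit integral formulas by splitting at $|v'|\sim|v|/2$ and using Gaussian decay. You correctly identify and handle the one delicate point, namely that the upper bound $\lambda_1\le C(1+|v|)^{-3}$ requires the factor $|v'_\perp|^2/|v-v'|^2$ coming from the projection in $\phi$ rather than a crude bound $|\phi|\le|v-v'|^{-1}$.
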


We can derive upper and lower bounds of eigenvalues for $\sigma+ \sigma
_{\sqrt\mu g}$ by adapting ideas in the proof of Theorem 3 in
\cite{guo2002landau}.

\begin{lemma}
\label{lemma : eigenvalue estimate} Let $g$ be a given function in $L^\infty((0,\infty)\times\T^3\times \R^3)$ and $G = \mu+
\sqrt\mu g$. 
Let $\sigma_{G}$ be the matrix defined as in \eqref{Eq : sigma_u}.
Then there exists $0<\varepsilon\ll 1$ such that if $g$ satisfies
\begin{equation}
\label{Eq : condition for g}\sup_{0\le s\le\infty}\|g(s)\|_{\infty}%
\le\varepsilon
\end{equation}
then
\[%
\begin{split}
D_{G}(\nu;v)  &  \ge\frac{1}{2C} \left(  (1+|v|)^{-3}|P_{v} \nu|^{2} +
(1+|v|)^{-1}|(I-P_{v})\nu|^{2}\right)  ,\\
D_{G}(\nu;v)  &  \le2C \left(  (1+|v|)^{-3}|P_{v} \nu|^{2} +
(1+|v|)^{-1}|(I-P_{v})\nu|^{2}\right)  ,
\end{split}
\]
for every $v \in\mathbb{R}^{3}$. Thus $\sigma_{G}(v)$ has three non-negative
eigenvalues. Moreover, $\lambda(v)$, eigenvalue of $\sigma_{G}(v)$, has the following estimate
\[
\frac{1}{C}(1+|v|)^{-3}\le\lambda(v) \le C(1+|v|)^{-1},
\]
for some constant $C>0$.
\end{lemma}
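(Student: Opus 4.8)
The idea is to write $\sigma_G = \sigma_\mu + \sigma_{\sqrt\mu g}$ and show that the perturbation $\sigma_{\sqrt\mu g}$ is, in operator norm, a small multiple of the anisotropic weight $(1+|v|)^{-1}$, so that it cannot destroy the two-sided bounds for $D_\mu$ provided by Lemma \ref{Lemma : Guo2002 lemma3}. Concretely, for any unit vector $\nu$ I want to establish
\begin{equation}
\label{Eq : perturbation bound}
|D_{\sqrt\mu g}(\nu;v)| = \left| \nu^T \sigma_{\sqrt\mu g}(v) \nu \right| \le C \varepsilon (1+|v|)^{-1},
\end{equation}
and then simply add and subtract. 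Since $D_\mu(\nu;v) = \lambda_1|P_v\nu|^2 + \lambda_2|(I-P_v)\nu|^2$ with $\lambda_2 \gtrsim (1+|v|)^{-1}$, if $C\varepsilon$ is smaller than half the lower constant in the $\lambda_2$ bound, the $(I-P_v)$ part absorbs the error. The $P_v$ part is more delicate: $\lambda_1 \sim (1+|v|)^{-3}$ decays faster than the error term $(1+|v|)^{-1}$, so a crude bound \eqref{Eq : perturbation bound} is \emph{not} enough to preserve the lower bound $\lambda_1 |P_v\nu|^2$ on the $P_v$ part. This is the main obstacle, and it is the reason the lemma only claims $D_G(\nu;v) \ge \tfrac{1}{2C}\big((1+|v|)^{-3}|P_v\nu|^2 + (1+|v|)^{-1}|(I-P_v)\nu|^2\big)$ with a single mixed expression rather than a diagonalized one: the point is that even if the error eats into the $P_v$ direction, it does so only by an amount controlled by the much larger $(I-P_v)$-scale, and since $\sigma_G$ is symmetric non-negative (being a convolution of the non-negative matrix $\phi$ against the non-negative function $G = F/\sqrt\mu \cdot \sqrt\mu$... wait, $G$ need not be non-negative, so one must be a little careful: positivity of $\sigma_G$ as a form will instead follow from the two-sided bound itself once $\varepsilon$ is small, since the right-hand side is manifestly $\ge 0$).

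\textbf{Step 1: the convolution estimate.} First I would prove \eqref{Eq : perturbation bound}. Write $D_{\sqrt\mu g}(\nu;v) = \int_{\R^3} \nu^T \phi(v-v') \nu \, \sqrt{\mu(v')} g(t,x,v')\, dv'$. Using $|g| \le \varepsilon$ pointwise (by \eqref{Eq : condition for g}) and the pointwise bound $0 \le \nu^T\phi(w)\nu \le |w|^{-1}$ coming from \eqref{Eq : phi} (since $\phi(w)$ has eigenvalues $0$ and $|w|^{-1}$), this reduces to showing
\begin{equation}
\label{Eq : kernel int}
\int_{\R^3} \frac{\sqrt{\mu(v')}}{|v-v'|}\, dv' \le C (1+|v|)^{-1}.
\end{equation}
This is a standard estimate: split the integral into $|v'| \le |v|/2$, where $|v-v'| \ge |v|/2$ and the Gaussian integrates to a constant, giving $\lesssim (1+|v|)^{-1}$; and $|v'| > |v|/2$, where the Gaussian is bounded by $e^{-\tau|v|^2}$ for some $\tau > 0$ while $\int_{|v-v'|\le 1}|v-v'|^{-1}\,dv' < \infty$ and the far part of the singularity is harmless — so this piece decays faster than any polynomial. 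In fact \eqref{Eq : kernel int} is a special case of Lemma \ref{Lemma : Guo2002 lemma2} with $a(v) = |v|^{-1}$ (so $\vartheta = -1$) and $b = \sqrt\mu$, applied with $\beta = 0$; I would just cite that lemma.

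\textbf{Step 2: assembling the two-sided bound.} Combining Step 1 with Lemma \ref{Lemma : Guo2002 lemma3}, for every unit $\nu$,
\begin{equation*}
D_G(\nu;v) \ge \lambda_1(v)|P_v\nu|^2 + \lambda_2(v)|(I-P_v)\nu|^2 - C\varepsilon(1+|v|)^{-1}\big(|P_v\nu|^2 + |(I-P_v)\nu|^2\big).
\end{equation*}
For the $(I-P_v)$ term we use $\lambda_2(v) - C\varepsilon(1+|v|)^{-1} \ge \big(\tfrac1C - C\varepsilon\big)(1+|v|)^{-1} \ge \tfrac{1}{2C}(1+|v|)^{-1}$ once $\varepsilon$ is small. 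For the $P_v$ term the naive bound fails, so here is the fix: one reads off that $D_G(\nu;v) \ge \lambda_1(v)|P_v\nu|^2 - C\varepsilon(1+|v|)^{-1}|P_v\nu|^2 + \tfrac{1}{2C}(1+|v|)^{-1}|(I-P_v)\nu|^2$ and one does \emph{not} try to keep $\lambda_1|P_v\nu|^2$ with a good constant; instead, one observes $-C\varepsilon(1+|v|)^{-1}|P_v\nu|^2 \ge -C\varepsilon(1+|v|)^{-1}|\nu|^2$, i.e. the loss in the $P_v$ direction is absorbed into a (small) fraction of the $(I-P_v)$-scale quantity $(1+|v|)^{-1}|\nu|^2$; but $|\nu|^2 = |P_v\nu|^2 + |(I-P_v)\nu|^2$, so to close this one genuinely needs the error in the $P_v$ direction to be dominated, and the honest route is to \emph{not} claim the lower bound $\tfrac{1}{2C}(1+|v|)^{-3}|P_v\nu|^2$ from $\lambda_1$ alone but rather to note that for the statement as written it suffices to prove $D_G(\nu;v) \ge \tfrac{1}{2C}(1+|v|)^{-3}|\nu|^2$ (the weakest of the claimed lower bounds, since $(1+|v|)^{-3} \le (1+|v|)^{-1}$), and for that one combines $\lambda_1 - C\varepsilon(1+|v|)^{-1} \ge \tfrac{1}{C}(1+|v|)^{-3} - C\varepsilon(1+|v|)^{-3}(1+|v|)^2$... which again only works if $\varepsilon(1+|v|)^2$ is small, i.e. for bounded $v$. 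So the final, correct argument is a two-regime split: for $|v| \le R$ (a fixed radius depending only on the constants), $D_G$ is a small perturbation of a uniformly positive-definite matrix and the full diagonalized lower bound survives; for $|v| > R$ one uses instead the non-negativity of the quadratic form $\nu^T\sigma_G\nu$ (which for $|v|>R$ follows because the only way the bound could fail is through the $P_v$ direction, and there $\sigma_G \ge \sigma_\mu + (\text{symmetric perturbation of size } \le C\varepsilon(1+|v|)^{-1}\cdot I)$, and one shows that on the orthogonal complement of $v$ the perturbation is still dominated while along $v$ one only needs $\lambda_1 - C\varepsilon(1+|v|)^{-1} \ge 0$, which holds for large $v$... no — this also fails).

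\textbf{Resolution of the obstacle.} The clean way, and the one I would actually write, is to estimate the perturbation \emph{directionally}: $\sigma_{\sqrt\mu g}$ is itself of the form $\lambda_1'(v) P_v + \lambda_2'(v)(I - P_v)$ up to lower-order terms is \emph{not} true in general, but one \emph{can} bound $|P_v \sigma_{\sqrt\mu g}(v) P_v| = |v^T \sigma_{\sqrt\mu g} v|/|v|^2$ more sharply than $(1+|v|)^{-1}$: writing $v^T\phi(v-v')v/|v|^2 = (1 - \tfrac{(v\cdot(v-v'))^2}{|v|^2|v-v'|^2})|v-v'|^{-1}$ and using that the factor in parentheses is $\le C(|v'|^2 \wedge |v|^2)/(|v||v-v'|)$ or more simply $\le |v'|^2/(|v|^2)$-type gains from the Gaussian, one recovers $|P_v\sigma_{\sqrt\mu g}P_v| \le C\varepsilon(1+|v|)^{-3}$, matching the $\lambda_1$ scale. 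This directional improvement is exactly the content of "adapting ideas in the proof of Theorem 3 in \cite{guo2002landau}" mentioned before the lemma. With it, the $P_v$ part closes too: $\lambda_1(v) - C\varepsilon(1+|v|)^{-3} \ge \tfrac{1}{2C}(1+|v|)^{-3}$ for $\varepsilon$ small. The upper bounds are easier — the crude \eqref{Eq : perturbation bound} suffices for $(I-P_v)$, the directional bound for $P_v$ — giving $D_G(\nu;v) \le 2C(\ldots)$. The final sentence of the lemma, on the eigenvalues $\lambda(v)$ of the symmetric matrix $\sigma_G(v)$, then follows from the min-max characterization: $\lambda(v) = \min$ or $\max$ over unit $\nu$ of $D_G(\nu;v)$, so $\lambda(v) \ge \tfrac{1}{2C}(1+|v|)^{-3}$ (taking the worst, $P_v$, direction in the lower bound) and $\lambda(v) \le 2C(1+|v|)^{-1}$ (the $(I-P_v)$ direction in the upper bound), and in particular all three eigenvalues are strictly positive, so $\sigma_G(v)$ is positive definite. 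The genuinely technical point — and the only place real work is needed — is the directional (sharp in $P_v$) convolution estimate in Step 1; everything else is linear algebra and choosing $\varepsilon$.
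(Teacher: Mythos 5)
Your overall strategy is the same as the paper's: write $\sigma_G=\sigma_\mu+\sigma_{\sqrt\mu g}$, use Lemma \ref{Lemma : Guo2002 lemma3} for the Maxwellian part, and show that the perturbation obeys the \emph{anisotropic} bound $|D_{\sqrt\mu g}(\nu;v)|\le C\|g\|_\infty\big((1+|v|)^{-3}|P_v\nu|^2+(1+|v|)^{-1}|(I-P_v)\nu|^2\big)$, which is exactly the paper's \eqref{Eq : D_(mu^1/2 g)}. You correctly diagnose that the crude $C\varepsilon(1+|v|)^{-1}|\nu|^2$ bound fails in the $P_v$ direction, and your route to the sharp diagonal bound is sound: the identity $v^T\phi(v-v')v=|v\times(v-v')|^2|v-v'|^{-3}=|v\times v'|^2|v-v'|^{-3}\le |v|^2|v'|^2|v-v'|^{-3}$ gives $|v^T\sigma_{\sqrt\mu g}v|/|v|^2\le C\varepsilon(1+|v|)^{-3}$ on $\{2|v'|\le|v|\}$ (where $|v-v'|\ge|v|/2$), with the complementary region exponentially small. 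This is a slightly more elementary substitute for the paper's second-order Taylor expansion of $\phi^{ij}(v-v')$ about $v$.

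There is, however, a genuine gap: you only estimate the two diagonal blocks $P_v\sigma_{\sqrt\mu g}P_v$ and $(I-P_v)\sigma_{\sqrt\mu g}(I-P_v)$, but the quadratic form for a general $\nu$ also contains the cross term $2(P_v\nu)^T\sigma_{\sqrt\mu g}\,(I-P_v)\nu$, and this term must be shown to be $O\big(\varepsilon(1+|v|)^{-2}|P_v\nu||(I-P_v)\nu|\big)$. The crude bound $O(\varepsilon(1+|v|)^{-1}|P_v\nu||(I-P_v)\nu|)$ is \emph{not} absorbable: Young's inequality turns it into $\varepsilon\big(A|P_v\nu|^2+A^{-1}(1+|v|)^{-2}|(I-P_v)\nu|^2\big)$, and making the second piece $\lesssim(1+|v|)^{-1}|(I-P_v)\nu|^2$ forces $A\gtrsim(1+|v|)^{-1}\gg(1+|v|)^{-3}$, destroying the $P_v$ lower bound. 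The paper handles this in the term $(II)_2$ of its proof via the cancellation $\sum_{i,j}\partial_k\phi^{ij}(v)v_iv_j=0$, which improves the mixed block by one power of $(1+|v|)$. Alternatively, within your framework you could write $g=g_+-g_-$ so that each $\sigma_{\sqrt\mu g_\pm}\ge 0$ and apply Cauchy--Schwarz for non-negative forms, $|a^TMb|\le(a^TMa)^{1/2}(b^TMb)^{1/2}$, to each piece, reducing the cross term to the two diagonal bounds you already have. Without one of these arguments the key estimate \eqref{Eq : D_(mu^1/2 g)} — and hence the lemma — is not established for general $\nu$. (The remaining steps, absorbing the perturbation for small $\varepsilon$ and reading off the eigenvalue bounds by min–max, are fine as you describe them.)
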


\begin{proof}
Let $u=\sqrt\mu g$. Then we claim that there exists $C^{\prime}>0$ such that
\begin{equation}
\label{Eq : D_(mu^1/2 g)}|D_{u}(\nu;v)|\le C^{\prime}\|g\|_{\infty}\left(
(1+|v|)^{-3}|P_{v} \nu|^{2} + (1+|v|)^{-1}|(I-P_{v})\nu|^{2}\right)
.
\end{equation}

Consider
\[%
\begin{split}
D_{u}(\nu;v)  &  = \sum_{i,j}\int_{2|v^{\prime}| > |v|} \nu_{i} \nu_{j}
\phi^{ij}(v-v^{\prime})\sqrt\mu(v^{\prime}) g(v^{\prime}) dv^{\prime} + \sum_{i,j}\int_{2|v^{\prime}| \le|v|} \nu_{i} \nu_{j} \phi
^{ij}(v-v^{\prime})\sqrt\mu(v^{\prime}) g(v^{\prime}) dv^{\prime}\\
&  = (I) + (II).
\end{split}
\]
Note that for $2|v^{\prime}| > |v|$, $\sqrt\mu(v^{\prime}) \le C^{\prime}%
\mu(v^{\prime}/4) \mu(v/4)$. Therefore,
\begin{equation}
\label{Eq : (I)}%
\begin{split}
|(I)|  &  \le C^{\prime}\mu\left(  \frac{v}{4}\right)  \|g\|_{\infty}|\nu
|^{2}\int_{\mathbb{R}^{3}} \phi^{ij}(v-v^{\prime}) \mu\left(  \frac{v^{\prime
}}{4}\right)  dv^{\prime}\\
&  \le C^{\prime-1}\mu\left(  \frac{v}{4}\right)  \|g\|_{\infty}%
|\nu|^{2}.
\end{split}
\end{equation}

To control $(II)$, we expand $\phi^{ij}(v-v^{\prime})$ to get
\[
\phi^{ij}(v-v^{\prime ij}(v) - \sum_{k}\partial_{k} \phi^{ij}(v)v_{k}^{\prime
}+ \frac{1}{2}\sum_{k,l}\partial_{kl}\phi^{ij}(\bar v)v_{k}^{\prime}%
v_{*}^{\prime},
\]
for some $\bar v$ in a line segment of $v$ and $v-v^{\prime}$. Then we
have
\[%
\begin{split}
(II)  &  = \sum_{i,j} \nu_{i} \nu_{j} \phi^{ij}(v)\int_{2|v^{\prime}| \le
|v|}\sqrt\mu(v^{\prime}) g(v^{\prime}) dv^{\prime}\\
&  \quad- \sum_{i,j} \nu_{i} \nu_{j} \sum_{k} \partial_{k}\phi^{ij}%
(v)\int_{2|v^{\prime}| \le|v|}v_{k}^{\prime}\sqrt\mu(v^{\prime}) g(v^{\prime})
dv^{\prime}\\
&  \quad+ \frac{1}{2}\sum_{i,j}\int_{2|v^{\prime}| \le|v|} \nu_{i} \nu_{j}
\sum_{k,l}\partial_{kl}\phi^{ij}(\bar v)v_{k}^{\prime}v_{*}^{\prime}\sqrt
\mu(v^{\prime}) g(v^{\prime}) dv^{\prime}\\
&  =(II)_{1} + (II)_{2} + (II)_{3}.
\end{split}
\]
Since
\[
\sum_{i} \phi^{ij}(v)v_{i} = \sum_{j} \phi^{ij}(v)v_{j} = 0,
\]
we have
\begin{equation}
\label{Eq : (II)_1}%
\begin{split}
|(II)_{1}|  &  = \big((I-P_{v}) \nu\big)^{T} \phi(v) \big((I-P_{v})
\nu\big) \int_{2|v^{\prime}|\le|v|}\sqrt\mu(v^{\prime}) g(v^{\prime})
dv^{\prime}\\
&  \le C\|g\|_{\infty}(1+|v|)^{-1}|(I-P_{v}) \nu|^{2}.
\end{split}
\end{equation}
Note that
\[
\sum_{i,j} \partial_{k} \phi^{ij}(v)v_{i}v_{j} = 0.
\]
Therefore
\begin{equation}
\label{Eq : (II)_2}%
\begin{split}
|(II)_{2}|  &  \le\left|  \sum_{k} \big((I-P_{v})\nu\big)^{T}\partial_{k}%
\phi(v) \big((I-P_{v})\nu\big) \int_{2|v^{\prime}| \le|v|} v_{k}^{\prime}%
\sqrt\mu(v^{\prime}) g(v^{\prime}) dv^{\prime}\right| \\
&  \quad+ 2\left|  \sum_{k} (P_{v}\nu)^{T}\partial_{k}\phi(v) \big((I-P_{v}%
)\nu\big) \int_{2|v^{\prime}| \le|v|} v_{k}^{\prime}\sqrt\mu(v^{\prime})
g(v^{\prime}) dv^{\prime}\right| \\
&  \le C\|g\|_{\infty}(1+|v|)^{-2}\left(  |(I-P_{v})\nu|^{2} + |P_{v}
\nu||(I-P_{v})\nu|\right) \\
&  \le C\|g\|_{\infty}\left(  (1+|v|)^{-3}|P_{v} \nu|^{2} + (1 +
|v|)^{-1}|(I-P_{v})\nu|^{2}\right)  .
\end{split}
\end{equation}
Since $\bar v$ is in $(v,v-v^{\prime})$ and $2|v^{\prime}| \le|v|$, we have
$|v|/2 \le|\bar v| \le3|v|/2$. Therefore, $\partial_{kl}\phi^{ij}(\bar v) \le
C^{\prime} |v|^{-3}$. Thus we have
\begin{equation}
\label{Eq : (II)_3}|(II)_{3}| \le C^{\prime}\|g\|_{\infty}(1+|v|)^{-3}%
|\nu|^{2}.
\end{equation}
Combining \eqref{Eq : (I)} - \eqref{Eq : (II)_3}, we have \eqref{Eq : D_(mu^1/2 g)}.

Now we can compute $D_{G}(\nu;v)$. Since $\varepsilon>0$ is a given small enough
constant, from \eqref{Eq : D_mu}, \eqref{Eq : D_(mu^1/2 g)}, we have
\[%
\begin{split}
D_{G}(\nu;v)  &  \ge\frac{1}{2C} \left(  (1+|v|)^{-3}|P_{v} \nu|^{2} +
(1+|v|)^{-1}|(I-P_{v})\nu|^{2}\right)  ,\\
D_{G}(\nu;v)  &  \le2C \left(  (1+|v|)^{-3}|P_{v} \nu|^{2} +
(1+|v|)^{-1}|(I-P_{v})\nu|^{2}\right)  .
\end{split}
\]
Therefore,
\[
\frac{1}{2C} (1+|v|)^{-3}\le\lambda\le2C(1+|v|)^{-1}.
\]

\end{proof}

\begin{lemma}
[Corollary 1 in \cite{guo2002landau}]\label{Lemma : Guo2002 corollary1} There
exists $c=c_{\vartheta}>0$, such that
\[%
\begin{split}
|g|_{\sigma,\vartheta}^2  &  \ge c\left\{  \left|  w^{\vartheta}[1+|v|]^{-3
/2}\left\{  P_{v} \partial_{i} g\right\}  \right|  _{2}^{2} + \left|
w^{\vartheta}[1+|v|]^{-1/2}\left\{  [I-P_{v}] \partial_{i} g\right\}
\right|  _{2}^{2} \right. \\
&  \quad\quad\quad+ \left.  \left|  w^{\vartheta}[1+|v|]^{-1/2}g\right|  _{2}^{2}\right\}.
\end{split}
\]

\end{lemma}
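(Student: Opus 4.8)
The plan is to reduce the inequality to the pointwise spectral decomposition of the diffusion matrix supplied by Lemma~\ref{Lemma : Guo2002 lemma3}, treating separately the zeroth‑order term whose coefficient degenerates like $|v|^2$ near the origin. (If $|g|_{\sigma,\vartheta}=\infty$ there is nothing to prove, so one may assume $g\in H^1_{\mathrm{loc}}$.)

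First I would handle the gradient terms. Taking $\nu=\nabla_v g$ componentwise in $D_\mu(\nu;v)=\lambda_1(v)|P_v\nu|^2+\lambda_2(v)|(I-P_v)\nu|^2$ and using $\lambda_1(v)\ge \frac1C(1+|v|)^{-3}$, $\lambda_2(v)\ge\frac1C(1+|v|)^{-1}$ from Lemma~\ref{Lemma : Guo2002 lemma3}, one gets the pointwise bound
\[
\sigma^{ij}\partial_i g\,\partial_j g\;\ge\;\frac1C\Big((1+|v|)^{-3}|P_v\nabla_v g|^2+(1+|v|)^{-1}|(I-P_v)\nabla_v g|^2\Big).
\]
Since $\sum_i|\{P_v\partial_i g\}|^2=|P_v\nabla_v g|^2$ and likewise for $I-P_v$, multiplying by $w^{2\vartheta}$ and integrating over $\mathbb{R}^3$ already yields the first two terms on the right‑hand side from the gradient part of $|g|_{\sigma,\vartheta}^2$ alone. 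Hence it remains only to produce $|w^\vartheta(1+|v|)^{-1/2}g|_2^2$.

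For that last term I would split $\mathbb{R}^3$ at $|v|=1$. Because $P_vv=v$ and $(I-P_v)v=0$, Lemma~\ref{Lemma : Guo2002 lemma3} gives $\sigma^{ij}v_iv_j=\lambda_1(v)|v|^2\ge\frac1C(1+|v|)^{-3}|v|^2$; on $\{|v|\ge1\}$ one has $|v|^2\ge\frac14(1+|v|)^2$, so $\sigma^{ij}v_iv_j\ge\frac{1}{4C}(1+|v|)^{-1}$ there, and discarding the nonnegative gradient part,
\[
\int_{|v|\ge1}w^{2\vartheta}(1+|v|)^{-1}g^2\,dv\;\le\;4C\int_{\mathbb{R}^3}w^{2\vartheta}\sigma^{ij}v_iv_j g^2\,dv\;\le\;4C\,|g|_{\sigma,\vartheta}^2 .
\]
On $\{|v|\le1\}$ the factor $w^{2\vartheta}(1+|v|)^{-1}$ is bounded by a constant $c_\vartheta$, so it suffices to control $\int_{|v|\le1}g^2$. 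Here I would invoke the Poincaré inequality on the ball $B_2$ with mean taken over the annulus $A=\{1\le|v|\le2\}$, $\int_{B_2}|g-\langle g\rangle_A|^2\le C\int_{B_2}|\nabla_v g|^2$, which gives $\int_{B_1}g^2\le C\big(\int_{B_2}|\nabla_v g|^2+\int_A g^2\big)$. On $B_2$ the weights $(1+|v|)^{-3}$, $(1+|v|)^{-1}$ and $w^{2\vartheta}$ are all bounded below by positive constants depending on $\vartheta$, so the first step gives $\int_{B_2}|\nabla_v g|^2\le C_\vartheta|g|_{\sigma,\vartheta}^2$; and $\int_A g^2\le 3\int_A(1+|v|)^{-1}g^2\le C_\vartheta\int_A w^{2\vartheta}(1+|v|)^{-1}g^2\le C_\vartheta|g|_{\sigma,\vartheta}^2$ by the previous display. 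Adding the three contributions with one common small constant $c=c_\vartheta$ finishes the proof.

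The only non‑routine point is this last step: the coefficient $\sigma^{ij}v_iv_j$ vanishes like $|v|^2$ as $v\to0$, so near the origin the $g^2$‑bound cannot come from the zeroth‑order term and must be recovered from the gradient term via Poincaré, anchored on the annulus where the $(1+|v|)^{-1}$‑weighted $L^2$ norm of $g$ is already under control; everything else is routine bookkeeping of the harmless weights $w^{2\vartheta}$ on compact velocity sets. This is precisely the content of Corollary~1 in \cite{guo2002landau}, and the above is a self‑contained rephrasing of that argument.
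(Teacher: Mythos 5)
The paper does not prove this lemma; it is imported verbatim as Corollary~1 of \cite{guo2002landau}, so the only comparison available is with that reference. Your argument is correct and self-contained: the spectral decomposition of Lemma~\ref{Lemma : Guo2002 lemma3} applied to $\nu=\nabla_v g$ gives the two gradient terms pointwise, and $\sigma^{ij}v_iv_j=\lambda_1(v)|v|^2\gtrsim (1+|v|)^{-1}$ on $\{|v|\ge 1\}$ gives the zeroth-order term away from the origin. You correctly identify the one genuinely non-trivial point, namely that $\sigma^{ij}v_iv_j$ degenerates like $|v|^2$ at the origin so that $\int_{|v|\le 1}g^2$ must be recovered from the gradient; your Poincar\'e inequality on $B_2$ with mean anchored on the annulus $\{1\le|v|\le 2\}$ (where the weighted $L^2$ norm of $g$ is already controlled) does this cleanly, and is an acceptable substitute for the radial fundamental-theorem-of-calculus computation used in the original source — the two devices are interchangeable here. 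The remaining steps (uniform ellipticity of $\sigma$ and boundedness of $w^{2\vartheta}$ on compact velocity sets, the identity $\sum_i|P_v\partial_i g|^2=|P_v\nabla_v g|^2$) are routine, so I consider the proof complete.
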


\begin{lemma}
[Lemma 5 in \cite{guo2002landau}]\label{Lemma : Guo2002 lemma5} 
Let $L$, $K$, and $\sigma^{i}$ be defined as in \eqref{Eq : L}, \eqref{Eq : sigma and sigma^i}, and \eqref{Eq : K}.
Let
$\vartheta\in\mathbb{R}$. For any $m>1$, there is $0<C(m)<\infty$, such that
\[%
\begin{split}
&  |\langle w^{2 \vartheta}\partial_{i} \sigma^{i} g_{1}, g_{2} \rangle| +
|\langle w^{2 \vartheta} Kg_{1}, g_{2} \rangle|\\
&  \quad\le\frac{C}{m}|g_{1}|_{\sigma,\vartheta}|g_{2}|_{\sigma,\vartheta} +
C(m) \left\{  \int_{|v|\le C(m)} |w^{\vartheta}g_{1}|^{2} dv \right\}  ^{1/2}
\left\{  \int_{|v|\le C(m)} |w^{\vartheta}g_{2}|^{2} dv\right\}  ^{1/2}.
\end{split}
\]
Moreover, there is $\delta>0$, such that
\[
\langle Lg,g \rangle\ge\delta|(I-\mathbf{P}) g|_{\sigma}^{2}.
\]

\end{lemma}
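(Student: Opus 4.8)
The plan is to deduce both assertions from two structural facts about the coefficients. First, the exact identity $\partial_i\sigma^i=4\pi\mu$: it follows from the vanishing $(v-v')_j\phi^{ij}(v-v')=0$, an integration by parts via $v'_j\mu(v')=-\tfrac12\partial_{v'_j}\mu(v')$ which gives $\sigma^i=-\partial_i(|\cdot|^{-1}*\mu)$, and $-\Delta|v|^{-1}=4\pi\delta_0$ on $\R^3$. Second, its companion cancellation $\sum_{ij}\partial_{ij}\phi^{ij}=-8\pi\delta_0$ (the $|v|^{-3}$ parts cancel, leaving only a point mass), which I will need to control the second integration by parts in the $K$ term. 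Throughout I use the truncation of a $v$-integral over $\{|v|\le C(m)\}$ and $\{|v|>C(m)\}$: on the bounded piece all coefficients are tame and one is left with the compactly supported error, while on the outer piece the rapid decay of the Gaussians produces the factor $\tfrac{C}{m}$, after Lemma~\ref{Lemma : Guo2002 corollary1} is invoked to pass from $|w^\vartheta(1+|v|)^{-1/2}g|_2$ to $|g|_{\sigma,\vartheta}$. With the identity in hand the $\partial_i\sigma^i$ term is immediate: $|\langle w^{2\vartheta}\partial_i\sigma^i g_1,g_2\rangle|\le 4\pi\int w^{2\vartheta}\mu|g_1||g_2|$, and on $\{|v|>C(m)\}$ one has $w^{2\vartheta}\mu\le\tfrac{C}{m}w^{2\vartheta}(1+|v|)^{-1}$, so Cauchy--Schwarz and Lemma~\ref{Lemma : Guo2002 corollary1} give $\tfrac{C}{m}|g_1|_{\sigma,\vartheta}|g_2|_{\sigma,\vartheta}$, while on $\{|v|\le C(m)\}$ one gets the stated local term.

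For the $K$ term I would first integrate by parts in $v'$ to remove $\partial_j g_1$ (boundary terms vanish by $\mu^{1/2}(v')$ and local integrability), and then integrate by parts in $v_i$ to remove $\partial_i g_2$; the $|v-v'|^{-3}$ expression produced by $\partial_{v_i}$ hitting the kernel is, by the cancellation above, equal to a multiple of $\delta_0(v-v')$ plus a remainder that is only $O(|v'|\,|v-v'|^{-2})$, hence locally integrable. After these reductions $\langle w^{2\vartheta}Kg_1,g_2\rangle$ is a finite sum of bilinear forms $\iint g_2(v)\,\mathbf{k}(v,v')\,g_1(v')\,dv\,dv'$ plus one purely local term $\propto\int\mu\,w^{2\vartheta}g_1g_2$ (handled as the $\partial_i\sigma^i$ term), where the kernels satisfy $|\mathbf{k}(v,v')|\lesssim w^{2\vartheta}(v)\,\mu^{1/4}(v)\,\mu^{1/4}(v')\,(|v-v'|^{-2}+|v-v'|^{-1})$. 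Note that $\mathbf{k}$ carries a Gaussian in \emph{both} variables --- one from the $\mu^{-1/2}\partial_i\{\mu(\cdot)\}$ structure of $K$, one from the $\mu^{1/2}$ inside $\sigma$ --- so it dominates any polynomial weight; using this I split the weight $w^{2\vartheta}(v)=w^\vartheta(v)\cdot w^\vartheta(v)$ and bound one factor by $w^\vartheta(v')$ (using $w^\vartheta(v)\le C w^\vartheta(v')$ near the diagonal, and the surplus Gaussian decay away from it), landing on $|\mathbf{k}(v,v')|\lesssim w^\vartheta(v)(1+|v|)^{-1/2}\,\widehat{\mathbf{k}}(v,v')\,w^\vartheta(v')(1+|v'|)^{-1/2}$ with $\widehat{\mathbf{k}}$ still a Gaussian times a locally integrable diagonal singularity, hence Schur-bounded in both variables. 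Now truncate in $v$: on $\{|v|>C(m)\}$ the surviving factor $\mu^{1/16}(v)\le\tfrac{C}{m}$, so Schur and Lemma~\ref{Lemma : Guo2002 corollary1} give $\tfrac{C}{m}|g_1|_{\sigma,\vartheta}|g_2|_{\sigma,\vartheta}$; on $\{|v|\le C(m)\}$ I discard the tail $\{|v'|>C(m)\}$ (again $\le\tfrac{C}{m}(\cdots)$ by $\mu^{1/16}(v')$) and on $\{|v'|\le C(m)\}$ the kernel is bounded, so a Schur/Cauchy--Schwarz estimate yields $C(m)\big(\int_{|v|\le C(m)}|w^\vartheta g_2|^2\big)^{1/2}\big(\int_{|v|\le C(m)}|w^\vartheta g_1|^2\big)^{1/2}$. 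Summing proves the first assertion.

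The step I expect to be the main obstacle is the bookkeeping in the two integrations by parts for $K$: one must verify that each $|v-v'|^{-3}$ term actually collapses to ``a point mass plus a locally integrable remainder'' rather than being genuinely non-integrable, and must carry the weight $w^{2\vartheta}(v)$ (whose derivatives are one power lower, hence harmless) through the process and redistribute it between the two velocity variables using the Gaussian factors. Once the kernel is in the form above the remainder is routine.

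For the coercivity, integrating by parts and using $\partial_i(g\mu^{-1/2})=\mu^{-1/2}(\partial_ig+v_ig)$ gives $-\langle Ag,g\rangle=\int(\partial_ig+v_ig)\sigma^{ij}(\partial_jg+v_jg)\,dv=|g|_\sigma^2+\int\sigma^{ij}v_i\partial_j(g^2)\,dv=|g|_\sigma^2-\int\partial_i\sigma^i\,g^2\,dv=|g|_\sigma^2-4\pi\int\mu g^2\,dv$, again by the identity. Since $\partial_i\sigma^i=4\pi\mu\ge0$, the quantity $4\pi\int\mu g^2+|\langle Kg,g\rangle|$ is exactly the left side of the first assertion with $\vartheta=0$, $g_1=g_2=g$, hence $\le\tfrac{C}{m}|g|_\sigma^2+C(m)\int_{|v|\le C(m)}|g|^2$; fixing $m$ with $\tfrac{C}{m}\le\tfrac12$ and writing $R_0=C_0=C(m)$ gives $\langle Lg,g\rangle\ge\tfrac12|g|_\sigma^2-C_0\int_{|v|\le R_0}|g|^2$. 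To upgrade this to $\langle Lg,g\rangle\ge\delta|(I-\mathbf{P})g|_\sigma^2$, I would use that $L$ is self-adjoint and nonnegative with $\ker L=\mathbf{P}=\operatorname{span}\{\sqrt\mu,\,v_i\sqrt\mu,\,|v|^2\sqrt\mu\}$, so $\langle Lg,g\rangle=\langle L(I-\mathbf{P})g,(I-\mathbf{P})g\rangle$ and it suffices to remove the local term on $\{g\perp\mathbf{P}\}$. This is the standard compactness argument: if there were $h_n\perp\mathbf{P}$ with $|h_n|_\sigma=1$ and $\langle Lh_n,h_n\rangle\to0$, then $\int_{|v|\le R_0}|h_n|^2\ge\tfrac1{4C_0}$ by the displayed inequality; $\{h_n\}$ is bounded in $|\cdot|_\sigma$, hence --- by the local uniform ellipticity of $\sigma^{ij}$ coming from Lemma~\ref{Lemma : Guo2002 lemma3} --- bounded in $H^1_{\mathrm{loc}}$ and thus precompact in $L^2_{\mathrm{loc}}$, so the limit $h_*$ satisfies $\int_{|v|\le R_0}|h_*|^2>0$, i.e. $h_*\ne0$; but $\langle Lh_*,h_*\rangle=0$ by weak lower semicontinuity of the nonnegative quadratic form $\langle L\cdot,\cdot\rangle$ in the $\sigma$-norm (the $K$-part being compact relative to $|\cdot|_\sigma$ by the first assertion), forcing $h_*\in\mathbf{P}$ and contradicting $h_*\perp\mathbf{P}$.
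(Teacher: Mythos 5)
The paper states this lemma purely as a citation of Lemma 5 in \cite{guo2002landau} and supplies no proof of its own; your argument correctly reconstructs the standard proof from that reference --- the identity $\partial_i\sigma^i=4\pi\mu$ together with the reduction of $\langle w^{2\vartheta}Kg_1,g_2\rangle$ to Schur-bounded kernels carrying Gaussian decay in both velocity variables (plus the $\delta_0$ point mass from $\sum_{ij}\partial_{ij}\phi^{ij}=-8\pi\delta_0$) for the first assertion, and the compactness-and-contradiction argument for the coercivity. The only inputs you invoke without derivation, namely $L\ge 0$ and $\ker L=\operatorname{range}(\mathbf{P})$, are exactly the classical facts from the quadratic-form (H-theorem) representation of the linearized Landau operator that the cited proof also relies on, so this is not a gap.
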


\begin{lemma}
[Lemma 6 in \cite{guo2002landau}]\label{Lemma : Guo2002 lemma6}
Let $L$, $A$, and $K$ be defined as in \eqref{Eq : L}, \eqref{Eq : A}, and \eqref{Eq : K}.
Let $\vartheta\in\mathbb{R}$ and $|\beta|\ge0$. For small $\delta>0$, there exists
$C_{\delta} = C_{\delta}(\vartheta)>0$ such that
\[
-\langle w^{2 \vartheta} Ag, g \rangle\ge|g|_{\sigma, \vartheta}^{2} -
\delta|g|_{\sigma, \vartheta}^{2} - C_{\delta}|\mu g|_{2}^{2},
\]
\[
|\langle w^{2 \vartheta} K g_{1}, g_{2} \rangle| \le\left\{  \delta
|g_{1}|_{\sigma,\vartheta} +C_{\delta}|\mu g_{1}|_{2} \right\}  |g_{2}%
|_{\sigma, \vartheta}.
\]
Thus we have
\[
\frac{1}{2}|g|_{\sigma,\vartheta}^{2} - C_{\vartheta}|g|_{\sigma}^{2}
\le\langle w^{2 \vartheta} Lg, g \rangle\le\frac{3}{2}|g|_{\sigma,\vartheta
}^{2} + C_{\vartheta}|g|_{\sigma}^{2}.%
\]

\end{lemma}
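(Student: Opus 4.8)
The plan is to reduce everything to Lemma~\ref{Lemma : Guo2002 lemma5}, Lemma~\ref{Lemma : Guo2002 lemma3}, and Lemma~\ref{Lemma : Guo2002 corollary1}, together with a single integration by parts in $v$. Using the second expression for $A$ in \eqref{Eq : A}, I would first record, for $g$ smooth with compact support (the general case following by approximation),
\[
-\langle w^{2\vartheta}Ag,g\rangle=|g|_{\sigma,\vartheta}^{2}+\int_{\R^{3}}(\partial_{i}w^{2\vartheta})\,g\,\sigma^{ij}\partial_{j}g\,dv-\langle w^{2\vartheta}(\partial_{i}\sigma^{i})g,g\rangle ,
\]
where the identity uses that the two terms $\int w^{2\vartheta}\sigma^{ij}\partial_{i}g\partial_{j}g$ and $\int w^{2\vartheta}\sigma^{ij}v_{i}v_{j}g^{2}$ produced by the integration by parts recombine into $|g|_{\sigma,\vartheta}^{2}$. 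Thus the first inequality is equivalent to bounding the last two terms from below by $-\delta|g|_{\sigma,\vartheta}^{2}-C_{\delta}|\mu g|_{2}^{2}$, and running the same estimates as an upper bound gives $-\langle w^{2\vartheta}Ag,g\rangle\le(1+\delta)|g|_{\sigma,\vartheta}^{2}+C_{\delta}|\mu g|_{2}^{2}$, which is needed for the two‑sided bound at the end.

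The $\partial_{i}\sigma^{i}$-term I would control directly by Lemma~\ref{Lemma : Guo2002 lemma5} with $g_{1}=g_{2}=g$: choosing $m=m(\delta)$ so that $C/m\le\delta/2$, and noting that on $\{|v|\le C(m)\}$ the weight $w^{2\vartheta}$ is bounded by a constant multiple of $\mu^{2}$, the remaining localized integral is $\le C_{\delta}|\mu g|_{2}^{2}$. The weight cross-term is the only genuinely new point. Writing $\partial_{i}w^{2\vartheta}=2\vartheta w^{2\vartheta-1}v_{i}/|v|$ and applying Cauchy--Schwarz to the non-negative form $\sigma$ gives $\bigl|(v/|v|)^{T}\sigma\nabla g\bigr|\le\bigl(D_{\mu}(v/|v|;v)\bigr)^{1/2}\bigl(\nabla g^{T}\sigma\nabla g\bigr)^{1/2}$, and since $v/|v|$ is parallel to $v$, \eqref{Eq : D_mu} of Lemma~\ref{Lemma : Guo2002 lemma3} yields $D_{\mu}(v/|v|;v)=\lambda_{1}(v)\le C(1+|v|)^{-3}$. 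Hence, after one more Cauchy--Schwarz in $v$,
\[
\Bigl|\int_{\R^{3}}(\partial_{i}w^{2\vartheta})\,g\,\sigma^{ij}\partial_{j}g\,dv\Bigr|\le C|g|_{\sigma,\vartheta}\Bigl(\int_{\R^{3}}w^{2\vartheta}(1+|v|)^{-5}g^{2}\,dv\Bigr)^{1/2}.
\]
Splitting the last integral at $|v|=R$, the inner part is $\le C_{R}|\mu g|_{2}^{2}$ and the outer part is $\le C(1+R)^{-4}|g|_{\sigma,\vartheta}^{2}$ by Lemma~\ref{Lemma : Guo2002 corollary1}; taking $R$ large and then using Young's inequality absorbs the whole term into $\delta|g|_{\sigma,\vartheta}^{2}+C_{\delta}|\mu g|_{2}^{2}$. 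This gives the first inequality together with its upper-bound analogue.

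For the $K$-estimate I would again invoke Lemma~\ref{Lemma : Guo2002 lemma5}: picking $m\ge C/\delta$ bounds the first term by $\delta|g_{1}|_{\sigma,\vartheta}|g_{2}|_{\sigma,\vartheta}$; the localized factor in $g_{1}$ is $\le C_{\delta}|\mu g_{1}|_{2}$ exactly as above, while the localized factor in $g_{2}$ is $\le C_{\delta}|g_{2}|_{\sigma,\vartheta}$ because on a fixed ball Lemma~\ref{Lemma : Guo2002 corollary1} gives $|g_{2}|_{\sigma,\vartheta}^{2}\ge c\int w^{2\vartheta}(1+|v|)^{-1}g_{2}^{2}\,dv\ge c_{\delta}\int_{|v|\le C(m)}|w^{\vartheta}g_{2}|^{2}\,dv$. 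Finally, the two-sided bound on $\langle w^{2\vartheta}Lg,g\rangle$ follows from $L=-A-K$ in \eqref{Eq : L} by combining the two displayed estimates with Young's inequality and the elementary bound $|\mu g|_{2}^{2}\le C|g|_{\sigma}^{2}$ (itself a consequence of $\mu^{2}(v)\le C(1+|v|)^{-1}$ and Lemma~\ref{Lemma : Guo2002 corollary1} with $\vartheta=0$), then fixing $\delta$ small enough to reach the constants $\tfrac12$ and $\tfrac32$.

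I expect the only delicate step to be the cross-term. It is precisely the anisotropy of $\sigma_{\mu}$ in Lemma~\ref{Lemma : Guo2002 lemma3}, which forces the direction $v/|v|$ to ``see'' only the small eigenvalue $\lambda_{1}\sim(1+|v|)^{-3}$, that supplies the extra decay $(1+|v|)^{-3/2}$ needed to recover the power of weight lost in $\partial_{i}w^{2\vartheta}$; without this anisotropic gain the estimate does not close for general $\vartheta$. A minor technicality is justifying the integration by parts when $w^{2\vartheta}=(1+|v|)^{2\vartheta}$ is not $C^{1}$ at $v=0$, which is handled routinely by mollifying $1+|v|$ near the origin and passing to the limit.
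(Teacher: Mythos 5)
Your argument is correct and reconstructs the standard proof: the paper itself gives no proof of this lemma (it is imported verbatim from Guo's 2002 paper), and your route — integration by parts to isolate $|g|_{\sigma,\vartheta}^{2}$, Lemma~\ref{Lemma : Guo2002 lemma5} for the $\partial_{i}\sigma^{i}$ and $K$ terms, and the anisotropic bound $D_{\mu}(v/|v|;v)=\lambda_{1}(v)\lesssim(1+|v|)^{-3}$ from Lemma~\ref{Lemma : Guo2002 lemma3} to tame the weight cross-term — is exactly the mechanism of the original proof. You correctly identify the cross-term as the delicate step, and your large-$|v|$/small-$|v|$ splitting together with Lemma~\ref{Lemma : Guo2002 corollary1} closes it.
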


For the nonlinear estimate in Theorem 3 \cite{guo2002landau}, they estimated
\[
\left(  w^{2 \vartheta}\Gamma[g_{1},g_{2}], g_{3} \right)
\]
in terms of $\|g_{i}\|_{2, \vartheta}$ and $\|g_{i}\|_{\sigma, \vartheta}$ for
$i=1,2,3$ and $\vartheta\ge0$. To get such a $L^{2}$ estimate, they need a
higher-order regularity like $\|D_{\beta}^{\alpha} g_{i}\|_{2, \vartheta}$ and
$\|D_{\beta}^{\alpha} g_{i}\|_{\sigma, \vartheta}$ for $i=1,2,3$,
$\vartheta\ge0$, $|\alpha| + |\beta| \le N$ and $N\ge8$.

The following lemma is a refinement of Theorem 3 in \cite{guo2002landau}.
First, the range of $\vartheta$ is extended to $\mathbb{R}$. Second, we
estimate the nonlinear term in terms of $\|\cdot\|_{\infty}$, $\|\cdot\|_{2,
\vartheta}$, and $\|\cdot\|_{\sigma, \vartheta}$ without a higher-order regularity.

\begin{theorem}
\label{Thm : modified Guo2002 theorem3}
Let $\Gamma$ be defined as in \eqref{Eq : Gamma}.
\begin{enumerate}
\item For every $\vartheta\in\mathbb{R}$, there exists $C_{\vartheta}$ such
that
\begin{equation}
\label{Eq : nonlinear estimate for v}
	\langle w^{2 \vartheta} \Gamma[g_{1},g_{2}], g_{3} \rangle|\le C_{\vartheta}|g_{1}|_{\infty}|g_{2}|_{\sigma, \vartheta}
	|g_{3}|_{\sigma, \vartheta},
\end{equation}
and
\begin{equation}
\label{Eq : nonlinear estimate 1}%
	\left|  \left(  w^{2 \vartheta} \Gamma[g_{1},g_{2}], g_{3} \right)  \right| \le C_{\vartheta}\|g_{1}\|_{\infty}\|g_{2}\|_{\sigma, \vartheta}\|g_{3}\|_{\sigma, \vartheta}.
\end{equation}

\item There exists $\bar \vartheta<0$ such that for any $\vartheta
\le \bar \vartheta$,
\begin{equation}
\label{Eq : nonlinear estimate 2}
	\left|  \left(  w^{2 \vartheta} \Gamma[g_{1},g_{2}], g_{3} \right)  \right| \le C_{\vartheta}\min\{ \|g_{1}\|_{2, \vartheta}, \|g_{1}\|_{\sigma,\vartheta} \}(\|g_{2}\|_{\infty}+ \|D_{v} g_{2}\|_{\infty})\|g_{3}\|_{\sigma,\vartheta}.
\end{equation}

\end{enumerate}
\end{theorem}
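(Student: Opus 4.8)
The plan is to follow the scheme of the proof of Theorem~3 in \cite{guo2002landau}, but to rearrange every occurrence of $g_1$ so that it enters only through convolutions of $\phi^{ij}$, or its first derivatives, against the Gaussian-weighted factor $\mu^{1/2}g_1$; such convolutions are controlled purely by $|g_1|_\infty$ for part~(1) and by $\|g_1\|_{2,\vartheta}$ or $\|g_1\|_{\sigma,\vartheta}$ for part~(2), so no derivative of $g_1$ is ever needed. The first step is an integration by parts. In $\langle w^{2\vartheta}\Gamma[g_1,g_2],g_3\rangle$, the two terms of \eqref{Eq : Gamma} carrying an outer derivative $\partial_i$ are integrated by parts in $v$, moving $\partial_i$ onto $w^{2\vartheta}g_3$; this produces terms with $w^{2\vartheta}\partial_i g_3$ together with lower-order terms in which $\partial_i w^{2\vartheta}=O(w^{2\vartheta-1})$ replaces one derivative. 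For the two terms containing $\partial_j g_1$ one integrates by parts in the convolution variable, using $\partial_{j'}\phi^{ij}(v-v')=-(\partial_j\phi^{ij})(v-v')$ and $\partial_{j'}\mu^{1/2}(v')=-v'_j\mu^{1/2}(v')$, so that $\phi^{ij}\ast[\mu^{1/2}\partial_j g_1]$ is replaced by $(\partial_j\phi^{ij})\ast[\mu^{1/2}g_1]+\phi^{ij}\ast[v_j\mu^{1/2}g_1]$. After these steps $g_1$ appears only inside the kernels $\sigma_{\mu^{1/2}g_1}=\phi\ast[\mu^{1/2}g_1]$, $\phi^{ij}\ast[v_i\mu^{1/2}g_1]$ and $(\partial_j\phi^{ij})\ast[\mu^{1/2}g_1]$, each contracted with one factor from $\{g_2,\partial_j g_2\}$ and one from $\{g_3,\partial_i g_3\}$, times $w^{2\vartheta}$ or $\partial w^{2\vartheta}$.

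The second step is the kernel estimates. For the leading kernel I would reuse the bound \eqref{Eq : D_(mu^1/2 g)} established inside the proof of Lemma~\ref{lemma : eigenvalue estimate}: the symmetric quadratic form $\nu\mapsto\nu^{T}\sigma_{\mu^{1/2}g_1}(v)\nu$ is dominated by $C|g_1|_\infty$ times the positive form $Q_v(\nu):=(1+|v|)^{-3}|P_v\nu|^{2}+(1+|v|)^{-1}|(I-P_v)\nu|^{2}$; conjugating $\sigma_{\mu^{1/2}g_1}(v)$ by $Q_v^{1/2}$ one gets $\|Q_v^{-1/2}\sigma_{\mu^{1/2}g_1}(v)Q_v^{-1/2}\|_{\mathrm{op}}\le C|g_1|_\infty$, hence the bilinear bound $|\xi^{T}\sigma_{\mu^{1/2}g_1}(v)\eta|\le C|g_1|_\infty\,Q_v(\xi)^{1/2}Q_v(\eta)^{1/2}$, which is exactly the anisotropic structure needed to pair $\partial_j g_2$ with $\partial_i g_3$. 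The remaining two kernels are scalar multipliers; splitting the convolution over $|v'|\le|v|/2$, $|v'|\ge 2|v|$ and the middle region, or invoking Lemma~\ref{Lemma : Guo2002 lemma2} since $\phi^{ij}$ and $\partial_j\phi^{ij}=-2v_i/|v|^3$ are homogeneous of degrees $-1$ and $-2$, gives $|\phi^{ij}\ast[v_i\mu^{1/2}g_1](v)|\le C|g_1|_\infty(1+|v|)^{-1}$ and $|(\partial_j\phi^{ij})\ast[\mu^{1/2}g_1](v)|\le C|g_1|_\infty(1+|v|)^{-2}$.

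The third step is Cauchy--Schwarz and matching to the $\sigma$-norm. Writing $w^{2\vartheta}=w^\vartheta\cdot w^\vartheta$ and inserting the anisotropic decay just obtained, every term is bounded by $C|g_1|_\infty$ times a product of two factors, each of the type $|w^\vartheta(1+|v|)^{-3/2}P_v\partial g|_2$, $|w^\vartheta(1+|v|)^{-1/2}(I-P_v)\partial g|_2$ or $|w^\vartheta(1+|v|)^{-1/2}g|_2$ with $g\in\{g_2,g_3\}$ (the weight-derivative terms carry $w^{2\vartheta-1}$ and one fewer power of $1+|v|$, hence are strictly better). By Lemma~\ref{Lemma : Guo2002 corollary1} each such factor is $\le C|g_2|_{\sigma,\vartheta}$ or $\le C|g_3|_{\sigma,\vartheta}$, which proves \eqref{Eq : nonlinear estimate for v}; then \eqref{Eq : nonlinear estimate 1} follows by estimating $|g_1(x,\cdot)|_\infty\le\|g_1\|_\infty$ inside the $x$-integral and applying Cauchy--Schwarz in $x$ to $|g_2(x,\cdot)|_{\sigma,\vartheta}|g_3(x,\cdot)|_{\sigma,\vartheta}$.

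For part~(2) the integration by parts is identical, but now $g_1$ — which after the reduction occurs only as $\mu^{1/2}g_1$ or $v_j\mu^{1/2}g_1$ convolved against $\phi$ or $\partial\phi$ — is extracted by a Gaussian-weighted Cauchy--Schwarz in the convolution variable, yielding kernels bounded by $C\min\{\|g_1\|_{2,\vartheta},\|g_1\|_{\sigma,\vartheta}\}(1+|v|)^{-1}$; both norms are admissible because the polynomial discrepancy between $w^\vartheta$ and the weight $w^{\vartheta-1/2}$ that $\|\cdot\|_{\sigma,\vartheta}$ controls via Lemma~\ref{Lemma : Guo2002 corollary1} is harmless next to $\mu^{1/2}$, which is the source of the $\min$ in \eqref{Eq : nonlinear estimate 2}. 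The factors of $g_2$ are pulled out in $\|g_2\|_\infty+\|D_v g_2\|_\infty$, and $g_3$ or $\partial g_3$ is matched to $\|g_3\|_{\sigma,\vartheta}$ as in part~(1). The one new point, and the main obstacle, is the integrability of the residual velocity integrals: pairing the $(1+|v|)^{-1}$ decay of the $g_1$-kernel with the weight $w^\vartheta(1+|v|)^{\pm1/2}$ coming out of the $\sigma$-norm and the surviving $w^{2\vartheta}$, one is left with integrals of the form $\int_{\R^3}(1+|v|)^{2\vartheta+m}\,dv$ with $m\ge -1$, which converge only when $\vartheta$ is sufficiently negative; this is precisely what pins down the threshold $\bar\vartheta<0$. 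One then integrates in $x$ and applies Cauchy--Schwarz in $x$ as before.
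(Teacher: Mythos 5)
Your proposal is correct and follows essentially the same route as the paper: the same integration by parts in $v$ and in the convolution variable to remove all derivatives from $g_1$, the same kernel bounds (Lemma~\ref{Lemma : Guo2002 lemma2} for the scalar multipliers and the anisotropic estimate \eqref{Eq : D_(mu^1/2 g)} for the term pairing $\partial g_2$ with $\partial g_3$), the same matching to the $\sigma$-norms via Lemma~\ref{Lemma : Guo2002 corollary1}, and in part~(2) the same Gaussian-weighted Cauchy--Schwarz extraction of $\min\{\|g_1\|_{2,\vartheta},\|g_1\|_{\sigma,\vartheta}\}$ with the threshold $\bar\vartheta$ determined by the convergence of $\int(1+|v|)^{2\vartheta+1}\,dv$ (the paper takes $\bar\vartheta=-2$).
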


\begin{proof}

\begin{enumerate}
\item By the integration by parts, we have
\begin{equation}
\label{Eq : gamma integration by parts 1}%
\begin{split}
|\langle w^{2 \vartheta} \Gamma[g_{1},g_{2}],g_{3} \rangle|  &  \le|
\langle\partial_{i} w^{2 \vartheta}\{\phi^{ij}*[\sqrt\mu g_{1}]\}\partial_{j}
g_{2}, g_{3} \rangle|\\
&  \quad+ | \langle w^{2 \vartheta}\{\partial_{j} \phi^{ij}*[\sqrt\mu g_{1}]\}
g_{2}, \partial_{i} g_{3} \rangle|\\
&  \quad+ | \langle\partial_{i} w^{2 \vartheta}\{\partial_{j}\phi^{ij}%
*[\sqrt\mu g_{1}]\} g_{2}, g_{3} \rangle|\\
&  \quad+ | \langle\partial_{i} w^{2 \vartheta}\{\phi^{ij}*[v_{j} \sqrt\mu
g_{1}]\} g_{2}, g_{3} \rangle|\\
&  \quad+ | \langle w^{2 \vartheta}\{\partial_{j} \phi^{ij}*[v_{i} \sqrt\mu
g_{1}]\} g_{2}, g_{3} \rangle|\\
&  \quad+ | \langle w^{2 \vartheta}\{\phi^{ij}*[ \partial_{j} \{v_{i} \sqrt
\mu\} g_{1}]\} g_{2}, g_{3} \rangle|\\
&  \quad+ | \langle w^{2 \vartheta}\{\phi^{ij}* [v_{i}\sqrt\mu g_{1}%
]\}\partial_{j} g_{2}, g_{3} \rangle|\\
&  \quad+ | \langle w^{2 \vartheta}\{\phi^{ij}*[ v_{j} \sqrt\mu g_{1}]\}
g_{2}, \partial_{i} g_{3} \rangle|\\
&  \quad+ | \langle w^{2 \vartheta}\{\phi^{ij}*[\sqrt\mu g_{1}]\}\partial_{j}
g_{2}, \partial_{i} g_{3} \rangle|\\
&  = (I) + (II) + \dots+ (IX),
\end{split}
\end{equation}
where $\phi$ is the matrix defined as in \eqref{Eq : phi}.
Clearly, $|\partial_{i} w^{2 \vartheta}| \le C_{\vartheta} (1+|v|)^{-1}w^{2
\vartheta}$ and by Lemma \ref{Lemma : Guo2002 lemma2}, we have
\[
|\phi^{ij} * [\sqrt\mu g_{1}] | + |\phi^{ij} * [v_{i} \sqrt\mu g_{1}] | +
|\phi^{ij} * [v_{j} \sqrt\mu g_{1}] | + |\phi^{ij} * [\partial_{j} \{v_{i}
\sqrt\mu\} g_{1}] | \le C(1+|v|)^{-1}\|g_{1}\|_{\infty},
\]
\[
|\partial_{j} \phi^{ij} * [\sqrt\mu g_{1}] | + |\partial_{j} \phi^{ij} *
[v_{i} \sqrt\mu g_{1}]| \le C(1+|v|)^{-2}\|g_{1}\|_{\infty}.
\]
Therefore, by Lemma \ref{Lemma : Guo2002 corollary1} and the H\"older
inequality,
\[%
\begin{split}
(I)  &  \le C_{\vartheta} |g_{1}|_{\infty}| \langle w^{\vartheta}
(1+|v|)^{-3/2}\partial_{j} g_{2}, w^{\vartheta} (1+|v|)^{-1/2} g_{3} \rangle|\\
&  \le C_{\vartheta} |g_{1}|_{\infty}|g_{2}|_{\sigma,\vartheta} |g_{3}%
|_{\sigma, \vartheta},
\end{split}
\]
\[%
\begin{split}
(II)  &  \le C |g_{1}|_{\infty}| \langle w^{\vartheta} (1+|v|)^{-1/2} g_{2}, w^{\vartheta} (1+|v|)^{-3/2} \partial_{i}
g_{3} \rangle|\\
&  \le C |g_{1}|_{\infty}|g_{2}|_{\sigma,\vartheta} |g_{3}|_{\sigma,
\vartheta},
\end{split}
\]
\[%
\begin{split}
(III) + (IV) + (V) + (VI)  &  \le C_{\vartheta}|g_{1}|_{\infty}| \langle
w^{\vartheta} (1+|v|)^{-1/2} g_{2}, w^{\vartheta} (1+|v|)^{-1/2} g_{3} \rangle|\\
&  \le C_{\vartheta} |g_{1}|_{\infty}|g_{2}|_{\sigma,\vartheta} |g_{3}%
|_{\sigma, \vartheta}.
\end{split}
\]
By \eqref{Eq : D_(mu^1/2 g)} and the H\"older inequality,
\[%
\begin{split}
(VII)  &  \le C|g_{1}|_{\infty}\int w^{2 \vartheta}\left|  (1+|v|)^{-3/2}|P_{v} \partial_{j} g_{2}| + (1+|v|)^{-1/2}%
|(I-P_{v}) \partial_{j} g_{2}| \right|  (1+|v|)^{-1/2} |g_{3}|
dv\\
&  \le C|g_{1}|_{\infty}|g_{2}|_{\sigma, \vartheta} |g_{3}|_{\sigma,
\vartheta},
\end{split}
\]
\[%
\begin{split}
(VIII)  &  \le C|g_{1}|_{\infty}\int w^{2 \vartheta} (1+|v|)^{-1/2} |g_{2}| \left|  (1+|v|)^{-3/2}|P_{v} \partial_{j} g_{3}| +
(1+|v|)^{-1/2}|(I-P_{v}) \partial_{j} g_{3}| \right|  dv\\
&  \le C|g_{1}|_{\infty}|g_{2}|_{\sigma, \vartheta} |g_{3}|_{\sigma,
\vartheta},
\end{split}
\]
and
\begin{multline*}
	(IX)  \le C|g_{1}|_{\infty}\int w^{2 \vartheta} \left|  (1+|v|)^{-3/2}|P_{v} \partial_{j} g_{2}| + (1+|v|)^{-1/2}%
	|(I-P_{v}) \partial_{j} g_{2}| \right| \\
	\times\left|  (1+|v|)^{-3/2}|P_{v} \partial_{j} g_{3}| + (1+|v|)^{-1/2}|(I-P_{v})\partial_{j} g_{3}| \right|  dv\\
	\le C|g_{1}|_{\infty}|g_{2}|_{\sigma, \vartheta} |g_{3}|_{\sigma,\vartheta}.
\end{multline*}
\[%
\begin{split}
\end{split}
\]
Thus we obtain \eqref{Eq : nonlinear estimate for v}. By applying the H\"older
inequality to \eqref{Eq : nonlinear estimate for v},
\begin{equation*}
	\begin{split}
		\left|  \left(  w^{2 \vartheta} \Gamma[g_{1},g_{2}], g_{3} \right)  \right|& = \int|\langle w^{2 \vartheta} \Gamma[g_{1},g_{2}], g_{3} \rangle|dx\\
		&  \le\int C_{\vartheta}|g_{1}|_{\infty}|g_{2}|_{\sigma, \vartheta}|g_{3}|_{\sigma, \vartheta} dx\\
		& \le C_{\vartheta}\|g_{1}\|_{\infty}\|g_{2}\|_{\sigma, \vartheta}\|g_{3}\|_{\sigma, \vartheta}.
	\end{split}
\end{equation*}

Thus we have \eqref{Eq : nonlinear estimate 1}.

\item By the integration by parts again, we have
\begin{equation}
\label{Eq : gamma integration by parts 2}%
\begin{split}
|\langle w^{2 \vartheta} \Gamma[g_{1},g_{2}], g_{3} \rangle|  &  := | \langle
w^{2 \vartheta} \{\phi^{ij}*[\mu^{1/2}g_{1}]\}\partial_{j} g_{2}, \partial_{i}
g_{3} \rangle|\\
&  \quad+|\langle w^{2 \vartheta} \{\phi^{ij}*[v_{i} \mu^{1/2} g_{1}]
\}\partial_{j} g_{2} , g_{3} \rangle|\\
&  \quad+ | \langle w^{2 \vartheta} \{\phi^{ij}*[\mu^{1/2}\partial_{j} g_{1}]
\} g_{2}, \partial_{i} g_{3} \rangle|\\
&  \quad+ |\langle w^{2 \vartheta} \{\phi^{ij}*[v_{i} \mu^{1/2} \partial_{j}
g_{1}] \}g_{2}, g_{3} \rangle|\\
&  \quad+ |\langle\partial_{i} w^{2 \vartheta} \{\phi^{ij}*[\mu^{1/2} g_{1}]
\} \partial_{j} g_{2}, g_{3} \rangle|\\
&  \quad+ | \langle\partial_{i} w^{2 \vartheta} \{\phi^{ij}*[\mu^{1/2}%
\partial_{j} g_{1}] \} g_{2}, g_{3} \rangle|\\
&  = (i) + (ii) + \cdots+ (vi).
\end{split}
\end{equation}
By the H\"older inequality and the integration by parts, we have
\[%
\begin{split}
&  |\langle\{\phi^{ij}*[\mu^{1/2}g_{1}]\} \rangle| + |\langle\{\phi
^{ij}*[v_{i} \mu^{1/2} g_{1}] \} \rangle| + |\langle\{\phi^{ij}*[\mu
^{1/2}\partial_{j} g_{1}] \} \rangle| + |\langle\{\phi^{ij}*[v_{i} \mu^{1/2}
\partial_{j} g_{1}] \} \rangle|\\
&  \quad\le C_{\vartheta} (1+|v|)^{-1}\min\{ |g_{1}|_{2, \vartheta
},|g_{1}|_{\sigma, \vartheta}\}.
\end{split}
\]
Let $\bar \vartheta := -2$, then by applying the H\"older inequality to $(i)$ and Lemma \ref{Lemma : Guo2002 corollary1}, we have
\[%
\begin{split}
(i)  &  \le C_{\vartheta} \min\{ |g_{1}|_{2, \vartheta},|g_{1}|_{\sigma,
\vartheta}\} \left|  \int_{\mathbb{R}^{3}} w^{2\vartheta} (1+|v|)^{-1}
\partial_{j} g_{2}(v) \partial_{i} g_{3}(v) dv\right| \\
&  \le C_{\vartheta} \min\{ |g_{1}|_{2, \vartheta},|g_{1}|_{\sigma, \vartheta
}\} |\partial_{j} g_{2}|_{\infty} \left(  \int_{\mathbb{R}^{3}} (1+|v|)^{2
\vartheta+ 1} dv\right)  ^{1/2} \left(  \int_{\mathbb{R}^{3}}
w^{2\vartheta} (1+|v|)^{-3} |\partial_{i} g_{3}|^{2} dv \right)  ^{1/2}\\
&  \le C_{\vartheta} \min\{ |g_{1}|_{2, \vartheta},|g_{1}|_{\sigma, \vartheta
}\} |D_{v} g_{2}|_{\infty}|g_{3}|_{\sigma, \vartheta}.
\end{split}
\]
Similarly,
\[
(ii) + (iii) + \cdots+ (vi) \le C_{\vartheta} \min\{ |g_{1}|_{2, \vartheta
},|g_{1}|_{\sigma, \vartheta}\}(|g_{2}|_{\infty}+ |D_{v} g_{2}|_{\infty
})|g_{3}|_{\sigma, \vartheta}.%
\]
Therefore, by the H\"older inequality again, we have
\begin{equation*}
	\begin{split}
		\left|  \left(  w^{2 \vartheta} \Gamma[g_{1}, g_{2}], g_{3} \right)	\right| & = \int| \langle w^{2 \vartheta} \Gamma[g_{1}, g_{2}], g_{3} \rangle|	dx\\
		&  \le\int C_{\vartheta} \min\{ |g_{1}|_{2, \vartheta},|g_{1}|_{\sigma,\vartheta}\}(|g_{2}|_{\infty} + |D_{v} g_{2}|_{\infty}) |g_{3}|_{\sigma,\vartheta} dx\\
		&  \le C_{\vartheta} \min\{ \|g_{1}\|_{2, \vartheta},\|g_{1}\|_{\sigma,\vartheta}\}(\|g_{2}\|_{\infty} + \|D_{v} g_{2}\|_{\infty}) \|g_{3}\|_{\sigma,\vartheta}.
	\end{split}
\end{equation*}

\end{enumerate}
\end{proof}

\begin{lemma}
\label{lemma : Kf estimate} 
Let $\bar K_g^\vartheta$ be defined as in \eqref{Eq : bar K_g^theta}.
Suppose that $g$ satisfies the assumption in Lemma
\ref{lemma : eigenvalue estimate}. Then there exists $C=C_{\vartheta}>0$ such that for every
$N,M>0$,
\begin{equation}
\label{Eq : Kf L^infty norm}\|\bar K_{g}^{\vartheta}f\|_{L^{\infty}%
(\mathbb{T}^{3} \times\mathbb{R}^{3})}\le C\|f^{\vartheta}\|_{L^{\infty
}(\mathbb{T}^{3} \times\mathbb{R}^{3})},
\end{equation}
\begin{equation}
\label{Eq : Kf L^infty norm unbdd}\|\bar K_{g}^{\vartheta}1_{|v|>M}%
f\|_{L^{\infty}(\mathbb{T}^{3} \times\mathbb{R}^{3})}\le C(1+M)^{-1}\|f^{\vartheta}\|_{L^{\infty}(\mathbb{T}^{3} \times\mathbb{R}^{3})},
\end{equation}
and
\begin{equation}
\label{Eq : Kf L2 norm}\|\bar K_{g}^{\vartheta}f\|_{L^{2}(\mathbb{T}^{3}
\times\mathbb{R}^{3})} \le C N^{2}\|f^{\vartheta}%
\|_{L^{2}(\mathbb{T}^{3} \times\mathbb{R}^{3})} + \frac{C}%
{N}\|f^{\vartheta}\|_{L^{\infty}(\mathbb{T}^{3}
\times\mathbb{R}^{3})}.
\end{equation}
\end{lemma}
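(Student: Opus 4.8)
The plan is to split $\bar K_g^\vartheta f$ into the action of the integral operator $K$ on $f$ plus multiplication of $f^\vartheta=w^\vartheta f$ by a short list of explicit coefficient functions of $v$, then to check that (a) every such coefficient is a bounded function of $v$ with decay $\lesssim(1+|v|)^{-1}$, and (b) $K$ has an integral kernel with a locally integrable singularity and Gaussian decay in both variables. Since the non‑$K$ parts of $\bar K_g$ in \eqref{Eq : bar K} are multiplication operators, \eqref{Eq : bar K_g^theta} gives
\[
\bar K_g^\vartheta f = w^\vartheta Kf + c_g(v)\,f^\vartheta,\qquad c_g := \partial_i\sigma^i - \sigma^{ij}v_iv_j - \partial_i\{\phi^{ij}*[\mu^{1/2}\partial_j g]\} + \phi^{ij}*[v_i\mu^{1/2}\partial_j g] + r_g,
\]
where $r_g$ is the bracketed coefficient of $f^\vartheta$ in \eqref{Eq : bar K_g^theta}, built from $\sigma_G$, $a_g$ and logarithmic derivatives of $w^\vartheta$.

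First I would bound the coefficients. By \eqref{Eq : phi} one has $|\partial_\beta\phi^{ij}(v)|\le C_\beta|v|^{-1-|\beta|}$, so Lemma \ref{Lemma : Guo2002 lemma2} (applicable since $-1,-2>-3$) gives $|\phi^{ij}*[\mu^{1/2}h]|\le C(1+|v|)^{-1}|h|_\infty$ and $|\partial_k\{\phi^{ij}*[\mu^{1/2}h]\}|\le C(1+|v|)^{-2}|h|_\infty$ for Gaussian‑controlled $h$. For the terms containing $\partial_j g$ I would first integrate by parts in $v'$ to transfer $\partial_j$ off $g$ onto $\phi^{ij}$ and $\mu^{1/2}$; this produces at worst a $|v-v'|^{-2}$ convolution kernel (still covered by Lemma \ref{Lemma : Guo2002 lemma2}) plus the single distributional contribution $\partial_i\partial_j\phi^{ij}=-8\pi\delta$, which only yields a $\mu^{1/2}g$ term, so that only $\|g\|_\infty\le\varepsilon$ enters and those pieces are $\le C(1+|v|)^{-1}\|g\|_\infty$. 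The same lemma controls $\sigma^i$ and $\partial_i\sigma^i$; Lemma \ref{Lemma : Guo2002 lemma3} gives $\sigma^{ij}v_iv_j=\lambda_1(v)|v|^2\le C(1+|v|)^{-1}$; Lemma \ref{lemma : eigenvalue estimate} with \eqref{Eq : condition for g} gives $|\sigma_G|\le C(1+|v|)^{-1}$ and (again via Lemma \ref{Lemma : Guo2002 lemma2} and one more integration by parts in $v'$ for the $\partial_i g$ piece of $a_g$) $|\partial_i\sigma_G^{ij}|,|a_g|\le C(1+|v|)^{-1}$; and $|\partial_\beta w^\vartheta|/w^\vartheta\le C(1+|v|)^{-|\beta|}$. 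Hence $|r_g|\le C(1+|v|)^{-2}$ and, in particular, $|c_g(v)|\le C(1+|v|)^{-1}$ uniformly in $x$ and $t$.

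Next I would treat the $K$ term. Using $\mu^{-1/2}\partial_i\mu=-2v_i\mu^{1/2}$ in \eqref{Eq : K} and integrating by parts in $v'$ to eliminate $\partial_j f$, one obtains
\[
Kf(v) = c_0\,\mu(v)f(v) + \int_{\mathbb{R}^3}\mathbf k(v,v')f(v')\,dv',\qquad |\mathbf k(v,v')|\le C\mu^{1/4}(v)\,[\,|v-v'|^{-2}+|v-v'|^{-1}\,]\,\mu^{1/4}(v'),
\]
with $c_0\in\mathbb{R}$; the local term comes from $\partial_i\partial_j\phi^{ij}=-8\pi\delta$, while the polynomial factors $(1+|v|)(1+|v'|)$ arising from the outer $v_i\mu^{1/2}(v)$ and from $v'_j$ are absorbed into $\mu^{1/4}(v)\mu^{1/4}(v')$, leaving only the locally integrable singularities $|v-v'|^{-2},|v-v'|^{-1}$. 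Since $w^\vartheta(v)\mu^{1/4}(v)\,\mu^{1/4}(v')w^{-\vartheta}(v')\le C\mu^{1/8}(v)\mu^{1/8}(v')$ for every $\vartheta$, the kernel of $w^\vartheta Kf$ acting on $f^\vartheta$ has the same form with $\mu^{1/8}$; in particular $w^\vartheta(v)|Kf(v)|\le C\mu^{1/8}(v)\|f^\vartheta\|_{L^\infty}$, and if $f$ is supported in $\{|v|>M\}$ then $\mu^{1/8}(v')1_{|v'|>M}\le \frac{C}{1+M}\mu^{1/16}(v')$ (and likewise $\mu(v)1_{|v|>M}\le\frac{C}{1+M}$ for the local term) supplies the extra factor $(1+M)^{-1}$.

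Assembling: \eqref{Eq : Kf L^infty norm} is immediate from $\bar K_g^\vartheta f=c_gf^\vartheta+w^\vartheta Kf$ with $\|c_g\|_\infty\le C$ and $w^\vartheta Kf\le C\mu^{1/8}\|f^\vartheta\|_{L^\infty}$; \eqref{Eq : Kf L^infty norm unbdd} follows because on $\{|v|>M\}$ one has $|c_g(v)|\le C(1+M)^{-1}$ and the $K$ term gains $(1+M)^{-1}$ as above. For \eqref{Eq : Kf L2 norm} the multiplication part gives $\|c_gf^\vartheta\|_{L^2}\le C\|f^\vartheta\|_{L^2}\le CN^2\|f^\vartheta\|_{L^2}$ (for $N\ge1$; small $N$ is never used), while for $w^\vartheta Kf$ I would split the kernel at $|v-v'|=1/N$: on $\{|v-v'|\ge1/N\}$ the kernel is $\le CN^2\mu^{1/8}(v)\mu^{1/8}(v')$ with Gaussian decay in $v-v'$, so Schur's test (Young's inequality) bounds that piece by $CN^2\|f^\vartheta\|_{L^2}$; on $\{|v-v'|<1/N\}$ the kernel integrates in $v'$ to $\le\frac{C}{N}\mu^{1/8}(v)$, so that piece maps $f^\vartheta$ to a function of $L^2$‑norm $\le\frac{C}{N}\|f^\vartheta\|_{L^\infty}$, using $\mu^{1/8}\in L^2(\mathbb{R}^3)$. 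The main obstacle is not any single inequality but the bookkeeping: correctly carrying out the $v'$‑integrations by parts that trade $\nabla_v f$ and $\nabla_v g$ for $f$ and $g$ while tracking the locally integrable $|v-v'|^{-2}$ singularities and the one distributional identity $\partial_i\partial_j\phi^{ij}=-8\pi\delta$, and, for \eqref{Eq : Kf L2 norm}, pulling out exactly the powers $N^2$ and $N^{-1}$ from the split at scale $|v-v'|\sim1/N$, which hinges on the kernel's $|v-v'|^{-2}$ (rather than milder) singularity.
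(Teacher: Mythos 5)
Your proposal is correct and follows essentially the same route as the paper: the same decomposition into $w^{\vartheta}Kf$ plus multiplication by coefficients bounded by $C(1+|v|)^{-1}$, the same integration by parts reducing $K$ to convolutions with $|v-v'|^{-1},|v-v'|^{-2}$ kernels together with the local term from $\partial_{ij}\phi^{ij}=-8\pi\delta$, the same use of the Gaussian factors to gain $(1+M)^{-1}$, and the same split at scale $|v-v'|\sim 1/N$ to extract $N^{2}$ and $N^{-1}$ in the $L^{2}$ bound. Your Schur-test organization of the $L^{2}$ estimate is a slightly cleaner packaging of the paper's region-by-region Minkowski/H\"older computation, but it is the same underlying argument.
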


\begin{proof}
Since
\[%
\begin{split}
\bar K^{\vartheta}_{g} f  &  =w^{\vartheta}\bar K_{g} f + \left(  2
\frac{\partial_{i} w^{\vartheta}\partial_{j} w^{\vartheta}}{w^{2 \vartheta}%
}\sigma_{G}^{ij} -\frac{\partial_{ij}w^{\vartheta}}{w^{\vartheta}}\sigma
_{G}^{ij} - \frac{\partial_{j}w^{\vartheta}}{w^{\vartheta}}\partial_{i}%
\sigma_{G}^{ij} - \frac{\partial_{i} w^{\vartheta}}{w^{\vartheta}}a_{g}^{i}
\right)  f^{\vartheta}\\
&  = w^{\vartheta}Kf + \partial_{i} \sigma^{i} f^{\vartheta}-v\cdot \sigma v f^{\vartheta}-\partial
_{i}\left\{  \phi^{ij}*[\mu^{1/2}\partial_{j} g] \right\}  f^{\vartheta}+
\left\{  \phi^{ij}*[v_{i} \mu^{1/2} \partial_{j} g] \right\}  f^{\vartheta}\\
&  \quad+\left(  2 \frac{\partial_{i} w^{\vartheta}\partial_{j} w^{\vartheta}%
}{w^{2 \vartheta}}\sigma_{G}^{ij} -\frac{\partial_{ij}w^{\vartheta}%
}{w^{\vartheta}}\sigma_{G}^{ij} - \frac{\partial_{j}w^{\vartheta}%
}{w^{\vartheta}}\partial_{i}\sigma_{G}^{ij} - \frac{\partial_{i} w^{\vartheta
}}{w^{\vartheta}}a_{g}^{i} \right)  f^{\vartheta},%
\end{split}
\]
where $\phi$, $K$, $\sigma_G$, $\sigma^i$ are defined as in \eqref{Eq : phi}, \eqref{Eq : K}, \eqref{Eq : sigma_u}, and \eqref{Eq : sigma and sigma^i} with $G = \mu + \mu^{1/2}g$,
by Lemma \ref{Lemma : Guo2002 lemma2} and \ref{Lemma : Guo2002 lemma3}, we
have
\[
|\partial_{i} \sigma^{i}(v)| + |v\cdot \sigma v| + \left|  \partial_{i}\left\{  \phi^{ij}%
*[\mu^{1/2}\partial_{j} g](v) \right\}  \right|  + \left|  \left\{  \phi
^{ij}*[v_{i} \mu^{1/2} \partial_{j} g](v) \right\}  \right|   \le
C(1+|v|)^{-1},
\]
\[
\left|  2 \frac{\partial_{i} w^{\vartheta}\partial_{j} w^{\vartheta}}{w^{2
\vartheta}}\sigma_{G}^{ij}\right|  +\left|  \frac{\partial_{ij}w^{\vartheta}%
}{w^{\vartheta}}\sigma_{G}^{ij}\right|  + \left|  \frac{\partial
_{j}w^{\vartheta}}{w^{\vartheta}}\partial_{i}\sigma_{G}^{ij}\right|  + \left|
\frac{\partial_{i} w^{\vartheta}}{w^{\vartheta}}a_{g}^{i} \right|  \le
C(1+|v|)^{-1}.%
\]
Thus it is sufficient to show that $w^{\vartheta}Kf$ also satisfies
\eqref{Eq : Kf L^infty norm} - \eqref{Eq : Kf L2 norm}.

After the integration by parts, we have
\[%
\begin{split}
w^{\vartheta}Kf  &  = - w^{\vartheta}\mu^{-1/2}\partial_{i} \left\{
\mu\left[  \phi^{ij}* \left\{  \mu^{1/2}[\partial_{j} f + v_{j} f] \right\}
\right]  \right\} \\
&  = 2w^{\vartheta}v_{i} \mu\left[  \phi^{ij}* \left\{  \mu^{1/2}[\partial_{j}
f + v_{j} f] \right\}  \right]  - w^{\vartheta}\mu^{1/2}\left[  \partial_{i}
\phi^{ij}* \left\{  \mu^{1/2}[\partial_{j} f + v_{j} f] \right\}  \right] \\
&  =2w^{\vartheta}v_{i} \mu\left[  \phi^{ij}*(v_{j} \mu^{1/2} f)\right]  -
2w^{\vartheta}v_{i} \mu\left[  \phi^{ij}*(\partial_{j}\mu^{1/2} f)\right]  +
2w^{\vartheta}v_{i} \mu\left[  \partial_{j} \phi^{ij}*(\mu^{1/2} f)\right] \\
&  \quad-w^{\vartheta}\mu^{1/2}\left[  \partial_{i} \phi^{ij}* (v_{j}\mu
^{1/2}f)\right]  + w^{\vartheta}\mu^{1/2}\left[  \partial_{i} \phi^{ij}*
(\partial_{j}\mu^{1/2}f)\right]  -w^{\vartheta}\mu^{1/2}\left[  \partial_{ij}
\phi^{ij}* (\mu^{1/2}f)\right] \\
&  =4w^{\vartheta}v_{i} \mu\left[  \phi^{ij}*(v_{j} \mu^{1/2} f)\right]  +
2w^{\vartheta}v_{i} \mu\left[  \partial_{j} \phi^{ij}*(\mu^{1/2} f)\right] \\
&  \quad-w^{\vartheta}2\mu^{1/2}\left[  \partial_{i} \phi^{ij}* (v_{j}%
\mu^{1/2}f)\right]  - w^{\vartheta}\mu^{1/2}\left[  \partial_{ij} \phi^{ij}*
(\mu^{1/2}f)\right] \\
&  =4w^{\vartheta}v_{i} \mu\left[  \phi^{ij}*(v_{j} w^{-\vartheta}\mu^{1/2}
f^{\vartheta})\right]  + 2w^{\vartheta}v_{i} \mu\left[  \partial_{j} \phi
^{ij}*(w^{-\vartheta}\mu^{1/2} f^{\vartheta})\right] \\
&  \quad-w^{\vartheta}2\mu^{1/2}\left[  \partial_{i} \phi^{ij}* (v_{j}
w^{-\vartheta}\mu^{1/2}f^{\vartheta})\right]  - w^{\vartheta}\mu^{1/2}\left[
\partial_{ij} \phi^{ij}* (w^{-\vartheta}\mu^{1/2}f^{\vartheta})\right]\\
&=(I) + (II) + (III) + (VI).
\end{split}
\]
Applying Lemma \ref{Lemma : Guo2002 lemma2} to $(I) + (II) + (III)$, we have $(I) + (II) + (III) \le C\|f^{\vartheta}\|_{L^{\infty}(\T^3 \times \R^3)}$.
Note that $\partial_{ij}\phi^{ij}*(\mu^{1/2}f) = -8 \pi\mu^{1/2}f$.
Thus we also have $(VI) \le C\|f^{\vartheta}\|_{L^{\infty}(\T^3 \times \R^3)}$.
Therefore, we have \eqref{Eq : Kf L^infty norm}.

Since every convolution term of $Kf$ contains $\mu^{1/2}$, we have \eqref{Eq : Kf L^infty norm unbdd}.

For \eqref{Eq : Kf L2 norm}, clearly we have
\begin{equation}	\label{Eq : unbdd term}
	\|1_{|v|\ge N}w^{\vartheta}K f\|_{2} \le\frac{C}{N}
\|w^{\vartheta}f\|_{\infty}.
\end{equation}
Now we will estimate $\|1_{|v|<N}w^{\vartheta}K f\|_{2}$.
First, consider $ \left\|1_{|v|<N} v_{i} w^{\vartheta}\mu\left[  \partial_{j} \phi^{ij}*w^{-\vartheta}\mu^{1/2} f^{\vartheta})\right]  \right\|_{L^{2}(\T^3 \times \R^3)}$. 
\begin{align*}
	&\left\|1_{|v|<N} v_{i} w^{\vartheta}\mu\left[  \partial_{j} \phi^{ij}*w^{-\vartheta}\mu^{1/2} f^{\vartheta})\right]  \right\|_{2}^{2}\\
	&  = \int_{\mathbb{T}^{3}}\int_{|v|<N} \left(  \int\partial_{j} \phi^{ij}(v-v^{\prime})v_{i}w^{\vartheta}(v)\mu(v) \mu^{1/2}(v^{\prime})w^{- \vartheta}(v^{\prime})f^{\vartheta}(v^{\prime}) dv^{\prime}\right)  ^{2} dv dx\\
	&  \le C \Bigg(\int_{\mathbb{T}^{3}} \int_{|v|<N} \left(  \int_{1/N<|v-v^{\prime}|<2N}dv^{\prime}\right)  ^{2} dvdx\\
	&  \quad+ \int_{\mathbb{T}^{3}}\int_{|v|<N} \left(  \int_{|v-v^{\prime}|>2N}dv^{\prime}\right)  ^{2} dv dx\\
	&  \quad+ \int_{\mathbb{T}^{3}}\int_{|v|<N} \left(  \int_{|v-v^{\prime}|<1/N}dv^{\prime}\right)  ^{2} dv dx\Bigg)\\
	&  =(i) + (ii) + (iii).
\end{align*}
Since $|\partial_{j} \phi^{ij}(v-v^{\prime})|\le C|v-v^{\prime}|^{-2}$, by
the Minkowski and H\"older inequality,
\begin{align*}
	(i) & \le\int_{\mathbb{T}^{3}}\left(  \int_{|v^{\prime}|<3N}\left(\int_{1/N<|v-v^{\prime}|<2N}|v-v^{\prime}|^{-4} w^{2 \vartheta}(v)v_{i}^2\mu^{2}(v)w^{-2 \vartheta}(v^{\prime})\mu(v^{\prime})(f^{\vartheta})^{2}(v^{\prime})dv\right)^{1/2}dv^{\prime}\right)  ^{2} dx\\
	&  = \int_{\mathbb{T}^{3}}\left(  \int_{|v^{\prime}|<3N}w^{- \vartheta}(v^{\prime})\mu^{1/2}(v^{\prime})f^{\vartheta}(v^{\prime})\left(  \int_{1/N<|v-v^{\prime}|<2N}|v-v^{\prime}|^{-4} w^{2 \vartheta}(v)v_{i}^2\mu^{2}(v) dv\right)  ^{1/2}dv^{\prime}\right)  ^{2}dx\\
	&  \le C\int_{\mathbb{T}^{3}}\left(  \int_{|v^{\prime}|<3N} \mu^{1/2}(v^{\prime})(f^{\vartheta})^{2}(v^{\prime})dv^{\prime}\right)  \left(\int_{|v^{\prime}|<3N}\int_{1/N<|v-v^{\prime}|<2N}|v-v^{\prime}|^{-4}w^{2\vartheta}(v) v_{i}^{2}\mu^{2}(v) dvdv^{\prime}\right)  dx\\
	&  \le C N^{4}\|f^{\vartheta}\|_{2}^{2}.
\end{align*}
Note that if $|v|<N$ and $|v-v^{\prime}|>2N$, then $|v^{\prime}|>N$. Since the
integrand of $(ii)$ contains a Maxwellian and $|v^{\prime}|>N$, for every
$\beta>0$ we have
\[
(ii) \le\frac{C_{\beta}}{N^{2\beta}}\|f^{\vartheta}\|_{\infty}^{2}.
\]
Finally,
\[
(iii) \le C \|f^{\vartheta}\|_{\infty}^{2} \iint_{\mathbb{T}^{3}
\times[0,\infty)}\int_{\mathbb{R}^{3}} \mu(v)\left(  \int_{|v-v^{\prime}%
|<1/N}|v-v^{\prime}|^{-2}dv^{\prime}\right)  ^{2}dv dxdt \le C\frac{1}{N^{2}}\|f^{\vartheta}\|_{\infty}^{2}.
\]
So we have
\begin{equation}		\label{Eq : bdd 3rd term}
\left\|  1_{|v|<N} w^{\vartheta}\mu^{1/2} \left[  \partial_{ij} \phi^{ij}*
w^{-\vartheta}\mu^{1/2} f^{\vartheta}\right]  \right\|  _{2} \le CN^{2}\|f^{\vartheta}\|_{2} + \frac{C}{N}\|f^{\vartheta}\|_{\infty}.
\end{equation}
In a similar manner,
\begin{equation}		\label{Eq : bdd 2nd term}
 	\left\|1_{|v|<N} w^{\vartheta}\mu^{1/2}\left[  \partial_{i} \phi^{ij}* (v_{j} w^{-\vartheta}\mu^{1/2}f^{\vartheta})\right] \right\|  _{2} \le CN^{2}\|f^{\vartheta}\|_{2} + \frac{C}{N}\|f^{\vartheta}\|_{\infty}
\end{equation}
and
\begin{equation} 	\label{Eq : bdd 1st term}
	\left\|  1_{|v|<N} v_{i} w^{\vartheta}\mu\left[  \phi^{ij}*(v_{j} w^{-\vartheta}\mu^{1/2} f^{\vartheta})\right]  \right\|  _{2}\le CN\|f^{\vartheta}\|_{2} + \frac{C}{N^{2}}\|f^{\vartheta}\|_{\infty}.
\end{equation}
Note that 
\begin{align*}
	w^{\vartheta}\mu^{1/2}\left[\partial_{ij}\phi^{ij}*(w^{-\vartheta}\mu^{1/2}f^{\vartheta})\right]& = w^{\vartheta}\mu^{1/2}\left[-8 \pi w^{-\vartheta}\mu^{1/2}f^{\vartheta} \right]\\
	&=-8 \pi \mu f^{\vartheta}.
\end{align*}
Thus,
\begin{equation}		\label{Eq : 4th term}
	\left\| 1_{|v|<N} w^{\vartheta}\mu^{1/2}\left[\partial_{ij}\phi^{ij}*(w^{-\vartheta}\mu^{1/2}f^{\vartheta})\right]\right\|_{2} \le C \|f^{\vartheta}\|_{2}.
\end{equation}
From \eqref{Eq : unbdd term} - \eqref{Eq : 4th term}, we have \eqref{Eq : Kf L2 norm}

So the proof is complete.
\end{proof}

\section{Maximum Principle} 
\label{sec:maximum_principle}
In this section, we first define a weak solution for \eqref{Eq : equation for h} and obtain the well-posedness and the maximum principle of the weak solution for \eqref{Eq : equation for h}.
Due to the lack of regularity, we cannot use a direct contradiction argument for the weak solution as in the case of strong solutions.
Therefore, we first construct a smooth approximated solution and then pass to the limit to obtain the maximum principle for the weak solution.
\begin{definition}
\label{Def : weak sol for h} Let $h(t,x,v) \in L^{\infty}((0,\infty)\times
\mathbb{T}^{3} \times\mathbb{R}^{3},w^{\vartheta}(v)dtdxdv)$ be a periodic function in
$x\in\mathbb{T}^{3} = [-\pi,\pi]^{3}$ satisfying
\begin{equation*}
	\int_{0}^{t}\iint_{\T^3 \times \R^3} \big(\sigma^{ij}\partial_{i}h \partial_{j}h\big)(s,x,v) dxdvds< \infty,
\end{equation*} 
where $\sigma$ is defined as in \eqref{Eq : sigma and sigma^i}.
We say that $h$ is a weak solution of
\eqref{Eq : equation for h}, with $h(0) = h_0$ on $(0,\infty
)\times\mathbb{T}^{3} \times\mathbb{R}^{3}$ if for all $t\in(0,\infty)$ and
all $\varphi\in C^{1,1,1}_{t,x,v}\left(  (0,\infty)\times\mathbb{T}^{3}
\times\mathbb{R}^{3}\right)  $ such that $\varphi(t,x,v)$ is a periodic
function in $x\in\mathbb{T}^{3} = [-\pi,\pi]^{3}$ and $\varphi(t,x,\cdot)$ is
compactly supported in $\mathbb{R}^{3}$, it satisfies
\begin{multline}
\label{Eq : weak sol for h}
\iint_{\mathbb{T}^{3} \times\mathbb{R}^{3}} h(t,x,v) \varphi(t,x,v) dxdv -
\iint_{\mathbb{T}^{3} \times\mathbb{R}^{3}} h_{0}(x,v) \varphi(0,x,v) dxdv\\
= \iiint_{(0,t)\times\mathbb{T}^{3} \times\mathbb{R}^{3}}
h(s,x,v)\bigg(\partial_{s} \varphi + v \cdot\nabla_{x}\varphi - \left(a_g + 2 \frac{\nabla w^{\vartheta}}{w^{\vartheta}} \sigma_{G}\right)\cdot \nabla_v \varphi\bigg)(s,x,v)\\
- \nabla_{v}h(s,x,v) \cdot (\sigma_{G} \nabla_{v}\varphi)(s,x,v) dsdxdv,
\end{multline}
where $\sigma_G$ is defined as in \eqref{Eq : sigma_u} with $G = \mu + \mu^{1/2} g$.
\end{definition}
\begin{lemma}		\label{Lemma : energy estimate approximated bar A^theta} 
	Assume \eqref{Eq : condition for g}.
	Let $\sigma_G$ be the matrix defined as in \eqref{Eq : sigma_u} with $G=\mu + \mu^{1/2}g$.
	Let $\vartheta\in \N \cup \{0\}$, $\delta\ge 0$, and $h$ be a classical solution of \eqref{Eq : equation for h}.
	Then there exist $C = C(\vartheta)$, $0<\varepsilon\ll 1$ such that if $\|g\|_{\infty} < \varepsilon$, then
	\begin{equation}		\label{Eq : energy estimate h}
		\sup_{0\le s\le t}\|h(s)\|_{L^{2}}^{2}+\int_{0}^{t}\iint_{\T^3 \times \R^3} \big(\sigma^{ij}\partial_{i}h \partial_{j}h\big)(s,x,v) dxdvds\leq C(t)\|h(0)\|_{L^{2}}^{2}.
	\end{equation}
\end{lemma}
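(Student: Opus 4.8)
\textbf{Proof proposal for Lemma \ref{Lemma : energy estimate approximated bar A^theta}.}
The plan is to run the standard energy method directly on the equation $h_t + v\cdot\nabla_x h = \bar A_g^\vartheta h = \nabla_v\cdot(\sigma_G\nabla_v h) + a_g\cdot\nabla_v h - 2\frac{\nabla w^\vartheta}{w^\vartheta}\sigma_G\cdot\nabla_v h$. First I would multiply \eqref{Eq : equation for h} by $h$ and integrate over $\mathbb T^3\times\mathbb R^3$. The transport term $v\cdot\nabla_x h$ integrates to zero by periodicity in $x$ (no velocity differentiation, so no boundary issue at infinity once we note $h(t,x,\cdot)$ has enough decay — which is built into the weighted $L^\infty$ hypothesis on weak solutions, and for the classical solution considered here we may assume the necessary decay). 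The principal term gives, after integrating by parts in $v$, the good dissipative contribution $-\iint \sigma_G^{ij}\partial_i h\,\partial_j h\,dxdv$, which by Lemma \ref{lemma : eigenvalue estimate} (using \eqref{Eq : condition for g}) is comparable to $-\iint \sigma^{ij}\partial_i h\,\partial_j h\,dxdv$ up to a fixed constant; this is the term we want to retain on the left-hand side of \eqref{Eq : energy estimate h}.

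The remaining two terms are lower-order and must be absorbed. For the drift $\iint (a_g\cdot\nabla_v h)h\,dxdv = \tfrac12\iint a_g\cdot\nabla_v(h^2)\,dxdv = -\tfrac12\iint (\nabla_v\cdot a_g)h^2\,dxdv$, and from \eqref{Eq : bar A} one has $a_g^i = -\phi^{ij}*[v_i\mu^{1/2}g] - \partial_j\{\phi^{ij}*[\mu^{1/2}\partial_j g]\}$ type expressions, whose $v$-divergence is bounded by $C(1+|v|)^{-1}$ by Lemma \ref{Lemma : Guo2002 lemma2} (with a factor $\|g\|_\infty$, hence small); so this contributes $\le C\|h\|_{L^2}^2$, in fact with small constant. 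For the weight-commutator term $-2\iint \frac{\partial_i w^\vartheta}{w^\vartheta}\sigma_G^{ij}\partial_j h\, h\,dxdv$, I would use $|\partial_i w^\vartheta/w^\vartheta|\le C(1+|v|)^{-1}$ together with the eigenvalue bound $|\sigma_G^{ij}|\le C(1+|v|)^{-1}$ and Cauchy–Schwarz/Young: split $\sigma_G^{ij}\partial_j h\, h = (\sigma_G^{1/2}\partial h)(\sigma_G^{1/2})_{\cdot i}\frac{\partial_i w^\vartheta}{w^\vartheta}h$ so that it is controlled by $\eta\iint\sigma_G^{ij}\partial_i h\,\partial_j h + C_\eta\iint (1+|v|)^{-2}h^2$, the first piece absorbed into the dissipation for small $\eta$ and the second by $C\|h\|_{L^2}^2$. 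Collecting, $\tfrac{d}{dt}\tfrac12\|h\|_{L^2}^2 + c\iint\sigma^{ij}\partial_i h\,\partial_j h\,dxdv \le C\|h\|_{L^2}^2$.

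Finally I would integrate in time on $[0,t]$ and apply Grönwall's inequality to get $\sup_{0\le s\le t}\|h(s)\|_{L^2}^2 \le e^{2Ct}\|h(0)\|_{L^2}^2$, and then feed this back into the time-integrated inequality to bound $\int_0^t\iint\sigma^{ij}\partial_i h\,\partial_j h\,dxdvds \le C(t)\|h(0)\|_{L^2}^2$, which is \eqref{Eq : energy estimate h} with $C(t) = C e^{2Ct}$. The main obstacle I anticipate is not any single estimate but making sure the weight-commutator term is genuinely lower order: one must check that the $2\sigma_G^{ij}\partial_i w^\vartheta\partial_j w^\vartheta/w^{2\vartheta}$-type quadratic-in-weight pieces (which appear when one instead differentiates $w^\vartheta h$, but here are already inside $\bar K_g^\vartheta$, not $\bar A_g^\vartheta$) do not secretly carry a factor that grows in $|v|$; since $\vartheta$ is a fixed integer and all derivatives of $w^\vartheta$ lose exactly one power of $(1+|v|)$ each, combined with the decay of $\sigma_G$ this is fine, but it is the place where the integer/fixed-$\vartheta$ hypothesis is really used. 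The dependence $C = C(\vartheta)$ and the fact that $\varepsilon$ may have to shrink with $\vartheta$ both come from here.
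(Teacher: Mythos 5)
Your proposal follows essentially the same route as the paper: multiply by $h$, integrate, use Lemma \ref{lemma : eigenvalue estimate} to compare the $\sigma_G$-dissipation with the $\sigma$-dissipation, absorb the drift and weight-commutator terms via Young's inequality into a small multiple of the dissipation plus $C\|h\|_{L^2}^2$, and conclude with Gr\"onwall. The only (immaterial) difference is that for the drift term the paper estimates $a_g\cdot\nabla_v h\cdot h$ directly through the quadratic-form bound \eqref{Eq : D_(mu^1/2 g)} and Young, whereas you integrate by parts onto $\nabla_v\cdot a_g$; both give the same lower-order contribution.
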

\begin{proof}
	Multiplying \eqref{Eq : equation for h} by $h$ and integrating both sides of the resulting equation, we have
	\begin{equation}		\label{Eq : energy estimate h-1}
		\iint_{\T^3 \times \R^3} \frac{1}{2} \left( h^2(t,x,v) - h^2(0,x,v) \right) dxdv = \int_0^t \iint_{\T^3 \times \R^3} (\bar A_{g}^{\vartheta} h(s,x,v) )h(s,x,v) dxdvds.
	\end{equation}
	By Lemma \ref{lemma : eigenvalue estimate}, we have
	\begin{equation}		\label{Eq : energy estimate h-2}
		C^{-1} \sigma^{ij}\partial_{i}h \partial_{j}h \le \sigma_{G}^{ij}\partial_{i}h \partial_{j}h \le C\sigma^{ij}\partial_{i}h \partial_{j}h,
	\end{equation}
	for some $C$.
	By Lemma \ref{lemma : eigenvalue estimate} and the Young inequality, we have
	\begin{equation}		\label{Eq : energy estimate h-3}
		\begin{split}
			\left| \frac{\partial_{i}w^{\vartheta}}{w^{\vartheta}}\sigma_{G}^{ij} (\partial_{j}h) h\right|& \le C (1+|v|)^{-1} (\sigma_{G}^{ij}\partial_{i}h\partial_{j}h)^{1/2}(\sigma_{G}^{ij}h^2)^{1/2}\\
			&\le \varepsilon \sigma_{G}^{ij}\partial_{i}h\partial_{j}h + C_{\varepsilon} (1+|v|)^{-1}\sigma_{G}^{ij}h^2\\
			&\le \varepsilon \sigma^{ij}\partial_{i}h\partial_{j}h + C_{\varepsilon} h^2.
		\end{split}
	\end{equation}
	In a similar manner, by \eqref{Eq : D_(mu^1/2 g)} and the Young inequality, we have
	\begin{equation}		\label{Eq : energy estimate h-4}
		\begin{split}
			\left| \left\{\phi * [v_i \mu^{1/2}g] \right\} (\partial_{i}h) h \right| & \le C\|g\|_{\infty} D_{\mu}(\nabla_{v} h ; v)^{1/2}(\sigma^{ij}h^2)^{1/2}\\
			&\le \varepsilon \sigma^{ij}\partial_{i}h\partial_{j}h + \varepsilon h^2
		\end{split}
	\end{equation}
	and
	\begin{equation}		\label{Eq : energy estimate h-5}
		\left| \left\{  \phi^{ij}\ast\lbrack\mu^{1/2}\partial_{j}g]\right\}	\partial_{i}h h\right| \le \varepsilon \sigma^{ij}\partial_{i}h\partial_{j}h + \varepsilon h^2.
	\end{equation}
	Thus from \eqref{Eq : energy estimate h-1} - \eqref{Eq : energy estimate h-5}, we have
	\begin{multline*}
		\iint_{\T^3 \times \R^3} h^2(t,x,v)dxdv +  \int_{0}^{t}\iint_{\T^3 \times \R^3} \big(\sigma^{ij}\partial_{i}h \partial_{j}h\big)(s,x,v) dxdvds \\
		\le \iint_{\T^3 \times \R^3} h^2(0,x,v)dxdv + \varepsilon\int_{0}^{t}\iint_{\T^3 \times \R^3} \big(\sigma^{ij}\partial_{i}h \partial_{j}h\big)(s,x,v) dxdvds + C_{\varepsilon} \int_{0}^{t}\iint_{\T^3 \times \R^3} h^2(s,x,v) dxdvds.
	\end{multline*}
	Absorbing the second term of the RHS to the LHS and applying the Gronwall inequality to the resulting equation, we have \eqref{Eq : energy estimate h}.
\end{proof}
\begin{lemma}
\label{lemma : existence of weak sol h}
	Assume \eqref{Eq : condition for g}. Then there exists a unique weak solution to \eqref{Eq : equation for h} which satisifes \eqref{Eq : energy estimate h}
\end{lemma}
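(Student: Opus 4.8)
The plan is to treat \eqref{Eq : equation for h} as a linear degenerate kinetic Fokker--Planck equation, to build the weak solution by a vanishing--viscosity approximation, and to obtain uniqueness separately by an energy argument; both steps rest entirely on the a priori bound of Lemma \ref{Lemma : energy estimate approximated bar A^theta} together with the coefficient estimates from Lemma \ref{lemma : eigenvalue estimate} and \eqref{Eq : D_(mu^1/2 g)}. (Equivalently, one could bundle existence and uniqueness into a single application of Lions' non-symmetric Lax--Milgram theorem for evolution equations in a Hilbert triple $V\subset H\subset V'$, $H=L^2(\T^3\times\R^3)$ and $\|h\|_V^2=\|h\|_H^2+\iint\sigma^{ij}\partial_i h\,\partial_j h$, as is standard for Vlasov--Fokker--Planck; I will follow the more hands-on approximation route suggested in this section, for a fixed initial datum $h_0=h(0)\in L^2\cap L^\infty_{w^\vartheta}$.)

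\textbf{Existence.} First I would regularize: add $\kappa(\Delta_x+\Delta_v)$, mollify the coefficients, and truncate $\sigma_G$, $a_g$ and $\nabla w^\vartheta/w^\vartheta$ for large $|v|$, so that on each bounded velocity ball the problem is uniformly parabolic with smooth coefficients; classical parabolic theory then gives a smooth solution $h^\kappa$ with $h^\kappa(0)$ a mollification of $h_0$. The key observation is that the computation proving Lemma \ref{Lemma : energy estimate approximated bar A^theta} uses \emph{only} the bound $|\sigma_G|\lesssim(1+|v|)^{-1}$, the equivalence $C^{-1}\sigma^{ij}\xi_i\xi_j\le\sigma_G^{ij}\xi_i\xi_j\le C\sigma^{ij}\xi_i\xi_j$ of Lemma \ref{lemma : eigenvalue estimate}, and \eqref{Eq : D_(mu^1/2 g)}, while the added dissipation $\kappa(\Delta_x+\Delta_v)$ only helps; hence \eqref{Eq : energy estimate h} holds for $h^\kappa$ uniformly in $\kappa$. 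This gives $h^\kappa$ bounded in $L^\infty_t L^2_{x,v}$ and $\sigma^{1/2}\nabla_v h^\kappa$ bounded in $L^2_{t,x,v}$ on compact time intervals; extracting a weak-$\ast$/weak limit $h$ and passing to the limit in the regularized weak formulation is routine for the linear terms, and the degenerate term $\iiint\nabla_v h^\kappa\cdot\sigma_G\nabla_v\varphi$ converges since $\sigma_G\nabla_v\varphi\in L^2$ for a fixed test function $\varphi$ and $\nabla_v h^\kappa\rightharpoonup\nabla_v h$ where $\sigma_G>0$. The membership $h\in L^\infty((0,\infty)\times\T^3\times\R^3,w^\vartheta dtdxdv)$ required by Definition \ref{Def : weak sol for h} follows by applying the maximum principle for \eqref{Eq : equation for h} (established in this section) to the $h^\kappa$ and passing to the limit, giving in fact $\|h(t)\|_{\infty,\vartheta}\le\|h_0\|_{\infty,\vartheta}$.

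\textbf{Uniqueness.} By linearity it suffices to show a weak solution $h$ with $h_0=0$ vanishes. Formally one tests \eqref{Eq : weak sol for h} against $h$ itself and runs the Gronwall step of Lemma \ref{Lemma : energy estimate approximated bar A^theta}; rigorously $h$ is inadmissible (not $C^1$ in $(t,x)$, not compactly supported in $v$), so I would regularize by convolving in $(t,x)$ with a mollifier $\rho_\varepsilon$ and multiplying by a smooth velocity cutoff $\chi_R(v)$. The transport term then produces $-\tfrac12\frac{d}{dt}\iint(h\ast\rho_\varepsilon)^2\chi_R$ up to a Friedrichs commutator for $v\cdot\nabla_x$ that vanishes as $\varepsilon\to0$ and a cutoff error supported in $\{R\le|v|\le2R\}$ controlled by the finite $\sigma$-weighted gradient of $h$, which vanishes as $R\to\infty$; the diffusion and drift terms are absorbed exactly as in \eqref{Eq : energy estimate h-2}--\eqref{Eq : energy estimate h-5}. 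Passing $\varepsilon\to0$ then $R\to\infty$ gives $\|h(t)\|_{L^2}^2+\int_0^t(\text{dissipation})\le C\int_0^t\|h(s)\|_{L^2}^2$, hence $h\equiv0$.

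\textbf{Main obstacle.} The delicate point is exactly this rigorous energy identity for the low-regularity weak solution: one must verify that $t\mapsto\|h(t)\|_{L^2}^2$ is absolutely continuous --- which follows from a Lions--Magenes type lemma once one reads off $\partial_t h+v\cdot\nabla_x h\in L^2_t V'$ from the equation --- and that the velocity-cutoff commutators genuinely vanish in the limit, using only the $\sigma$-weighted gradient control $\int_0^t\iint\sigma^{ij}\partial_i h\,\partial_j h<\infty$ that a weak solution is assumed to possess. Everything else is a routine repetition of the estimates already carried out for Lemma \ref{Lemma : energy estimate approximated bar A^theta}.
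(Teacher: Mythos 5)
Your proposal is correct and follows essentially the same route as the paper: regularize (the paper mollifies $g$ and $h_0$ and invokes a hyperviscosity regularization $\varepsilon(\nabla_{x,v})^{2m}$ to solve the smooth linear problem, where you add $\kappa(\Delta_x+\Delta_v)$ and truncate the coefficients), use the regularization-uniform energy bound of Lemma \ref{Lemma : energy estimate approximated bar A^theta} to extract a weak $L^2$ limit and pass to the limit in the weak formulation, and obtain uniqueness by applying the same energy estimate to the difference of two weak solutions. The only difference is expository: the paper dismisses uniqueness as ``an analogue of Lemma \ref{Lemma : energy estimate approximated bar A^theta},'' whereas you spell out the mollification, velocity-cutoff and commutator argument needed to justify testing a weak solution against itself -- a legitimate gap the paper leaves implicit.
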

\begin{proof}
	We approximate $g$ by $g^{\delta} \in C^{\infty}$ and $h_{0}$ by $h^{\delta}_{0}\in C^\infty$ such that $\|g^{\delta}\|_{\infty }\leq \|g\|_{\infty }$, $\|h^{\delta}_{0}\|_{\infty }\leq \|h_{0}\|_{\infty }$ and 
	\begin{equation*}
		\lim \|g^{\delta}-g\|_{\infty }=0,\quad \lim \|h^{\delta}_{0}-h_{0}\|_{1}=0.
	\end{equation*}
	Consider
	\begin{equation}		\label{Eq : h delta}
		\partial_{t} h^{\delta}+v\cdot\nabla_{x}h^{\delta}=\bar{A}_{g^{\delta} }^{\vartheta}h^{\delta}
	\end{equation}
	\begin{equation*}
		h^{\delta}(0,x,v) = h^{\delta}_{0}(x,v).
	\end{equation*}
	By Lemma \ref{lemma : eigenvalue estimate}, $\sigma_G \ge 0$.
	Since $\sigma_G \ge 0$, it is rather standard (for instance, by adding regularization $\varepsilon(\nabla_{x,v})^{2m}$, for some large integer $m$, then letting $\varepsilon \rightarrow 0$, if necessary) that there exists a solution $h^{\delta}$ to the linear equation \eqref{Eq : h delta}.
	Since $g^{\delta}$ and $h^{\delta}_{0}$ are smooth, we can derive a similar energy estimate for the derivatives of $h^{\delta}$ by taking derivatives of the above equation and multiplying by the derivatives of $h^{\delta}$ and integrating both sides of the resulting equation as in \cite{guo2002landau}.
	For more details, see \cite{guo2002landau}.
	Therefore, $h^{\delta}$ is smooth.

	By \eqref{Eq : energy estimate h}, $\|h^{\delta}(s)\|_{L^2}^2$ is uniformly bounded on $0\le s\le t$.
	Therefore, there exists $h$ such that $h^{\delta}\rightarrow h$ weakly in $L^2$.
	Multiplying \eqref{Eq : h delta} by a test function $\varphi$, integrating both sides of the resulting equation, and taking the integration by parts, we have
	\begin{equation}
	\label{Eq : h delta weak form}
		\begin{split}
			&  \iint_{\mathbb{T}^{3} \times\mathbb{R}^{3}} h^{\delta}(t,x,v) \varphi(t,x,v) dxdv -
			\iint_{\mathbb{T}^{3} \times\mathbb{R}^{3}} h^{\delta}_{0}(x,v) \varphi(0,x,v) dxdv\\
			& = \iiint_{(0,t)\times\mathbb{T}^{3} \times\mathbb{R}^{3}}
			h^{\delta}(s,x,v)\bigg(\partial_{s} \varphi(s,x,v) + v \cdot\nabla_{x}\varphi(s,x,v)- \left(a_{g^{\delta}} + 2 \frac{\nabla w^{\vartheta}}{w^{\vartheta}} \sigma_{G^{\delta}}\right)\cdot \nabla_v \varphi
			\\
			& \quad \quad \quad \quad \quad \quad + \nabla_v \cdot (\sigma_{G^{\delta}} \nabla_v \varphi) dsdxdv,
		\end{split}
	\end{equation}
	where $G^{\delta} = \mu + \sqrt \mu g^{\delta}$.
	Since $h^{\delta} \rightarrow h$ weakly in $L^2$, taking $\delta \rightarrow 0$ in \eqref{Eq : h delta weak form} we have \eqref{Eq : weak sol for h}.
	Therefore $h$ is a weak solution of \eqref{Eq : equation for h}.
	The second assertion is an analogue of Lemma \ref{Lemma : energy estimate approximated bar A^theta}.
	Let $h$ and $\tilde h$ be weak solutions to \eqref{Eq : equation for h}.
	Then $h-\tilde h$ is also a weak solution to \eqref{Eq : equation for h} with zero initial data.
	Therefore, we have $\sup_{0\le s \le t} \|(h-\tilde h)(s)\|_{L^2}^2 = 0$.
	Thus we obtain the uniqueness.
\end{proof}

Before we derive the maximum principle for weak solutions, we establish the maximum principle for strong solutions.
We first derive the maximum principle for strong solutions in bounded domains. The following
technique is similar to that in \cite{hwang2014fokker}.

\begin{lemma}
\label{lemma : max. principle bdd domain}
Assume \eqref{Eq : condition for g}. 
Let $h\in C^{1,1,2}_{t,x,v}\left(
[0,T] \times\mathbb{T}^{3} \times B(0;M) \right)  $ be a periodic function
satisfying $\mathcal{M}_{g}^{\vartheta}h \le0$. Then $h$ attains its maximum
only at $t=0$ or $|v|=M$.
\end{lemma}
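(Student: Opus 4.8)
The plan is to prove a classical weak maximum principle for the degenerate parabolic operator
$\mathcal{M}_g^\vartheta h := \partial_t h + v\cdot\nabla_x h - \bar A_g^\vartheta h$
(subsolution convention; see \eqref{Eq : bar A_g^theta}, \eqref{Eq : equation for h}) on the cylinder $\overline Q := [0,T]\times\mathbb{T}^3\times\overline{B(0;M)}$, via the usual two-step scheme: first handle strict subsolutions by inspecting the sign of the derivatives of $h$ at an interior maximum, then remove the strictness by the perturbation $h_\eta := h - \eta t$ and send $\eta\to 0$. Before starting I would record, using \eqref{Eq : bar A} and \eqref{Eq : bar A_g^theta}, that $\bar A_g^\vartheta$ expands as $\sigma_G^{ij}\partial_{ij} + b^j\partial_j$ with $b^j := \partial_i\sigma_G^{ij} + a_g^j - 2(\partial_i w^\vartheta/w^\vartheta)\sigma_G^{ij}$ — in particular it carries \emph{no} zeroth-order term — and that by Lemma \ref{lemma : eigenvalue estimate} (invoking the smallness \eqref{Eq : condition for g}) the matrix $\sigma_G(v)$ is positive semidefinite for every $v$, while $b$ is continuous, hence bounded on the compact set $\overline Q$.

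Step one: strict subsolutions, i.e. $\mathcal{M}_g^\vartheta h < 0$. Since $h$ is continuous on the compact cylinder $\overline Q$, it attains its maximum at some $(t_*,x_*,v_*)$, and I claim $t_* = 0$ or $|v_*| = M$. If not, then $t_* > 0$ and $|v_*| < M$, and the standard first- and second-order conditions hold at $(t_*,x_*,v_*)$: $\partial_t h \ge 0$ (interior in $t$ if $t_*<T$, one-sided if $t_*=T$); $\nabla_x h = 0$ — here I would emphasize that because $x$ ranges over the boundaryless compact manifold $\mathbb{T}^3$, \emph{every} spatial point is interior, so this is automatic and the transport term $v_*\cdot\nabla_x h$ vanishes; $\nabla_v h = 0$ since $v_*$ is in the open ball; and $D_v^2 h \preceq 0$. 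Combining $\sigma_G(v_*)\succeq 0$ with $D_v^2 h\preceq 0$ gives $\sigma_G^{ij}\partial_{ij}h = \operatorname{tr}(\sigma_G D_v^2 h)\le 0$ at that point, whence $\mathcal{M}_g^\vartheta h(t_*,x_*,v_*) = \partial_t h + v_*\cdot\nabla_x h - \sigma_G^{ij}\partial_{ij}h - b^j\partial_j h \ge 0$, contradicting $\mathcal{M}_g^\vartheta h < 0$. So a strict subsolution attains its maximum only on $\{t=0\}\cup\{|v|=M\}$.

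Step two: the general case $\mathcal{M}_g^\vartheta h\le 0$. Put $h_\eta := h - \eta t$, $\eta>0$. Since $\eta t$ is independent of $v$ and $\bar A_g^\vartheta$ has no zeroth-order term, $\bar A_g^\vartheta(\eta t) = 0$, while $\partial_t(\eta t)=\eta$ and $v\cdot\nabla_x(\eta t)=0$; hence $\mathcal{M}_g^\vartheta h_\eta = \mathcal{M}_g^\vartheta h - \eta < 0$, and step one applies to $h_\eta$, giving $\max_{\overline Q} h_\eta = \max_{\{t=0\}\cup\{|v|=M\}} h_\eta$. Because $0\le h - h_\eta = \eta t\le \eta T$ uniformly on $\overline Q$, letting $\eta\to 0$ yields $\max_{\overline Q} h = \max_{\{t=0\}\cup\{|v|=M\}} h$, so $h$ attains its maximum on $\{t=0\}\cup\{|v|=M\}$ (and only there when the subsolution inequality is strict).

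The only substantive points, and thus where care is needed, are: (i) the degeneracy of the velocity diffusion, which is exactly why one needs $\sigma_G(v)\succeq 0$ for all $v$ — this is precisely Lemma \ref{lemma : eigenvalue estimate} under \eqref{Eq : condition for g}; (ii) the absence of a spatial boundary, which is in fact a simplification, since maximality over $\mathbb{T}^3$ forces $\nabla_x h=0$ and annihilates the transport term; and (iii) the absence of a zeroth-order term in $\bar A_g^\vartheta$, so that no sign restriction on the maximum value is required and $h-\eta t$ remains a subsolution. The perturbation-and-limit step is routine. A Hopf-type strong maximum principle excluding interior maxima for honest solutions would additionally need the hypoellipticity generated by $\{v\cdot\nabla_x,\nabla_v\}$, but it is not needed here — the statement above is exactly what is used in the subsequent approximation argument for the weak solution of \eqref{Eq : equation for h}.
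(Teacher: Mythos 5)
Your proposal is correct and follows essentially the same route as the paper: rule out an interior maximum for strict subsolutions using $\sigma_G\succeq 0$ (Lemma \ref{lemma : eigenvalue estimate}) together with the first- and second-order conditions at the maximum, then pass to $\mathcal{M}_g^\vartheta h\le 0$ via the perturbation $h-\eta t$ and let $\eta\to 0$. Your treatment of the spatial variable is in fact slightly cleaner, since viewing $\mathbb{T}^3$ as boundaryless makes every $x$ interior and removes the paper's separate (and unnecessary) case analysis at $|x|=\pi$.
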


\begin{proof}
Let us assume that $\max_{(t,x,v)\in[0,T] \times\mathbb{T}^{3} \times B(0;M)}
h(t,x,v) > 0$ and $\mathcal{M}_{g}^{\vartheta}h <0$. Suppose that $h$ attains
its maximum at an interior point $(t,x,v) \in[0,T] \times\mathbb{T}^{3} \times
B(0;M)$ or at $(T,x,v)$ with $(x,v)$ lying in the interior. Since $\sigma_G \ge 0$ by Lemma \ref{lemma : eigenvalue estimate}, we have $\partial_{t}
h \ge0$, $\nabla_{x} h =0$, and $\nabla_{v} h =0$ while $\sigma_{G}^{ij}\partial_{ij} h \le0$ and $h(t,x,v)>0$. Thus $\mathcal{M}_{g}^{\vartheta
}h(t,x,v) \ge0$ and this gives a contradiction. Suppose $h$ attains its
maximum at $|x| = \pi$. Since $h$ is periodic in $x$, we can assume that $h$
attains its maximum at $x = \pi$, $v\ge0$ or $x= -\pi$, $v\le0$. Then 
$\partial_{t} h = 0$, $v\cdot\nabla_{x} h \ge0$, and $\nabla_{v} h = 0$
while $\sigma_{G}^{ij}\partial_{ij} h \le0$ and $h(t,x,v)>0$. Thus
$\mathcal{M}_{g}^{\vartheta}h(t,x,v) \ge0$ which makes a contradiction too.

In the case of $\mathcal{M}_{g}^{\vartheta}h \le0$, define $h^{k} := h-kt$ for
$k>0$, then $\mathcal{M}_{g}^{\vartheta}h^{k} <0$. Thus we have
\[
\sup_{(t,x,v)\in[0,T] \times\mathbb{T}^{3} \times B(0;M)} h^{k}(t,x,v) =
\sup_{t=0 \text{ or }|v|=M} h^{k}(t,x,v).
\]
Taking $k \rightarrow0$, we complete the proof.
\end{proof}

\begin{lemma}
\label{lemma : barrier function}
Assume \eqref{Eq : condition for g}.
There exists $\varphi\in C^{1,1,2}\left(
[0,T]\times\mathbb{T}^{3} \times\mathbb{R}^{3}\right)  $ with $\varphi\ge0$,
which satisfies $\mathcal{M}_{g}^{\vartheta}\varphi\ge0$ and $\varphi
\rightarrow\infty$ as $|v| \rightarrow\infty$ uniformly in $t \in[0,T]$.
\end{lemma}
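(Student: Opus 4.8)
The plan is to exhibit an explicit, $x$-independent barrier. Recall from the discussion around \eqref{Eq : equation for h} that $\mathcal{M}^\vartheta_g$ is the transport--diffusion operator
\[
\mathcal{M}^\vartheta_g h := \partial_t h + v\cdot\nabla_x h - \bar A^\vartheta_g h ,
\]
and that, expanding $\bar A^\vartheta_g$ via \eqref{Eq : bar A} and \eqref{Eq : bar A_g^theta}, one may write $\bar A^\vartheta_g h = \sigma_G^{ij}\partial_{ij}h + b^j\partial_j h$, where $G=\mu+\mu^{1/2}g$ and the drift
\[
b^j := \partial_i\sigma_G^{ij} + a_g^j - 2\,\frac{\partial_i w^\vartheta}{w^\vartheta}\,\sigma_G^{ij}
\]
collects all first-order coefficients (summation over $i$). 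I will look for $\varphi$ of the form
\[
\varphi(t,v) := \langle v\rangle + \lambda t , \qquad \langle v\rangle := (1+|v|^2)^{1/2},
\]
with $\lambda>0$ to be fixed below. Since $\varphi$ is independent of $x$, $v\cdot\nabla_x\varphi\equiv 0$, so
\[
\mathcal{M}^\vartheta_g\varphi = \lambda - \sigma_G^{ij}\partial_{ij}\langle v\rangle - b^j\partial_j\langle v\rangle ,
\]
and the whole proof reduces to bounding the last two terms by a constant uniform in $(t,x,v)$.

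The first step is to record that the coefficients of $\bar A^\vartheta_g$ are bounded, uniformly in $(t,x)$ thanks to \eqref{Eq : condition for g}. By Lemma \ref{lemma : eigenvalue estimate}, $\sigma_G(v)$ is symmetric with eigenvalues in $[0,C]$, hence $0\le \mathrm{tr}\,\sigma_G(v)\le C$ and $v^T\sigma_G(v)v\ge 0$. For the drift: $\partial_i\sigma_G^{ij}=\big(\sum_i\partial_i\phi^{ij}\big)*G$ with $\sum_i\partial_i\phi^{ij}(v)=-2v_j|v|^{-3}$, which is locally integrable, so Lemma \ref{Lemma : Guo2002 lemma2} for the $\mu$-part and the dyadic splitting $\{2|v'|>|v|\}\cup\{2|v'|\le|v|\}$ of the proof of Lemma \ref{lemma : eigenvalue estimate} for the $\mu^{1/2}g$-part give $|\partial_i\sigma_G^{ij}(v)|\le C$; likewise $\big|\tfrac{\partial_i w^\vartheta}{w^\vartheta}\sigma_G^{ij}\big|\le C$ since $|\partial_i w^\vartheta/w^\vartheta|\le |\vartheta|$ and the eigenvalues of $\sigma_G$ are bounded. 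For $a_g^j$, which by \eqref{Eq : bar A} is a linear combination of the terms $\phi^{ij}*[v_i\mu^{1/2}g]$ and $\phi^{ij}*[\mu^{1/2}\partial_j g]$, the first is bounded by $C\|g\|_\infty$ by the same splitting; for the second, since $g$ is only $L^\infty$, one first integrates by parts in $v'$ to rewrite it as $(\partial_j\phi^{ij})*[\mu^{1/2}g]-\phi^{ij}*[(\partial_j\mu^{1/2})g]$, and both resulting terms are then bounded by $C\|g\|_\infty$ as before. Altogether $|b(v)|\le C$ (away from $v=0$, where the $\tfrac{\partial_i w^\vartheta}{w^\vartheta}\sigma_G^{ij}$ term may be discontinuous but stays bounded), with $C=C(\vartheta,\varepsilon)$.

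To conclude, from $\nabla_v\langle v\rangle = v/\langle v\rangle$ (so $|\nabla_v\langle v\rangle|\le 1$) and $\partial_{ij}\langle v\rangle = \delta_{ij}/\langle v\rangle - v_iv_j/\langle v\rangle^3$ we get, discarding the nonnegative term,
\[
\sigma_G^{ij}\partial_{ij}\langle v\rangle
= \frac{\mathrm{tr}\,\sigma_G}{\langle v\rangle} - \frac{v^T\sigma_G v}{\langle v\rangle^3}
\le \frac{\mathrm{tr}\,\sigma_G}{\langle v\rangle}\le C ,
\]
and $|b^j\partial_j\langle v\rangle| = |b\cdot\nabla_v\langle v\rangle|\le |b|\le C$; at $v=0$ the drift term vanishes identically since $\nabla_v\langle v\rangle(0)=0$, so the possible discontinuity of $b$ there causes no problem. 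Hence $\sigma_G^{ij}\partial_{ij}\langle v\rangle + b^j\partial_j\langle v\rangle\le C_0$ for a constant $C_0=C_0(\vartheta,\varepsilon)$ independent of $(t,x,v)$, and choosing $\lambda:=C_0$ yields $\mathcal{M}^\vartheta_g\varphi\ge 0$. The remaining properties are immediate: $\varphi=\langle v\rangle+\lambda t\ge\langle v\rangle\ge 1>0$; $\varphi(t,v)\ge\langle v\rangle\to\infty$ as $|v|\to\infty$, uniformly in $t\in[0,T]$; and $\varphi$ is $C^\infty$ in $(t,x,v)$ — in particular it lies in $C^{1,1,2}$ — and $x$-independent, hence trivially periodic in $x$. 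The only slightly delicate point is the collection of the uniform bounds on $b$, especially the $a_g$ term built from a convolution against $\partial_j g$ with $g\in L^\infty$ only; but, as indicated, this is entirely within reach of Lemma \ref{Lemma : Guo2002 lemma2} and the splitting already used for Lemma \ref{lemma : eigenvalue estimate}.
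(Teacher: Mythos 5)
Your proof is correct and follows essentially the same strategy as the paper's: an explicit $x$-independent barrier whose time derivative dominates the uniformly bounded contributions of $\bar A_g^\vartheta$, with the coefficient bounds supplied by Lemma \ref{lemma : eigenvalue estimate} and Lemma \ref{Lemma : Guo2002 lemma2} (the paper takes $\varphi = e^{kt}(1+|v|^2)$ with $k$ large, so that the term $k e^{kt}(1+|v|^2)$ dominates the decaying coefficients, whereas you take $\langle v\rangle + \lambda t$ and use $|\nabla_v\langle v\rangle|\le 1$ to reduce everything to uniform boundedness of the drift; the verification is the same in substance). Your explicit integration by parts to make sense of the $\phi^{ij}\ast[\mu^{1/2}\partial_j g]$ contribution to $a_g$ when $g$ is only in $L^\infty$ is a welcome detail that the paper's proof leaves implicit.
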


\begin{proof}
Define,
\begin{equation}
\label{Eq : barrier}\varphi(t,x,v):= \varphi(t,v) = \alpha_{1}(t) + \alpha
_{2}(t) |v|^{2}.
\end{equation}
Then
\[%
\begin{split}
\mathcal{M}_{g}^{\vartheta}\varphi &  = \alpha_{1}^{\prime}(t) + \alpha
_{2}^{\prime }(t)|v|^{2}- 2 \alpha_{2}(t) \nabla_{v}\cdot(\sigma_{G} v)\\
&  \quad - 2 \alpha_{2}(t)
a_{g}\cdot v -2 \alpha_{2}(t) \frac{\partial_{i} w^{\vartheta}}{w^{\vartheta}%
}\sigma^{ij}v_{j}\\
&  = \alpha_{1}^{\prime}(t) + \alpha_{2}^{\prime }(t)|v|^2- 2 \alpha_{2}(t)
\partial_{i} \sigma_{G}^{ij}v_{j} - 2 \alpha_{2}(t) \sigma_{G}^{ii}\\
&  \quad - 2
\alpha_{2}(t)a_{g}\cdot v-2 \alpha_{2}(t) \frac{\partial_{i} w^{\vartheta}%
}{w^{\vartheta}}\sigma^{ij}v_{j}.
\end{split}
\]
Note that
\[
|\partial_{i} \sigma_{G}^{ij}| \le C \|g\|_{\infty}(1+|v|)^{-2},
\quad|\sigma_{G}^{ii}|\le C \|g\|_{\infty}(1+|v|)^{-1},
\]
\[
|a_{g}| \le C \|g\|_{\infty}(1+|v|)^{-1},\text{ and }
\frac{\partial_{i} w^{\vartheta}}{w^{\vartheta}}\sigma^{ij}v_{j} \le C
\|g\|_{\infty}(1+|v|)^{-3}.
\]
Choose $\alpha_{1}(t) = \alpha_{2}(t) := \exp(k t)$. Then $\varphi\ge0$.
Moreover,
\begin{equation*}
	\mathcal{M}_{g}^{\vartheta}\varphi  \ge e^{kt} \left(  k(1+|v|)^{2} - C(1+|v|)^{-1} - C(1+|v|)^{-1}(1+|v|) \right) \ge0
\end{equation*}
for a sufficiently large $k$.
\end{proof}

\begin{lemma}
\label{lemma : strong maximum principle}
Assume \eqref{Eq : condition for g}.
Let $h\in C^{1,1,2}_{t,x,v}\left(  [0,T]
\times\mathbb{T}^{3} \times\mathbb{R}^{3} \right)  $ be a periodic function
satisfying $\mathcal{M}_{g}^{\vartheta}h \le0$. Then $h$ attains its maximum
only at $t=0$.
\end{lemma}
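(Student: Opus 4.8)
The plan is to reduce the problem, which lives on the noncompact velocity space, to the bounded-domain maximum principle of Lemma~\ref{lemma : max. principle bdd domain} by subtracting a small multiple of the barrier constructed in Lemma~\ref{lemma : barrier function}. Write $\Lambda_{0}:=\sup_{\mathbb{T}^{3}\times\mathbb{R}^{3}}h(0,\cdot)$. Since $\sup h\ge\Lambda_{0}$ is automatic, it suffices to show $\sup_{[0,T]\times\mathbb{T}^{3}\times\mathbb{R}^{3}}h\le\Lambda_{0}$; one may assume $\Lambda_{0}<\infty$ and that $h$ is bounded above (as holds for the smooth solutions to which the lemma is applied), for otherwise there is nothing to prove.

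First I would fix $\varepsilon>0$ and take $\varphi\in C^{1,1,2}$ as in Lemma~\ref{lemma : barrier function}, so that $\varphi\ge0$, $\mathcal{M}_{g}^{\vartheta}\varphi\ge0$, and $\varphi(t,v)\to\infty$ as $|v|\to\infty$ uniformly in $t\in[0,T]$, and set $h^{\varepsilon}:=h-\varepsilon\varphi$. Since $\mathcal{M}_{g}^{\vartheta}$ is linear, $\mathcal{M}_{g}^{\vartheta}h^{\varepsilon}=\mathcal{M}_{g}^{\vartheta}h-\varepsilon\,\mathcal{M}_{g}^{\vartheta}\varphi\le0$, and $h^{\varepsilon}$ is periodic in $x$ and lies in $C^{1,1,2}$. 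Using that $h$ is bounded above while $\varphi\to\infty$ uniformly in $t$, I would choose $M_{\varepsilon}>0$ so large that $h^{\varepsilon}\le\Lambda_{0}$ on $\{|v|\ge M_{\varepsilon}\}$. Applying Lemma~\ref{lemma : max. principle bdd domain} to $h^{\varepsilon}$ on $[0,T]\times\mathbb{T}^{3}\times\overline{B(0;M_{\varepsilon})}$ then shows that the maximum of $h^{\varepsilon}$ there is attained on $\{t=0\}\cup\{|v|=M_{\varepsilon}\}$; on the first piece $h^{\varepsilon}=h(0,\cdot)-\varepsilon\varphi\le\Lambda_{0}$ and on the second $h^{\varepsilon}\le\Lambda_{0}$ by the choice of $M_{\varepsilon}$, so $h^{\varepsilon}\le\Lambda_{0}$ on the whole cylinder and, together with the bound for $|v|\ge M_{\varepsilon}$, one gets $h(t,x,v)\le\Lambda_{0}+\varepsilon\,\varphi(t,v)$ for every $(t,x,v)$. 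Finally, fixing a point and letting $\varepsilon\downarrow0$ gives $h(t,x,v)\le\Lambda_{0}$, which is the claim.

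The one genuinely delicate point is the interplay of the two parameters: $M_{\varepsilon}$ must be sent to infinity as $\varepsilon\to0$ in order to expel the ``bad'' lateral set $\{|v|=M_{\varepsilon}\}$, yet one still wants to pass $\varepsilon\to0$ at a fixed $(t,x,v)$. This is harmless because the barrier $\varphi$ is a single fixed function (comparable to $(1+|v|^{2})e^{kt}$), only $M_{\varepsilon}$ depending on $\varepsilon$, so the inequality $h\le\Lambda_{0}+\varepsilon\varphi$ holds globally with that fixed $\varphi$ and $\varphi(t,v)$ remains finite as $\varepsilon\downarrow0$. A secondary technical requirement is the upper bound on $h$ needed to produce $M_{\varepsilon}$ (so that $h-\varepsilon\varphi\to-\infty$ as $|v|\to\infty$); beyond the $C^{1,1,2}$ regularity already assumed, this is the only extra information used, and it is available for the solutions arising in our construction.
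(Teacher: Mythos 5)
Your proposal is correct and follows essentially the same route as the paper's own proof: subtract a small multiple of the barrier from Lemma~\ref{lemma : barrier function}, apply the bounded-domain maximum principle of Lemma~\ref{lemma : max. principle bdd domain}, expel the lateral boundary by taking the radius large, and then let the barrier coefficient tend to zero. The only difference is cosmetic: you make explicit the upper bound on $h$ needed to dominate the barrier on $\{|v|=M\}$, which the paper uses implicitly in the same step.
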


\begin{proof}
Fix $\lambda>0$. Let $\varphi$ be a barrier function obtained in Lemma
\ref{lemma : barrier function}. Define $\eta^{\lambda}(t,x,v) := h(t,x,v) -
\lambda\varphi(t,x,v)$, then $\mathcal{M}_{g}^{\vartheta}\eta^{\lambda} \le0$.
Thus we can apply Lemma \ref{lemma : max. principle bdd domain} on the domain
$[0,T]\times\mathbb{T}^{3} \times B(0;M)$. Then we have
\[
\eta^{\lambda}(t,x,v) \le\sup_{t=0 \text{ or } |v|=M} \eta^{\lambda}(t,x,v),
\quad\text{for }(t,x,v)\in[0,T]\times\mathbb{T}^{3} \times B(0;M).
\]
Note that
\[
\eta^{\lambda}(0,x,v) = h(0,x,v) - \lambda\varphi(0,x,v) \le h(0,x,v) \le
\sup_{x,v}h(0,x,v).
\]
For a sufficiently large $M$, we have
\[
\eta^{\lambda}(t,x,v) = h(t,x,v) - \lambda\varphi(t,x,v) = h(t,x,v) -
\lambda(\alpha_{1}(t) + \alpha_{2}(t)M^{2}) \le\sup_{x,v} h(0,x,v)
\]
for $|v|=M$. Thus
\[
\eta^{\lambda}(t,x,v) \le\sup_{x,v}h(0,x,v), \quad\text{for }(t,x,v)\in
[0,T]\times\mathbb{T}^{3} \times B(0;M).
\]
Since $M$ is an arbitrary large enough constant, we can take $M \rightarrow
\infty$. Then we have
\[
\eta^{\lambda}(t,x,v) \le\sup_{x,v}h(0,x,v), \quad\text{for }(t,x,v)\in
[0,T]\times\mathbb{T}^{3} \times\mathbb{R}^{3}.
\]
Taking $\lambda\rightarrow0$, we have
\[
h(t,x,v) \le\sup_{x,v}h(0,x,v).
\]
Thus we complete the proof.
\end{proof}

Now we will derive the maximum principle for weak solutions.
\begin{lemma}
\label{lemma : Maximum principle}
Assume \eqref{Eq : condition for g} and $g\in C^{0}$ and $||h_{0}||_{\infty }<\infty .$ Then the weak
solution to \eqref{Eq : equation for h} satisfies 
\begin{equation}
\label{Eq : maximum principle}
\sup_{t}\|h(t)\|_{\infty }\le \|h_{0}\|_{\infty }
\end{equation}
\end{lemma}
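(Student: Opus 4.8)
The plan is to deduce the maximum principle for weak solutions from the one for strong solutions (Lemma \ref{lemma : strong maximum principle}) via the smooth approximation scheme already constructed in Lemma \ref{lemma : existence of weak sol h}. Recall that there we approximated $g$ by $g^{\delta}\in C^{\infty}$ and $h_{0}$ by $h_{0}^{\delta}\in C^{\infty}$, producing smooth solutions $h^{\delta}$ of $\partial_{t}h^{\delta}+v\cdot\nabla_{x}h^{\delta}=\bar A_{g^{\delta}}^{\vartheta}h^{\delta}$ with $h^{\delta}(0)=h_{0}^{\delta}$, and showed $h^{\delta}\rightharpoonup h$ weakly in $L^{2}$. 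The key observation is that each $h^{\delta}$ is a \emph{classical} solution on $[0,T]\times\mathbb{T}^{3}\times\mathbb{R}^{3}$ of the equation governed by $g^{\delta}$, and $g^{\delta}$ still satisfies the smallness hypothesis \eqref{Eq : condition for g} since $\|g^{\delta}\|_{\infty}\le\|g\|_{\infty}\le\varepsilon$. Hence Lemma \ref{lemma : strong maximum principle} (applied with $g^{\delta}$ in place of $g$, noting its proof only used the smallness of $\|g\|_{\infty}$ and the sign of $\sigma_{G}$) gives
\[
\sup_{[0,T]\times\mathbb{T}^{3}\times\mathbb{R}^{3}} h^{\delta}(t,x,v)\le \sup_{x,v} h_{0}^{\delta}(x,v)\le \|h_{0}^{\delta}\|_{\infty}\le\|h_{0}\|_{\infty},
\]
and applying the same to $-h^{\delta}$ yields $\|h^{\delta}(t)\|_{\infty}\le\|h_{0}\|_{\infty}$ uniformly in $\delta$ and $t$.

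The next step is to pass this uniform $L^{\infty}$ bound to the limit. Since $\{h^{\delta}\}$ is bounded in $L^{\infty}((0,\infty)\times\mathbb{T}^{3}\times\mathbb{R}^{3})$ uniformly in $\delta$, it is in particular bounded in $L^{\infty}$ on any finite time slab, and up to a further subsequence $h^{\delta}\overset{*}{\rightharpoonup} \tilde h$ in $L^{\infty}$; the weak-$L^{2}$ convergence already established forces $\tilde h=h$ a.e. Because $L^{\infty}$ balls are weak-$*$ closed (the norm is weak-$*$ lower semicontinuous), we conclude
\[
\|h(t)\|_{\infty}\le \liminf_{\delta\to 0}\|h^{\delta}(t)\|_{\infty}\le\|h_{0}\|_{\infty}
\]
for a.e.\ $t$, which after taking the essential supremum over $t$ gives \eqref{Eq : maximum principle}. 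One mild technical point: the weak solution is a priori only an $L^{\infty}(w^{\vartheta}dtdxdv)$ object, so the statement $\sup_{t}\|h(t)\|_{\infty}$ should be read as an essential supremum, and one invokes the continuity-in-time of $t\mapsto h(t)$ in a weak sense (available since $h$ is a weak solution and its time derivative is controlled in a dual space) to upgrade the a.e.\ bound to a genuine $\sup$ over $t$; this is routine.

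The only genuine obstacle is a bookkeeping one: making sure that the approximate solutions $h^{\delta}$ from Lemma \ref{lemma : existence of weak sol h} are regular enough ($C^{1,1,2}_{t,x,v}$ and periodic in $x$) to legitimately feed into Lemma \ref{lemma : strong maximum principle}, and that the barrier-function argument there is unaffected by replacing $g$ with $g^{\delta}$. Both are fine: the construction in Lemma \ref{lemma : existence of weak sol h} explicitly produces smooth $h^{\delta}$ (via the $\varepsilon(\nabla_{x,v})^{2m}$ regularization and energy estimates for derivatives as in \cite{guo2002landau}), and every estimate in Lemmas \ref{lemma : max. principle bdd domain}--\ref{lemma : strong maximum principle} was stated under hypothesis \eqref{Eq : condition for g}, which $g^{\delta}$ inherits. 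There is no smallness or regularity of $h_{0}$ needed beyond $\|h_{0}\|_{\infty}<\infty$, exactly as hypothesized; the regularization of $h_{0}$ can be taken with $\|h_{0}^{\delta}\|_{\infty}\le\|h_{0}\|_{\infty}$, which is what the limiting argument requires.
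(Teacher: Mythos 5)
Your proposal is correct and follows essentially the same route as the paper: approximate $g$ and $h_{0}$ by smooth data, apply the strong maximum principle of Lemma \ref{lemma : strong maximum principle} to $\pm h^{\delta}$ to get the uniform bound $\|h^{\delta}(t)\|_{\infty}\le\|h_{0}\|_{\infty}$, and pass to the limit. The only (immaterial) difference is in the last step, where the paper upgrades to strong $L^{2}$ convergence of $h^{\delta}$ via a Cauchy-sequence energy estimate and then transfers the pointwise bound, whereas you use weak-$*$ compactness in $L^{\infty}$ and lower semicontinuity of the norm; both are valid.
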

\begin{proof}
	Approximating $g$ by $g^{\delta} \in C^{\infty}$ and $h_{0}$ by $h^{\delta}_{0}\in C^\infty$ as in Lemma \ref{lemma : existence of weak sol h}, we can obtain a smooth solution $h^{\delta}$ to \eqref{Eq : equation for h}.
	Thus by Lemma \ref{lemma : strong maximum principle}, we have
	\begin{equation*}
		\sup_{t} \|h^{\delta}(t)\|_{\infty} \le \|h^{\delta}_{0}\|_{\infty} \le \|h_{0}\|_{\infty}.
	\end{equation*}
	In a similar manner to Lemma \ref{Lemma : energy estimate approximated bar A^theta} we can derive an energy estimate for $h^{\delta} - h^{\delta'}$ and we can show that $h^{\delta}$ is a Cauchy sequence in $L^2$.
	Therefore there exists $h$ such that $\|h^{\delta} - h\|_{L^2} \rightarrow 0$.
	In a similar manner to Lemma \ref{lemma : existence of weak sol h}, we can show that $h$ is a weak solution.
	Since $\sup_{t} \|h^{\delta}(t)\|_{\infty} \le \|h_{0}\|_{\infty}$ and $\|h^{\delta} - h\|_{L^2} \rightarrow 0$, we can obtain \eqref{Eq : maximum principle}.
\end{proof}


\section{$L^{2}$ decay}

\label{sec:l2 decay}

In this section, we will establish a weighted $L^{2}$ estimate for
\eqref{linear landau}. We will adapt techniques in \cite{esposito2013non}, \cite{guo2002landau}, and \cite{strain2006almost}.

As a starting point, we prove that \eqref{linear landau} has a unique weak solution globally in time.
\begin{definition}\label{Def : linear weak sol}
	 Let $f(t,x,v) \in L^{\infty}((0,\infty)\times
\mathbb{T}^{3} \times\mathbb{R}^{3},w^{\vartheta}(v)dtdxdv)$ be a periodic function in
$x\in\mathbb{T}^{3} = [-\pi,\pi]^{3}$ satisfying
\begin{equation}		\label{Eq : sigma norm bdd condition}
	\int_{0}^{t}\|f(s)\|_{\sigma, \vartheta}^2 ds < \infty.
\end{equation}
We say that $f$ is a weak solution of
the Landau equation \eqref{initial}, \eqref{linear landau} on $(0,\infty
)\times\mathbb{T}^{3} \times\mathbb{R}^{3}$ if for all $t\in(0,\infty)$ and
all $\varphi\in C^{1,1,1}_{t,x,v}\left(  (0,\infty)\times\mathbb{T}^{3}
\times\mathbb{R}^{3}\right)  $ such that $\varphi(t,x,v)$ is a periodic
function in $x\in\mathbb{T}^{3} = [-\pi,\pi]^{3}$ and $\varphi(t,x,\cdot)$ is
compactly supported in $\mathbb{R}^{3}$, it satisfies
\begin{equation}
\label{Eq : linear weak sol}%
\begin{split}
&  \iint_{\mathbb{T}^{3} \times\mathbb{R}^{3}} f(t,x,v) \varphi(t,x,v) dxdv -
\iint_{\mathbb{T}^{3} \times\mathbb{R}^{3}} f_{0}(x,v) \varphi(0,x,v) dxdv\\
& = - (f,\varphi)_{\sigma} + \iiint_{(0,t)\times\mathbb{T}^{3} \times\mathbb{R}^{3}}
f(s,x,v)\bigg(\partial_{s} \varphi(s,x,v) + v \cdot\nabla_{x}\varphi(s,x,v)+a_{g}(s,x,v)\cdot \nabla_{v} \varphi(s,x,v)\\
&\quad \quad + K \varphi(s,x,v) + \partial_{i}\sigma^{i}(s,x,v) \varphi(s,x,v) - \partial_{i}\left\{\phi^{ij}*[\mu^{1/2}\partial_{j}g]\right\}(s,x,v)\varphi(s,x,v)\\
&\quad \quad + \left\{\phi^{ij}*[v_{i}\mu^{1/2}\partial_{j}g]\right\}(s,x,v)\varphi(s,x,v) \bigg) - \sigma_{\mu^{1/2}g}(s,x,v) \partial_{i}f(s,x,v) \partial_{j} \varphi(s,x,v) dsdxdv.
\end{split}
\end{equation}
\end{definition}

Let $f(t) = U(t,s)f_0$ be a solution of the following equation
\begin{equation}
f_{t}+v\cdot\partial_{x}f=\bar A_{g} f, \label{bar Af}%
\end{equation}
\begin{equation*}
	f(s) = U(s,s)f_0 = f_0,
\end{equation*}
where $\bar A_{g}$ is defined as in \eqref{Eq : bar A}.
Then by the Duhamel principle, the solution of \eqref{linear landau} is
\begin{equation}
\label{Eq : Duhamel principle}
	f(t) = U(t,0)f_{0} + \int_{0}^{t} U(t, \tau) \bar K_{g} f(\tau) d \tau.
\end{equation}
\begin{lemma}
\label{lemma : existence of linear weak sol}
	Assume \eqref{Eq : condition for g}. Then there exists a unique weak solution f to \eqref{linear landau} in the sense of Definition \ref{Def : linear weak sol} with $f(0) = f_{0}$, which satisfies
	\begin{equation*}
		\sup_{0\le s\le t}\|f(s)\|_\infty \le C(t) \|f_0\|_\infty.
	\end{equation*}
\end{lemma}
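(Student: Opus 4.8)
The plan is to combine the Duhamel representation \eqref{Eq : Duhamel principle} with the maximum principle for \eqref{Eq : equation for h} (Lemma \ref{lemma : Maximum principle}, with $\vartheta = 0$) and the $L^\infty$ bounds on $\bar K_g$ from Lemma \ref{lemma : Kf estimate}. First I would record that the solution operator $U(t,s)$ of \eqref{bar Af} is well-defined on $L^\infty$: by Lemma \ref{lemma : existence of weak sol h} (again with $\vartheta = 0$, so that $\bar A_g^\vartheta = \bar A_g$) there is a unique weak solution $h = U(t,s)h_0$ of $h_t + v\cdot\nabla_x h = \bar A_g h$, and by Lemma \ref{lemma : Maximum principle} applied to both $h$ and $-h$ we get the contraction estimate $\|U(t,s)h_0\|_\infty \le \|h_0\|_\infty$ for all $t \ge s$. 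This gives the building block for the Duhamel iteration.

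Next I would set up a Picard-type iteration on $L^\infty$ to solve \eqref{Eq : Duhamel principle}. Define $f^{(0)}(t) := U(t,0)f_0$ and $f^{(n+1)}(t) := U(t,0)f_0 + \int_0^t U(t,\tau)\bar K_g f^{(n)}(\tau)\,d\tau$. Using $\|U(t,\tau)\|_{L^\infty\to L^\infty} \le 1$ and the bound $\|\bar K_g \psi\|_\infty \le C\|\psi\|_\infty$ from \eqref{Eq : Kf L^infty norm} (with $\vartheta = 0$), a standard Gronwall/induction argument shows that $\sup_{0\le s\le t}\|f^{(n)}(s)\|_\infty \le e^{Ct}\|f_0\|_\infty$ uniformly in $n$, and that $\sup_{0\le s\le t}\|f^{(n+1)}(s) - f^{(n)}(s)\|_\infty \le \frac{(Ct)^{n+1}}{(n+1)!}\|f_0\|_\infty$, so $f^{(n)}$ converges uniformly on compact time intervals to a limit $f \in L^\infty_{loc}((0,\infty); L^\infty(\mathbb{T}^3\times\mathbb{R}^3))$ satisfying \eqref{Eq : Duhamel principle} and the asserted bound $\sup_{0\le s\le t}\|f(s)\|_\infty \le C(t)\|f_0\|_\infty$ with $C(t) = e^{Ct}$. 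One must also check the weighted integrability $\int_0^t \|f(s)\|_{\sigma,\vartheta}^2\,ds < \infty$ required by Definition \ref{Def : linear weak sol}; for the $U(t,0)f_0$ part this is \eqref{Eq : energy estimate h}, and for the Duhamel integral one uses \eqref{Eq : energy estimate h} again together with the $L^2$ bound $\|\bar K_g f(\tau)\|_2 \le C\|f(\tau)\|_\infty$ (a crude consequence of \eqref{Eq : Kf L2 norm} once $\|f\|_\infty$ is controlled) to propagate finiteness of the $\sigma$-norm.

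Then I would verify that the $f$ constructed this way is indeed a weak solution in the sense of Definition \ref{Def : linear weak sol}. The point is that $U(t,s)$ is the weak solution operator of \eqref{bar Af}, whose weak formulation is \eqref{Eq : weak sol for h} with $\vartheta = 0$, i.e. with $g^{ij}$ terms replaced by $\sigma_G$ and with no zeroth-order part; adding the Duhamel term $\int_0^t U(t,\tau)\bar K_g f(\tau)\,d\tau$ and using Remark \ref{Rmk : adjoint operator} to identify the weak action of $\bar A_g + \bar K_g = \bar A_g^0 + \bar K_g^0$, one recovers exactly \eqref{Eq : linear weak sol}. Concretely this is a Fubini/duality computation: test \eqref{Eq : Duhamel principle} against $\varphi$, move $U(t,\tau)^*$ onto $\varphi$ using the weak formulation of \eqref{bar Af}, and collect terms. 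Uniqueness follows the usual route: if $f, \tilde f$ are two weak solutions with the same data, then $f - \tilde f$ solves \eqref{linear landau} with zero data, and testing against $f - \tilde f$ itself (justified by the regularity in Definition \ref{Def : linear weak sol} and a density/mollification argument, exactly as in Lemma \ref{Lemma : energy estimate approximated bar A^theta} combined with the nonlinear estimate \eqref{Eq : nonlinear estimate 1} to absorb the $\Gamma(g, f-\tilde f)$ term using smallness of $\|g\|_\infty < \varepsilon$) gives $\sup_{0\le s\le t}\|(f-\tilde f)(s)\|_2^2 \le \varepsilon' \int_0^t\|(f-\tilde f)(s)\|_{\sigma}^2\,ds$, which forces $f = \tilde f$.

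The main obstacle I anticipate is the rigorous justification that the limit $f$ satisfies the weak formulation with the correct $-(f,\varphi)_\sigma$ term and the $\sigma_{\mu^{1/2}g}\partial_i f\,\partial_j\varphi$ term: these involve $\nabla_v f$, which is only an $L^2_{loc}$ object coming from the energy estimate \eqref{Eq : energy estimate h}, not something controlled uniformly along the iteration in a strong topology. To pass to the limit one needs weak-$L^2$ convergence of $\nabla_v f^{(n)} \rightharpoonup \nabla_v f$ in the weighted space, which follows from the uniform bound $\int_0^t\|f^{(n)}(s)\|_{\sigma}^2\,ds \le C(t)$ (obtained as above) plus lower semicontinuity; care is needed because $U(t,\tau)\bar K_g f^{(n)}(\tau)$ appears inside a time integral, so one should first establish the uniform $\sigma$-bound for the iterates and then extract a weakly convergent subsequence before identifying the weak limit — the uniqueness just proved then upgrades subsequential convergence to full convergence. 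Everything else is routine Gronwall estimation and duality bookkeeping.
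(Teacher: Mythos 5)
Your proposal is correct and follows essentially the same route as the paper, whose entire proof of this lemma is the one-line sketch ``it is clear from \eqref{Eq : Duhamel principle} and by the Gronwall inequality'': you build the solution via the Duhamel formula with the semigroup $U(t,s)$ from Lemma \ref{lemma : existence of weak sol h}, the contraction $\|U(t,s)\|_{L^\infty\to L^\infty}\le 1$ from the maximum principle, the bound $\|\bar K_g\psi\|_\infty\le C\|\psi\|_\infty$, and a Picard/Gronwall iteration. Your additional care with the weak formulation, the $\sigma$-norm integrability, and uniqueness goes beyond what the paper records and is sound (only note that your final uniqueness inequality should keep the absorbed $\big(\tfrac12-C\varepsilon\big)\int_0^t\|f-\tilde f\|_\sigma^2$ on the left and the Gronwall term $C\int_0^t\|f-\tilde f\|_2^2$ on the right, as written elsewhere in the paper).
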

\begin{proof}[Sketch of proof]
	It is clear from \eqref{Eq : Duhamel principle} and by the Gronwall inequality.
\end{proof}

For any real-valued function $f(v)$, we define the projection onto the
$\text{span} \{\sqrt\mu, v \sqrt\mu, |v|^{2} \sqrt\mu\}$ in $L^{2}%
(\mathbb{R}^{3})$ as
\begin{equation}
\label{Eq : P_0}\textbf{P} f := \left(  a_{f}(t,x) + v \cdot b_{f} (t,x) +
\frac{(|v|^{2} -3)}{2}c_{f}(t,x)\right)  \sqrt\mu,
\end{equation}
where
\[%
\begin{split}
a_{f}  &  := \frac{\langle f, \sqrt\mu\rangle}{|\langle\sqrt\mu, \sqrt
\mu\rangle|^{2}},\\
b_{f}^{i}  &  := \frac{\langle f, v_{i} \sqrt\mu\rangle}{|\langle v_{i}
\sqrt\mu, v_{i} \sqrt\mu\rangle|^{2}},\\
c_{f}  &  := \frac{\langle f, (|v|^{2} -3) \sqrt\mu/2 \rangle}{|\langle
(|v|^{2} -3)\sqrt\mu/2, (|v|^{2} -3)\sqrt\mu/2 \rangle|^{2}}.%
\end{split}
\]


We will prove the positivity of $L$. By Lemma
\ref{Lemma : Guo2002 lemma5}, $L$ is only semi-positive;
\[
\left(  Lf, f \right)  \ge C \|(I-\mathbf{P}) f\|_{\sigma}^{2}.%
\]
Now we will estimate $\mathbf{P} f$ in terms of $(I-\mathbf{P})f$. The following
lemma is an adaptation of Lemma 6.1 in \cite{esposito2013non}.

\begin{lemma}
[Lemma 6.1 in \cite{esposito2013non}]\label{Lemma : Espotiso 2013 lemma 6.1}
Assume \eqref{Eq : condition for g}.
Let $f$ be a weak solution of \eqref{initial}, \eqref{conservation laws},
\eqref{linear landau}. Then there exist $C$ and a function $\eta(t) \le C\|f(t)\|_{2}^{2}$, such
that 
\[
\int_{s}^{t} \|\mathbf{P} f(\tau)\|_{\sigma}^{2} \le\eta(t)-\eta(s) + C \int
_{s}^{t} \|(I-\mathbf{P})f(\tau)\|_{\sigma}^{2}.
\]

\end{lemma}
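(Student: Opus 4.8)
The plan is to establish the standard macroscopic estimate for $\mathbf{P}f$ in the spirit of \cite{esposito2013non} and \cite{guo2002landau}, adapted to the weak formulation \eqref{Eq : linear weak sol}. Write $f=\mathbf{P}f+(I-\mathbf{P})f$ with $\mathbf{P}f=\{a+v\cdot b+\tfrac{|v|^{2}-3}{2}c\}\sqrt\mu$, $a=a_{f}(t,x)$, $b=b_{f}(t,x)$, $c=c_{f}(t,x)$. Testing \eqref{Eq : linear weak sol} against $x$-independent test functions $\sqrt\mu$, $v_{i}\sqrt\mu$, $|v|^{2}\sqrt\mu$ (truncated in $v$, which is harmless given the weight $w^{\vartheta}$ in the function space) shows that the conservation laws \eqref{conservation laws} persist in time, so $\int_{\mathbb{T}^{3}}a(t,\cdot)=\int_{\mathbb{T}^{3}}b_{i}(t,\cdot)=\int_{\mathbb{T}^{3}}c(t,\cdot)=0$ for every $t$. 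Since $\mathbf{P}f$ is Gaussian-localized in $v$, one has $\|\mathbf{P}f\|_{\sigma}^{2}\approx\|(a,b,c)\|_{L^{2}(\mathbb{T}^{3})}^{2}$, so the lemma reduces to controlling $\int_{s}^{t}\|(a,b,c)\|_{L^{2}(\mathbb{T}^{3})}^{2}$.

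\textbf{Macroscopic equations.} First I would test \eqref{Eq : linear weak sol} against $\varphi=\psi(v)\,\zeta(t,x)$ for smooth periodic $\zeta$ and $\psi$ running through $\{\sqrt\mu,\,v_{i}\sqrt\mu,\,|v|^{2}\sqrt\mu\}$. Since $L$ is symmetric with these $\psi$ in its kernel and since $\Gamma(g,\cdot)$ inherits the mass/momentum/energy conservation of the Landau collision operator, all collision contributions pairing against $\mathbf{P}f$ cancel, while those pairing against $(I-\mathbf{P})f$ reduce (by Cauchy--Schwarz and Lemma \ref{Lemma : Guo2002 corollary1}) to Gaussian moments $\Theta,\Lambda$ of $(I-\mathbf{P})f$ with $\|\Theta\|+\|\Lambda\|\lesssim\|(I-\mathbf{P})f\|_{\sigma}$. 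This yields the local conservation laws
\[
\partial_{t}a+\nabla_{x}\!\cdot b=0,\qquad \partial_{t}b+\nabla_{x}(\alpha a+\beta c)+\nabla_{x}\!\cdot\Theta=0,\qquad \partial_{t}c+\gamma\,\nabla_{x}\!\cdot b+\nabla_{x}\!\cdot\Lambda=0
\]
for fixed constants $\alpha,\beta,\gamma$. Testing instead against the traceless tensor $(v_{i}v_{j}-\tfrac{|v|^{2}}{3}\delta_{ij})\sqrt\mu$ and against $v_{i}(|v|^{2}-5)\sqrt\mu$ produces the ``stress'' and ``heat-flux'' identities, schematically
\[
\partial_{i}b_{j}+\partial_{j}b_{i}-\tfrac{2}{3}\delta_{ij}\nabla_{x}\!\cdot b=\partial_{t}r_{ij}+\nabla_{x}\!\cdot R_{ij}+\ell_{ij}+q_{ij},\qquad \partial_{i}(\tilde\alpha a+\tilde\beta c)=\partial_{t}r_{i}+\nabla_{x}\!\cdot R_{i}+\ell_{i}+q_{i},
\]
where $r_{ij},r_{i},R_{ij},R_{i}$ are Gaussian moments of $(I-\mathbf{P})f$, the $\ell$-terms equal $\langle L(I-\mathbf{P})f,\cdot\rangle$ and obey $|\ell|\lesssim\|(I-\mathbf{P})f\|_{\sigma}$ by Lemma \ref{Lemma : Guo2002 lemma6}, and the $q$-terms equal $\langle\Gamma(g,f),\cdot\rangle$, which are $\lesssim\|g\|_{\infty}\|f\|_{\sigma}\le\varepsilon\|f\|_{\sigma}$ by \eqref{Eq : nonlinear estimate for v} in Theorem \ref{Thm : modified Guo2002 theorem3}.

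\textbf{Closing via Poisson potentials.} Next, for each $t$, I would solve on $\mathbb{T}^{3}$ the zero-mean Poisson problems $-\Delta_{x}\Phi_{a}=a$, $-\Delta_{x}\Psi_{i}=b_{i}$, $-\Delta_{x}\Phi_{c}=c$ (solvable thanks to the mean-zero property), and insert into \eqref{Eq : linear weak sol} test functions of the form $\varphi=\{|v|^{2}-C_{\ast}\}\sqrt\mu\,(v\cdot\nabla_{x}\Phi)$, and analogous $b$-adapted choices built from $\Psi$, with the constants $C_{\ast}$ tuned so that the $v$-pairing against $\mathbf{P}f$ isolates a single macroscopic coefficient. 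After integrating by parts in $x$ (using the Poisson equation to turn $\Delta_{x}\Phi$ back into the macroscopic quantity) and in $v$, each such identity reads, pointwise in $s$ and then integrated over $(s,t)$,
\[
\int_{s}^{t}\|a\|_{L^{2}}^{2}\,d\tau\ \le\ \eta_{a}(t)-\eta_{a}(s)+\int_{s}^{t}\Big(\delta\|(a,b,c)\|_{L^{2}}^{2}+C_{\delta}\|(I-\mathbf{P})f\|_{\sigma}^{2}+C\varepsilon\|f\|_{\sigma}^{2}\Big)\,d\tau,
\]
and similarly for $b$ and $c$, where $\eta_{a}(\tau)=\iint f(\tau)\varphi(\tau)$-type pairings satisfy $|\eta_{a}(\tau)|\lesssim\|f(\tau)\|_{2}\,\|\nabla_{x}\Phi\|_{L^{2}}\lesssim\|f(\tau)\|_{2}^{2}$ by elliptic regularity and Poincar\'e. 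The couplings between $a$, $b$, $c$ on the right-hand sides have the same structure; as in \cite{esposito2013non}, a suitable linear combination of the three estimates, together with $\|\mathbf{P}f\|_{\sigma}^{2}\approx\|(a,b,c)\|_{L^{2}}^{2}$ and the smallness of $\delta$ and $\varepsilon$, absorbs all $\|(a,b,c)\|_{L^{2}}^{2}$ and $C\varepsilon\|\mathbf{P}f\|_{\sigma}^{2}$ terms into the left-hand side, leaving $\int_{s}^{t}\|\mathbf{P}f\|_{\sigma}^{2}\le\eta(t)-\eta(s)+C\int_{s}^{t}\|(I-\mathbf{P})f\|_{\sigma}^{2}$ with $\eta:=\sum\eta_{a}+\sum\eta_{b}+\eta_{c}$ and $|\eta(\tau)|\le C\|f(\tau)\|_{2}^{2}$.

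\textbf{Main obstacle.} The principal difficulty is that $f$ is only a weak solution, so $\partial_{t}(a,b,c)$ exists only distributionally and the potentials $\Phi_{a},\Psi_{i},\Phi_{c}$ are not a priori $C^{1}$ in $t$, hence the test functions above are not literally admissible. I would resolve this by carrying out the entire argument first on the smooth approximants $h^{\delta}$ (constructed as in Lemma \ref{lemma : existence of weak sol h}, or on a Friedrichs mollification in $x$ of \eqref{Eq : linear weak sol}), obtaining the estimate with constants independent of the regularization parameter, and then passing to the limit using $\int_{0}^{t}\|f(s)\|_{\sigma,\vartheta}^{2}\,ds<\infty$ from \eqref{Eq : sigma norm bdd condition} together with weak lower semicontinuity of $\|\cdot\|_{\sigma}$. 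A secondary, purely technical, burden is verifying that the nonlocal terms $K\varphi$, $\partial_{i}\sigma^{i}\varphi$ and $\phi^{ij}\ast(\cdots)$ appearing in \eqref{Eq : linear weak sol}, when tested against the polynomial-times-Gaussian functions above, contribute only the claimed $\|(I-\mathbf{P})f\|_{\sigma}$ and $\varepsilon\|f\|_{\sigma}$ quantities and that the $\mathbf{P}f$-parts cancel as stated; this relies on the Gaussian decay of those coefficients guaranteed by Lemma \ref{Lemma : Guo2002 lemma2} and on careful bookkeeping of the cancellations, which is the heart of the proof.
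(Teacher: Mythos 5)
Your proposal follows essentially the same route as the paper: the macroscopic estimate of \cite{esposito2013non}, obtained by testing the weak formulation against moment test functions built from Poisson potentials of $a$, $b$, $c$ on the torus (solvable by the mean-zero property), with $\eta$ arising as the time-boundary pairings $-\iint \psi f(t)$ and the linear and nonlinear collision contributions reduced to $\|(I-\mathbf{P})f\|_{\sigma}$ and $\varepsilon\|f\|_{\sigma}$ before a final absorption. The one step you leave implicit is the quantitative dual estimate $\|\nabla_{x}\partial_{t}\phi_{a}\|_{2}\lesssim\|b\|_{2}$ and $\|\nabla_{x}\partial_{t}\phi_{b}\|_{2}+\|\nabla_{x}\partial_{t}\phi_{c}\|_{2}\lesssim\|(a,b,c)\|_{2}+\|(I-\mathbf{P})f\|_{\sigma}+\|f\|_{\infty}\|f\|_{\sigma}$ (Steps 1--3 of the paper's proof), which is the actual mechanism that makes the $\partial_{s}\varphi$ contributions of the potential-based test functions estimable; your regularization remark gestures at the right issue but this bound is what closes the argument.
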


\begin{proof}
For every periodic test fundtion $\psi$, $f$ satisfies,
\begin{multline}
\label{Eq : weak formulation}%
	-\int_{s}^{t}\iint_{\mathbb{T}^{3} \times\mathbb{R}^{3}} v\cdot\nabla_{x}\psi f - \int_{s}^{t}\iint_{\mathbb{T}^{3} \times\mathbb{R}^{3}} \partial_{t}\psi f\\
	= -\iint_{\mathbb{T}^{3} \times\mathbb{R}^{3}} \psi f(t) +\iint_{\mathbb{T}^{3} \times\mathbb{R}^{3}}\psi f(s) + \int_{s}^{t}\iint_{\mathbb{T}^{3} \times\mathbb{R}^{3}}-\psi L(I-\mathbf{P})f + \psi\Gamma(f,f).
\end{multline}
By convention, we denote $a(t,x) = a_{f}(t,x)$, $b(t,x)=b_{f}(t,x)$, and
$c(t,x)= c_{f}(t,x)$, where $a_{f}$, $b_{f}$, and $c_{f}$ are defined as in
\eqref{Eq : P_0}. We note that, with such choices $\eta(t) = -\int
\int_{\mathbb{T}^{3}\times\mathbb{R}^{3}} \psi f(t) dxdv$, and $\psi=
p(v)\phi(t,x)$ for some $|p(v)|\le\exp(-|v|^{2}/4)$ and $\|\phi(t)\|_{2} \le C
(\|a(t)\|_{2} + \|b(t)\|_{2} + \|c(t)\|_{2})$. Thus,
\[%
\begin{split}
|\eta(t)|  &  \le\|f(t)\|_{2} \left(  \int_{\mathbb{T}^{3} \times
\mathbb{R}^{3}} p(v) |\phi(t,x)|^{2} dxdv\right)  ^{1/2}\\
&  = C\|f(t)\|_{\infty}\left(  \int_{\mathbb{T}^{3}} |\phi(t,x)|^{2}
dx\right)  ^{1/2}\\
&  \le C\|f(t)\|_{2} \|\phi(t)\|_{2}\\
&  \le C\|f(t)\|_{2}^{2}.
\end{split}
\]
Without loss of generality, we can take $s=0$.

\textit{Step 1. Estimate of $\nabla_{x} \Delta^{-1} \partial_{t} a =
\nabla_{x} \partial_{t} \phi_{a}$.} Choosing a test function $\psi= \phi\sqrt
\mu$ with $\phi$ dependent only on $x$, we have (Note that
${\displaystyle \int\sqrt\mu Lf = \int\sqrt\mu\Gamma(f,f) = 0}$)
\[
\sqrt{2 \pi}\int_{\mathbb{T}^{3}} [a(t+ \varepsilon) - a(t)]\phi(x) = 2
\pi\sqrt{2 \pi} \int_{t}^{t+ \varepsilon}\int_{\mathbb{T}^{3}}(b\cdot
\nabla_{x})\phi(x).
\]
Therefore,
\[
\int_{\mathbb{T}^{3}} \phi\partial_{t} a = \sqrt{2 \pi} \int_{\mathbb{T}^{3}%
}(b \cdot\nabla_{x})\phi.
\]
First, take $\phi=1$. Then, we have $\int_{\mathbb{T}^{3}} \partial_{t} a(t)
dx =0$ for all $t>0$. On the other hand, for all $\phi(x)\in H^{1}%
(\mathbb{T}^{3})$, we have
\[
\left|  \int_{\mathbb{T}^{3}} \phi(x) \partial_{t} a dx \right|
\lesssim\|b\|_{2} \|\phi\|_{H^{1}}.
\]
Therefore, for all $t>0$, $\|\partial_{t} a(t)\|_{(H^{1})^{*}} \lesssim
\|b(t)\|_{2}$. Since $\int_{\mathbb{T}^{3}} \partial_{t} a dx =0$ for all
$t>0$, we can find a solution of the Poisson equation with the Neumann
boundary condition $- \Delta\Phi_{a} = \partial_{t} a(t)$, $\frac{\partial
\Phi_{a}}{\partial n}=0$ at $\partial\mathbb{T}^{3}$. Let $\phi_{a}$ be a
solution of the Poisson equation with the Neumann boundary condition $-
\Delta\phi_{a} = a(t)$, $\frac{\partial\phi_{a}}{\partial n}=0$ at
$\partial\mathbb{T}^{3}$. Then $\Phi_{a} = \partial_{t} \phi_{a}$. Moreover,
we have
\begin{equation}
\label{Eq : grad. d_t phi_a}\|\nabla_{x} \partial_{t} \phi_{a} \|_{2} =
\|\Phi_{a}\|_{H^{1}} \lesssim\|\partial_{t} a(t)\|_{(H^{1})^{*}}%
\lesssim\|b(t)\|_{2}.
\end{equation}

\textit{Step 2. Estimate of $\nabla_{x} \Delta^{-1} \partial_{t} b^{j} =
\nabla_{x} \partial_{t} \phi_{b}^{j}$.} Choosing a test function $\psi=
\phi(x)v_{i} \sqrt\mu$, we have
\[%
\begin{split}
&  2 \pi\sqrt{2 \pi} \int_{\mathbb{T}^{3}} [b_{i}(t+ \varepsilon
)-b_{i}(t)]\phi\\
&  \quad= 2 \pi\sqrt{2 \pi}\int_{t}^{t+\varepsilon}\int_{\mathbb{T}^{3}}
\partial_{i} \phi[a+c] + \int_{t}^{t+\varepsilon}\iint_{\mathbb{T}^{3}%
\times\mathbb{R}^{3}}\sum_{j=1}^{d} v_{j}v_{i} \sqrt\mu\partial_{j}
\phi(I-\mathbf{P}) f\\
&  \quad\quad+ \int_{t}^{t+ \varepsilon}\iint_{\mathbb{T}^{3} \times
\mathbb{R}^{3}}\phi v_{i} \Gamma(f,f) \sqrt\mu.
\end{split}
\]
Therefore,
\begin{multline*}
	\int_{\mathbb{T}^{3}}\partial_{t} b_{i}(t) \phi = \int_{\mathbb{T}^{3}}\partial_{i} \phi[a(t)+c(t)] + \frac{1}{2 \pi\sqrt{2 \pi}} \Bigg\{\iint_{\mathbb{T}^{3} \times\mathbb{R}^{3}}\sum_{j=1}^{d} v_{i}v_{j} \sqrt{\mu}\partial_{j} \phi(I-\mathbf{P})f(t)\\
	+ \iint_{\mathbb{T}^{3} \times\mathbb{R}^{3}}\phi v_{i} \Gamma(f,f)(t)\sqrt{\mu} \Bigg\} .
\end{multline*}
By the H\"older inequality and Theorem \ref{Thm : modified Guo2002 theorem3}
\[%
\begin{split}
&  \iint_{\mathbb{T}^{3} \times\mathbb{R}^{3}}\sum_{j=1}^{d} v_{i}v_{j}
\sqrt{\mu}\partial_{j} \phi(I-\mathbf{P})f(t)\\
&  \quad\le C\|(1+|v|)^{-1/2}(I-\mathbf{P})f(t)\|_{2}^{2}\left\|
(1+|v|)^{-\frac{-1}{2}}\sum_{j=1}^{d} v_{i}v_{j} \sqrt{\mu}\partial_{j}
\phi\right\|  _{2}\\
&  \quad\le C\|(I-\mathbf{P})\|_{\sigma}\|\phi\|_{2}
\end{split}
\]
and
\[
\left\|  \iint_{\mathbb{T}^{3} \times\mathbb{R}^{3}}\phi v_{i} \Gamma(f,f)(t)
\sqrt{\mu}\right\|  \le C \|f\|_{\infty}\|f\|_{\sigma}\|\phi v_{i}
\mu\|_{\sigma} \le\|f\|_{\infty}\|f\|_{\sigma}\|\phi\|_{2}.
\]
For fixed $t>0$, we choose $\phi= \Phi_{b}^{i}$, where $\Phi_{b}^{i}$ is a
solution of the Poisson equation with the Dirichlet boundary condition $-
\Delta\Phi_{b}^{i} = \partial_{t} b_{i}(t)$, $\Phi_{b}^{i}|_{\partial
\mathbb{T}^{3}} = 0$. Let $\phi_{b}^{i}$ be a solution of the Poisson equation
with the Dirichlet boundary condition $- \Delta\phi_{b}^{i} = b_{i}(t)$,
$\phi_{b}^{i}(t)|_{\partial\mathbb{T}^{3}}=0$. Then $\Phi_{b}^{i} =
\partial_{t} \phi_{b}^{i}$. By the Poincar\`{e} inequality,
\[%
\begin{split}
\int_{\mathbb{T}^{3}} |\nabla_{x} \partial_{t} \phi_{b}^{i}(t)|^{2} dx  &  =
\int_{\mathbb{T}^{3}} |\nabla_{x} \Phi_{b}^{i}|^{2} dx = -\int_{T^{3}}
\Delta_{x} \Phi_{b}^{i} \Phi_{b}^{i} dx\\
&  \le\varepsilon\{\|\nabla_{x} \Phi_{b}^{i}\|_{2}^{2} + \| \Phi_{b}^{i}%
\|_{2}^{2}\}\\
&  \quad+ C_{\varepsilon} \{ \|a(t)\|_{2}^{2} + \|c(t)\|_{2}^{2} +
\|(I-\mathbf{P})f(t)\|_{\sigma}^{2} + \|f(t)\|_{\infty}^{2}\|f(t)\|_{\sigma}^{2}
\}\\
&  \le C\varepsilon\|\nabla_{x} \Phi_{b}^{i}\|_{2}^{2} + C_{\varepsilon} \{
\|a(t)\|_{2}^{2} + \|c(t)\|_{2}^{2} + \|(I-\mathbf{P})f(t)\|_{\sigma}^{2} +
\|f(t)\|_{\infty}^{2}\|f(t)\|_{\sigma}^{2} \},
\end{split}
\]
for every $\varepsilon>0$. Now, we choose small $\varepsilon$, such that $C
\varepsilon\le1/4$. Then we can absorb the first term in RHS to the LHS. Then
we have for all $t>0$,
\begin{equation}
\label{Eq : grad. d_t phi_b^i}\|\nabla_{x} \partial_{t} \phi_{b}^{i}(t)\|_{2}
\le C_{\varepsilon}\{ \|a(t)\|_{2} + \|c(t)\|_{2} + \|(I-\mathbf{P})f(t)\|_{\sigma}
+ \|f(t)\|_{\infty}\|f(t)\|_{\sigma} \}.
\end{equation}

\textit{Step 3. Estimate of $\nabla_{x} \Delta^{-1}\partial_{t} c = \nabla_{x}
\partial_{t} c$.} Choosing a test function $\psi= \phi(x) \left(  \frac
{|v|^{2}-3}{2}\right)  \sqrt\mu$, we have
\begin{align*}
	&  3 \pi\sqrt{2 \pi} \int_{\mathbb{T}^{3}} \phi(x)[c(t+ \varepsilon)-c(t)]\\
	&  \quad= 2 \pi\sqrt{2 \pi} \int_{t}^{t+ \varepsilon}\int_{\mathbb{T}^{3}}b\cdot\nabla_{x} \phi- \int_{t}^{t + \varepsilon}\iint_{\mathbb{T}^{3}\times\mathbb{R}^{3}}(I-\mathbf{P})f \left(  \frac{|v|^{2}-3}{2}\right)  \sqrt{\mu}(v\cdot\nabla_{x})\phi\\
	&  \quad\quad+ \int_{t}^{t+\varepsilon}\iint_{\mathbb{T}^{3} \times\mathbb{R}^{3}} \phi\Gamma(f,f) \left(  \frac{|v|^{2} -3}{2}\right)  \sqrt \mu.
\end{align*}
Therefore,
\[%
\begin{split}
&  \int_{\mathbb{T}^{3}} \phi(x) \partial_{t} c(t)\\
&  \quad= \frac{2}{3}\int_{\mathbb{T}^{3}} b(t)\cdot\nabla_{x} \phi+ \frac
{1}{3 \pi\sqrt{2 \pi}}\iint_{\mathbb{T}^{3} \times\mathbb{R}^{3}}%
(I-\mathbf{P})f(t)\left(  \frac{|v|^{2} -3}{2}\right)  \sqrt{\mu}(v \cdot\nabla
_{x})\phi\\
&  \quad\quad+ \frac{1}{3 \pi\sqrt{2 \pi}}\iint_{\mathbb{T}^{3} \times
\mathbb{R}^{3}}\phi\Gamma(f,f)(t) \left(  \frac{|v|^{2} -3}{2}\right)
\sqrt{\mu}.
\end{split}
\]
Similarly to step 2,
\[
\iint_{\mathbb{T}^{3} \times\mathbb{R}^{3}} (I-\mathbf{P})f(t)\left(  \frac{|v|^{2}
-3}{2}\right)  \sqrt{\mu}(v \cdot\nabla_{x})\phi\le C\|(I-\mathbf{P})\|_{\sigma
}\|\nabla_{x} \phi\|_{2}
\]
and
\[
\left\|  \iint_{\mathbb{T}^{3} \times\mathbb{R}^{3}}\phi\Gamma(f,f)(t) \left(
\frac{|v|^{2} -3}{2}\right)  \sqrt{\mu}\right\|  \le\|f\|_{\infty
}\|f\|_{\sigma}\|\phi\|_{2}.
\]
For fixed $t>0$, we choose $\phi= \Phi_{c}$, where $\Phi_{c}$ is a solution of
the Poisson equation with the Dirichlet boundary condition $- \Delta\Phi_{c} =
\partial_{t} c(t)$, $\Phi_{c}|_{\partial\mathbb{T}^{3}} = 0$. Let $\phi_{c}$
be a solution of the Poisson equation with the Dirichlet boundary condition $-
\Delta\phi_{c} = c(t)$, $\phi_{c}(t)|_{\partial\mathbb{T}^{3}}=0$. Then
$\Phi_{c} = \partial_{t} \phi_{c}$. By the Poincar\`{e} inequality,
\[%
\begin{split}
\int_{\mathbb{T}^{3}} |\nabla_{x} \partial_{t} \phi_{c}(t)|^{2} dx  &  =
\int_{\mathbb{T}^{3}} |\nabla_{x} \Phi_{c}|^{2} dx = -\int_{\mathbb{T}^{3}}
\Delta_{x} \Phi_{c} \Phi_{c} dx\\
&  \le\varepsilon\{ \|\nabla_{x} \Phi_{c}\|_{2}^{2} + \| \Phi_{c}\|_{2}%
^{2}\}\\
&  \quad+ C_{\varepsilon}\{ \|b(t)\|_{2}^{2} + \|(I-\mathbf{P})f(t)\|_{\sigma}^{2}
+ \|f(t)\|_{\infty}^{2}\|f(t)\|_{\sigma}^{2} \}\\
&  \le C\varepsilon\|\nabla_{x} \Phi_{c}\|_{2}^{2} + C_{\varepsilon}\{
\|b(t)\|_{2}^{2} + \|(I-\mathbf{P})f(t)\|_{\sigma}^{2} + \|f(t)\|_{\infty}%
^{2}\|f(t)\|_{\sigma}^{2} \}.
\end{split}
\]
Therefore, for all $t>0$,
\begin{equation}
\label{Eq : grad. d_t phi_c}\|\nabla_{x} \partial_{t} \phi_{c}(t)\|_{2} \le
C_{\varepsilon}\{ \|b(t)\|_{2} + \|(I-\mathbf{P})f(t)\|_{\sigma} + \|f(t)\|_{\infty
}\|f(t)\|_{\sigma}\}.
\end{equation}

\textit{Step 4. Estimate of $c$.} Choosing a test function $\psi= (|v|^{2} - 5)
\sqrt{\mu} v\cdot\nabla_{x} \phi_{c}$, we have
\[%
\begin{split}
-10 \pi\sqrt{2 \pi}\int_{0}^{t} \int_{\mathbb{T}^{3}} \Delta_{x} \phi_{c} c
&  = -\iint_{\mathbb{T}^{3} \times\mathbb{R}^{3}} \psi f(t) + \iint
_{\mathbb{T}^{3} \times\mathbb{R}^{3}} \psi f(0)\\
&  \quad+ \sum_{i=1}^{d}\int_{0}^{t} \iint_{\mathbb{T}^{3} \times
\mathbb{R}^{3}} (|v|^{2} - 5) v_{i} \sqrt{\mu} \partial_{t} \partial_{i}
\phi_{c} f\\
&  \quad- \int_{0}^{t} \iint_{\mathbb{T}^{3} \times\mathbb{R}^{3}} \psi
L(I-\mathbf{P})f + \int_{0}^{t} \iint_{\mathbb{T}^{3} \times\mathbb{R}^{3}}
\psi\Gamma(f,f).\\
&  = -\iint_{\mathbb{T}^{3} \times\mathbb{R}^{3}} \psi f(t) + \iint
_{\mathbb{T}^{3} \times\mathbb{R}^{3}} \psi f(0)\\
&  \quad+ \sum_{i,j=1}^{d}\int_{0}^{t} \iint_{\mathbb{T}^{3} \times
\mathbb{R}^{3}} (|v|^{2} - 5) v_{i} v_{j} \mu\partial_{t} \partial_{i}
\phi_{c} b_{j}\\
&  \quad+ \sum_{i=1}^{d}\int_{0}^{t} \iint_{\mathbb{T}^{3} \times
\mathbb{R}^{3}} (|v|^{2} - 5) v_{i} \sqrt{\mu} \partial_{t} \partial_{i}
\phi_{c} (I-\mathbf{P}) f\\
&  \quad- \int_{0}^{t} \iint_{\mathbb{T}^{3} \times\mathbb{R}^{3}} \psi
L(I-\mathbf{P})f + \int_{0}^{t} \iint_{\mathbb{T}^{3} \times\mathbb{R}^{3}}
\psi\Gamma(f,f).
\end{split}
\]
Note that ${\displaystyle \int(|v|^{2}-5)v_{i}v_{j} \mu=0}$. Therefore, the
third term of RHS is zero. Moreover,
\[%
\begin{split}
&  \sum_{i=1}^{d} \iint_{\mathbb{T}^{3} \times\mathbb{R}^{3}} (|v|^{2} - 5)
v_{i} \sqrt{\mu} \partial_{t} \partial_{i} \phi_{c} (I-\mathbf{P}) f\\
&  \quad\le C\|\nabla_{x} \partial_{t} \phi_{c}\|_{2} \|(I-\mathbf{P})f\|_{\sigma
}\\
&  \quad\le C(C_{\varepsilon}\{ \|b\|_{2} + \|(I-\mathbf{P})f\|_{\sigma} +
\|f\|_{\infty}\|f\|_{\sigma}\})\|(I-\mathbf{P})f\|_{\sigma}\\
&  \quad\le\varepsilon\|b\|_{2}^{2} + C_{\varepsilon}\{\|(I-\mathbf{P})f\|_{\sigma
}^{2} + \|f\|_{\infty}^{2}\|f\|_{\sigma}^{2} \},
\end{split}
\]
\[%
\begin{split}
\iint_{\mathbb{T}^{3} \times\mathbb{R}^{3}} \psi L(I-\mathbf{P})f  &  =
\iint_{\mathbb{T}^{3} \times\mathbb{R}^{3}} L \psi(I-\mathbf{P})f\\
&  \le C\|\nabla_{x} \phi_{c}\|_{2}\|(I-\mathbf{P})f\|_{\sigma}\\
&  \le C\|c\|_{2} \|(I-\mathbf{P})f\|_{\sigma}\\
&  \le\varepsilon\|c\|_{2}^{2} + C_{\varepsilon}\|(I-\mathbf{P})\|_{\sigma}^{2}
\end{split}
\]
and
\[
\iint_{\mathbb{T}^{3} \times\mathbb{R}^{3}} \psi\Gamma(f,f) \le C\|f\|_{\infty
}\|f\|_{\sigma}\|c\|_{2}\le\varepsilon\|c\|_{2}^{2} + C_{ \varepsilon
}\|f\|_{\infty}^{2}\|f\|_{\sigma}^{2}.
\]
For a small $\varepsilon>0$, we can absorb $\|c\|_{2}^{2}$ on the RHS to the LHS.
By \eqref{Eq : grad. d_t phi_c}, we have
\begin{equation}
\label{Eq : estimate c}%
	\int_{0}^{t} \|c(s)\|^{2} ds\le C( \eta(t)-\eta(0)) + \int_{0}^{t} C_{\varepsilon}\left\{\|(I-\mathbf{P})f\|_{\sigma}^{2} + \|f\|_{\infty}^{2}\|f\|_{\sigma}^{2}\right\}  +\varepsilon\|b\|_{2}^{2} ds.
\end{equation}

\textit{Step 5. Estimate of $b$.} We will estimate $(\partial_{ij} \phi^{j}%
_{b})b_{i}$ for all $i,j = 1,...,d,$ and $(\partial_{jj}\phi^{i}_{b})b_{i}$
for $i \neq j$.

We first estimate $(\partial_{ij} \phi^{j}_{b})b_{i}$. Choosing a test function
$\psi= [(v_{i})^{2} - 1]\sqrt\mu\partial_{j} \phi_{b}^{j}$, we have
\begin{equation}
\label{Eq : test b1}%
	\begin{split}
		&  -\sum_{l}\int_{0}^{t} \iint_{\mathbb{T}^{3} \times\mathbb{R}^{3}} v_{*}[(v_{i})^{2} -1]\sqrt\mu\partial_{lj} \phi^{j}_{b} f - \int_{0}^{t}\iint_{\mathbb{T}^{3} \times\mathbb{R}^{3}}(v_{i}^{2} -1)\sqrt\mu\partial_{t}\partial_{j} \phi^{j}_{b} f\\
		& \quad = -\iint_{\mathbb{T}^{3} \times\mathbb{R}^{3}} \psi f(t) + \iint_{\mathbb{T}^{3} \times\mathbb{R}^{3}} \psi f(0)\\
		& \quad \quad- \int_{0}^{t} \iint_{\mathbb{T}^{3} \times\mathbb{R}^{3}} \psi L(I-\mathbf{P})f + \int_{0}^{t} \iint_{\mathbb{T}^{3} \times\mathbb{R}^{3}}\psi\Gamma(f,f).
	\end{split}
\end{equation}
Note that for $i\neq k$
\[
\int[(v_{i})^{2}-1]\mu= \int[(v_{i})^{2} - 1](v_{k})^{2} \mu= 0,
\]
and
\[
\int[(v_{i})^{2} - 1](v_{i})^{2} \mu= 2 \sqrt{2 \pi}.
\]
Therefore,
\begin{equation}
\label{Eq : test b1-1}%
	\begin{split}
		&  \sum_{l}\int_{0}^{t} \iint_{\mathbb{T}^{3} \times\mathbb{R}^{3}} v_{*}[(v_{i})^{2} -1]\sqrt\mu\partial_{lj} \phi^{j}_{b} f\\
		& \quad = \sum_{l}\int_{0}^{t} \iint_{\mathbb{T}^{3} \times\mathbb{R}^{3}}(v_{*})^{2} [(v_{i})^{2} -1] \mu\partial_{lj} \phi^{j}_{b} b_{l}\\
		& \quad \quad+ \sum_{l}\int_{0}^{t} \iint_{\mathbb{T}^{3} \times\mathbb{R}^{3}}v_{*} [(v_{i})^{2} -1]\sqrt\mu\partial_{lj} \phi^{j}_{b} (I-\mathbf{P})f\\
		& \quad = 2 \sqrt{2 \pi} \int_{\mathbb{T}^{3}} \partial_{ij} \phi^{j}_{b} b_{i} +\sum_{l}\int_{0}^{t} \iint_{\mathbb{T}^{3} \times\mathbb{R}^{3}} v_{*}[(v_{i})^{2} -1]\sqrt\mu\partial_{lj} \phi^{j}_{b} (I-\mathbf{P})f,
	\end{split}
\end{equation}
and
\begin{equation}
\label{Eq : test b1-2}%
\begin{split}
\left|  \sum_{l}\int_{0}^{t} \iint_{\mathbb{T}^{3} \times\mathbb{R}^{3}} v_{*}
[(v_{i})^{2} -1]\sqrt\mu\partial_{lj} \phi^{j}_{b} (I-\mathbf{P})f\right|   &  \le
C \int_{0}^{t} \|b\|_{2}\|(I-\mathbf{P})f\|_{\sigma}\\
&  \le\varepsilon\|b\|_{2}^{2} + C_{\varepsilon}\|(I-\mathbf{P})f\|_{\sigma}^{2}.
\end{split}
\end{equation}
Moreover
\[%
\begin{split}
\int_{0}^{t} \iint_{\mathbb{T}^{3} \times\mathbb{R}^{3}}(v_{i}^{2} -1)\sqrt
\mu\partial_{t} \partial_{j} \phi^{j}_{b} f  &  = \int_{0}^{t} \iint
_{\mathbb{T}^{3} \times\mathbb{R}^{3}}(v_{i}^{2} -1) \mu\partial_{t}
\partial_{j} \phi^{j}_{b} \frac{|v|^{2}-3}{2} c\\
&  \quad+ \int_{0}^{t} \iint_{\mathbb{T}^{3} \times\mathbb{R}^{3}}(v_{i}^{2}
-1)\sqrt\mu\partial_{t} \partial_{j} \phi^{j}_{b} (I-\mathbf{P})f.
\end{split}
\]
By \eqref{Eq : grad. d_t phi_b^i},
\begin{equation}
\label{Eq : test b1-3}%
\begin{split}
\left|  \int_{0}^{t} \iint_{\mathbb{T}^{3} \times\mathbb{R}^{3}}(v_{i}^{2}
-1)\sqrt\mu\partial_{t} \partial_{j} \phi^{j}_{b} f\right|   &  \le\int
_{0}^{t} C_{\varepsilon} \left\{  \|a\|_{2} + \|c\|_{2} + \|(I-\mathbf{P}%
)f\|_{\sigma} + \|f\|_{\sigma} \|f\|_{\sigma} \right\} \\
&  \quad\quad\quad\times\left\{  \|c\|_{2} + C_{\vartheta}\|(I-\mathbf{P}%
)f\|_{\sigma}\right\} \\
&  \le\int_{0}^{t} C_{\varepsilon} \left\{  \|(I-\mathbf{P})f\|_{\sigma}^{2} +
\|f\|_{\infty}^{2}\|f\|_{\sigma}^{2} + \|c\|_{2}^{2} \right\}  +
\varepsilon\|a\|_{2}^{2}.
\end{split}
\end{equation}
In a similar way to step 4,
\begin{equation}
\label{Eq : test b1-4}\iint_{\mathbb{T}^{3} \times\mathbb{R}^{3}} \psi
L(I-\mathbf{P})f \le\varepsilon\|b\|_{2}^{2} + C_{\varepsilon}\|(I-\mathbf{P}%
)\|_{\sigma}^{2}.
\end{equation}
and
\begin{equation}
\label{Eq : test b1-5}\iint_{\mathbb{T}^{3} \times\mathbb{R}^{3}} \psi
\Gamma(f,f) \le\varepsilon\|b\|_{2}^{2} + C_{\varepsilon}\|f\|_{\infty}%
^{2}\|f\|_{\sigma}^{2}.
\end{equation}
Combining \eqref{Eq : test b1} - \eqref{Eq : test b1-5}, 
\begin{equation}
\label{Eq : pa_ij phi^j_b b_i}%
\begin{split}
&  \int\partial_{ij} \phi^{j}_{b} b_{i}\\
&  \quad\le C(\eta(t)-\eta(0)) + \int_{0}^{t} C_{\varepsilon}\left\{
\|(I-\mathbf{P})f\|_{\sigma}^{2} + \|f\|_{\infty}^{2}\|f\|_{\sigma}^{2} +
\|c\|_{2}^{2}\right\}  + \varepsilon\{ \|a\|_{2}^{2} + \|b\|_{2}^{2}\}.
\end{split}
\end{equation}

Now we estimate $(\partial_{jj}\phi^{i}_{b})b_{i}$. Choose test function
$\psi= |v|^{2} v_{i} v_{j} \sqrt\mu\partial_{j} \phi^{i}_{b}$ for $i\neq j$.
Then
\begin{equation}
\label{Eq : test b2}%
	\begin{split}
		&  -\sum_{l}\int_{0}^{t} \iint_{\mathbb{T}^{3} \times\mathbb{R}^{3}} v_{*}|v|^{2} v_{i} v_{j} \sqrt\mu\partial_{lj}\phi^{i}_{b} f - \int_{0}^{t}\iint_{\mathbb{T}^{3} \times\mathbb{R}^{3}}|v|^{2} v_{i} v_{j} \sqrt\mu\partial_{t} \partial_{j} \phi^{i}_{b} f\\
		& \quad = -\iint_{\mathbb{T}^{3} \times\mathbb{R}^{3}} \psi f(t) + \iint_{\mathbb{T}^{3} \times\mathbb{R}^{3}} \psi f(0)- \int_{0}^{t} \iint_{\mathbb{T}^{3} \times\mathbb{R}^{3}} \psi L(I-\mathbf{P})f + \int_{0}^{t} \iint_{\mathbb{T}^{3} \times\mathbb{R}^{3}}\psi\Gamma(f,f).
	\end{split}
\end{equation}
Note that
\begin{equation}
\label{Eq : test b2-1}%
	\begin{split}
		&  \sum_{l}\int_{0}^{t} \iint_{\mathbb{T}^{3} \times\mathbb{R}^{3}} v_{*}|v|^{2} v_{i} v_{j} \sqrt\mu\partial_{lj}\phi^{i}_{b} f\\
		& \quad = \int_{0}^{t} \iint_{\mathbb{T}^{3} \times\mathbb{R}^{3}} |v|^{2}(v_{i})^{2} (v_{j})^{2} \sqrt\mu[\partial_{ij}\phi^{i}_{b} b_{j} +\partial_{jj} \phi^{i}_{b} b_{i}]\\
		& \quad \quad+ \sum_{l}\int_{0}^{t} \iint_{\mathbb{T}^{3} \times\mathbb{R}^{3}}v_{*} |v|^{2} v_{i} v_{j} \sqrt\mu\partial_{lj}\phi^{i}_{b} (I-\mathbf{P})f.
	\end{split}
\end{equation}
From \eqref{Eq : pa_ij phi^j_b b_i},
\begin{equation}
\label{Eq : test b2-2}%
\begin{split}
&  \left|  \int_{0}^{t} \iint_{\mathbb{T}^{3} \times\mathbb{R}^{3}} |v|^{2}
(v_{i})^{2} (v_{j})^{2} \sqrt\mu\partial_{ij}\phi^{i}_{b} b_{j}\right| \\
&  \quad\le C(\eta(t)-\eta(0)) + \int_{0}^{t} C_{\varepsilon}\left\{
\|(I-\mathbf{P})f\|_{\sigma, \vartheta}^{2} + \|f\|_{\infty}^{2}\|f\|_{\sigma,
\vartheta}^{2} + \|c\|_{2}^{2} \right\}  + \varepsilon\{ \|a\|_{2}^{2} +
\|b\|_{2}^{2}\},
\end{split}
\end{equation}
and
\begin{equation}
\label{Eq : test b2-3}%
\begin{split}
\left|  \sum_{l}\int_{0}^{t} \iint_{\mathbb{T}^{3} \times\mathbb{R}^{3}} v_{*}
|v|^{2} v_{i} v_{j} \sqrt\mu\partial_{lj}\phi^{i}_{b} (I-\mathbf{P})f\right|   &
\le\int_{0}^{t} C\|b\|_{2} \|(I-\mathbf{P})f\|_{\sigma}\\
&  \le\int_{0}^{t} \varepsilon\|b\|_{2}^{2} + C_{\varepsilon}\|(I-\mathbf{P}%
)f\|_{\sigma}^{2}.
\end{split}
\end{equation}
Moreover, by \eqref{Eq : grad. d_t phi_b^i}
\begin{equation}
\label{Eq : test b2-4}%
\begin{split}
\left|  \int_{0}^{t} \iint_{\mathbb{T}^{3} \times\mathbb{R}^{3}}|v|^{2} v_{i}
v_{j} \sqrt\mu\partial_{t} \partial_{j} \phi^{i}_{b} f\right|   &  = \left|
\int_{0}^{t} \iint_{\mathbb{T}^{3} \times\mathbb{R}^{3}}|v|^{2} v_{i} v_{j}
\sqrt\mu\partial_{t} \partial_{j} \phi^{i}_{b} (I-\mathbf{P})f\right| \\
&  \le\int_{0}^{t} C_{\varepsilon}\left\{  \|a\|_{2} + \|c\|_{2} +
\|(I-\mathbf{P})f\|_{\sigma} + \|f\|_{\infty}\|f\|_{\sigma}\right\}  \|(I-\mathbf{P}%
)f\|_{\sigma}\\
&  \le\int_{0}^{t} C_{\varepsilon}\left\{  \|(I-\mathbf{P})f\|_{\sigma}^{2} +
\|f\|_{\infty}^{2}\|f\|_{\sigma}^{2}\right\}  + \varepsilon\left\{
\|a\|_{2}^{2} + \|c\|_{2}^{2} \right\}  .
\end{split}
\end{equation}
Similarly to \eqref{Eq : test b1-4} and \eqref{Eq : test b1-5},
\begin{equation}
\label{Eq : test b2-5}\iint_{\mathbb{T}^{3} \times\mathbb{R}^{3}} \psi
L(I-\mathbf{P})f \le\varepsilon\|b\|_{2}^{2} + C_{\varepsilon}\|(I-\mathbf{P}%
)\|_{\sigma}^{2}.
\end{equation}
and
\begin{equation}
\label{Eq : test b2-6}\iint_{\mathbb{T}^{3} \times\mathbb{R}^{3}} \psi
\Gamma(f,f) \le\varepsilon\|b\|_{2}^{2} + C_{\varepsilon}\|f\|_{\infty}%
^{2}\|f\|_{\sigma}^{2}.
\end{equation}
Combining \eqref{Eq : test b2} - \eqref{Eq : test b2-6} yields
\begin{equation}
\label{Eq : pa_jj phi^i_b b_i}%
\begin{split}
&  \int\partial_{jj} \phi^{i}_{b} b_{i}\\
&  \quad\le C(\eta(t)-\eta(0)) + \int_{0}^{t} C_{\varepsilon} \left\{
\|(I-\mathbf{P})f\|_{\sigma}^{2} + \|f\|_{\infty}^{2}\|f\|_{\sigma}^{2} +
\|c\|_{2}^{2} \right\}  + \varepsilon\{ \|a\|_{2}^{2} + \|b\|_{2}^{2}\}.
\end{split}
\end{equation}
From \eqref{Eq : pa_ij phi^j_b b_i} and \eqref{Eq : pa_jj phi^i_b b_i} for
small $\varepsilon$, we can absorb $\|b\|_{2}^{2}$ term on RHS to the LHS.
Then we can conclude that
\begin{equation}
\label{Eq : estimate b}\int_{0}^{t} \|b(s)\|^{2} ds \le C(\eta(t)-\eta(0)) +
\int_{0}^{t} C_{\varepsilon}\left\{  \|(I-\mathbf{P})f\|_{\sigma}^{2} +
\|f\|_{\infty}^{2}\|f\|_{\sigma}^{2} + \|c\|_{2}^{2} \right\}  +
\varepsilon\|a\|_{2}^{2} ds.
\end{equation}

\textit{Step 6. Estimate of $a$.} Choosing a test function
\[
\psi= (|v|^{2} - 10) v\cdot\nabla_{x} \phi_{a} \sqrt\mu,
\]
we have
\begin{equation}
\label{Eq : test a}%
\begin{split}
&  -\int_{0}^{t} \iint_{\mathbb{T}^{3}\times\mathbb{R}^{3}} (|v|^{2}%
-10)v_{i}v_{j} \partial_{ij} \phi_{a} \sqrt\mu f -\int_{0}^{t} \iint
_{\mathbb{T}^{3}\times\mathbb{R}^{3}} (|v|^{2}-10)v_{i} \partial_{t}%
\partial_{i} \phi_{a} \sqrt\mu f\\
&  = -\iint_{\mathbb{T}^{3} \times\mathbb{R}^{3}} \psi f(t) + \iint
_{\mathbb{T}^{3} \times\mathbb{R}^{3}} \psi f(0)- \int_{0}^{t} \iint_{\mathbb{T}^{3} \times\mathbb{R}^{3}} \psi
L(I-\mathbf{P})f + \int_{0}^{t} \iint_{\mathbb{T}^{3} \times\mathbb{R}^{3}}
\psi\Gamma(f,f).
\end{split}
\end{equation}
Note that
\[
\int(|v|^{2} -10)\frac{|v|^{2} - 3}{2}(v_{i})^{2} \mu= 0.
\]
Therefore,
\begin{equation}
\label{Eq : test a-1}%
\begin{split}
&  \int_{0}^{t} \iint_{\mathbb{T}^{3}\times\mathbb{R}^{3}} (|v|^{2}%
-10)v_{i}v_{j} \partial_{ij} \phi_{a} \sqrt\mu f\\
&  =\int_{0}^{t} \iint_{\mathbb{T}^{3}\times\mathbb{R}^{3}} (|v|^{2}%
-10)(v_{i})^{2} \mu\partial_{ii} \phi_{a} a+ \int_{0}^{t} \iint_{\mathbb{T}^{3}\times\mathbb{R}^{3}}
(|v|^{2}-10)v_{i}v_{j} \partial_{ij} \phi_{a} \sqrt\mu(I-\mathbf{P})f
\end{split}
\end{equation}
and
\begin{equation}
\label{Eq : test a-2}%
\begin{split}
\left|  \int_{0}^{t} \iint_{\mathbb{T}^{3}\times\mathbb{R}^{3}} (|v|^{2}%
-10)v_{i}v_{j} \partial_{ij} \phi_{a} \sqrt\mu(I-\mathbf{P})f\right|   &  \le
\int_{0}^{t} C\|a\|_{2} \|(I-\mathbf{P})f\|_{\sigma}\\
&  \le\int_{0}^{t} \varepsilon\|a\|_{2}^{2} + C_{\varepsilon}\|(I-\mathbf{P}%
)f\|_{\sigma}^{2}.
\end{split}
\end{equation}
Moreover, by \eqref{Eq : grad. d_t phi_a}
\begin{equation}
\label{Eq : test a-3}%
\begin{split}
\left|  \int_{0}^{t} \iint_{\mathbb{T}^{3}\times\mathbb{R}^{3}} (|v|^{2}%
-10)v_{i} \partial_{t}\partial_{i} \phi_{a} \sqrt\mu f\right|   &  \le\left|
\int_{0}^{t} \iint_{\mathbb{T}^{3}\times\mathbb{R}^{3}} (|v|^{2}
-10)(v_{i})^{2} \mu\partial_{t} \partial_{i} \phi_{a} b_{i}\right| \\
&  \quad+ \left|  \int_{0}^{t} \iint_{\mathbb{T}^{3}\times\mathbb{R}^{3}}
(|v|^{2}-10)v_{i} \partial_{t}\partial_{i} \phi_{a} \sqrt\mu(I-\mathbf{P}) f\right|
\\
&  \le\int_{0}^{t} C\|b\|_{2}\left\{  \|b\|_{2} + C\|(I-\mathbf{P})f\|_{\sigma
}\right\} \\
&  \le\int_{0}^{t} C\{ \|b\|_{2}^{2} + \|(I-\mathbf{P})f\|_{\sigma}^{2}\}.
\end{split}
\end{equation}
Similarly to step 4 and 5, we have
\begin{equation}
\label{Eq : test a-4}\iint_{\mathbb{T}^{3} \times\mathbb{R}^{3}} \psi
L(I-\mathbf{P})f \le\varepsilon\|a\|_{2}^{2} + C_{\varepsilon}\|(I-\mathbf{P}%
)\|_{\sigma}^{2}
\end{equation}
and
\begin{equation}
\label{Eq : test a-5}\iint_{\mathbb{T}^{3} \times\mathbb{R}^{3}} \psi
\Gamma(f,f) \le\varepsilon\|a\|_{2}^{2} + C_{\varepsilon}\|f\|_{\infty}%
^{2}\|f\|_{\sigma}^{2}.
\end{equation}
Similarly, from \eqref{Eq : test a} - \eqref{Eq : test a-5} for a small
$\varepsilon$, we can absorb $\|a\|_{2}^{2}$ on the RHS to the LHS. Then we
have
\begin{equation}
\label{Eq : estimate a}\int_{0}^{t} \|a(s)\|^{2} ds \le C(\eta(t)-\eta(0)) +
\int_{0}^{t} C_{\varepsilon} \left\{  \|(I-\mathbf{P})f(s)\|_{\sigma}^{2} +
\|f(s)\|_{\infty}^{2}\|f(s)\|_{\sigma}^{2} + \|b(s)\|_{2}^{2}\right\}  ds.
\end{equation}

Combining \eqref{Eq : estimate c}, \eqref{Eq : estimate b}, and
\eqref{Eq : estimate a}, we have
\[%
\begin{split}
\int_{0}^{t} \|\mathbf{P} f\|_{\sigma}^{2}  &  \le C(\eta(t)-\eta(0) )+ \int
_{0}^{t} C_{\varepsilon}\left\{  \|(I-\mathbf{P})f(s)\|_{\sigma}^{2} +
\|f(s)\|_{\infty}^{2} \|f(s)\|_{\sigma}^{2} \right\}  ds\\
&  \quad+ \int_{0}^{t} \varepsilon\|\mathbf{P} f(s)\|_{\sigma}^{2}ds\\
&  \le C(\eta(t)-\eta(0)) + \int_{0}^{t} C_{\varepsilon}\left\{
\|(I-\mathbf{P})f(s)\|_{\sigma}^{2} + \|f(s)\|_{\infty}^{2} \|(I-\mathbf{P}%
)f(s)\|_{\sigma}^{2}\right\}  ds\\
&  \quad+ \int_{0}^{t} (C_{\varepsilon}\|f(s)\|_{\infty}^{2} + \varepsilon
)\|\mathbf{P} f(s)\|_{\sigma}^{2} ds.
\end{split}
\]
Note that $C_{\varepsilon} = C \varepsilon^{-1}$. Choosing $\varepsilon_{0} =
\varepsilon$, we have $C_{\varepsilon} \|f(s)\|_{\infty}^{2} \le C_{\varepsilon}
\varepsilon^{2} = C \varepsilon\le1/4$ so that $\|\mathbf{P}f\|_{\sigma}^{2}$ term
on the RHS can be absorbed to the LHS. Thus we complete the proof.
\end{proof}

\begin{corollary}
\label{Coro : coercivity}
Assume \eqref{Eq : condition for g}.
Let $f(t,x,v)$ be a weak solution of \eqref{Landau}
- \eqref{conservation laws} in the sense of Definition \ref{Def : linear weak sol}. Then there exist a constant $0< \delta^{\prime
}\le1/4$ and a function $0\le\eta(t) \le C\|f(t)\|_{2}^{2}$, such that
\begin{equation}
\label{Eq : coercivity}\int_{s}^{t} (L[f(\tau)],f(\tau))d \tau\ge
\delta^{\prime}\left(  \int_{s}^{t} \|f(\tau)\|_{\sigma}^{2} d \tau-
\{\eta(t)-\eta(s)\} \right)  .
\end{equation}

\end{corollary}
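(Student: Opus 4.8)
The plan is to combine the coercivity (spectral gap) of $L$ from Lemma \ref{Lemma : Guo2002 lemma5} with the estimate of the hydrodynamic part $\mathbf{P}f$ in terms of the microscopic part $(I-\mathbf{P})f$ from Lemma \ref{Lemma : Espotiso 2013 lemma 6.1}. First I would observe that the pointwise (in $(\tau,x)$) lower bound $\langle L f(\tau,x,\cdot), f(\tau,x,\cdot)\rangle \ge \delta\,|(I-\mathbf{P})f(\tau,x,\cdot)|_\sigma^2$ supplied by Lemma \ref{Lemma : Guo2002 lemma5}, integrated over $x\in\mathbb{T}^3$, gives
\[
(L[f(\tau)], f(\tau)) \ge \delta\,\|(I-\mathbf{P})f(\tau)\|_\sigma^2 \quad\text{for a.e. }\tau ,
\]
so that $\int_s^t (L[f],f)\,d\tau \ge \delta \int_s^t \|(I-\mathbf{P})f\|_\sigma^2\,d\tau$.

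Next, by the triangle inequality $\|f(\tau)\|_\sigma^2 \le 2\|\mathbf{P}f(\tau)\|_\sigma^2 + 2\|(I-\mathbf{P})f(\tau)\|_\sigma^2$; integrating over $[s,t]$ and applying Lemma \ref{Lemma : Espotiso 2013 lemma 6.1} to the first term yields
\[
\int_s^t \|f(\tau)\|_\sigma^2\,d\tau \le (2+2C)\int_s^t \|(I-\mathbf{P})f(\tau)\|_\sigma^2\,d\tau + 2\big(\eta(t)-\eta(s)\big) \le \frac{2+2C}{\delta}\int_s^t (L[f],f)\,d\tau + 2\big(\eta(t)-\eta(s)\big),
\]
where $0\le\eta(t)\le C\|f(t)\|_2^2$ is, up to the irrelevant factor $2$ which we absorb into $\eta$, the instantaneous functional from Lemma \ref{Lemma : Espotiso 2013 lemma 6.1}. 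Rearranging and setting $\delta' := \min\{\delta/(2+2C),\,1/4\}$ gives \eqref{Eq : coercivity}: when the bracket $\int_s^t\|f\|_\sigma^2 - (\eta(t)-\eta(s))$ is negative the inequality is trivial since $(L[f],f)\ge 0$, and otherwise it follows from $\delta'\le\delta/(2+2C)$.

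Essentially all of the analytic work is already carried by Lemmas \ref{Lemma : Guo2002 lemma5} and \ref{Lemma : Espotiso 2013 lemma 6.1}, so I do not expect a serious obstacle here; the only points requiring attention are the elementary bookkeeping of constants needed to force $\delta'\le 1/4$, and the remark that since $\eta$ enters only through the increment $\eta(t)-\eta(s)$ it may be rescaled by a constant without affecting either the sign condition $\eta\ge 0$ or the bound $\eta(t)\le C\|f(t)\|_2^2$.
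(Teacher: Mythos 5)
Your proposal is correct and follows essentially the same route as the paper: the spectral-gap bound $(Lf,f)\ge\delta\|(I-\mathbf{P})f\|_\sigma^2$ from Lemma \ref{Lemma : Guo2002 lemma5} combined with the bound on $\int\|\mathbf{P}f\|_\sigma^2$ from Lemma \ref{Lemma : Espotiso 2013 lemma 6.1}, followed by a rearrangement of constants (the paper splits $\delta\int\|(I-\mathbf{P})f\|_\sigma^2$ into weights $\tfrac{C}{1+C}$ and $\tfrac{1}{1+C}$ rather than using your triangle-inequality form, but the content is identical). Your handling of the case where the bracket is negative and of the rescaling of $\eta$ is a harmless bookkeeping variant.
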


\begin{proof}
By Lemma \ref{Lemma : Guo2002 lemma5} and Lemma
\ref{Lemma : Espotiso 2013 lemma 6.1},
\[%
\begin{split}
\int_{s}^{t} (L[f(\tau)],f(\tau))d \tau &  \ge\delta\int_{s}^{t}
\|(I-\mathbf{P})f(\tau)\|_{\sigma}^{2} d \tau\\
&  \ge\delta\frac{C}{1+C} \int_{s}^{t} \|(I-\mathbf{P})f(\tau)\|_{\sigma}^{2} d
\tau+ \delta\frac{1}{1+C} \int_{s}^{t} \|(I-\mathbf{P})f(\tau)\|_{\sigma}^{2} d
\tau\\
&  \ge\delta\frac{C}{1+C} \int_{s}^{t} \|(I-\mathbf{P})f(\tau)\|_{\sigma}^{2} d
\tau+ \delta\frac{1}{1+C} C \left(  \int_{s}^{t} \|\mathbf{P} f(\tau)\|_{\sigma
}^{2} d \tau- \{\eta(t) - \eta(s)\}\right) \\
&  = \frac{C \delta}{1+C} \left(  \int_{0}^{t} \|f(\tau)\|_{\sigma}^{2} d
\tau- \{\eta(t) - \eta(s)\} \right)  .
\end{split}
\]

\end{proof}

\begin{remark}
\label{Rmk : choice of delta'} Note that in Lemma \ref{Lemma : Guo2002 lemma5}%
, we can take $\delta>0$ sufficiently small. Therefore we can also take $\delta
^{\prime}$ small enough.
\end{remark}

Now we will prove Theorem \ref{Thm : energy estimate}.
The proof is a modification of Theorem 5.1 in \cite{strain2006almost}.

\begin{proof}[Proof of Theorem \ref{Thm : energy estimate}]
We will prove
\begin{equation}
\label{Eq : energy induction}%
\begin{split}
&  \sum_{0\le\bar\vartheta\le 2\vartheta}\left(  \frac{C_{\bar\vartheta}^{*}}%
{2}\{ \|f(t)\|_{2, \bar\vartheta/2}^{2} - \|f(s)\|_{2, \bar\vartheta/2}^{2} \} +
\delta_{\bar\vartheta,2\vartheta}\int_{s}^{t} \|f(\tau)\|_{\sigma,
\bar\vartheta/2}^{2} d \tau\right)  - \delta^{\prime}\{\eta(t) - \eta(s)\}\\
&  \quad\le C_{\vartheta} \int_{s}^{t} \|g(\tau)\|_{\infty} \|f(\tau
)\|_{\sigma,\vartheta}^{2} d \tau
\end{split}
\end{equation}
by the induction on $\vartheta$.

\textit{Basis step($\vartheta=0$).} Multiplying \eqref{linear landau} by $f$,
integrating both sides of the resulting equation, by Theorem
\ref{Thm : modified Guo2002 theorem3} and Corollary \ref{Coro : coercivity}, we
have
\[
\frac{1}{2}\{ \|f(t)\|_{2}^{2} - \|f(s)\|_{2}^{2}\} + \delta^{\prime}\left(
\int_{s}^{t} \|f(\tau)\|_{\sigma}^{2} d \tau- \{\eta(t) - \eta(s)\}\right)
\le C\int_{s}^{t} \|g(\tau)\|_{\infty} \|f(\tau)\|_{\sigma}^{2} d \tau.
\]

\textit{Inductive step.} Suppose that \eqref{Eq : energy induction} holds for
$\vartheta-1/2$. Multiplying \eqref{linear landau} by $w^{2\vartheta}f$,
integrating both sides of the resulting equation, by Lemma
\ref{Lemma : Guo2002 lemma6} and Theorem
\ref{Thm : modified Guo2002 theorem3}, we have
\begin{equation}
\label{Eq : energy induction 2}\frac{1}{2}\{ \|f(t)\|_{2,\vartheta}^{2} -
\|f(s)\|_{2,\vartheta}^{2} \} + \int_{s}^{t}\left(  \frac{1}{2} \|f(\tau
)\|_{\sigma,\vartheta}^{2} - C_{\vartheta} \|f(\tau)\|_{\sigma}^{2} d
\tau\right)  \le C_{\vartheta}\int_{s}^{t} \|g(\tau)\|_{\infty}\|f(\tau
)\|_{\sigma,\vartheta}^{2}d \tau.
\end{equation}
Multiply \eqref{Eq : energy induction 2} by $\frac{\delta_{0,2\vartheta-1}%
}{2C_{\vartheta}}$ and add it to \eqref{Eq : energy induction}. Then we have
\[%
\begin{split}
&  \sum_{0\le\bar\vartheta\le 2\vartheta-1}\left(  \frac{C_{\bar\vartheta}^{*}%
}{2}\{ \|f(t)\|_{2, \bar\vartheta/2}^{2} - \|f(s)\|_{2, \bar\vartheta/2}^{2} \} +
\delta_{\bar\vartheta,2\vartheta-1}\int_{s}^{t} \|f(\tau)\|_{\sigma,
\bar\vartheta/2}^{2} d \tau\right)  - \delta^{\prime}\{\eta(t) - \eta(s)\}\\
&  \quad+ \frac{\delta_{0,2\vartheta-1}}{2C_{\vartheta}} \left[  \frac{1}%
{2}\{\|f(t)\|_{2,\vartheta}^{2} - \|f(s)\|_{2,\vartheta}^{2}\} + \int_{s}%
^{t}\left(  \frac{1}{2} \|f(\tau)\|_{\sigma,\vartheta}^{2} - C_{\vartheta}
\|f(\tau)\|_{\sigma}^{2} d \tau\right)  \right] \\
&  \qquad\le C_{\vartheta-1/2} \int_{s}^{t} \|g(\tau)\|_{\infty} \|f(\tau
)\|_{\sigma,\vartheta-1/2}^{2} d \tau+ \frac{\delta_{0,2\vartheta-1}}{2} \int
_{s}^{t} \|g(\tau)\|_{\infty}\|f(\tau)\|_{\sigma,\vartheta}^{2} d \tau.
\end{split}
\]
Note that $\|\cdot\|_{2,\vartheta-1/2} \le\|\cdot\|_{2,\vartheta}$,
$\|\cdot\|_{\sigma,\vartheta-1/2} \le\|\cdot\|_{\sigma,\vartheta}$. Choosing
sequences of $C_{2\vartheta}^{*}$, $\delta_{\bar\vartheta, 2\vartheta}$, and
$C_{\vartheta}$ such that
\[
C_{0}^{*} = 1,\quad\delta_{0,0} = \delta^{\prime}, \quad C_{0} = C,
\]
\begin{equation}
\label{Eq : induction constants 1}C_{\vartheta}^{*} = \frac{\delta
_{0,2\vartheta-1}}{2 C_{\vartheta}},
\end{equation}
\begin{equation}
\label{Eq : induction constants 2}\delta_{\bar\vartheta, 2\vartheta} =
\begin{cases}
\frac{\delta_{0, 2\vartheta-1}}{2}, & \text{ if }\bar\vartheta=0\\
\delta_{\bar\vartheta, 2\vartheta-1}, & \text{ if }\bar2\vartheta= 1,\cdots,
\vartheta-1,\\
\frac{\delta_{0, 2\vartheta-1}}{4C_{\vartheta}}, & \text{ if }\bar\vartheta= 2 \vartheta,
\end{cases}
\end{equation}
and
\begin{equation}
\label{Eq : induction constants 3}C_{\vartheta} = C_{\vartheta-1/2} +
\frac{\delta_{0,2\vartheta-1}}{2},
\end{equation}
we have \eqref{Eq : energy induction} for all $\vartheta$.

Note that from \eqref{Eq : induction constants 1} -
\eqref{Eq : induction constants 3}, we have
\[
\delta_{0,k} = \frac{\delta^{\prime}}{2^{k}}, \text{ for } k = 1,2, \cdots, 2 \vartheta.
\]
\[
C <C_{\vartheta} = C+\sum_{0 \le\bar\vartheta\le 2\vartheta-1} \frac
{\delta_{0,\bar\vartheta}}{2} < C+ \delta^{\prime}< C+1,
\]
\[
\frac{\delta^{\prime}}{2^{2\vartheta}(C+1)}< C_{\vartheta}^{*} < \frac
{\delta^{\prime}}{2^{2\vartheta} C},
\]
and
\[
\delta_{2\vartheta, 2\vartheta} = \frac{\delta_{0,2\vartheta-1}}{4C_{\vartheta}}
= \frac{C_{\vartheta}^{*}}{2}.
\]
Let $\varepsilon= \frac{\delta_{2\vartheta,2\vartheta}}{2 C_{\vartheta}}$. By
Remark \ref{Rmk : choice of delta'}, we can choose $\delta
^{\prime}$ small enough such that
\begin{equation}
\label{Eq : choice of delta'}\delta^{\prime}\eta(t) \le\frac{C_{0}^{*}}%
{4}\|f(t)\|_{2}^{2} = \frac{1}{4}\|f(t)\|_{2}^{2}.
\end{equation}
From \eqref{Eq : energy induction}, we have
\begin{equation}
\label{Eq : energy estimate 2}%
\begin{split}
\frac{C_{\vartheta}^{*}}{2}\|f(t)\|_{2,\vartheta}^{2} + \frac{\delta
_{2\vartheta,2\vartheta}}{2}\int_{s}^{t} \|f(\tau)\|_{\sigma,\vartheta}^{2} d
\tau &  \le\frac{1}{4}\|f(s)\|_{2}^{2} + \sum_{1\le\bar\vartheta\le 2\vartheta}
\frac{C_{\bar\vartheta}^{*}}{2}\|f(s)\|_{2,\bar\vartheta/2}^{2}\\
\quad &  \le\left(  \frac{1}{4} + \frac{\delta^{\prime}}{C} \right)
\|f(s)\|_{2,\vartheta}^{2}\\
\quad &  \le\frac{1}{2}\|f(s)\|_{2,\vartheta}^{2}.%
\end{split}
\end{equation}
Taking $s=0$ and dividing by $\frac{\delta_{2\vartheta,2\vartheta}}{2}$ both sides
of \eqref{Eq : energy estimate 2}, we have
\[
2\|f(t)\|_{2,\vartheta}^{2} + \int_{0}^{t} \|f(\tau)\|_{\sigma,\vartheta}^{2}
d \tau\le\frac{2}{C_{\vartheta}^{*}}\|f(0)\|_{2,\vartheta}^{2}. \le C
2^{2\vartheta}\|f(0)\|_{2,\vartheta}^{2}.%
\]
Therefore, we have \eqref{Eq : energy estimate linear}.

Fix $\vartheta,k \ge0$, by the H\"older inequality and
\eqref{Eq : energy estimate linear},
\begin{equation}
\label{Eq : weight interpolation}%
\begin{split}
	\|f\|_{2,\vartheta}^{2}  &  = \int w^{2\vartheta} f^{2}\\
	&  = \int\left(  w^{2 \left( \vartheta - \frac{1}{2}\right)}f^{2}\right)  ^{\frac{k}{k+1}}\left(w^{2 \left( \vartheta + \frac{k}{2}\right)}f^{2}\right)  ^{\frac{1}{k+1}}\\
	&  \le\left(  \int w^{2 \left( \vartheta - \frac{1}{2}\right)}f^{2}\right)  ^{\frac{k}{k+1}}\left(\int w^{2 \left( \vartheta + \frac{k}{2}\right)}f^{2}\right)  ^{\frac{1}{k+1}}\\
	&  \le\|f\|_{2,\vartheta-1/2}^{2\frac{k}{k+1}} \left(  C 2^{2\vartheta+k}\mathcal{E}_{\vartheta+k/2}(0) \right)  ^{\frac{1}{k+1}}.
\end{split}
\end{equation}
By Lemma \ref{Lemma : Guo2002 corollary1},
\begin{equation}
\label{Eq : sigma norm estimate}\|f\|_{\sigma,\vartheta} \ge\|(1+|v|)^{-1/2} f\|_{2,\vartheta} =
\|f\|_{2,\vartheta-1/2}.
\end{equation}
Combining \eqref{Eq : energy induction}, \eqref{Eq : weight interpolation},
and \eqref{Eq : sigma norm estimate}, we have
\begin{equation}
\label{Eq : decay estimate 2}%
\begin{split}
&  \sum_{1 \le\bar\vartheta\le2\vartheta} \frac{C_{\bar\vartheta}^{*}}{2}
\left(  \|f(t)\|_{2,\bar\vartheta/2}^{2} - \|f(s)\|_{2,\bar\vartheta/2}%
^{2}\right)  + \left\{  \frac{1}{2}\|f(t)\|_{2}^{2} - \delta^{\prime}%
\eta(t)\right\}  - \left\{  \frac{1}{2}\|f(s)\|_{2}^{2} - \delta^{\prime}%
\eta(s)\right\} \\
&  \quad\le-\frac{\delta_{2\vartheta,2\vartheta}}{2} \int_{s}^{t} \|f(\tau
)\|_{\sigma,\vartheta}^{2} d \tau\\
&  \quad\le-\frac{\delta_{2\vartheta,2\vartheta}}{2} \int_{s}^{t} \|f(\tau
)\|_{2,\vartheta-1/2}^{2} d \tau\\
&  \quad\le-\frac{\delta_{2\vartheta,2\vartheta}}{2} \int_{s}^{t} \left(
C2^{2\vartheta+k}\mathcal{E}_{\vartheta+k/2}(0)\right)  ^{-\frac{1}{k}}%
\|f(\tau)\|_{2,\vartheta}^{2\frac{k+1}{k}} d \tau.
\end{split}
\end{equation}
Let
\[
y(t) := \left\{  \frac{1}{2}\|f(t)\|_{2}^{2} - \delta^{\prime}\eta(t)\right\}
+ \sum_{1 \le\bar\vartheta\le2\vartheta} \frac{C_{\bar\vartheta}^{*}}{2}
\|f(t)\|_{2,\bar\vartheta/2}^{2}.
\]
Then
\begin{equation}
\label{Eq : y(t)}\frac{C_{\vartheta}^{*}}{2}\|f(t)\|_{2,\vartheta}^{2} \le
y(t) \le\left(  \frac{1}{2} + \sum_{\bar\vartheta=1}^{2\vartheta} \frac
{C_{\bar\vartheta}^{*}}{2}\right)  \|f(t)\|_{2,\vartheta}^{2} \le
\|f(t)\|_{2,\vartheta}^{2}.
\end{equation}
Combining \eqref{Eq : decay estimate 2}, \eqref{Eq : y(t)}, we have
\[
y(t) - y(s) \le- \frac{\delta_{2\vartheta,2\vartheta}}{2} \int_{s}^{t} \left(
C2^{2\vartheta+k}\mathcal{E}_{\vartheta+k/2}(0)\right)  ^{-\frac{1}{k}}\left(
y(\tau)\right)  ^{\frac{k+1}{k}} d \tau.
\]
Therefore, we have
\begin{equation}
\label{Eq : y'(t)}y^{\prime}(t) \le-\frac{1}{2} \delta_{2\vartheta,2\vartheta
}\left(  C2^{2\vartheta+k}\mathcal{E}_{\vartheta+k/2}(0)\right)  ^{-\frac{1}{k}}
y(t)^{\frac{k+1}{k}}\le- \frac{1}{2^{2\vartheta}C} \left(  C2^{2\vartheta
+k}\mathcal{E}_{\vartheta+k/2}(0)\right)  ^{-\frac{1}{k}} y(t)^{\frac{k+1}{k}} .
\end{equation}
Multiplying \eqref{Eq : y'(t)} by $-\frac{1}{k}y^{-\frac{k+1}{k}}$, we have
\[
\partial_{t} \left(  y(t)^{-\frac{1}{k}}\right)  \ge\frac{1}{2^{2\vartheta}Ck}
\left(  C2^{2\vartheta+k}\mathcal{E}_{\vartheta+k/2}(0)\right)  ^{-\frac{1}{k}}.
\]
Integrating above over $[0,t]$ yields
\[%
\begin{split}
y(t)^{-\frac{1}{k}}  &  \ge\frac{t}{2^{2\vartheta}Ck} \left(  C2^{2\vartheta
+k}\mathcal{E}_{\vartheta+k/2}(0)\right)  ^{-\frac{1}{k}} + y(0)^{-\frac{1}{k}%
}\\
&  \ge\frac{t}{2^{2\vartheta}Ck} \left(  C2^{2\vartheta+k}\mathcal{E}%
_{\vartheta+k/2}(0)\right)  ^{-\frac{1}{k}} + \left(  \|f(0)\|_{2,\vartheta}%
^{2}\right)  ^{-\frac{1}{k}}\\
&  \ge\frac{\left(  C2^{2\vartheta+k}\mathcal{E}_{\vartheta+k/2}(0)\right)
^{-\frac{1}{k}}}{2^{2\vartheta}C} \left(  \frac{t}{k} + 1\right).
\end{split}
\]
Therefore,
\[
\|f(t)\|_{2,\vartheta}^{2} \le\frac{2}{C_{\vartheta}^{*}} y(t) \le
C_{\vartheta,k} \mathcal{E}_{\vartheta+k/2}(0)\left(  1+ \frac{t}{k}\right)
^{-k},
\]
where we use \eqref{Eq : y(t)} in the first inequality. Thus we complete the proof.
\end{proof}

\begin{theorem}
\label{Thm : energy estimate tilde Af}
Assume \eqref{Eq : condition for g}.
Let $\vartheta\in 2^{-1}\mathbb{N} \cup\{0\}$ and $f$ be a classical solution of
\eqref{conservation laws}, \eqref{bar Af}. Then there exist $C,
\varepsilon(\vartheta)>0$ such that if $\|g\|_{\infty}< \varepsilon$, then
\begin{equation}
\label{Eq : energy estimate for tilde Af}\sup_{0 \le s< \infty}\mathcal{E}%
_{\vartheta}(f(s)) \le C 2^{2\vartheta} \mathcal{E}_{\vartheta}(0),
\end{equation}
and for any $t>0$, $k\in\mathbb{N}$,
\begin{equation}
\label{Eq : decay estimate for tilde Af}\|f(t)\|_{2,\vartheta} \le
C_{\vartheta,k} \mathcal{E}_{\vartheta+k/2}(0)\left(  1+ \frac{t}{k}\right)
^{-k/2}.
\end{equation}

\end{theorem}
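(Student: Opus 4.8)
The plan is to mirror, essentially line by line, the three-stage argument that proves Theorem \ref{Thm : energy estimate}: first a coercivity estimate for the operator $-\bar A_g$ (the analogue of Corollary \ref{Coro : coercivity}), then a weighted energy induction on $\vartheta$, then a weight-interpolation plus ODE argument for the time decay. The only structural difference between \eqref{bar Af} and \eqref{linear landau} is the absent $\bar K_g f$ term; equivalently, since $-\bar A_g = L + \bar K_g - \Gamma(g,\cdot)$, a solution $f$ of \eqref{bar Af} solves the linear Landau equation \eqref{linear landau} with an extra source $-\bar K_g f$ of "collision type'' — it contains no velocity derivative of $f$ and every convolution in it carries a Maxwellian — so the point is to check that this source is absorbable wherever it appears in the Section \ref{sec:l2 decay} argument.

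The crux is the coercivity estimate
\[
\int_s^t (-\bar A_g f, f)\, d\tau \ \ge\ \delta'\Big( \int_s^t \|f(\tau)\|_\sigma^2\, d\tau - \{\eta(t)-\eta(s)\}\Big),\qquad 0\le \eta(t)\le C\|f(t)\|_2^2,
\]
for $\|g\|_\infty$ small. First I would redo the macroscopic positivity estimate of Lemma \ref{Lemma : Espotiso 2013 lemma 6.1}: testing \eqref{bar Af} against $\psi=p(v)\phi(t,x)$ with $|p(v)|\le e^{-|v|^2/4}$ and using $\bar A_g f = -Lf+\Gamma(g,f)-\bar K_g f$, each identity for $\partial_t a,\partial_t b,\partial_t c$ and each bound in Steps 1--6 of that proof reappears with one new term $\iint\psi\bar K_g f$; by the explicit form of $\bar K_g$ together with Lemma \ref{lemma : Kf estimate}, $|\iint\psi\bar K_g f|\lesssim \|\phi\|_{L^2_x}\big(\|f\|_\sigma + \|\mathbf 1_{\{|v|\le R\}}f\|_2\big)$, which is absorbed exactly like the $\psi L(I-\mathbf P)f$ and $\psi\Gamma(f,f)$ terms there, giving $\int_s^t\|\mathbf P f\|_\sigma^2\le \eta(t)-\eta(s)+C\int_s^t\|(I-\mathbf P)f\|_\sigma^2$. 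Then I would expand $(-\bar A_g f,f)=(Lf,f)+(\bar K_g f,f)-(\Gamma(g,f),f)$, use $(Lf,f)\ge\delta\|(I-\mathbf P)f\|_\sigma^2$ (Lemma \ref{Lemma : Guo2002 lemma5}), $|(\Gamma(g,f),f)|\le C\|g\|_\infty\|f\|_\sigma^2$ (Theorem \ref{Thm : modified Guo2002 theorem3}), and split $(\bar K_g f,f)$ via Lemma \ref{Lemma : Guo2002 lemma5} and \ref{lemma : Kf estimate} into a piece $\le \tfrac{C}{m}\|f\|_\sigma^2+C(m)\|\mathbf 1_{\{|v|\le C(m)\}}f\|_2^2$, an $O(\|g\|_\infty\|f\|_\sigma^2)$ piece, and the remaining term $-(\sigma^{ij}v_iv_j f,f)$; regrouping the last with $(Lf,f)$ (the combination $(Lf,f)+(\bar K_g f,f)$ collapses, modulo small and compact remainders, to the gradient part $\iint\sigma^{ij}\partial_i f\partial_j f$ plus macroscopic quantities), one controls the compact remainders by the macroscopic estimate and the conservation laws \eqref{conservation laws}, and absorbs the $\|g\|_\infty$-terms for $\varepsilon$ small.

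With the coercivity in hand, the weighted induction is identical to the proof of Theorem \ref{Thm : energy estimate}: multiply \eqref{bar Af} by $w^{2\vartheta}f$ (the transport term integrates away on $\mathbb T^3$), use Lemma \ref{Lemma : Guo2002 lemma6} for the $-(w^{2\vartheta}Af,f)$-type term — the weight-commutator terms and the $\bar K_g$-induced terms are absorbed via Lemma \ref{lemma : Kf estimate} — and Theorem \ref{Thm : modified Guo2002 theorem3} for the $\Gamma(g,f)$ part, then run the same induction over $\vartheta\in 2^{-1}\mathbb N\cup\{0\}$ with the same constants $C^*_\vartheta,\delta_{\bar\vartheta,2\vartheta},C_\vartheta$ and the same choice $\varepsilon=\varepsilon(\vartheta)$; this yields \eqref{Eq : energy estimate for tilde Af}. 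The decay \eqref{Eq : decay estimate for tilde Af} then follows verbatim from the weight-interpolation inequality \eqref{Eq : weight interpolation}, the bound $\|f\|_{\sigma,\vartheta}\ge\|f\|_{2,\vartheta-1/2}$ coming from Lemma \ref{Lemma : Guo2002 corollary1}, and integration of the resulting differential inequality $y'(t)\lesssim -\big(C2^{2\vartheta+k}\mathcal E_{\vartheta+k/2}(0)\big)^{-1/k}y(t)^{(k+1)/k}$, exactly as in the last part of the proof of Theorem \ref{Thm : energy estimate}.

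I expect the coercivity estimate for $-\bar A_g$ to be the main obstacle. Unlike $L$, the operator $\bar A_g$ is a degenerate diffusion with no zeroth-order restoring term, so $(-\bar A_g f,f)$ by itself produces only the gradient part $\iint\sigma^{ij}\partial_i f\partial_j f$ of $\|f\|_\sigma^2$; the zeroth-order part $\iint\sigma^{ij}v_iv_j f^2$ must be recovered through the decomposition $-\bar A_g=L+\bar K_g-\Gamma(g,\cdot)$, the macroscopic positivity estimate, and the conservation laws, and the delicate bookkeeping is precisely that the extra $\bar K_g$-contributions — notably the non-small term $-\sigma^{ij}v_iv_j f$ inside $\bar K_g$ — must be regrouped with $(Lf,f)$ so that only controllable (small in $\|g\|_\infty$, or compact) remainders are left when the macroscopic estimate is applied. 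Once this is arranged, the remainder of the argument is a repetition of Section \ref{sec:l2 decay}.
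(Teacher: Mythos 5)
The paper's own proof of this theorem is the single sentence ``choose $\Gamma=0$ in Theorem \ref{Thm : energy estimate}'', so your route is necessarily different, and you have correctly spotted what that one-liner glosses over: \eqref{bar Af} is \eqref{linear landau} with the zeroth-order operator $\bar K_g$ removed, not with $\Gamma(g,\cdot)$ removed. Unfortunately the repair you propose does not close, and the failure is exactly at the step you yourself flag as the crux.

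The inner product $(-\bar A_g f,f)$ equals, identically, $\iint\sigma_G^{ij}\partial_if\partial_jf+\tfrac12\iint(\nabla_v\cdot a_g)f^2$; since this is an identity, no regrouping through $-\bar A_g=L+\bar K_g-\Gamma(g,\cdot)$ can produce anything beyond the gradient Dirichlet form plus an $O(\|g\|_\infty)$ remainder. In particular the zeroth-order piece $\iint\sigma^{ij}v_iv_jf^2\sim\|f\|_{2,-1/2}^2$ of $\|f\|_\sigma^2$ is not generated: the confining term $-\sigma^{ij}v_iv_jf$ inside $A$, which is what makes Lemma \ref{Lemma : Guo2002 lemma6} coercive at order zero, sits in $\bar K_g$ and cancels exactly against the corresponding term of $L$ in your decomposition. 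For the microscopic component this piece is neither small nor compact nor macroscopic, so it cannot be absorbed, recovered by the $\eta$-trick (which only yields $\|\mathbf{P}f\|_\sigma$), or extracted from the conservation laws; concretely, for $f=c(t,x)\chi_R(v)$ with $\chi_R(v)=R^{-3/2}\chi(v/R)$ orthogonal to the collision invariants, the gradient form is $O(R^{-3})\|c\|_2^2$ while $\iint\sigma^{ij}v_iv_jf^2\sim R^{-1}\|c\|_2^2$, so no coercivity of the form you assert holds uniformly. (A secondary problem: the moments of $\bar A_gf$ against $\sqrt\mu$, $v_i\sqrt\mu$, $|v|^2\sqrt\mu$ do not vanish, so the local balance laws in Steps 1--6 of Lemma \ref{Lemma : Espotiso 2013 lemma 6.1} acquire sources, $\int_{\mathbb{T}^3}\partial_ta\,dx\neq0$ breaks the solvability of the Neumann problem for $\Phi_a$, and \eqref{conservation laws} is not propagated by \eqref{bar Af}.) What your argument does deliver is \eqref{Eq : energy estimate for tilde Af} with $\|f\|_{\sigma,\vartheta}^2$ replaced by the gradient part $\iint w^{2\vartheta}\sigma^{ij}\partial_if\partial_jf$ and a Gronwall constant $C(t)$ --- which is precisely what the paper proves for the closely related equation \eqref{Eq : equation for h} in Lemma \ref{Lemma : energy estimate approximated bar A^theta} --- but that is not enough to run the interpolation $\|f\|_{\sigma,\vartheta}\ge\|f\|_{2,\vartheta-1/2}$ on which the decay \eqref{Eq : decay estimate for tilde Af} rests.
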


\begin{proof}[Sketch of proof]
	The proof can be done by choosing $\Gamma = 0$ in Theorem \ref{Thm : energy estimate}.
\end{proof}

\section{$L^{2}-L^{\infty}$ estimate}

\label{sec:L2-Linfty estimate}

\subsection{Local $L^2-L^\infty$ estimate} 
\label{sub:local_L2-Linfty estimate}


In this subsection we will derive a local $L^{\infty}$ estimate for $h$.
\begin{equation}
\label{Eq : M_g^theta}\mathcal{M}_{g}^{\vartheta}h:= (\partial_{t} +
v\cdot\nabla_{x} - \bar A_{g}^{\vartheta})h,
\end{equation}
where $\bar A_{g}^{\vartheta}$ is defined as in \eqref{Eq : bar A_g^theta}.

Here we will refine the results about the $L^{2}$-$L^{\infty}$ estimate in
\cite{mouhot2015holder}. Comparing with \cite{mouhot2015holder}, we have an
additional term; $a_{g}\cdot \nabla_{v} f-2 \frac{\partial_{i} w^{\vartheta}%
}{w^{\vartheta}}\sigma_{G}^{ij} \partial_{j} f$ and a diffusion matrix of $\mathcal{M}_{g}^{\vartheta}$ is not uniformly elliptic. Moreover,
to get a $L^{2}$-$L^{\infty}$ estimate for $\mathbb{T}^{3} \times
\mathbb{R}^{3}$, we need to know the local $L^{2}$-$L^{\infty}$ estimate more explicitly.

Define $Q_{n} := [-t_{n},0]\times\mathbb{T}^{3} \times B(0;R_{n})$, for $t_{n}
\ge t_{n+1}$ and $R_{n} \ge R_{n+1}$. The following estimates are refinements of Lemma 4 - 6 and Theorem 2 and Theorem 7 in
\cite{mouhot2015holder}.

\begin{lemma}
[Lemma 4 in \cite{mouhot2015holder}]%
\label{lemma : gain of integrability w.r.t. v and t}
Assume \eqref{Eq : condition for g}.
Let $h$ be a nonnegative
periodic function in $x$ satisfying $\mathcal{M}_{g}^{\vartheta}h \le0$. Then
$h$ satisfies
\begin{align}
\int_{Q_{1}}|\nabla_{v} h|^{2} \le C \int_{Q_{0}}h^{2}%
\label{Eq : gain of regularity w.r.t. v}\\
\|h\|_{L^{2}_{t}L^{2}_{x}L^{q}_{v}(Q_{1})}^{2} \le C \int_{Q_{0}}%
h^{2}\label{Eq : gain of integrability w.r.t. v}\\
\|h\|_{L^{\infty}_{t}L^{2}_{x}L^{2}_{v}(Q_{1})}^{2} \le C \int_{Q_{0}}h^{2} .
\label{Eq : gain of integrability w.r.t. t}%
\end{align}
for some $q>2$ and $C = \bar C(R_{0})\left(  1+ \frac{1}{t_{0} -
t_{1}} + \frac{1}{R_{0} - R_{1}} + \frac{1}{(R_{0} - R_{1})^{2}} \right)  $.
\end{lemma}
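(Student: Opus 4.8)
The plan is to run the standard De Giorgi / Caffarelli–Vasseur energy-inequality machinery for the kinetic Fokker–Planck operator, but with two modifications forced on us by the structure of $\mathcal{M}_g^\vartheta$: (i) the diffusion matrix $\sigma_G$ is only comparable to the Landau matrix $\sigma$, hence \emph{degenerate} as $|v|\to\infty$, not uniformly elliptic, so all the ellipticity must be localized to the ball $B(0;R_0)$ where $\sigma_G\gtrsim (1+R_0)^{-3}\mathrm{Id}$ by Lemma~\ref{lemma : eigenvalue estimate}; and (ii) there are the extra first-order drift terms $a_g\cdot\nabla_v h$ and $-2\frac{\partial_i w^\vartheta}{w^\vartheta}\sigma_G^{ij}\partial_j h$, which by the bounds in Lemma~\ref{lemma : eigenvalue estimate} and the explicit estimates in the proofs of Lemmas~\ref{lemma : barrier function} and~\ref{lemma : Kf estimate} have coefficients bounded by $C(1+|v|)^{-1}$ on $B(0;R_0)$, hence are controlled by the diffusion via Young's inequality with a loss that only contributes to the constant $\bar C(R_0)$. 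The strategy: first I would take a cutoff $\chi(t,x,v)$ in $t$ and $v$ (independent of $x$ since the $x$-domain is all of $\mathbb{T}^3$) with $\chi\equiv 1$ on $Q_1$, supported in $Q_0$, with $|\partial_t\chi|\lesssim (t_0-t_1)^{-1}$ and $|\nabla_v\chi|\lesssim (R_0-R_1)^{-1}$, $|\nabla_v^2\chi|\lesssim (R_0-R_1)^{-2}$.

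Since $h\ge 0$ and $\mathcal{M}_g^\vartheta h\le 0$, I would test the inequality $\partial_t h+v\cdot\nabla_x h\le \bar A_g^\vartheta h$ against $\chi^2 h$ (working with a smooth approximation $h^\delta$ as in Lemma~\ref{lemma : existence of weak sol h} and passing to the limit, or just on the subsolution directly if regularity permits). Integration by parts in $v$ on the divergence-form part $\nabla_v\cdot(\sigma_G\nabla_v h)$ produces the good term $-\iint \chi^2 \sigma_G^{ij}\partial_i h\,\partial_j h$ plus cross terms $\iint \sigma_G^{ij}\partial_i h\,(2\chi\partial_j\chi) h$, absorbed by Young into $\tfrac12$ of the good term plus $C\iint |\nabla_v\chi|^2 \sigma_G^{ij}\text{(scalar)} h^2 \lesssim C(R_0)\iint|\nabla_v\chi|^2 h^2$ since on $B(0;R_0)$ the entries of $\sigma_G$ are bounded. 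The transport term $v\cdot\nabla_x(\chi^2 h^2/2)$ integrates to zero on $\mathbb{T}^3$ because $\chi$ is $x$-independent and $h$ is periodic. The time term gives $\tfrac12\partial_t\iint\chi^2 h^2 - \iint\chi\partial_t\chi\, h^2$, the latter bounded by $C(t_0-t_1)^{-1}\iint_{Q_0}h^2$. The first-order drift terms, tested against $\chi^2 h$, give $\iint \chi^2 (\text{drift})\cdot\nabla_v h\, h$, and since the drift coefficients are $O((1+R_0)^{-1})$ on $B(0;R_0)$ and $\sigma_G\gtrsim c(R_0)\mathrm{Id}$ there, Young's inequality absorbs $\tfrac14$ of the good diffusion term at the cost of $C(R_0)\iint_{Q_0}h^2$; the lower-order $\bar K_g^\vartheta$-type zeroth order pieces inside $\bar A_g^\vartheta$ (the $a_g^i$ divergence, etc.) are handled the same way or directly bounded by $C(R_0)\iint_{Q_0}h^2$. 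Integrating in $t$ from $-t_0$ (where $\chi=0$) to any $\tau\in[-t_1,0]$ then yields both $\sup_t\iint_{\mathbb{T}^3\times B(0;R_1)}h^2 \le C\iint_{Q_0}h^2$, which is \eqref{Eq : gain of integrability w.r.t. t}, and $\iint_{Q_1}|\nabla_v h|^2 \le C\iint_{Q_0}h^2$ after using $\sigma_G\gtrsim c(R_0)$ on the good term — this is \eqref{Eq : gain of regularity w.r.t. v}, with the constant $C=\bar C(R_0)\big(1+(t_0-t_1)^{-1}+(R_0-R_1)^{-1}+(R_0-R_1)^{-2}\big)$ as claimed.

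For \eqref{Eq : gain of integrability w.r.t. v}, the gain of $L^q_v$ integrability with $q>2$, I would combine \eqref{Eq : gain of integrability w.r.t. t} and \eqref{Eq : gain of regularity w.r.t. v} on an intermediate cylinder with the Sobolev embedding $H^1_v(\mathbb{R}^3)\hookrightarrow L^6_v(\mathbb{R}^3)$ and interpolation: for fixed $(t,x)$, $\|h\chi\|_{L^q_v}\lesssim \|h\chi\|_{L^2_v}^{1-\theta}\|h\chi\|_{L^6_v}^{\theta}\lesssim \|h\chi\|_{L^2_v}^{1-\theta}\big(\|h\chi\|_{L^2_v}+\|\nabla_v(h\chi)\|_{L^2_v}\big)^{\theta}$; squaring and integrating in $x$ over $\mathbb{T}^3$ (finite measure) and then in $t$, using $L^\infty_t L^2_{x,v}$ control for the first factor and $L^2_t H^1_v$ control for the second, produces $\|h\|_{L^2_tL^2_xL^q_v(Q_1)}^2 \le C\iint_{Q_0}h^2$ for any $q\in(2,6)$ — picking, say, $q=10/3$ — where the constant again has the stated form. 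The main obstacle is bookkeeping the degeneracy: every application of ellipticity of $\sigma_G$ costs a factor of a power of $(1+R_0)$, and one must be careful that these all get absorbed into $\bar C(R_0)$ and do not interact badly with the $(R_0-R_1)$-dependent cutoff factors; this is routine but must be tracked honestly because the later iteration lemmas (De Giorgi level sets) will sum these constants over a geometric sequence of radii, and only the stated dependence $1+(t_0-t_1)^{-1}+(R_0-R_1)^{-1}+(R_0-R_1)^{-2}$ makes that summation converge.
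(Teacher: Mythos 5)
Your proposal is correct and follows essentially the same route as the paper: a Caccioppoli-type energy estimate obtained by testing against $h\Phi^{2}$ for a cutoff vanishing at $t=-t_{0}$ and for $|v|>R_{0}$, integration by parts with the local ellipticity of $\sigma_{G}$ from Lemma \ref{lemma : eigenvalue estimate}, absorption of the cross and drift terms, and then Sobolev embedding in $v$ for the $L^{q}_{v}$ gain. The only cosmetic difference is that the paper integrates the drift term $a_{g}\cdot\nabla_{v}(h^{2})\Phi^{2}$ entirely onto the test function (so only $\|a_{g}\|_{\infty}$ and $\|\nabla_{v}\cdot a_{g}\|_{\infty}$ enter the constant) rather than splitting it by Young against the diffusion, but both yield the stated constant.
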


\begin{proof}
Consider a test function $\Phi\in C^{\infty}(\mathbb{R} \times\mathbb{T}^{3}
\times\mathbb{R}^{3})$, periodic with respect to $x$ and $\Phi(t,x,v)=0$ for
$|v|> R_{0}$. Multiplying \eqref{Eq : M_g^theta} by $2h \Phi^{2}$ and
integrating the resulting equation over $\mathcal{R }:= [-t_{0},s]
\times\mathbb{T}^{3} \times B(0;R_{0})$ for some $s\in[-t_{1},0]$, then
\begin{align*}
	&  \int_{\mathcal{R}} \partial_{t}(h^{2})\Phi^{2} + \int_{\mathcal{R}}v\cdot\nabla_{x}(h^{2})\Phi^{2}\\
	&  \quad\le2\int_{\mathcal{R}} \nabla_{v}\cdot(\sigma_{G} \nabla_{v} h) h\Phi^{2} + \int_{\mathcal{R}} a_{g} \cdot\nabla_{v}(h^{2})\Phi^{2} -2\int_{\mathcal{R}} \frac{\nabla_{v} (w^{\vartheta})}{w^{\vartheta}}\cdot\sigma_{G} \nabla_{v}(h^{2})\Phi^{2},
\end{align*}
where $\sigma_G$ is defined as in \eqref{Eq : sigma_u} with $G = \mu + \mu^{1/2}g$.
Using the integration by parts and the positivity of $\sigma_G$, we have
\[%
\begin{split}
&  \int_{\mathcal{R}} \partial_{t}(h^{2} \Phi^{2}) + 2\int_{\mathcal{R}%
}(\nabla_{v} h \cdot\sigma_{G} \nabla_{v} h) \Phi^{2}\\
&  \quad\le\int_{\mathcal{R}} h^{2}\left(  \partial_{t}(\Phi^{2}) +
v\cdot\nabla_{x}(\Phi^{2}) - \nabla_{v}\cdot(\Phi^{2} a_{g}) + 2 \nabla_{v}
\cdot\left(  \Phi^{2} \sigma_{G} \frac{\nabla_{v} (w^{\vartheta}%
)}{w^{\vartheta}}\right)  \right)  - 4 \int_{\mathcal{R}} h \Phi\nabla_{v}
\Phi\cdot\sigma_{G} \nabla_{v} h\\
&  \quad\le\int_{\mathcal{R}} h^{2}\left(  \partial_{t}(\Phi^{2}) +
v\cdot\nabla_{x}(\Phi^{2}) - \nabla_{v}\cdot(\Phi^{2} a_{g}) + 2 \nabla_{v}
\cdot\left(  \Phi^{2} \sigma_{G} \frac{\nabla_{v} (w^{\vartheta}%
)}{w^{\vartheta}}\right)  \right) \\
&  \quad+ \int_{\mathcal{R}}(\nabla_{v} h \cdot\sigma_{G} \nabla_{v} h)
\Phi^{2} + C\int_{\mathcal{R}}(\nabla_{v} \Phi\cdot\sigma_{G} \nabla_{v} \Phi)
h^{2}.
\end{split}
\]
Thus we have
\[%
\begin{split}
&  \int_{\mathcal{R}} \partial_{t}(h^{2} \Phi^{2}) + \min(1,(1+R_{0})^{-3
})\int_{\mathcal{R}}|\nabla_{v} h|^{2} \Phi^{2}\\
&  \quad\le\bar C(d) \max(1, (1+R_{0})^{-1})\bigg(\|\partial_{t}
\Phi\|_{\infty}\|\Phi\|_{\infty}+ R_{0}\|\nabla_{x} \Phi\|_{\infty}%
\|\Phi\|_{\infty}\\
&  \quad\quad+ \|\Phi\|_{\infty}\|a_{g}\|_{\infty}\|\nabla_{v} \Phi\|_{\infty
}+ \|\Phi\|_{\infty}^{2}\|\nabla_{v}\cdot a_{g}\|_{\infty}\\
&  \quad\quad+ \|\nabla_{v}\Phi\|_{\infty}^{2} + \|\Phi\|_{\infty}\|\nabla_{v}
\Phi\|_{\infty}\left\|  \sigma\frac{\nabla_{v} (w^{\vartheta})}{w^{\vartheta}%
}\right\|  _{\infty}+ \|\Phi\|_{\infty}^{2}\left\|  \nabla_{v} \cdot\left(
\sigma\frac{\nabla_{v} (w^{\vartheta})}{w^{\vartheta}}\right)  \right\|
_{\infty}\bigg)\int_{\mathcal{R}} h^{2}.
\end{split}
\]
Choosing $\Phi$ such that $\Phi(-t_{0},x,v)=0$ and $\Phi= 1$ in $Q_{1}$, we
have
\begin{equation}
\label{Eq : gain of integrability w.r.t. v and t-1}
	\int_{\mathbb{T}^{3} \times B(0;R_{1})}h^{2}(s)dxdv +\int_{\mathcal{R}} |\nabla_{v} h|^{2}\le\bar C(R_{0})\left(  1+ \frac{1}{t_{0} - t_{1}} + \frac{1}{R_{0}- R_{1}} + \frac{1}{(R_{0} - R_{1})^{2}} \right)  \int_{\mathcal{R}}h^{2}.
\end{equation}
Especially,
\[
\sup_{s\in[-t_{1},0]}\int_{\mathbb{T}^{3} \times B(0;R_{1})}h^{2}(s)dxdv
\le\bar C(R_{0})\left(  1+ \frac{1}{t_{0} - t_{1}} + \frac{1}{R_{0}
- R_{1}} + \frac{1}{(R_{0} - R_{1})^{2}} \right)  \int_{Q_{0}}h^{2}.
\]
Therefore, we prove \eqref{Eq : gain of integrability w.r.t. t}. Choosing
$s=0$ in \eqref{Eq : gain of integrability w.r.t. v and t-1}, we have
\[
\int_{Q_{1}} |\nabla_{v} h|^{2} \le\bar C(R_{0})\left(  1+ \frac
{1}{t_{0} - t_{1}} + \frac{1}{R_{0} - R_{1}} + \frac{1}{(R_{0} - R_{1})^{2}}
\right)  \int_{Q_{0}}h^{2},
\]
so we obtain \eqref{Eq : gain of regularity w.r.t. v}. Moreover, the Sobolev
inequality implies \eqref{Eq : gain of integrability w.r.t. v}
\end{proof}

\begin{lemma}
[Lemma 5 in \cite{mouhot2015holder}]%
\label{lemma : Gain of regularity w.r.t. x,t}
Assume \eqref{Eq : condition for g}.
If $h$ is a weak solution of
\eqref{Eq : equation for h}, then
\begin{equation}
\label{Eq : Gain of regularity w.r.t. x,t}%
\begin{split}
\|D_{x}^{1/3}h\|_{L^{2}(Q_{1})}^{2}  &  \le C\|h\|_{L^{2}(Q_{0})}^{2}\\
\|D_{t}^{1/3}h\|_{L^{2}(Q_{1})}^{2}  &  \le C\|h\|_{L^{2}(Q_{0})}^{2}%
\end{split}
\end{equation}
for some $C = \bar C(R_{0})\left(  1+ \frac{1}{t_{0} - t_{1}} +
\frac{1}{R_{0} - R_{1}} + \frac{1}{(R_{0} - R_{1})^{2}} \right)  $.
\end{lemma}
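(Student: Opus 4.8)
The plan is to localize $h$ in $(t,x,v)$, recast the equation for the localized function as a free‑transport equation on the whole space with right‑hand side controlled in $L^2$, and then apply the classical velocity‑averaging (hypoelliptic regularization) estimate that constitutes Lemma 5 of \cite{mouhot2015holder}; the degeneracy of $\sigma_G$ and the extra drift are disposed of in the localization step. Concretely, I would first choose an intermediate cylinder $Q'=[-t',0]\times\mathbb{T}^3\times B(0;R')$ with $Q_1\subset Q'\subset Q_0$ and a cutoff $\chi\in C_c^\infty(\mathbb{R}\times\mathbb{T}^3\times\mathbb{R}^3)$, periodic in $x$, with $\chi\equiv1$ on $Q_1$, $\operatorname{supp}\chi\subset Q'$, and $\chi(t,x,\cdot)$ supported in $B(0;R')$. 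From the weak formulation \eqref{Eq : weak sol for h} (integrating the test‑function terms by parts), $h$ solves, distributionally, $\partial_t h+v\cdot\nabla_x h=\partial_j F^j$ with $F^j:=\sigma_G^{ij}\partial_i h+\big(a_g^j+2\tfrac{\partial_i w^{\vartheta}}{w^{\vartheta}}\sigma_G^{ij}\big)h$. Hence $\tilde h:=\chi h$, extended by zero, satisfies on all of $\mathbb{R}\times\mathbb{T}^3\times\mathbb{R}^3$
\begin{equation*}
\partial_t\tilde h+v\cdot\nabla_x\tilde h=\nabla_v\cdot\tilde H+\tilde S,
\qquad \tilde H:=\chi F,\quad \tilde S:=h\,(\partial_t\chi+v\cdot\nabla_x\chi)-\nabla_v\chi\cdot F .
\end{equation*}

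Next I would bound the data in $L^2(\mathbb{R}\times\mathbb{T}^3\times\mathbb{R}^3)$. On $B(0;R')\subset B(0;R_0)$ all coefficients are bounded: $\sigma_G\le C$ and $|\partial_i\sigma_G^{ij}|\le C$ by Lemma \ref{lemma : eigenvalue estimate} and Lemma \ref{Lemma : Guo2002 lemma3}; $|a_g|\le C$ by Lemma \ref{Lemma : Guo2002 lemma2}; and $|\nabla_v w^{\vartheta}|/w^{\vartheta}\le C$. Thus $|F|\le C(|\nabla_v h|+|h|)$ on $\operatorname{supp}\chi$, so $\|\tilde H\|_{L^2}^2+\|\tilde S\|_{L^2}^2\le C\big(\int_{Q'}|\nabla_v h|^2+\int_{Q'}h^2\big)$. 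By \eqref{Eq : gain of regularity w.r.t. v} of Lemma \ref{lemma : gain of integrability w.r.t. v and t}, applied with the pair $(Q',Q_0)$ — note its proof only uses testing the equation against $h\Phi^2$, hence is valid for the signed weak solution here — one has $\int_{Q'}|\nabla_v h|^2\le C\|h\|_{L^2(Q_0)}^2$. Altogether $\|\tilde h\|_{L^2}+\|\nabla_v\tilde h\|_{L^2}+\|\tilde H\|_{L^2}+\|\tilde S\|_{L^2}\le C\|h\|_{L^2(Q_0)}$, with $C$ of the asserted form $\bar C(R_0)\big(1+\tfrac1{t_0-t_1}+\tfrac1{R_0-R_1}+\tfrac1{(R_0-R_1)^2}\big)$.

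The heart of the argument is then the averaging lemma: $\tilde h$ is compactly supported in $v$, lies in $L^2_{t,x}H^1_v$, and solves the free transport equation with source in $\partial_v(L^2)+L^2$. Taking the Fourier transform in $t$ and Fourier series in $x$ (frequencies $\xi\in\mathbb{Z}^3$), $i(\tau+v\cdot\xi)\widehat{\tilde h}(\tau,\xi,v)=\nabla_v\cdot\widehat{\tilde H}+\widehat{\tilde S}$. For each $(\tau,\xi)$ and a threshold $\delta>0$ I would split the $v$-integral over the resonant slab $\{v\in B(0;R'):|\tau+v\cdot\xi|\le\delta\}$ (of $v$-width $\lesssim\delta/|\xi|$) and its complement: on the slab use a one‑dimensional trace inequality in the $\xi$‑direction together with $\widehat{\tilde h}(\tau,\xi,\cdot)\in H^1_v$; on the complement divide by $i(\tau+v\cdot\xi)$ and integrate by parts in $v$ to move the $\nabla_v$ off $\widehat{\tilde H}$, which produces the factors $\partial_v\tfrac1{\tau+v\cdot\xi}=O(|\xi|\delta^{-2})$, again paired with $\widehat{\tilde h}\in H^1_v$. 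Optimizing $\delta$ (of order $|\xi|^{2/3}$ when $|\xi|\gtrsim1$, with the regime $|\xi|\lesssim1\lesssim|\tau|$ treated by the analogous split in $\tau$, and the low‑frequency part being trivial) yields
\begin{equation*}
\int (1+|\tau|+|\xi|)^{2/3}\,|\widehat{\tilde h}|^2\,dv\,d\xi\,d\tau
\le C\big(\|\tilde h\|_{L^2}^2+\|\nabla_v\tilde h\|_{L^2}^2+\|\tilde H\|_{L^2}^2+\|\tilde S\|_{L^2}^2\big),
\end{equation*}
i.e. $\|D_t^{1/3}\tilde h\|_{L^2}^2+\|D_x^{1/3}\tilde h\|_{L^2}^2\le C(\cdots)$. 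Since $\tilde h=h$ on $Q_1$ and the right‑hand side is $\le C\|h\|_{L^2(Q_0)}^2$ by the previous step, this is \eqref{Eq : Gain of regularity w.r.t. x,t}.

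I expect the main obstacle to be the averaging step itself, and in particular two points. First, $\sigma_G$ is not uniformly elliptic globally — its eigenvalues range between $(1+|v|)^{-3}$ and $(1+|v|)^{-1}$ by Lemma \ref{lemma : eigenvalue estimate} — so the averaging lemma cannot be applied to the unlocalized equation; the localization together with the a priori bound $\int_{Q'}|\nabla_v h|^2\le C\|h\|_{L^2(Q_0)}^2$ from Lemma \ref{lemma : gain of integrability w.r.t. v and t} is precisely what turns the problem into one with $L^2$ data on a region where $\sigma_G$ is bounded and elliptic. Second, one must run the resonant/non‑resonant estimate for $\tilde h$ itself, not merely for its velocity averages; this is possible exactly because of the $H^1_v$ control just mentioned, which is what makes both the slab contribution and the integration‑by‑parts remainder on the complement absorbable. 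By contrast, the additional drift $a_g\cdot\nabla_v h-2\tfrac{\nabla_v w^{\vartheta}}{w^{\vartheta}}\cdot\sigma_G\nabla_v h$ relative to \cite{mouhot2015holder} is harmless: it is already in divergence form with coefficients bounded on $B(0;R_0)$, so it only contributes to the $L^2$ flux $\tilde H$ and does not alter the structure of the estimate.
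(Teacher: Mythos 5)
Your proposal follows essentially the same route as the paper: localize with a cutoff between $Q_1$ and $Q_0$, rewrite the truncated function as a solution of free transport with a source in $\nabla_v\cdot L^2 + L^2$ whose norm is controlled by $\|h\|_{L^2(Q_0)}$ via the $\nabla_v h$ estimate of Lemma \ref{lemma : gain of integrability w.r.t. v and t}, and then invoke kinetic hypoelliptic regularization — the paper simply cites Theorem 1.3 of \cite{bouchut02} at the point where you sketch the Fourier resonant/non-resonant proof of that averaging estimate. The only (harmless) discrepancy is your rewriting of the drift terms in divergence form, which omits the compensating zeroth-order terms $(\nabla_v\cdot a_g)h$ etc.; these are bounded on $B(0;R_0)$ and would simply be absorbed into $\tilde S$, whereas the paper keeps the drifts acting on $\nabla_v h$ inside the non-divergence source $H_0$.
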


\begin{proof}
Let $R_{\frac{1}{2}} = \frac{R_{1} + R_{2}}{2}$ and $Q_{\frac{1}{2}} =
Q_{R_{\frac{1}{2}}}$. Define truncation functions $\chi_{1}$ and $\chi_{1/2}$
such that
\[%
\begin{split}
\chi_{1}  &  =
\begin{cases}
1, & \text{ if }(t,x,v)\in Q_{1}\\
0, & \text{ if }(t,x,v)\in Q_{\frac{1}{2}}^{c},
\end{cases}
\\
\chi_{\frac{1}{2}}  &  =
\begin{cases}
1, & \text{ if }(t,x,v)\in Q_{\frac{1}{2}}\\
0, & \text{ if }(t,x,v)\in Q_{0}^{c}.
\end{cases}
\end{split}
\]
Let $h_{i} = h \chi_{i}$, for $i = 1, \frac{1}{2}$. Then we have
\[
(\partial_{t} + v \cdot\nabla_{x})h_{1} = \nabla_{v} \cdot H_{1} + H_{0}
\quad\text{ in }(-\infty,0]\times\mathbb{R}^{6},
\]
\[%
\begin{split}
H_{1}  &  = \chi_{1} \sigma_{G}\nabla_{v} h_{\frac{1}{2}},\\
H_{0}  &  = - \nabla_{v} \chi_{1} \cdot\sigma_{G}\nabla_{v} h_{\frac{1}{2}} +
\alpha_{1} h_{\frac{1}{2}} + \chi_{1} a_{g}
\cdot\nabla_{v} h_{1/2} -2 \chi_{1} \frac{\nabla_{v}(w^{\vartheta}%
)}{w^{\vartheta}}\cdot\sigma_{G}\nabla_{v} h_{1/2},\\
\alpha_{1}  &  = (\partial_{t} + v\cdot\nabla_{x})\chi_{1},
\end{split}
\]
where $\sigma_G$ is defined as in \eqref{Eq : sigma_u} with $G=\mu + \mu^{1/2}g$.
By Lemma \ref{lemma : gain of integrability w.r.t. v and t},
\[
\|H_{0}\|_{L^{2}(\mathbb{R}^{7})} + \|H_{1}\|_{L^{2}(\mathbb{R}^{7})} \le
C\|h\|_{L^{2}(Q_{0})}%
\]
with $C$ as in the statement. Applying Theorem 1.3 in \cite{bouchut02} with
$p=2$, $r=0$, $\beta=1$, $m=1$, $\kappa=1$ and $\Omega= 1$ yields \eqref{Eq : Gain of regularity w.r.t. x,t}.
\end{proof}

\begin{lemma}
[Lemma 6 in \cite{mouhot2015holder}]%
\label{lemma : gain of integrability w.r.t. x} Under the assumptions of Lemma
\ref{lemma : gain of integrability w.r.t. v and t}, there exists $p>2$ such
that
\begin{equation}
\label{Eq : gain of integrability w.r.t. x}\|h\|_{L^{2}_{t}L^{p}_{x}L^{2}%
_{v}(Q_{1})}^{2} \le C \|h\|_{L^{2}(Q_{0})}^{2}%
\end{equation}
with the same $C$ as in Lemma \ref{lemma : gain of integrability w.r.t. v and t}.
\end{lemma}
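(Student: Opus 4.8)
The plan is to deduce this gain of integrability in $x$ purely by functional analysis from the two preceding lemmas: the fractional $x$-regularity of Lemma~\ref{lemma : Gain of regularity w.r.t. x,t}, the plain $L^2$ bounds of Lemma~\ref{lemma : gain of integrability w.r.t. v and t}, the Sobolev embedding in the three spatial variables, and Minkowski's integral inequality to exchange the order of the $L^p_x$ and $L^2_v$ norms. No new information about the equation is needed beyond what is already packaged in Lemmas~\ref{lemma : gain of integrability w.r.t. v and t} and~\ref{lemma : Gain of regularity w.r.t. x,t}.

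First I would record, from Lemma~\ref{lemma : Gain of regularity w.r.t. x,t} together with the trivial inclusion $Q_1\subset Q_0$, that $h\in L^2_tL^2_vH^{1/3}_x(Q_1)$ with
\[
\|h\|_{L^2_tL^2_vH^{1/3}_x(Q_1)}^2=\|h\|_{L^2(Q_1)}^2+\|D_x^{1/3}h\|_{L^2(Q_1)}^2\le C\|h\|_{L^2(Q_0)}^2,
\]
where $C$ has exactly the form $\bar C(R_0)\bigl(1+\tfrac{1}{t_0-t_1}+\tfrac{1}{R_0-R_1}+\tfrac{1}{(R_0-R_1)^2}\bigr)$ of Lemma~\ref{lemma : gain of integrability w.r.t. v and t}. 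Since $x$ ranges over the three-dimensional torus $\mathbb{T}^3$, the fractional Sobolev embedding gives $H^{1/3}_x(\mathbb{T}^3)\hookrightarrow L^{p}_x(\mathbb{T}^3)$ for $\tfrac1p=\tfrac12-\tfrac{1/3}{3}=\tfrac{7}{18}$, i.e. $p=\tfrac{18}{7}>2$; applying this slice-wise in $(t,v)$ and then integrating in $(t,v)$ over $[-t_1,0]\times B(0;R_1)$ yields
\[
\|h\|_{L^2_tL^2_vL^{p}_x(Q_1)}^2\le C\|h\|_{L^2_tL^2_vH^{1/3}_x(Q_1)}^2\le C\|h\|_{L^2(Q_0)}^2 .
\]

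Second, I would invoke Minkowski's integral inequality: because $p\ge 2$, for a.e. fixed $t$ one may move the smaller exponent inside, $\bigl\|\,\|h(t,\cdot,\cdot)\|_{L^2_v}\bigr\|_{L^{p}_x}\le\bigl\|\,\|h(t,\cdot,\cdot)\|_{L^{p}_x}\bigr\|_{L^2_v}$; squaring and integrating in $t$ over $[-t_1,0]$ gives
\[
\|h\|_{L^2_tL^{p}_xL^2_v(Q_1)}^2\le\|h\|_{L^2_tL^2_vL^{p}_x(Q_1)}^2\le C\|h\|_{L^2(Q_0)}^2 ,
\]
which is the asserted estimate with $p=\tfrac{18}{7}$ and the constant of Lemma~\ref{lemma : gain of integrability w.r.t. v and t} (up to an absolute dimensional factor from the embedding, which does not affect the stated dependence on $R_0$, $t_0-t_1$, $R_0-R_1$).

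The only point requiring attention — and the main, rather mild, obstacle — is a matter of hypotheses: Lemma~\ref{lemma : Gain of regularity w.r.t. x,t} is stated for weak solutions of \eqref{Eq : equation for h}, while here $h$ is only assumed to satisfy $\mathcal{M}_g^\vartheta h\le 0$. One must check, as in \cite{mouhot2015holder}, that the averaging-lemma argument behind Lemma~\ref{lemma : Gain of regularity w.r.t. x,t} still applies to the truncation $h\chi$ when $h$ is merely a subsolution: the decomposition $(\partial_t+v\cdot\nabla_x)(h\chi)=\nabla_v\cdot H_1+H_0$ with $H_0,H_1\in L^2$ persists, the extra sign in $\mathcal{M}_g^\vartheta h\le 0$ producing only a favorable term that is discarded. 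Granting this, the argument above closes and the constant propagates through the embedding and Minkowski steps unchanged in form, completing the proof.
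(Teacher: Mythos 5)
Your argument is correct and is exactly the intended one: the paper itself omits the proof and defers to Lemma~6 of \cite{mouhot2015holder}, whose proof is precisely the combination of the $D_x^{1/3}\in L^2$ regularity from Lemma~\ref{lemma : Gain of regularity w.r.t. x,t}, the Sobolev embedding $H^{1/3}(\mathbb{T}^3)\hookrightarrow L^{18/7}(\mathbb{T}^3)$ applied slice-wise, and Minkowski's inequality to pass from $L^2_vL^p_x$ to $L^p_xL^2_v$. The one imprecision is your remark that for a subsolution the defect term is ``favorable and discarded'': in the averaging-lemma step one works with an \emph{equation}, so the nonnegative defect measure cannot be dropped but must be carried as a measure-valued source whose mass is controlled by the Caccioppoli estimate (this is how \cite{golse2016harnack} and \cite{mouhot2015holder} handle it) — though the present paper glosses over the same solution-versus-subsolution mismatch between Lemmas~\ref{lemma : Gain of regularity w.r.t. x,t} and~\ref{lemma : gain of integrability w.r.t. x}.
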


\begin{proof}
The proof is exactly the same as in the proof of Lemma 6 in \cite{mouhot2015holder}. We omit the proof.
\end{proof}
The following Lemma is a consequence of Lemma \ref{lemma : gain of integrability w.r.t. v and t}, \ref{lemma : Gain of regularity w.r.t. x,t}.
We omit the proof.
\begin{lemma}
\label{Lemma : Gain of regularity of singed weak solution} Under the
assumptions of Lemma \ref{lemma : Gain of regularity w.r.t. x,t}, we have
\[
\|h\|_{H^{s}_{x,v,t}(Q_{1})} \le C\|h\|_{L^{2}(Q_{0})}%
\]
with the same $C$ as in Lemma \ref{lemma : gain of integrability w.r.t. v and t}
and $s=1/3$.
\end{lemma}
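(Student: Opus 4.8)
The plan is to glue together the two one-directional regularity gains already obtained — the full velocity derivative in Lemma~\ref{lemma : gain of integrability w.r.t. v and t} and the fractional derivatives of order $1/3$ in $x$ and in $t$ in Lemma~\ref{lemma : Gain of regularity w.r.t. x,t} — by an elementary interpolation on the Fourier side, after a cut-off. First I would fix radii $R_1<R_{1/2}<R_0$ and times $t_1<t_{1/2}<t_0$, set $Q_{1/2}:=[-t_{1/2},0]\times\mathbb{T}^3\times B(0;R_{1/2})$, and choose $\chi\in C^\infty$, periodic in $x$ and compactly supported in $(t,v)$, with $\chi\equiv1$ on $Q_1$ and $\operatorname{supp}\chi\subset Q_{1/2}$. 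Applying Lemma~\ref{lemma : gain of integrability w.r.t. v and t} and Lemma~\ref{lemma : Gain of regularity w.r.t. x,t} on the pair $(Q_{1/2},Q_0)$ yields $\|h\|_{L^2(Q_{1/2})}\le C\|h\|_{L^2(Q_0)}$ together with
\[
\|\nabla_v h\|_{L^2(Q_{1/2})}+\|D_x^{1/3}h\|_{L^2(Q_{1/2})}+\|D_t^{1/3}h\|_{L^2(Q_{1/2})}\;\le\;C\|h\|_{L^2(Q_0)},
\]
where $C$ has the same structure $\bar C(R_0)\big(1+\tfrac{1}{t_0-t_1}+\tfrac{1}{R_0-R_1}+\tfrac{1}{(R_0-R_1)^2}\big)$ as in Lemma~\ref{lemma : gain of integrability w.r.t. v and t}, since the two lemmas produce constants of this common form and a single such constant dominates them.

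Next I would set $g:=\chi h$, extended by zero in $(t,v)$. Because $\chi$ is a fixed smooth compactly supported function, multiplication by $\chi$ is bounded on $L^2$, on $H^1_v$, and on $H^{1/3}_x$ and $H^{1/3}_t$ (for the fractional factors via the Gagliardo-seminorm estimate for products, which is harmless as $1/3<1$), so $g$, $\nabla_v g$, $D_x^{1/3}g$ and $D_t^{1/3}g$ all lie in $L^2$ with norms $\le C\|h\|_{L^2(Q_0)}$. Writing $\hat g(\tau,k,\xi)$ for the Fourier transform in $t\in\mathbb{R}$, the Fourier coefficients in $x\in\mathbb{T}^3$ (so $k\in\mathbb{Z}^3$), and the Fourier transform in $v\in\mathbb{R}^3$, Plancherel turns these bounds into
\[
\int(1+|\xi|^2)|\hat g|^2+\sum_k\int(1+|k|^2)^{1/3}|\hat g|^2+\int(1+|\tau|^2)^{1/3}|\hat g|^2\;\le\;C\|h\|_{L^2(Q_0)}^2 .
\]

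To finish, I would use subadditivity of $t\mapsto t^{1/3}$ and $1+|\tau|^2+|k|^2+|\xi|^2\le(1+|\tau|^2)+(1+|k|^2)+(1+|\xi|^2)$, together with $(1+|\xi|^2)^{1/3}\le 1+|\xi|^2$, to get
\[
\big(1+|\tau|^2+|k|^2+|\xi|^2\big)^{1/3}\le (1+|\tau|^2)^{1/3}+(1+|k|^2)^{1/3}+1+|\xi|^2 .
\]
Multiplying by $|\hat g|^2$, summing in $k$ and integrating in $(\tau,\xi)$, the previous display gives $\|g\|_{H^{1/3}_{x,v,t}}^2\le C\|h\|_{L^2(Q_0)}^2$; since $g=h$ on $Q_1$, restriction yields $\|h\|_{H^{1/3}_{x,v,t}(Q_1)}\le\|g\|_{H^{1/3}_{x,v,t}}\le C\|h\|_{L^2(Q_0)}$, which is the claim with $s=1/3$.

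I do not expect a genuine obstacle here; this is precisely the routine verification the authors omit. The only points needing minimal care are: (i) that the cut-off $\chi$ does not destroy the fractional $x$- and $t$-regularity, i.e. that $H^{1/3}$ is a module over $C_c^\infty$; and (ii) the bookkeeping of constants, namely that the constant produced on the pair $(Q_{1/2},Q_0)$ by Lemmas~\ref{lemma : gain of integrability w.r.t. v and t} and~\ref{lemma : Gain of regularity w.r.t. x,t} still has the displayed form — which it does, as it depends only on $R_0$ and on the strictly positive gaps between the nested cylinders (these gaps can be taken comparable to $R_0-R_1$ and $t_0-t_1$, so the constant is the one of Lemma~\ref{lemma : gain of integrability w.r.t. v and t} up to a universal factor).
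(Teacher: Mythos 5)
Your proposal is correct and is exactly the argument the paper has in mind: the authors state this lemma as a consequence of Lemmas \ref{lemma : gain of integrability w.r.t. v and t} and \ref{lemma : Gain of regularity w.r.t. x,t} and omit the proof, and your cut-off plus Fourier-side combination (using $(1+|\tau|^2+|k|^2+|\xi|^2)^{1/3}\le(1+|\tau|^2)^{1/3}+(1+|k|^2)^{1/3}+1+|\xi|^2$) is the standard way to fill in that omission, with the constant retaining the stated form up to a universal factor from halving the gaps between the nested cylinders.
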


\begin{lemma}
[Theorem 2 in \cite{mouhot2015holder}]\label{lemma : gain of integrability}
Under the assumptions of Lemma
\ref{lemma : gain of integrability w.r.t. v and t}, there exists $q>2$ such
that
\begin{equation}
\label{Eq : gain of integrability}\|h\|_{L^{q}(Q_{1})}^{2} \le C
\|h\|_{L^{2}(Q_{0})}^{2}%
\end{equation}
with the same $C$ as in Lemma \ref{lemma : gain of integrability w.r.t. v and t}.
\end{lemma}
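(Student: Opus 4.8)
The plan is to derive the joint gain of integrability by interpolation alone, combining the three one-directional gains that the previous lemmas already furnish for a nonnegative periodic subsolution $h$ on $Q_0$: the energy bound $\|h\|_{L^\infty_t L^2_x L^2_v(Q_1)}^2\le C\|h\|_{L^2(Q_0)}^2$ of \eqref{Eq : gain of integrability w.r.t. t}, the velocity gain $\|h\|_{L^2_t L^2_x L^{q_v}_v(Q_1)}^2\le C\|h\|_{L^2(Q_0)}^2$ with $q_v>2$ of \eqref{Eq : gain of integrability w.r.t. v}, and the spatial gain $\|h\|_{L^2_t L^{p_x}_x L^2_v(Q_1)}^2\le C\|h\|_{L^2(Q_0)}^2$ with $p_x>2$ of Lemma~\ref{lemma : gain of integrability w.r.t. x}. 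Since all three estimates go from $Q_0$ to the \emph{same} cylinder $Q_1$ and carry a constant of the shape $\bar C(R_0)\big(1+\tfrac1{t_0-t_1}+\tfrac1{R_0-R_1}+\tfrac1{(R_0-R_1)^2}\big)$, the constant produced below is a finite product of these and retains that shape, which is what the subsequent De Giorgi iteration needs. (To avoid a clash I write $q_v,p_x$ for the input exponents; the $q$ of the conclusion will in general be different from the $q$ of \eqref{Eq : gain of integrability w.r.t. v}.)

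First I would interpolate the velocity and spatial gains at fixed time. For a.e.\ $t$ and each $\lambda\in(0,1)$, the Riesz--Thorin theorem for mixed-norm Lebesgue spaces gives $\|h(t)\|_{L^{p_\lambda}_x L^{r_\lambda}_v}\le\|h(t)\|_{L^2_x L^{q_v}_v}^{1-\lambda}\,\|h(t)\|_{L^{p_x}_x L^2_v}^{\lambda}$ with $1/p_\lambda=(1-\lambda)/2+\lambda/p_x$ and $1/r_\lambda=(1-\lambda)/q_v+\lambda/2$. Because $1/q_v<1/2$ and $1/p_x<1/2$, there is a $\lambda$ for which $p_\lambda=r_\lambda=:s$, and then $s>2$. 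Squaring, integrating in $t$, and applying H\"older in $t$ with exponents $\tfrac1{1-\lambda},\tfrac1\lambda$ yields
\[
\|h\|_{L^2_t L^s_{x,v}(Q_1)}^2\le\|h\|_{L^2_t L^2_x L^{q_v}_v(Q_1)}^{2(1-\lambda)}\,\|h\|_{L^2_t L^{p_x}_x L^2_v(Q_1)}^{2\lambda}\le C\|h\|_{L^2(Q_0)}^2 .
\]
Next I would interpolate this against the energy bound in time: choosing $\theta=\tfrac{s}{2(s-1)}\in(\tfrac12,1)$ and $q:=2/\theta\in(2,4)$, one checks $1/q=\theta/2=(1-\theta)/2+\theta/s$, so that
\[
\|h\|_{L^q(Q_1)}\le\|h\|_{L^\infty_t L^2_{x,v}(Q_1)}^{1-\theta}\,\|h\|_{L^2_t L^s_{x,v}(Q_1)}^{\theta}\le C\|h\|_{L^2(Q_0)} ,
\]
and squaring gives \eqref{Eq : gain of integrability} with this $q>2$ and $C$ of the required form.

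The only step that needs genuine care is the mixed-norm interpolation: one must invoke the vector-valued form of Riesz--Thorin and check that the endpoints $(1/2,1/q_v)$ and $(1/p_x,1/2)$ lie on opposite sides of the diagonal, so that the interior crossing point $(1/s,1/s)$ has both coordinates strictly below $1/2$ and hence $s>2$ strictly — this is exactly where the strict improvements $q_v>2$ and $p_x>2$ are consumed. Everything else is elementary H\"older, and because no intermediate cylinders are needed the constant bookkeeping is immediate; one could instead interpolate all three norms simultaneously, but the two-step route keeps the exponent arithmetic transparent.
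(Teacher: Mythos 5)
Your proof is correct, and it is essentially the argument of Theorem 2 in \cite{mouhot2015holder} to which the paper defers (the paper omits the proof entirely): one interpolates the spatial gain of Lemma \ref{lemma : gain of integrability w.r.t. x} against the velocity gain \eqref{Eq : gain of integrability w.r.t. v} at fixed time via mixed-norm Riesz--Thorin to get an $L^2_tL^s_{x,v}(Q_1)$ bound with $s>2$, and then against the energy bound \eqref{Eq : gain of integrability w.r.t. t} in time; your exponent arithmetic ($\theta=\tfrac{s}{2(s-1)}$, $q=2/\theta=4-4/s>2$) and the constant bookkeeping both check out.
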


\begin{proof}
The proof is exactly the same as in the proof of Theorem 2 in \cite{mouhot2015holder}. We omit the proof.
\end{proof}

\begin{lemma}
[Theorem 7 in \cite{mouhot2015holder}]
Assume \eqref{Eq : condition for g}.
Let $h$ be a nonnegative
periodic function in $x$ satisfying $\mathcal{M}_{g}^{\vartheta}h \le0$.
Then, there exists $m>1$ such that
\[
\|h\|_{L^{\infty}(Q_{\infty})} \le\bar C (R_{0})^{m} \left(  1+
\frac{1}{\min(t_{0}-t_{\infty}, (R_{0}-R_{\infty})^{2})}\right)  ^{m}
\|h\|_{L^{2}(Q_{0})},
\]
where $Q_{0} = [-t_{0},0]\times\mathbb{T}^{3} \times[-R_{0},R_{0}]$ and
$Q_{\infty}= [-t_{\infty},0] \times\mathbb{T}^{3} \times[-R_{\infty},
R_{\infty}]$.
\end{lemma}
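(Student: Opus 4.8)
The plan is to run a De Giorgi level‑set iteration, with the local gain‑of‑integrability estimate of Lemma~\ref{lemma : gain of integrability} as the engine. Fix the geometry by interpolating between $Q_0$ and $Q_\infty$: set $t_k := t_\infty + (t_0-t_\infty)2^{-k}$ and $R_k := R_\infty + (R_0-R_\infty)2^{-k}$, so that $Q_{k+1}\subset Q_k$, $\bigcap_k Q_k = Q_\infty$, and the width losses are $t_{k-1}-t_k = (t_0-t_\infty)2^{-k}$, $R_{k-1}-R_k = (R_0-R_\infty)2^{-k}$. Choose increasing truncation levels $\ell_k := \ell_\infty(1-2^{-k})$ with $\ell_\infty>0$ to be fixed at the end, so $\ell_0=0$, $\ell_k\uparrow\ell_\infty$, and $\ell_k-\ell_{k-1} = \ell_\infty 2^{-k}$. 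Put $h_k := (h-\ell_k)_+$ and $A_k := \|h_k\|_{L^2(Q_k)}^2$; note $A_0 = \|h\|_{L^2(Q_0)}^2 < \infty$ since $h\in L^\infty_tL^2_{x,v}$ locally.

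The first point is that $h_k$ is again an admissible subsolution. Because $\bar A_g^\vartheta$ (see \eqref{Eq : bar A_g^theta}) contains no zeroth‑order term, $\mathcal M_g^\vartheta(h-\ell_k) = \mathcal M_g^\vartheta h \le 0$, and a standard Stampacchia truncation in the weak formulation of Definition~\ref{Def : weak sol for h} gives $\mathcal M_g^\vartheta h_k \le 0$ with $h_k\ge 0$ and $x$‑periodic; moreover $h_k\le h_{k-1}$ pointwise. Hence Lemma~\ref{lemma : gain of integrability} applies to $h_k$ on the pair $(Q_k,Q_{k-1})$: there is $q>2$ with
\[
\|h_k\|_{L^q(Q_k)}^2 \le C_k\,\|h_k\|_{L^2(Q_{k-1})}^2 \le C_k\, A_{k-1},
\qquad
C_k = \bar C(R_0)\Bigl(1+\tfrac{1}{t_{k-1}-t_k}+\tfrac{1}{R_{k-1}-R_k}+\tfrac{1}{(R_{k-1}-R_k)^2}\Bigr)\le C\,\bar C(R_0)\,4^{k}K,
\]
where $K := 1 + \frac{1}{\min(t_0-t_\infty,\,(R_0-R_\infty)^2)}$ and $C$ is universal. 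On $Q_k\cap\{h_k>0\}$ one has $h>\ell_k$, so $h_{k-1} > \ell_k-\ell_{k-1} = \ell_\infty 2^{-k}$, and Chebyshev gives $|\{h_k>0\}\cap Q_k| \le 4^{k}A_{k-1}/\ell_\infty^{2}$. Hölder with exponents $q/2$ and $q/(q-2)$ then yields, with $\beta := \frac{q-2}{q}>0$ and $B := 4^{1+\beta}$,
\[
A_k = \int_{Q_k}h_k^2 \le \|h_k\|_{L^q(Q_k)}^2\,\bigl|\{h_k>0\}\cap Q_k\bigr|^{\beta} \le C\,\bar C(R_0)\,K\,\frac{B^{k}}{\ell_\infty^{2\beta}}\,A_{k-1}^{1+\beta}.
\]

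This is the classical recursion $A_k \le C^\ast B^k A_{k-1}^{1+\beta}$ with $C^\ast := C\,\bar C(R_0)\,K\,\ell_\infty^{-2\beta}$. By the standard fast‑geometric‑decay lemma, if $A_0 \le (C^\ast)^{-1/\beta}B^{-1/\beta^2}$ then $A_k\to 0$. We enforce this as an equality by setting
\[
\ell_\infty := \bigl(C\,\bar C(R_0)\,K\bigr)^{\frac{1}{2\beta}}\,B^{\frac{1}{2\beta^2}}\,\|h\|_{L^2(Q_0)}.
\]
Since $Q_\infty\subset Q_k$ for every $k$, $\int_{Q_\infty}(h-\ell_\infty)_+^2 = \lim_k\int_{Q_\infty}(h-\ell_k)_+^2 \le \lim_k A_k = 0$, so $h\le\ell_\infty$ a.e.\ on $Q_\infty$, i.e.\ $\|h\|_{L^\infty(Q_\infty)}\le\ell_\infty$. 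As $\bar C(R_0)\ge 1$ and $K\ge 1$, the fixed exponent $\tfrac{1}{2\beta}=\tfrac{q}{2(q-2)}$ may be replaced by any larger number; choosing $m>1$ suitably and absorbing the universal constant $C^{1/(2\beta)}B^{1/(2\beta^2)}$ into $\bar C(R_0)^m$ gives $\|h\|_{L^\infty(Q_\infty)}\le \bar C(R_0)^m\bigl(1+\tfrac{1}{\min(t_0-t_\infty,\,(R_0-R_\infty)^2)}\bigr)^m\|h\|_{L^2(Q_0)}$, as claimed.

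The genuinely delicate points are: (i) justifying at the level of Definition~\ref{Def : weak sol for h}, with only $h\in L^\infty_tL^2_{x,v}$ and $\sigma^{ij}\partial_i h\partial_j h\in L^1$, that $(h-\ell)_+$ is a weak subsolution — here the absence of a zeroth‑order term in $\bar A_g^\vartheta$ and the divergence form of the principal part are exactly what make the Kato/Stampacchia truncation go through; and (ii) keeping the constant in Lemma~\ref{lemma : gain of integrability} precisely in the form $\bar C(R_0)\bigl(1+(t_{k-1}-t_k)^{-1}+(R_{k-1}-R_k)^{-1}+(R_{k-1}-R_k)^{-2}\bigr)$, so that the $k$‑dependence is only the geometric factor $4^k$ and the entire dependence on the target cylinder collapses into the single factor $K$. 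The degeneracy of $\sigma_G$ as $|v|\to\infty$ is not an obstacle here, because every $Q_k$ lies inside $B(0;R_0)$, where Lemma~\ref{lemma : eigenvalue estimate} makes $\sigma_G$ uniformly elliptic with constants depending only on $R_0$ — already built into $\bar C(R_0)$.
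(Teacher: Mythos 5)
Your proof is correct, but it follows a different route from the paper's. The paper runs a \emph{Moser iteration}: it uses the fact that $|h|^{q_n}$ with $q_n=\kappa^n$, $\kappa=q/2>1$, is again a subsolution, applies Lemma \ref{lemma : gain of integrability} to each power on the nested cylinders $Q_n$, and converts the resulting chain $\||h|^{\kappa q_n}\|_{L^2(Q_{n+1})}^2\le C_n^\kappa\||h|^{q_n}\|_{L^2(Q_n)}^{2\kappa}$ into $\|h\|_{L^{2q_n}(Q_n)}^2\le\prod_j C_j^{\kappa^{-j}}\|h\|_{L^2(Q_0)}^2$, summing the exponents $j^4\kappa^{-j}$ to get the final power $m$. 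You instead run a \emph{De Giorgi level-set iteration}: truncations $h_k=(h-\ell_k)_+$, Chebyshev plus H\"older to get the superlinear recursion $A_k\le C^*B^kA_{k-1}^{1+\beta}$, and the fast-geometric-decay lemma, with the $L^2\to L^2$ homogeneity of the final bound recovered by choosing $\ell_\infty$ proportional to $\|h\|_{L^2(Q_0)}$. Both arguments lean on exactly the same engine (Lemma \ref{lemma : gain of integrability}) and on the same structural fact that convex increasing transformations of subsolutions are subsolutions — the paper needs it for $|h|^{q_n}$, you need it for $(h-\ell)_+$, and the paper's separate lemma on $h_+$ justifies your version. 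Your bookkeeping of the constants is sound: the losses $t_{k-1}-t_k$ and $R_{k-1}-R_k$ contribute only the geometric factor $4^k$, which is absorbed by the fast decay, and the remaining dependence collapses into $K=1+\min(t_0-t_\infty,(R_0-R_\infty)^2)^{-1}$ raised to the fixed power $1/(2\beta)$, which can be enlarged to any $m>1$ since $\bar C(R_0),K\ge1$. The Moser route gives the exponent $m$ somewhat more explicitly as a convergent series; the De Giorgi route is arguably more elementary in that it never needs high powers of $h$ to be integrable a priori, only $h\in L^2(Q_0)$.
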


\begin{proof}
Let $\kappa:= q/2>1$. Since $|h|^{q_{n}}$, $q_{n}>1$, is also a sub-solution of
\eqref{Eq : equation for h}, by Lemma \ref{lemma : gain of integrability}
\[
\||h|^{q_{n}}\|_{L^{q}(Q_{n+1})}^{2} \le C_{n} \||h|^{q_{n}}\|_{L^{2}(Q_{n}%
)}^{2},
\]
where $C_{n} = \bar C(R_{0})\left(  1+ \frac{1}{t_{n} - t_{n+1}} +
\frac{1}{R_{n} - R_{n+1}} + \frac{1}{(R_{n} - R_{n+1})^{2}} \right)  $.
Changing $\|\cdot\|_{q}$ to $\|\cdot\|_{2}$ yields
\[
\||h|^{\kappa q_{n}}\|_{L^{2}(Q_{n+1})}^{2} \le C_{n}^{\kappa}\||h|^{q_{n}%
}\|_{L^{2}(Q_{n})}^{2 \kappa}.
\]
Let $q_{n} := \kappa^{n}$, then after iteration we have
\[
\||h|^{q_{n}}\|_{L^{2}(Q_{n})}^{2} \le\prod_{j=1}^{n} C_{n-j}^{\kappa^{j}}
\|h\|_{L^{2}(Q_{0})}^{2\kappa^{n}}.
\]
Changing $\|\cdot\|_{2}$ to $\|\cdot\|_{2q_{n}}$, we have
\[
\|h\|_{L^{2 q_{n}}(Q_{n})}^{2}\le\prod_{j=1}^{n} C_{n-j}^{\kappa^{j-n}}
\|h\|_{L^{2}(Q_{0})}^{2} = \prod_{j=0}^{n-1} C_{j}^{\kappa^{-j}}
\|h\|_{L^{2}(Q_{0})}^{2}.%
\]
Choosing $t_{n}-t_{n+1} = \alpha(t_{0}-t_{\infty})n^{-4}$ and $R_{n}-R_{n+1} =
\beta(R_{0}-R_{\infty})n^{-2}$, we have
\[
C_{j}^{\kappa^{-j}} \le\bar C^{\kappa^{-j}} \left(  C^{\prime}\left(  1+
\frac{1}{\min(t_{0}-t_{\infty}, (R_{0}-R_{\infty})^{2})}\right)  \right)
^{j^{4} \kappa^{-j}}.
\]
Thus,
\[
\prod_{j=0}^{\infty} C_{j}^{\kappa^{-j}} \le C^{\prime m }\bar C(R_{0})^{m}\left(  1+ \frac{1}{\min(t_{0}-t_{\infty}, (R_{0}-R_{\infty
})^{2})}\right)  ^{m}%
\]
for some $m>1$. So the proof is complete.
\end{proof}

\begin{lemma}
Assume \eqref{Eq : condition for g}.
If $h_{+}=\max\left\{  h ,0\right\}  $, where $h$ is a subsolution of
\eqref{Eq : equation for h}, then $h_{+}$ is a subsolution.
\end{lemma}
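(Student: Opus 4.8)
The plan is to use the classical principle that a smooth convex nondecreasing function of a subsolution is again a subsolution, the only structural input being the nonnegativity of the diffusion matrix $\sigma_{G}$ furnished by Lemma \ref{lemma : eigenvalue estimate}. Fix a family $\beta_{\varepsilon}\in C^{\infty}(\mathbb{R})$, $\varepsilon>0$, that is convex and nondecreasing, with $0\le\beta_{\varepsilon}'\le 1$, $\beta_{\varepsilon}''\ge 0$, $\beta_{\varepsilon}\to(\cdot)_{+}$ uniformly and $\beta_{\varepsilon}'\to \mathbf{1}_{\{\cdot>0\}}$ pointwise as $\varepsilon\to 0$; for concreteness one may take $\beta_{\varepsilon}=(\cdot)_{+}*\rho_{\varepsilon}$ for a standard mollifier $\rho_{\varepsilon}$, so that $\beta_{\varepsilon}''=\rho_{\varepsilon}\ge 0$.

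Recall from \eqref{Eq : bar A} and \eqref{Eq : bar A_g^theta} that $\bar A_{g}^{\vartheta}$ has the divergence structure $\bar A_{g}^{\vartheta}h=\nabla_{v}\cdot(\sigma_{G}\nabla_{v}h)+a_{g}\cdot\nabla_{v}h-2\frac{\partial_{i}w^{\vartheta}}{w^{\vartheta}}\sigma_{G}^{ij}\partial_{j}h$, that is, a second-order part in divergence form together with first-order terms. For a function $h$ with $\nabla_{v}h\in L^{2}_{\mathrm{loc}}$ the chain rule gives, after a Friedrichs mollification in $(t,x,v)$ to justify the manipulations,
\[
\mathcal{M}_{g}^{\vartheta}\beta_{\varepsilon}(h)=\beta_{\varepsilon}'(h)\,\mathcal{M}_{g}^{\vartheta}h-\beta_{\varepsilon}''(h)\,\big(\nabla_{v}h\cdot\sigma_{G}\nabla_{v}h\big),
\]
since each first-order operator ($\partial_{t}$, $v\cdot\nabla_{x}$, $a_{g}\cdot\nabla_{v}$, $-2\frac{\partial_{i}w^{\vartheta}}{w^{\vartheta}}\sigma_{G}^{ij}\partial_{j}$) commutes with $\beta_{\varepsilon}(\cdot)$ up to the factor $\beta_{\varepsilon}'(h)$, while the divergence-form term produces the extra quadratic term $\beta_{\varepsilon}''(h)\,\nabla_{v}h\cdot\sigma_{G}\nabla_{v}h$. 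By Lemma \ref{lemma : eigenvalue estimate}, $\sigma_{G}(v)$ is positive semidefinite for every $v$, so this quadratic term is $\ge 0$; together with $\beta_{\varepsilon}'\ge 0$ and $\mathcal{M}_{g}^{\vartheta}h\le 0$ this yields $\mathcal{M}_{g}^{\vartheta}\beta_{\varepsilon}(h)\le 0$, i.e. $\beta_{\varepsilon}(h)$ is a subsolution of \eqref{Eq : equation for h}.

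It remains to let $\varepsilon\to 0$. Since $h\in L^{\infty}_{\mathrm{loc}}$ and, by the square-integrability built into the notion of (sub)solution of \eqref{Eq : equation for h} together with the positivity of $\sigma$ on compact velocity sets (Lemma \ref{Lemma : Guo2002 lemma3}), $\nabla_{v}h\in L^{2}_{\mathrm{loc}}$, we have $\beta_{\varepsilon}(h)\to h_{+}$ in $L^{2}_{\mathrm{loc}}$ and $\nabla_{v}\beta_{\varepsilon}(h)=\beta_{\varepsilon}'(h)\nabla_{v}h\to \mathbf{1}_{\{h>0\}}\nabla_{v}h=\nabla_{v}h_{+}$ in $L^{2}_{\mathrm{loc}}$, the last identity by Stampacchia's lemma. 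Passing to the limit in the weak formulation \eqref{Eq : weak sol for h} (with $=$ replaced by $\le$ and tested against nonnegative $\varphi$) preserves the inequality, which shows that $h_{+}$ is a subsolution and completes the proof.

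The step requiring care is the rigorous chain-rule identity for $\beta_{\varepsilon}(h)$ when $h$ is only a weak (sub)solution of transport–diffusion type: one must mollify in all of $(t,x,v)$ and control the commutator with the transport field $v\cdot\nabla_{x}$ in the spirit of DiPerna--Lions, or, alternatively, run the computation on the smooth approximations $h^{\delta}$ built as in Lemma \ref{lemma : existence of weak sol h} and pass $\delta\to 0$ afterwards. Once $\sigma_{G}\ge 0$ is in hand, every other ingredient—the chain rule, the sign of the extra diffusion term, and the $\varepsilon\to 0$ limit—is elementary.
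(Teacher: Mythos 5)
Your proposal is correct and follows exactly the paper's argument: approximate $(\cdot)_{+}$ by smooth convex nondecreasing functions, use the chain rule together with $\beta_{\varepsilon}'\ge 0$, $\beta_{\varepsilon}''\ge 0$ and the nonnegativity of $\sigma_{G}$ from Lemma \ref{lemma : eigenvalue estimate} to see that $\beta_{\varepsilon}(h)$ is a subsolution, then pass to the limit. You have simply written out in full the computation that the paper's one-line proof leaves implicit.
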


\begin{proof}
Approximate a convex function $Q\left(  h\right)  \rightarrow h_{+}$ and then
use the convexity of $Q\left(  h\right)  $ such that $Q^{\prime}\left(
h\right)  >0$ and $Q^{\prime\prime}\left(  h\right)  >0$. Applying $Q\left(
h\right)  $ to the equation \eqref{Eq : equation for h}, we complete the proof.
\end{proof}

Let $h$ be a weak solution. Then since $\left\vert h\right\vert =h_{+}-h_{-}$
and $h_{+}=\max\left\{  h,0\right\}  $ are subsolutions (maximum of two
subsolutions is a subsolution) and $h_{-}=\min\left\{  -h,0\right\}  $ is a
supersolution (minimum of two supersolution is a supersolution), we can apply
Lemma above to both $h_{+}$ and $-h_{-}$. Thus we obtain:

\begin{lemma}
[Theorem 7 in \cite{mouhot2015holder}]\label{lemma : L^2-L^infty estimate} 
Assume \eqref{Eq : condition for g}.
Let $h$ be a sub-solution of \eqref{Eq : equation for h}. Then, there exists $m>1$ such
that
\[
\|h\|_{L^{\infty}(Q_{\infty})} \le\bar C (R_{0})^{m} \left(  1+
\frac{1}{\min(t_{0}-t_{\infty}, (R_{0}-R_{\infty})^{2})}\right)  ^{m}
\|h\|_{L^{2}(Q_{0})},
\]
where $Q_{0} = [-t_{0},0]\times\mathbb{T}^{3} \times[-R_{0},R_{0}]$ and
$Q_{\infty}= [-t_{\infty},0] \times\mathbb{T}^{3} \times[-R_{\infty},
R_{\infty}]$.
\end{lemma}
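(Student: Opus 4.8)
The plan is to deduce the estimate for a general (signed) sub-solution $h$ from the version already proved for \emph{nonnegative} sub-solutions, by truncation at zero.

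First I would invoke the lemma immediately preceding this statement, which asserts that $h_+ = \max\{h,0\}$ is again a sub-solution of \eqref{Eq : equation for h} whenever $h$ is. Since $h_+$ is moreover nonnegative, periodic in $x$, and inherits the required regularity and integrability from $h$ — one has $|\nabla_v h_+| \le |\nabla_v h|$ and $0 \le h_+ \le |h|$ a.e., hence $\int \sigma^{ij}\partial_i h_+ \partial_j h_+ < \infty$ and $h_+ \in L^\infty(w^\vartheta\,dtdxdv)$ — it satisfies all the hypotheses of the earlier $L^2$--$L^\infty$ lemma for nonnegative sub-solutions (the De Giorgi iteration of \cite{mouhot2015holder}, Theorem 7). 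That lemma, whose proof rests on the Caccioppoli-type gain-of-integrability estimates, Lemmas \ref{lemma : gain of integrability w.r.t. v and t} and \ref{lemma : gain of integrability}, iterated over the shrinking cylinders $Q_n$, then yields, with the same exponent $m>1$,
\[
\|h_+\|_{L^\infty(Q_\infty)} \le \bar C(R_0)^m\left(1 + \frac{1}{\min(t_0 - t_\infty,\, (R_0 - R_\infty)^2)}\right)^m \|h_+\|_{L^2(Q_0)}.
\]

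It then remains to transfer this to $h$. On the right, $0 \le h_+ \le |h|$ gives $\|h_+\|_{L^2(Q_0)} \le \|h\|_{L^2(Q_0)}$. On the left, $h \le h_+$ a.e.\ gives $\operatorname*{ess\,sup}_{Q_\infty} h \le \|h_+\|_{L^\infty(Q_\infty)}$, so, reading $\|h\|_{L^\infty(Q_\infty)}$ for a sub-solution as $\operatorname*{ess\,sup}_{Q_\infty} h$, the claimed bound follows. When $h$ is in fact a two-sided weak solution of \eqref{Eq : equation for h}, the function $-h$ is likewise a sub-solution, so the same argument applied to $-h$ bounds $\operatorname*{ess\,sup}_{Q_\infty}(-h)$ by the right-hand side as well; adding the two estimates controls $\||h|\|_{L^\infty(Q_\infty)}$ by the stated quantity, which is the form actually used afterwards (e.g.\ in patching local $L^\infty$ bounds into a global one).

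There is no genuine analytic difficulty in this step: the entire De Giorgi machinery — the local gain of integrability in $v$, $t$, and $x$, and the iteration producing $m$ — has already been carried out for nonnegative sub-solutions. The only care needed is the bookkeeping with signs: checking that $h \mapsto h_+$ preserves both the sub-solution property (supplied by the preceding lemma) and membership in the relevant function spaces, and keeping track of whether one is bounding the one-sided essential supremum (for sub-solutions) or $\||h|\|_{L^\infty}$ (for solutions).
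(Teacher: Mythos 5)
Your proposal is correct and follows essentially the same route as the paper: use the preceding truncation lemma to see that $h_{+}$ (and, for a two-sided weak solution, the positive part of $-h$) is a nonnegative sub-solution, apply the De Giorgi $L^{2}$--$L^{\infty}$ bound already proved for nonnegative sub-solutions, and transfer via $\|h_{+}\|_{L^{2}(Q_{0})}\le\|h\|_{L^{2}(Q_{0})}$. Your explicit remark that for a mere sub-solution the conclusion should be read as a one-sided essential-supremum bound, with the full $\||h|\|_{L^{\infty}}$ bound reserved for solutions, is a fair clarification of what the paper does implicitly.
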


\subsection{$L^2 - L^\infty$ estimate for \eqref{linear landau}} 
\label{sub:L2-Linfty estimate for linear landau}

We now consider \eqref{linear landau} and let $f$ be a solution of
\eqref{linear landau}. Then we split $f$ into two parts:%
\[
f=f \mathbf{1} _{\left\{  |v| \le M\right\}  }+f \mathbf{1} _{\left\{  |v| \ge
M\right\}  }=:f_{1}+f_{2}.
\]

Let $U\left(  t,s\right)  h$ be a solution of \eqref{Eq : equation for h} corresponding
to the initial times $s$ with the initial data $h$. Then
\[%
\begin{split}
f_{1}(t,x,v)  &  =\mathbf{1}_{\left\{  |v| \le M\right\}  }U(t,0) f_{0} +
\mathbf{1}_{\left\{  |v| \le M\right\}  }\int_{0}^{t}U(t,\tau) \bar
K_{g}^{\vartheta}f(\tau) d\tau\\
&  =\mathbf{1}_{\left\{  |v| \le M\right\}  }U(t,0) f_{0} + \int_{0}%
^{t}\mathbf{1}_{\left\{  |v| \le M\right\}  }U(t,\tau) \bar K_{g}^{\vartheta
}f(\tau) d\tau.
\end{split}
\]

Then we first obtain the $L^{\infty}$estimates for $f_{1}$:

\begin{lemma}
\label{Lemma : L^infty estimate for bdd v} 
Assume \eqref{Eq : condition for g}.
Let $f$ be a weak
solution of \eqref{initial}, \eqref{conservation laws}, and
\eqref{linear landau} in a periodic box in the sense of Definition \ref{Def : linear weak sol}, then there exist $C$, $\beta>0$
satisfying the following property: for any $Z, s, k>1$, and $\vartheta,
l\in\mathbb{N} \cup\{0\}$, there exists $C_{\vartheta,l}$ such that
\begin{equation}
\label{Eq : L^infty estimate for bdd v}%
\begin{split}
&  \left\|  1_{|v|<Zs^{k}} f^{\vartheta}( s) \right\|  _{\infty}\\
&  \quad\le C_{\vartheta,l}\left(  Zs^{k}\right)  ^{\beta}(1+s)^{-l}%
\|f_{0}\|_{2,\vartheta+ l} + \frac{C}{1+Zs^{k}}\sup_{s^{\prime}\in
(s-1,s)}\|f^{\vartheta}(s^{\prime})\|_{\infty}.
\end{split}
\end{equation}

\end{lemma}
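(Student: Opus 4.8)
The plan is to represent $f^\vartheta(s)$ by the Duhamel formula \eqref{Eq : Duhamel principle}, weighted by $w^\vartheta$ and started at time $s-1$ (legitimate since $s>1$): writing $U(t,\tau)$ for the solution operator of the homogeneous equation \eqref{Eq : equation for h} --- which obeys both the maximum principle of Lemma~\ref{lemma : Maximum principle} and the energy estimate of Lemma~\ref{Lemma : energy estimate approximated bar A^theta} --- one has $f^\vartheta(s) = U(s,s-1)f^\vartheta(s-1) + \int_{s-1}^s U(s,\tau)\bar K_g^\vartheta f(\tau)\,d\tau =: T_1 + T_2$, and I set $M:=Zs^k>1$, which will be both the truncation radius and the large parameter producing the gain $(1+M)^{-1}$. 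For $1_{|v|<M}T_1$: since $U(\cdot,s-1)f^\vartheta(s-1)$ solves \eqref{Eq : equation for h}, I apply the local $L^2$-$L^\infty$ estimate of Lemma~\ref{lemma : L^2-L^infty estimate} on the cylinders $Q_0=[s-1,s]\times\mathbb{T}^3\times[-2M,2M]$ and $Q_\infty=[s-\tfrac12,s]\times\mathbb{T}^3\times[-M,M]$; as $M>1$ the time--radius factor $\bigl(1+\min(\tfrac12,M^2)^{-1}\bigr)^m$ is a fixed constant while $\bar C(2M)^m$ is polynomial in $M$, so $\|1_{|v|<M}T_1\|_\infty\le CM^{\beta_0}\|U(\cdot,s-1)f^\vartheta(s-1)\|_{L^2([s-1,s]\times\mathbb{T}^3\times\mathbb{R}^3)}\le CM^{\beta_0}\|f(s-1)\|_{2,\vartheta}$ by Lemma~\ref{Lemma : energy estimate approximated bar A^theta}. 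Finally Theorem~\ref{Thm : energy estimate} with weight exponent $\vartheta+l$ (i.e.\ $k=2l$ there, so $\mathcal{E}_{\vartheta+l}(0)=\tfrac12\|f_0\|_{2,\vartheta+l}^2$) together with $s>1$ gives $\|f(s-1)\|_{2,\vartheta}\le C_{\vartheta,l}(1+s)^{-l}\|f_0\|_{2,\vartheta+l}$, hence $\|1_{|v|<M}T_1\|_\infty\le C_{\vartheta,l}M^{\beta_0}(1+s)^{-l}\|f_0\|_{2,\vartheta+l}$.

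For $T_2$ I split $\bar K_g^\vartheta f = \bar K_g^\vartheta(1_{|v|>M}f)+\bar K_g^\vartheta(1_{|v|\le M}f)$. The first term is handled by the maximum principle $\|U(s,\tau)\cdot\|_\infty\le\|\cdot\|_\infty$ and \eqref{Eq : Kf L^infty norm unbdd}, giving $\|U(s,\tau)\bar K_g^\vartheta(1_{|v|>M}f(\tau))\|_\infty\le\tfrac{C}{1+M}\|f^\vartheta(\tau)\|_\infty$, which after integrating over a unit time interval yields $\tfrac{C}{1+M}\sup_{s'\in(s-1,s)}\|f^\vartheta(s')\|_\infty$ --- precisely the second term of the assertion. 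For the second term I split the time integral at $\varepsilon:=\min(\tfrac12,M^{-1})$. On $[s-\varepsilon,s]$ the maximum principle and \eqref{Eq : Kf L^infty norm} give a bound $C\varepsilon\sup_{(s-1,s)}\|f^\vartheta\|_\infty\le\tfrac{C}{1+M}\sup_{(s-1,s)}\|f^\vartheta\|_\infty$. On $[s-1,s-\varepsilon]$, for each $\tau$ the function $U(\cdot,\tau)\bar K_g^\vartheta(1_{|v|\le M}f(\tau))$ solves \eqref{Eq : equation for h} on $[\tau,s]$ with $s-\tau\ge\varepsilon$, so Lemma~\ref{lemma : L^2-L^infty estimate} with time-height $\sim\varepsilon$ and $v$-radius $\sim M$ (constant $\le CM^{\beta_1}$, absorbing $(1+\varepsilon^{-1})^m\le CM^m$) gives $\|1_{|v|<M}U(s,\tau)\bar K_g^\vartheta(1_{|v|\le M}f(\tau))\|_\infty\le CM^{\beta_1}\|\bar K_g^\vartheta(1_{|v|\le M}f(\tau))\|_2$. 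I then use \eqref{Eq : Kf L2 norm} with the \emph{large} cutoff $N=M^{\beta_1+1}$ to obtain $\|\bar K_g^\vartheta(1_{|v|\le M}f(\tau))\|_2\le CM^{2\beta_1+2}\|f^\vartheta(\tau)\|_2+M^{-\beta_1-1}\|f^\vartheta(\tau)\|_\infty$; invoking the $L^2$ decay exactly as in the $T_1$ step and integrating over $[s-1,s-\varepsilon]$ produces $\le C_{\vartheta,l}M^{3\beta_1+2}(1+s)^{-l}\|f_0\|_{2,\vartheta+l}+\tfrac{C}{1+M}\sup_{(s-1,s)}\|f^\vartheta\|_\infty$. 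Collecting the four contributions and setting $\beta:=\max(\beta_0,3\beta_1+2)$ yields \eqref{Eq : L^infty estimate for bdd v}.

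The delicate point --- and the reason one cannot simply apply the maximum principle or the $L^2$-$L^\infty$ estimate to the whole Duhamel integral --- is the tension between the polynomial blow-up $M^{\beta}$ of the De Giorgi constant in the truncation radius and its short-time singularity: the maximum principle over $[s-\varepsilon,s]$ is lossless in $v$ but costs a factor $\varepsilon$, whereas the De Giorgi estimate over $[s-1,s-\varepsilon]$ costs $\varepsilon^{-m}$. One must therefore calibrate $\varepsilon$ and the $\bar K_g^\vartheta$-cutoff $N$ as suitable powers of $M=Zs^k$ so that the self-interaction $\int U\bar K_g^\vartheta f$ splits into a genuinely good part (bounded by the decaying weighted $L^2$ norm of the data, losing only polynomially many powers of $M$) and an error retaining the full $(1+M)^{-1}$ gain. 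Keeping track of these exponents, rather than any single estimate, is the main obstacle.
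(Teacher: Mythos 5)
Your proposal is correct and follows essentially the same route as the paper: Duhamel from time $s-1$, the local De Giorgi $L^2$--$L^\infty$ estimate (with constant polynomial in $Zs^{k}$) for the homogeneous part and for the Duhamel integrand away from the endpoint, the maximum principle plus \eqref{Eq : Kf L^infty norm} on the short final interval of length $\varepsilon\sim(1+Zs^{k})^{-1}$, the $N$-split \eqref{Eq : Kf L2 norm} with $N$ a power of $Zs^{k}$, and the weighted $L^{2}$ decay of Theorem \ref{Thm : energy estimate}. The only (harmless) deviation is that you additionally split $\bar K_{g}^{\vartheta}f$ into its $|v|>M$ and $|v|\le M$ parts via \eqref{Eq : Kf L^infty norm unbdd} inside this lemma, a step the paper defers to the proof of Theorem \ref{Thm : L^infty estimate}.
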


\begin{proof}
By the Duhamel principle,
\[%
\begin{split}
\|1_{|v|<Zs^{k}}f^{\vartheta}(s)\|_{L^{\infty}}  &  \le\|1_{|v|<Zs^{k}%
}U(s,s-1)f^{\vartheta}(s-1)\|_{L^{\infty}}\\
&  \quad+ \int_{0}^{1-\varepsilon} \|1_{|v|\le Zs^{k}} U(s,s-1+ \tau) \bar
K^{\vartheta}_{g} f(s -1 +\tau)\|_{\infty}d \tau\\
&  \quad+ \int_{1-\varepsilon}^{1} \|1_{|v|\le Zs^{k}} U(s,s-1+ \tau) \bar
K^{\vartheta}_{g} f(s -1 +\tau)\|_{\infty}d \tau\\
&  =(i)+(ii)+(iii),
\end{split}
\]
where $\bar K_g^{\vartheta}$ is defined as in \eqref{Eq : bar K_g^theta} and $\varepsilon$ is a constant which will be chosen later. By Lemma
\ref{lemma : L^2-L^infty estimate}, there exists $m>0$ such that
\[
(i) \le C\left(  Zs^{k}\right)  ^{m} \left(  \int_{0}^{1} \|U(s^{\prime},s-1)f^{
\vartheta}(s-1)\|_{2}^{2} ds^{\prime}\right)  ^{1/2}.
\]
By Theorem \ref{Thm : energy estimate} and Lemma
\ref{lemma : existence of weak sol h}, for every integer $l$, there exists
$C_{l}$ such that
\[%
\begin{split}
\|U(s^{\prime},s-1)f^{\vartheta}(s-1)\|_{2}  &  \le C\|f^{\vartheta}(s-1)\|_{2} =
C\|f(s-1)\|_{2,\vartheta}\\
&  \le C_{\vartheta,l}\left(  1+\frac{s-1}{l}\right)  ^{-l}\|f_{0}%
\|_{2,\vartheta+l}.\\
&  \le C_{\vartheta,l}(1+s)^{-l}\|f_{0}\|_{2, \vartheta+l}.%
\end{split}
\]
Thus
\[
(i) \le C_{\vartheta,l} \left(  Zs^{k}\right)  ^{m}(1+s)^{-l}\|f_{0}%
\|_{2,\vartheta+ l}.
\]
By the maximum principle and \eqref{Eq : Kf L^infty norm},
\[
(iii) \le C \varepsilon\sup_{s^{\prime}\in(s-1,s)}\|f^{\vartheta}(s^{\prime
})\|_{\infty}.
\]
By Lemma \ref{lemma : L^2-L^infty estimate},
\begin{multline*}
\|1_{|v|\le Zs^{k}} U(s,s-1+\tau) \bar K^{\vartheta}_{g} f(s-1+\tau)\|_{\infty}\\
\le C \left(  Zs^{k}\right)  ^{m}\left(  1+ \frac{1}{1- \tau}\right)  ^{m}
\left(  \int_{s-1+\tau}^{s}\|1_{|v|<2Zs^{k}} U(s^{\prime},s-1+\tau)\bar K^{\vartheta
}_{g} f(s-1+ \tau)\|_{2}^{2} ds^{\prime}\right)  ^{1/2}.%
\end{multline*}
By \eqref{Eq : Kf L2 norm} and Theorem \ref{Thm : energy estimate}, for any
$N>0$,
\begin{equation*}
	\begin{split}
		(ii)  &  \le C\int_{0}^{1- \varepsilon} \left(  Zs^{k}\right)  ^{m}\left(  1+\frac{1}{1- \tau}\right)  ^{m} \left(  \int_{s-1+\tau}^{s}\|\bar K^{\vartheta}_{f} f(s-1+ \tau)\|_{2}^{2} ds^{\prime}\right)  ^{1/2} d \tau\\
		&  \le C\left(  Zs^{k}\right)  ^{m} \left(  1+ \frac{1}{\varepsilon}\right)^{m} \int_{0}^{1} \left(  N^{2} \|f^{\vartheta}(s-1+\tau)\|_{2} +\frac{1}{N}\|f^{\vartheta}(s-1+ \tau)\|_{\infty}\right)  d\tau\\
		&  \le\left(  Zs^{k}\right)  ^{m} \left(  1+ \frac{1}{\varepsilon}\right)^{m} \left(  C_{\vartheta,l} N^{2}(1+s)^{-l} \|f_{0}\|_{2,\vartheta+l} +\frac{C}{N}\sup_{s^{\prime}\in(s-1,s)}\|f^{\vartheta}(s^{\prime})\|_{\infty}\right).
	\end{split}
\end{equation*}
Choose $\varepsilon^{-1} = 1+Zs^{k}$ and $N = (1+Zs^{k}%
)^{2m+1}$. Then
\[
(i)+(ii)+(iii) \le C_{\vartheta,l}\left(  Zs^{k}\right)  ^{\beta}%
(1+s)^{-l}\|f_{0}\|_{2,\vartheta+l} + \frac{C}{1+Zs^{k}}\sup_{s^{\prime}%
\in(s-1,s)}\|f^{\vartheta}(s^{\prime})\|_{\infty},
\]
where $\beta= 6m+2>0$.
\end{proof}

Based on the above results, we will prove Theorem \ref{Thm : L^infty estimate}.
\begin{proof}[Proof of Theorem \ref{Thm : L^infty estimate}]
Choose $\varepsilon$ as in Lemma \ref{lemma : eigenvalue estimate}.
By Lemma \ref{Lemma : L^infty estimate for bdd v}, there exists $l$ such that
\[
\left\|  1_{|v|<Zs^{k}} f^{\vartheta}( s) \right\|  _{L^{\infty}}\le
C_{\vartheta, l, Z} (1+s)^{-2}\|f_{0}\|_{2,\vartheta+l} + \frac{C}{1+Zs^{k}%
}\sup_{s^{\prime}\in(s-1,s)}\|f^{\vartheta}(s^{\prime})\|_{\infty}.
\]

Therefore by the Duhamel principle, the maximum principle, and
\eqref{Eq : Kf L^infty norm unbdd},
\[%
\begin{split}
\|f^{\vartheta}(n+1)\|_{L^{\infty}}  &  \le\|U(n+1,n)f^{\vartheta}(n)\|_{L^{\infty
}}\\
&  \quad+ \int_{0}^{1} \left\|  U(n+1, n + s)\bar K^{\vartheta}_{g} \left(  1_{|v|<
Z(n+s)^{k}}f(n+s) + 1_{|v|>Z(n+s)^{k}} f(n+s) \right)  \right\|  _{L^{\infty}}
ds\\
&  \le\|f^{\vartheta}(n)\|_{L^{\infty}} + C_{\vartheta,l,Z} \int_{0}^{1}
(1+n+s)^{-2}\|f_{0}\|_{2,\vartheta+l}\\
&  \quad+ \frac{C}{1+Z(n+s)^{k}} \sup_{s^{\prime}\in(n+s-1,n+s)}%
\|f^{\vartheta}(s^{\prime})\|_{\infty}ds\\
&  \quad+ \int_{0}^{1} (1+Z(n+s)^{k})^{-1}\|f^{\vartheta}(n+s)\|_{\infty
}ds\\
&  \le\|f^{\vartheta}(n)\|_{L^{\infty}} + C_{\vartheta,l,Z} (1+n)^{-2}%
\|f_{0}\|_{2,\vartheta+l}+ C (Zn^{k})^{-1}\sup_{s^{\prime}\in[n-1,n+1]} \|f^{\vartheta
}(s^{\prime})\|_{\infty},
\end{split}
\]
where $\bar K_g^{\vartheta}$ is defined as in \eqref{Eq : bar K_g^theta}.
After iteration,
\[%
\begin{split}
\|f^{\vartheta}(n+1)\|_{L^{\infty}}  &  \le\|f^{\vartheta}(1)\|_{L^{\infty}} +
C_{\vartheta,l,Z} \sum_{\bar{n}=1}^{n} (1+\bar{n})^{-2} \|f_{0}\|_{2,\vartheta
+l}\\
&  \quad+ C Z^{-1} \sum_{\bar{n}=1}^{n} \bar{n}^{-k}\sup
_{s\in[0,n+1]} \|f^{\vartheta}(s)\|_{\infty}.
\end{split}
\]
Choose large $k$ and $Z$ such that $-k < -1$ and $CZ^{-1}%
\sum_{\bar{n}=1}^{\infty}\bar{n}^{k(-1)} \le\varepsilon_{0}$, where
$\varepsilon_{0}$ will be determined later. Then
\[
\|f^{\vartheta}(n+1)\|_{L^{\infty}} \le\|f^{\vartheta}(1)\|_{L^{\infty}} +
C_{\vartheta,l,Z}\|f_{0}\|_{2,\vartheta+l} + \varepsilon_{0}\sup_{s\in
[0,n+1]}\|f^{\vartheta}(s)\|_{\infty}.
\]
Since $n$ is an arbitrary integer,
\[
\sup_{s=1,2,... n+1}\|f^{\vartheta}(s)\|_{L^{\infty}} \le\|f^{\vartheta
}(1)\|_{L^{\infty}} + C_{\vartheta,l,Z}\|f_{0}\|_{2,\vartheta,+l} +
\varepsilon_{0}\sup_{s\in[0,n+1]}\|f^{\vartheta}(s)\|_{\infty}.
\]
By the Duhamel principle, the maximum principle, and
\eqref{Eq : Kf L^infty norm unbdd}.
\[%
\begin{split}
\|f^{\vartheta}(n+t)\|_{L^{\infty}}  &  \le\|U(n+t,n)f^{\vartheta}(n)\|_{L^{\infty
}} + \int_{0}^{t} \left\|  U(n+t,n+s)\bar K^{\vartheta}_{g} f(n+s)\right\|
_{L^{\infty}} ds\\
&  \le\|f^{\vartheta}(n)\|_{L^{\infty}} + \int_{0}^{t} \left\|  \bar
K^{\vartheta}_{g} f(n+s)\right\|  _{L^{\infty}} ds\\
&  \le\|f^{\vartheta}(n)\|_{L^{\infty}} + C \int_{0}^{t} \left\|
f^{\vartheta}(n+s)\right\|  _{L^{\infty}} ds,
\end{split}
\]
for $t\in [0,1]$.
By the Gronwall inequality,
\[
\|f^{\vartheta}(n+t)\|_{L^{\infty}} \le C\|f^{\vartheta}(n)\|_{\infty}, \text{
for all }t\in[0,1].
\]
Therefore,
\[
\sup_{s\in[0,n+1]}\|f^{\vartheta}(s)\|_{L^{\infty}} \le C\|f^{\vartheta}%
_{0}\|_{L^{\infty}} + C_{\vartheta,l}\|f_{0}\|_{2,\vartheta+l} +
C\varepsilon_{0}\sup_{s\in[0,n+1]}\|f^{\vartheta}(s)\|_{\infty}.
\]
Now, we choose a small $\varepsilon_{0}$ satisfying $C\varepsilon_{0}< 1/2$, and then
absorb the last term on the RHS to the LHS. Then, we have \eqref{Eq : L^infty estimate linear} in case of $\vartheta_0 = 0$ by taking $l_0(0) = l$.

By \eqref{Eq : L^infty estimate for bdd v}, there exist $C$, $l_{1}(\vartheta_0)$ such
that
\[
\|1_{|v| < (1+t)^{\vartheta_{0}}}f^{\vartheta}(t)\|_{\infty} \le
C(1+t)^{-\vartheta_{0}}(\|f_{0}\|_{2,\vartheta+l_{1}(\vartheta_{0})} + \|f\|_{\infty,
\vartheta}).
\]
Thus, by Proposition \ref{Prop : L2Linfty}, we have
\[%
\begin{split}
	\|f(t)\|_{\infty, \vartheta}  &  \le\|1_{|v|< (1+t)}f(t)\|_{\infty, \vartheta} + \|1_{|v| \ge(1+t)}%
	f(t)\|_{\infty, \vartheta}\\
	&  \le C(1+t)^{-\vartheta_{0}}\left(\|f_{0}\|_{2,\vartheta+l_{1}(\vartheta_{0})} + \sup_{0 \le s \le t}\|f(s)\|_{\infty,\vartheta}\right) + C(1+t)^{-\vartheta_{0}}\|1_{|v| \ge(1+t)}f(t)\|_{\infty, \vartheta+\vartheta_{0}}\\
	&  \le C(1+t)^{-\vartheta_{0}}\left(\|f_{0}\|_{2,\vartheta+l_{1}(\vartheta_{0})} +  \sup_{0 \le s \le t}\|f(s)\|_{\infty,\vartheta + \vartheta_{0}}\right)\\
	&  \le C(1+t)^{-\vartheta_{0}}(\|f_{0}\|_{2,\vartheta+l_{1}(\vartheta_{0})} +
	\|f_{0}\|_{2,\vartheta+\vartheta_{0}+l_{0}(0)} + \|f_{0}\|_{\infty,	\vartheta+\vartheta_{0}})\\
	&  \le C(1+t)^{-\vartheta_{0}}\|f_{0}\|_{\infty,\vartheta+l_{0}(\vartheta_0)},
\end{split}
\]
where $l_{0}(\vartheta_0) = \max\{l_{1}({\vartheta_{0}}), \vartheta_{0} + l_{0}(0)\} + 2$.
\end{proof}

\begin{lemma}
\label{Lemma : weighted f^p norm} 
Assume \eqref{Eq : condition for g}.
Let $f$ be a strong solution of
\eqref{initial}, \eqref{conservation laws}, and \eqref{linear landau} in a
periodic box. Let $\beta>0$ and $p>2$ be given constants. Then there exist
$l\in\mathbb{N}$ and $C_{\beta,l}$ such that
\begin{equation}
\label{Eq : weighted f^p norm}%
	\begin{split}
		\left(  \int_{0}^{t} \|f(s)\|_{p,\beta}^{p} ds \right)  ^{1/p}&  \le C_{\beta,l_{0}} \|f_{0}\|_{2,\beta+l}^{2/p}\left(  \|f_{0}\|_{\infty,\beta}+\|f_{0}\|_{2,\beta+l}\right)  ^{(p-2)/p}\\
		&  \le C_{\beta,l_{0}} \left(  \|f_{0}\|_{\infty,\beta} + \|f_{0}\|_{2,\beta+l}\right)  .
	\end{split}
\end{equation}

\end{lemma}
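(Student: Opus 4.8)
The plan is to interpolate the weighted space-time $L^{p}$ norm between the weighted $L^{2}$ norm --- which is time-integrable with a gain of velocity weight by Theorem \ref{Thm : energy estimate} --- and the weighted $L^{\infty}$ norm --- which is bounded uniformly in time by Theorem \ref{Thm : L^infty estimate}. Writing $f^{\beta}:=w^{\beta}f$ and using the pointwise identity $|f^{\beta}(s,x,v)|^{p}=|f^{\beta}(s,x,v)|^{p-2}\,|f^{\beta}(s,x,v)|^{2}\le\|f(s)\|_{\infty,\beta}^{p-2}\,|f^{\beta}(s,x,v)|^{2}$, integrating over $\mathbb{T}^{3}\times\mathbb{R}^{3}$ gives $\|f(s)\|_{p,\beta}^{p}\le\|f(s)\|_{\infty,\beta}^{p-2}\,\|f(s)\|_{2,\beta}^{2}$, so that
\[
\int_{0}^{t}\|f(s)\|_{p,\beta}^{p}\,ds\le\Big(\sup_{0\le s\le t}\|f(s)\|_{\infty,\beta}\Big)^{p-2}\int_{0}^{t}\|f(s)\|_{2,\beta}^{2}\,ds .
\]
It then remains to control the two factors.

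For the supremum, I would apply Theorem \ref{Thm : L^infty estimate} with $\vartheta_{0}=0$ at an integer weight $\ge\beta$: it produces $l_{0}\in\mathbb{N}$ with $\sup_{s\ge0}\|f(s)\|_{\infty,\beta}\le C_{\beta}\,\|f_{0}\|_{\infty,\beta+l_{0}}$, and in fact (from the absorption step in its proof) the additive bound $\sup_{s\ge0}\|f(s)\|_{\infty,\beta}\le C_{\beta}(\|f_{0}\|_{\infty,\beta}+\|f_{0}\|_{2,\beta+l_{0}})$. For the time integral, I would choose the smallest half-integer $\vartheta'\ge\beta+\tfrac12$, so that $\vartheta'\le\beta+1$ and $\vartheta'-\tfrac12\ge\beta$; by \eqref{Eq : sigma norm estimate} we have $\|f(s)\|_{\sigma,\vartheta'}\ge\|f(s)\|_{2,\vartheta'-1/2}\ge\|f(s)\|_{2,\beta}$, and hence, using $\mathcal{E}_{\vartheta'}(f(t))\ge\int_{0}^{t}\|f(s)\|_{\sigma,\vartheta'}^{2}\,ds$, the global bound \eqref{Eq : energy estimate linear}, and $\mathcal{E}_{\vartheta'}(0)=\tfrac12\|f_{0}\|_{2,\vartheta'}^{2}$,
\[
\int_{0}^{t}\|f(s)\|_{2,\beta}^{2}\,ds\;\le\;\mathcal{E}_{\vartheta'}(f(t))\;\le\;C\,2^{2\vartheta'}\,\mathcal{E}_{\vartheta'}(0)\;\le\;C_{\beta}\,\|f_{0}\|_{2,\beta+l}^{2}
\]
for any $l\ge1$ (so that $\beta+l\ge\vartheta'$).

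Taking $l$ at least $\max\{l_{0},1\}$ (enlarged if $\beta$ is not an integer, to absorb the rounding in the two theorems) and combining the displays yields $\int_{0}^{t}\|f(s)\|_{p,\beta}^{p}\,ds\le C_{\beta,l}(\|f_{0}\|_{\infty,\beta}+\|f_{0}\|_{2,\beta+l})^{p-2}\,\|f_{0}\|_{2,\beta+l}^{2}$; its $p$-th root is the first inequality in \eqref{Eq : weighted f^p norm}, and since $\|f_{0}\|_{2,\beta+l}\le\|f_{0}\|_{\infty,\beta}+\|f_{0}\|_{2,\beta+l}$ the exponents $\tfrac2p+\tfrac{p-2}p=1$ collapse and give the second. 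The step I expect to be the main obstacle is the matching of velocity weights: the energy functional only controls $\int_{0}^{t}\|f\|_{\sigma,\vartheta}^{2}$, which by \eqref{Eq : sigma norm estimate} dominates $\int_{0}^{t}\|f\|_{2,\vartheta-1/2}^{2}$ rather than $\int_{0}^{t}\|f\|_{2,\vartheta}^{2}$, which is why the energy estimate must be run at the shifted weight $\vartheta'\ge\beta+\tfrac12$; likewise the finite loss $l_{0}$ in Theorem \ref{Thm : L^infty estimate} forces $l\ge l_{0}$. Beyond this bookkeeping, only the elementary interpolation $L^{p}=(L^{2})^{2/p}(L^{\infty})^{(p-2)/p}$ is used, and no smallness of $f_{0}$ or higher regularity is needed beyond what is already assumed.
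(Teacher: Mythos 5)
Your proof is correct, and the core of it --- the pointwise interpolation $\|f(s)\|_{p,\beta}^{p}\le\|f(s)\|_{\infty,\beta}^{p-2}\|f(s)\|_{2,\beta}^{2}$ followed by the additive $L^{\infty}$ bound extracted from the absorption step of Theorem \ref{Thm : L^infty estimate} --- is exactly what the paper does. The one place where you take a genuinely different route is the time integrability of $\|f(s)\|_{2,\beta}^{2}$. The paper keeps the two factors separate in $s$: it invokes the decay estimate \eqref{Eq : decay estimate linear} to get $\|f(s)\|_{2,\beta}\le C_{\beta,l}(1+s)^{-l}\|f_{0}\|_{2,\beta+l}$, bounds $\|f(s)\|_{p,\beta}^{p}\le C(1+s)^{-2l}\|f_{0}\|_{2,\beta+l}^{2}\left(\|f_{0}\|_{\infty,\beta}+\|f_{0}\|_{2,\beta+l}\right)^{p-2}$, and integrates $(1+s)^{-2l}$ over $(0,\infty)$. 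You instead pull the $L^{\infty}$ factor out as a supremum and control $\int_{0}^{t}\|f(s)\|_{2,\beta}^{2}\,ds$ directly through the dissipation term of the energy, via \eqref{Eq : sigma norm estimate} and the global bound \eqref{Eq : energy estimate linear} at a half-integer weight $\vartheta'\ge\beta+\tfrac12$. Both arguments produce the same structure of finite weight loss $l$; yours is marginally more economical in that it needs only the energy inequality and not the decay rate, while the paper's decay-based version gives the stronger pointwise-in-$s$ bound on $\|f(s)\|_{p,\beta}$, which is occasionally convenient elsewhere. Two small bookkeeping points, neither fatal: Theorem \ref{Thm : energy estimate} is stated for $\vartheta\in2^{-1}\mathbb{N}\cup\{0\}$, so your choice of $\vartheta'$ as a half-integer is exactly what is needed; and the inequality $\|f\|_{\sigma,\vartheta'}\ge\|f\|_{2,\vartheta'-1/2}$ holds only up to the constant $c_{\vartheta'}$ from Lemma \ref{Lemma : Guo2002 corollary1}, which harmlessly folds into $C_{\beta,l}$.
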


\begin{proof}
By Theorem \ref{Thm : energy estimate} and Theorem
\ref{Thm : L^infty estimate}, there exist $l\in\mathbb{N}$ and $C_{\beta,l}$ such that
\[%
\begin{split}
\|f(s)\|_{2,\beta}  &  \le C_{\beta,l_{0}} (1+s)^{-l} \|f_{0}\|_{2,\beta+l}\\
\|f(s)\|_{\infty, \beta}  &  \le C_{\beta,l_{0}} \left(  \|f_{0}\|_{\infty, \beta}
+ \|f_{0}|_{2,\beta+l}\right)  .
\end{split}
\]
By the interpolation, we have
\[
\|f(s)\|_{p,\beta}^{p} \le(C_{\beta,l_{0}})^{p} (1+s)^{-2l} \|f_{0}\|_{2,\beta+l}%
^{2}\left(  \|f_{0}\|_{\infty,\beta} + \|f_{0}\|_{2,\beta+l}\right)  ^{p-2}.
\]
Taking the integral over $s\in(0,\infty)$, we have the first inequality
of \eqref{Eq : weighted f^p norm}. The second inequality of
\eqref{Eq : weighted f^p norm} comes from the Young inequality, then we
complete the proof.
\end{proof}

\subsection{$L^2-L^\infty$ estimate for \eqref{bar Af}} 
\label{sub:l2-linfty estimate for tilde af}
We will derive another type of $L^2-L^\infty$ estimate to obtain a uniform H\"older estimate for a weak solution of \eqref{linear landau} in the sense of Definition \ref{Def : linear weak sol}.
The proof is similar to the case of Section \ref{sub:L2-Linfty estimate for linear landau}.

Let us multiply \eqref{bar Af} by $w^{\vartheta}$, then $h:=
w^{\vartheta}f = f^{\vartheta}$ satisfies
\begin{equation}
\label{Eq : weighted tilde Af}(\partial_{t} + v\cdot\nabla_{x} - \bar
A_{g}^{\vartheta})h = \tilde K_{g}^{\vartheta}h,
\end{equation}
where
\begin{equation}
\label{tilde K_g^theta}\tilde K^{\vartheta}_{g} h = \left(  2 \frac
{\partial_{i} w^{\vartheta}\partial_{j} w^{\vartheta}}{w^{2 \vartheta}}%
\sigma^{ij}_{G} -\frac{\partial_{ij}w^{\vartheta}}{w^{\vartheta}}\sigma
^{ij}_{G} - \frac{\partial_{j}w^{\vartheta}}{w^{\vartheta}}\partial_{i}%
\sigma^{ij}_{G} - \frac{\partial_{i} w^{\vartheta}}{w^{\vartheta}}a_{g}^{i}
\right)  h.
\end{equation}
Similar to Definition \ref{Def : linear weak sol}, we can define a weak solution of \eqref{Eq : weighted tilde Af}.

Then we split $f^{\vartheta}$ into two parts:
\[
f^{\vartheta}=f^{\vartheta}\mathbf{1} _{\left\{  |v| \le M\right\}
}+f^{\vartheta}\mathbf{1} _{\left\{  |v| \ge M\right\}  }=:f_{1}+f_{2}.
\]
Let $U^{\vartheta}(t,s)f_{0}$ be a weak solution of
\eqref{Eq : M_g^theta} in the sense of Definition \ref{Def : weak sol for h} corresponding to the initial data $f_{0}$ with the initial time $t=s$, then we have
\begin{align*}
	f_{1}(t) &= \mathbf{1}_{|v| \le M} U^{\vartheta}(t,0)f_{0}^{\vartheta} + \mathbf{1}_{|v| \le M} \int_{0}^{t}U^{\vartheta}(t, \tau)\tilde K_{g}^{\vartheta} f^{\vartheta}(\tau) d \tau\\
	&=\mathbf{1}_{|v| \le M} U^{\vartheta}(t,0)f_{0}^{\vartheta} +  \int_{0}^{t}\mathbf{1}_{|v| \le M} U^{\vartheta}(t, \tau)\tilde K_{g}^{\vartheta} f^{\vartheta}(\tau) d \tau.
\end{align*}

\begin{lemma}
\label{lemma : Kf estimate tilde M}
Assume \eqref{Eq : condition for g}.
There exists $C=C_{\vartheta}>0$ such that
\begin{equation}
\label{Eq : Kf L^infty norm tilde M}\|\tilde K_{g}^{\vartheta}f^{\vartheta
}\|_{L^{\infty}(\mathbb{T}^{3} \times\mathbb{R}^{3})}\le C\|f^{\vartheta
}\|_{L^{\infty}(\mathbb{T}^{3} \times\mathbb{R}^{3})},
\end{equation}
\begin{equation}
\label{Eq : Kf L^infty norm unbdd tilde M}\|\tilde K_{g}^{\vartheta}%
1_{|v|>M}f^{\vartheta}\|_{L^{\infty}(\mathbb{T}^{3} \times\mathbb{R}^{3})}\le
C(1+M)^{-1}\|f^{\vartheta}\|_{L^{\infty}(\mathbb{T}^{3} \times
\mathbb{R}^{3})},
\end{equation}
and
\begin{equation}
\label{Eq : Kf L2 norm tilde M}
	\|\tilde K_{g}^{\vartheta}f^{\vartheta}\|_{L^{2}(\mathbb{T}^{3} \times\mathbb{R}^{3})} \le C\|f^{\vartheta}\|_{L^{2}(\mathbb{T}^{3} \times\mathbb{R}^{3})}.
\end{equation}
\end{lemma}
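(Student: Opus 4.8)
The plan is to observe that $\tilde K_g^\vartheta$ has essentially the same structure as the lower-order remainder terms already analyzed in Lemma \ref{lemma : Kf estimate}, but is in fact \emph{simpler}, because every coefficient is an explicit pointwise multiplier rather than a convolution that must be hit with an integration by parts. Indeed, comparing \eqref{tilde K_g^theta} with \eqref{Eq : bar K_g^theta}, the operator $\tilde K_g^\vartheta$ is precisely the collection of ``weight-commutator'' terms
\[
\left(2\frac{\partial_i w^\vartheta \partial_j w^\vartheta}{w^{2\vartheta}}\sigma_G^{ij} - \frac{\partial_{ij}w^\vartheta}{w^\vartheta}\sigma_G^{ij} - \frac{\partial_j w^\vartheta}{w^\vartheta}\partial_i \sigma_G^{ij} - \frac{\partial_i w^\vartheta}{w^\vartheta}a_g^i\right)h,
\]
i.e. $\bar K_g^\vartheta f = w^\vartheta K f + (\text{further no-derivative terms}) + \tilde K_g^\vartheta f$, so the bounds we want for $\tilde K_g^\vartheta$ are a strict sub-collection of the bounds already established for $\bar K_g^\vartheta$ in the proof of Lemma \ref{lemma : Kf estimate}.

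First I would record the pointwise estimates on the four coefficients. Since $w = 1+|v|$ and $\vartheta \in \N$, one has $|\partial_i w^\vartheta| \le C w^{\vartheta-1}$ and $|\partial_{ij}w^\vartheta| \le C w^{\vartheta-2}$, hence $|\partial_i w^\vartheta/w^\vartheta| \le C(1+|v|)^{-1}$ and $|\partial_{ij}w^\vartheta/w^\vartheta| \le C(1+|v|)^{-2}$. By Lemma \ref{lemma : eigenvalue estimate} (which applies under \eqref{Eq : condition for g}) the eigenvalues of $\sigma_G$ satisfy $\lambda(v) \le C(1+|v|)^{-1}$, so $|\sigma_G^{ij}| \le C(1+|v|)^{-1}$; combining with Lemma \ref{Lemma : Guo2002 lemma2} applied to $\partial_i\sigma_G^{ij} = \partial_i\sigma^{ij} + \partial_i\sigma_{\mu^{1/2}g}^{ij}$ gives $|\partial_i\sigma_G^{ij}| \le C(1+|v|)^{-2}$ (the $g$-part is controlled using $\|g\|_\infty < \varepsilon$ and the expansion in the proof of Lemma \ref{lemma : eigenvalue estimate}). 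Finally $|a_g| = |\{\phi^{ij}*[v_i\mu^{1/2}g]\} + \{\phi^{ij}*[\mu^{1/2}\partial_j g]\}| \le C(1+|v|)^{-1}\|g\|_\infty$ by Lemma \ref{Lemma : Guo2002 lemma2}. Putting these together, the multiplier in front of $h$ in \eqref{tilde K_g^theta} is bounded in absolute value by $C(1+|v|)^{-1}$.

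The three claimed inequalities then follow immediately. For \eqref{Eq : Kf L^infty norm tilde M}, write $\tilde K_g^\vartheta f^\vartheta = m(v)f^\vartheta$ with $|m(v)| \le C(1+|v|)^{-1} \le C$, so $\|\tilde K_g^\vartheta f^\vartheta\|_{L^\infty} \le C\|f^\vartheta\|_{L^\infty}$. For \eqref{Eq : Kf L^infty norm unbdd tilde M}, on $\{|v| > M\}$ the multiplier obeys $|m(v)| \le C(1+M)^{-1}$, giving $\|\tilde K_g^\vartheta 1_{|v|>M}f^\vartheta\|_{L^\infty} \le C(1+M)^{-1}\|f^\vartheta\|_{L^\infty}$. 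For \eqref{Eq : Kf L2 norm tilde M}, since $|m(v)| \le C$ uniformly, $\|\tilde K_g^\vartheta f^\vartheta\|_{L^2} = \|m f^\vartheta\|_{L^2} \le C\|f^\vartheta\|_{L^2}$ --- note this is genuinely cleaner than \eqref{Eq : Kf L2 norm}, since the bad convolution term $\partial_{ij}\phi^{ij}*(\cdot)$ responsible for the $N^2$ loss does not appear here.

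The only point requiring a little care --- and the main (mild) obstacle --- is the bound $|\partial_i\sigma_G^{ij}| \le C(1+|v|)^{-2}$: for the Maxwellian part $\partial_i\sigma^{ij}$ this is Lemma \ref{Lemma : Guo2002 lemma2} (with $a = \phi^{ij}$, which decays like $|v|^{-1}$, so its derivative like $|v|^{-2}$), while for the perturbation part $\partial_i\sigma_{\mu^{1/2}g}^{ij}$ one must either reuse the Taylor-expansion argument from the proof of Lemma \ref{lemma : eigenvalue estimate} or simply note that $g$ enters only through $\mu^{1/2}g$, which is Schwartz-type, so Lemma \ref{Lemma : Guo2002 lemma2}'s hypotheses are met with the decay governed by $\phi^{ij}$ and the constant proportional to $\|g\|_\infty \le \varepsilon$. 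I would state this explicitly and then conclude as above.
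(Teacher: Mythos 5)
Your proposal is correct and follows essentially the same route as the paper: the paper's proof likewise observes that $\tilde K_g^\vartheta$ is a pointwise multiplication by a coefficient bounded by $C(1+|v|)^{-1}$ (via Lemma \ref{Lemma : Guo2002 lemma2}), from which all three estimates follow at once. Your write-up merely spells out the coefficient bounds in more detail than the paper does.
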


\begin{proof}
Since
\[
\tilde K^{\vartheta}_{g} f^{\vartheta} = \left(  2 \frac{\partial_{i}
w^{\vartheta}\partial_{j} w^{\vartheta}}{w^{2 \vartheta}}\sigma^{ij}_{G}
-\frac{\partial_{ij}w^{\vartheta}}{w^{\vartheta}}\sigma^{ij}_{G} -
\frac{\partial_{j}w^{\vartheta}}{w^{\vartheta}}\partial_{i}\sigma^{ij}_{G} -
\frac{\partial_{i} w^{\vartheta}}{w^{\vartheta}}a_{g}^{i} \right)
f^{\vartheta}%
\]
and by Lemma \ref{Lemma : Guo2002 lemma2},
\[
\left|  2 \frac{\partial_{i} w^{\vartheta}\partial_{j} w^{\vartheta}}{w^{2
\vartheta}}\sigma_{G}^{ij}\right|  +\left|  \frac{\partial_{ij}w^{\vartheta}%
}{w^{\vartheta}}\sigma_{G}^{ij}\right|  + \left|  \frac{\partial
_{j}w^{\vartheta}}{w^{\vartheta}}\partial_{i}\sigma_{G}^{ij}\right|  + \left|
\frac{\partial_{i} w^{\vartheta}}{w^{\vartheta}}a_{g}^{i} \right|  \le
C(1+|v|)^{-1}.%
\]
So the proof is complete.
\end{proof}

\begin{lemma}
\label{Lemma : L^infty estimate for bdd v tilde M}
Assume \eqref{Eq : condition for g}.
Let $f$ be a weak
solution of \eqref{bar Af} in a periodic box in the sense of Definition \ref{Def : weak sol for h}, then there exist $C$,
$\beta>0$ satisfying the following property: for any $Z$, $s>1$, $\vartheta$,
and $k>0$, and $l\in\mathbb{N}$, there exists $C_{\vartheta,l}$ such that
\begin{equation}
\label{Eq : L^infty estimate for bdd v tilde M}%
\begin{split}
&  \left\|  1_{|v|<Zs^{k}} f^{\vartheta}( s) \right\|  _{\infty}\\
&  \quad\le C_{\vartheta,l}\left(  Zs^{k}\right)  ^{\beta}(1+s)^{-l}%
\|f_{0}\|_{2,\vartheta+ l} + \frac{C}{1+Zs^{k}}\sup_{s^{\prime}\in
(s-1,s)}\|f^{\vartheta}(s^{\prime})\|_{\infty},
\end{split}
\end{equation}
for any $s\ge1$.
\end{lemma}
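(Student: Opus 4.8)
The plan is to follow closely the proof of Lemma \ref{Lemma : L^infty estimate for bdd v}, replacing \eqref{linear landau} by \eqref{bar Af} and the lower-order operator $\bar K_g^\vartheta$ by the simpler $\tilde K_g^\vartheta$ of \eqref{tilde K_g^theta}. First I would set $h = f^\vartheta = w^\vartheta f$, so that $h$ solves the weighted equation \eqref{Eq : weighted tilde Af}; since the solution operator $U^\vartheta(t,\tau)$ of the homogeneous equation $\mathcal M_g^\vartheta h = 0$ (see \eqref{Eq : M_g^theta}, \eqref{Eq : equation for h}) is well posed by Lemma \ref{lemma : existence of weak sol h}, the Duhamel formula gives
\[
f^\vartheta(s) = U^\vartheta(s,s-1)f^\vartheta(s-1) + \int_0^1 U^\vartheta(s, s-1+\tau)\,\tilde K_g^\vartheta f^\vartheta(s-1+\tau)\,d\tau .
\]
Restricting to $\{|v| < Zs^k\}$ and cutting the $\tau$-integral at $1-\varepsilon$ (with $\varepsilon$ chosen at the end), I would bound $\|1_{|v|<Zs^k}f^\vartheta(s)\|_\infty$ by a sum $(i)+(ii)+(iii)$ exactly as in that proof.

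For $(i)$ and $(ii)$ I would invoke the local $L^2$--$L^\infty$ estimate Lemma \ref{lemma : L^2-L^infty estimate} on cylinders of velocity radius $\sim Zs^k$ --- it applies to $\pm h$ since $\mathcal M_g^\vartheta h = 0$ --- which costs a factor $(Zs^k)^m$ coming from the $R_0$-dependence of its constant (a consequence of the non-uniform ellipticity of $\sigma_G$, whose eigenvalues are pinned by Lemma \ref{lemma : eigenvalue estimate}) and replaces the $L^\infty$ norm by an $L^2$ norm over a unit time interval. In $(i)$ I would use the unit-time energy estimate for \eqref{Eq : equation for h} (Lemma \ref{Lemma : energy estimate approximated bar A^theta}) to get $\|U^\vartheta(s',s-1)f^\vartheta(s-1)\|_2 \le C\|f^\vartheta(s-1)\|_2 = C\|f(s-1)\|_{2,\vartheta}$, and then the decay estimate \eqref{Eq : decay estimate for tilde Af} of Theorem \ref{Thm : energy estimate tilde Af} to bound $\|f(s-1)\|_{2,\vartheta}$ by $C_{\vartheta,l}(1+s)^{-l}\|f_0\|_{2,\vartheta+l}$. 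In $(ii)$ I would apply the same $L^2$--$L^\infty$ estimate together with the clean bound $\|\tilde K_g^\vartheta f^\vartheta\|_2 \le C\|f^\vartheta\|_2$ from \eqref{Eq : Kf L2 norm tilde M} and once more the decay of $\|f(s-1+\tau)\|_{2,\vartheta}$, obtaining $(ii) \le C_{\vartheta,l}(Zs^k)^m(1+\varepsilon^{-1})^m(1+s)^{-l}\|f_0\|_{2,\vartheta+l}$. The boundary layer $(iii)$, $\tau\in[1-\varepsilon,1]$, I would control by the maximum principle (Lemma \ref{lemma : Maximum principle}) combined with $\|\tilde K_g^\vartheta f^\vartheta\|_\infty \le C\|f^\vartheta\|_\infty$ from \eqref{Eq : Kf L^infty norm tilde M}, which gives $(iii)\le C\varepsilon\sup_{s'\in(s-1,s)}\|f^\vartheta(s')\|_\infty$.

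To conclude, I would choose $\varepsilon^{-1} = 1+Zs^k$: this turns $(iii)$ into the desired $\frac{C}{1+Zs^k}\sup_{s'\in(s-1,s)}\|f^\vartheta(s')\|_\infty$ and folds all polynomial-in-$(Zs^k)$ losses from $(i)$ and $(ii)$ into a single power $(Zs^k)^\beta$ with $\beta$ depending only on $m$, yielding \eqref{Eq : L^infty estimate for bdd v tilde M}. I expect the only delicate point to be the bookkeeping of the $R_0 = Zs^k$ dependence of the constants in Lemma \ref{lemma : L^2-L^infty estimate}, which is harmless because the decay factor $(1+s)^{-l}$ from Theorem \ref{Thm : energy estimate tilde Af} can be made to dominate any fixed polynomial in $Zs^k$ by taking $l$ large; in fact, since $\tilde K_g^\vartheta$ contains no derivatives and enjoys the simple estimate \eqref{Eq : Kf L2 norm tilde M}, the present case is \emph{easier} than Lemma \ref{Lemma : L^infty estimate for bdd v}, where the extra splitting parameter $N$ of \eqref{Eq : Kf L2 norm} was also needed. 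A minor caveat: Theorem \ref{Thm : energy estimate tilde Af} is stated for classical solutions, so I would apply it to the smooth approximations used to construct the weak solution (as in Lemma \ref{lemma : existence of weak sol h}) and pass to the limit, exactly as for the analogous step in Lemma \ref{Lemma : L^infty estimate for bdd v}.
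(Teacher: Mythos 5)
Your proposal is correct and follows essentially the same route as the paper: the same Duhamel decomposition into $(i)+(ii)+(iii)$ with cutoff at $1-\varepsilon$, the local $L^2$--$L^\infty$ estimate of Lemma \ref{lemma : L^2-L^infty estimate} for $(i)$ and $(ii)$ combined with Theorem \ref{Thm : energy estimate tilde Af}, the maximum principle with \eqref{Eq : Kf L^infty norm tilde M} for $(iii)$, and the choice $\varepsilon^{-1}=1+Zs^{k}$. Your observation that the clean bound \eqref{Eq : Kf L2 norm tilde M} makes the extra splitting parameter $N$ unnecessary here is exactly what the paper does.
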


\begin{proof}
By the Duhamel principle,
\[%
\begin{split}
\|1_{|v|<Zs^{k}}f^{\vartheta}(s)\|_{L^{\infty}}  &  \le\|1_{|v|<Zs^{k}}
U^{\vartheta}(s, s-1)f^{\vartheta}(s-1)\|_{L^{\infty}}\\
&  \quad+ \int_{0}^{1-\varepsilon} \|1_{|v|\le Zs^{k}}U^{\vartheta}(s, s-1+
\tau) \tilde K^{\vartheta}_{g} f^{\vartheta}(s -1 +\tau)\|_{\infty}d \tau\\
&  \quad+ \int_{1-\varepsilon}^{1} \|1_{|v|\le Zs^{k}}U^{\vartheta}(s,s-1+
\tau) \tilde K^{\vartheta}_{g} f^{\vartheta}(s -1 +\tau)\|_{\infty}d \tau\\
&  =(i)+(ii)+(iii).
\end{split}
\]
By Lemma \ref{lemma : L^2-L^infty estimate}, there exists $m>0$ such
that
\[
(i) \le C\left(  Zs^{k}\right)  ^{m} \left(  \int_{s-1}^{s} \|
U^{\vartheta}(s^{\prime},s-1)f^{\vartheta}(s-1)\|_{2}^{2} ds^{\prime}\right)  ^{1/2}.
\]
By Theorem \ref{Thm : energy estimate tilde Af} and Lemma
\ref{lemma : existence of weak sol h}, for every integer $l$, there
exists $C_{l}$ such that
\[%
\begin{split}
\|U^{\vartheta}(s^{\prime},s-1)f^{\vartheta}(s-1)\|_{2}  &  \le C\|f^{\vartheta}(s-1)\|_{2}
= C\|f(s-1)\|_{2,\vartheta}\\
&  \le C_{\vartheta,l}\left(  1+\frac{s-1}{l}\right)  ^{-l}\|f_{0}%
\|_{2,\vartheta+l}\\
&  \le C_{\vartheta,l}(1+s)^{-l}\|f_{0}\|_{2, \vartheta+l}.%
\end{split}
\]
Thus
\[
(i) \le C_{\vartheta,l} \left(  Zs^{k}\right)  ^{m}(1+s)^{-l}\|f_{0}%
\|_{2,\vartheta+ l}.
\]
By the maximum principle and \eqref{Eq : Kf L^infty norm tilde M},
\[
(iii) \le C \varepsilon\sup_{s^{\prime}\in(s-1,s)}\|f^{\vartheta}(s^{\prime
})\|_{\infty}.
\]
By Lemma \ref{lemma : L^2-L^infty estimate},
\begin{multline*}
\|1_{|v|\le Zs^{k}} U^{\vartheta}(s,s-1+ \tau) \tilde K^{\vartheta}_{g}
f(s-1+\tau)\|_{\infty}\\
\le C \left(  Zs^{k}\right)  ^{m}\left(  1+ \frac{1}{1- \tau}\right)  ^{m}
\left(  \int_{s}^{s-1+ \tau}\|1_{|v|<2Zs^{k}} U^{\vartheta}(s^{\prime
}, s-1+\tau)\tilde K^{\vartheta}_{g} f(s-1+ \tau)\|_{2}^{2} ds^{\prime}\right)  ^{1/2}.%
\end{multline*}
By \eqref{Eq : Kf L2 norm tilde M} and Theorem
\ref{Thm : energy estimate tilde Af}, for any $N>0$,
\[%
\begin{split}
(ii)  &  \le C\int_{0}^{1- \varepsilon} \left(  Zs^{k}\right)  ^{m}\left(  1+
\frac{1}{1- \tau}\right)  ^{m} \left(  \int_{0}^{1- \tau}\|\tilde
K^{\vartheta}_{g} f^{\vartheta}(s-1+ \tau)\|_{2}^{2} ds^{\prime}\right)
^{1/2} d \tau\\
&  \le C\left(  Zs^{k}\right)  ^{m} \left(  1+ \frac{1}{\varepsilon}\right)
^{m} \int_{0}^{1} C \|f^{\vartheta}(s-1+\tau)\|_{2} d \tau\\
&  \le\left(  Zs^{k}\right)  ^{m} \left(  1+ \frac{1}{\varepsilon}\right)
^{m} C_{\vartheta,l}(1+s)^{-l} \|f_{0}\|_{2,\vartheta-l}.%
\end{split}
\]
Choose $\varepsilon^{-1} = 1+Zs^{k}$. Then
\[
(i)+(ii)+(iii) \le C_{\vartheta,l}\left(  Zs^{k}\right)  ^{\beta}%
(1+s)^{-l}\|f_{0}\|_{2,\vartheta-l} + \frac{C}{1+Zs^{k}}\sup_{s^{\prime}%
\in(s-1,s)}\|f^{\vartheta}(s^{\prime})\|_{\infty},
\]
where $\beta= 2m+1$.
\end{proof}

\begin{theorem}
\label{Thm : L^infty estimate tilde M} 
Assume \eqref{Eq : condition for g}.
Let $f$ be a weak solution of \eqref{conservation laws}, \eqref{bar Af} in a
periodic box in the sense of Definition \ref{Def : weak sol for h}. Then there exists $l$ such that for every $\vartheta>0$,
\begin{equation}
\label{Eq : L^infty estimate tilde M}\| f^{\vartheta}( t) \| _{L^{\infty}}\le
C \|f_{0}^{\vartheta}\|_{L^{\infty}} + C_{\vartheta,l}\|f_{0}\|_{2,\vartheta
+l}\le C\|f_0\|_{\infty, \vartheta+ l_{0}},\quad \text{ for any }t>0,
\end{equation}
where $l_0 = l+2$.
\end{theorem}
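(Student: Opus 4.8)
The plan is to follow the scheme of the proof of Theorem \ref{Thm : L^infty estimate}, now applied to the weighted equation \eqref{Eq : weighted tilde Af}, namely $(\partial_t+v\cdot\nabla_x-\bar A_g^\vartheta)f^\vartheta=\tilde K_g^\vartheta f^\vartheta$, together with its Duhamel representation $f^\vartheta(t)=U^\vartheta(t,0)f_0^\vartheta+\int_0^tU^\vartheta(t,\tau)\tilde K_g^\vartheta f^\vartheta(\tau)\,d\tau$, where $U^\vartheta(t,s)$ is the propagator of \eqref{Eq : M_g^theta} with $\mathcal{M}_g^\vartheta=0$. The ingredients are: the maximum principle (Lemma \ref{lemma : Maximum principle}), which gives $\|U^\vartheta(t,s)h\|_\infty\le\|h\|_\infty$; the pointwise bounds on $\tilde K_g^\vartheta$ from Lemma \ref{lemma : Kf estimate tilde M}, i.e. \eqref{Eq : Kf L^infty norm tilde M}, \eqref{Eq : Kf L^infty norm unbdd tilde M}, \eqref{Eq : Kf L2 norm tilde M}; the energy and time-decay estimates Theorem \ref{Thm : energy estimate tilde Af} for \eqref{bar Af}; and the localized $L^\infty$ bound Lemma \ref{Lemma : L^infty estimate for bdd v tilde M}.

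First I would evaluate at integer times. For $t=n+1$, Duhamel and the maximum principle give $\|f^\vartheta(n+1)\|_\infty\le\|f^\vartheta(n)\|_\infty+\int_0^1\|\tilde K_g^\vartheta f^\vartheta(n+s)\|_\infty\,ds$. Splitting $f^\vartheta=\mathbf 1_{|v|<Z(n+s)^k}f^\vartheta+\mathbf 1_{|v|\ge Z(n+s)^k}f^\vartheta$, the outer piece is controlled by \eqref{Eq : Kf L^infty norm unbdd tilde M} with gain $(1+Z(n+s)^k)^{-1}$, while the inner piece is handled by \eqref{Eq : Kf L^infty norm tilde M} followed by Lemma \ref{Lemma : L^infty estimate for bdd v tilde M}, producing a term $C_{\vartheta,l}(Z(n+s)^k)^\beta(1+n+s)^{-l}\|f_0\|_{2,\vartheta+l}$ plus $\tfrac{C}{1+Z(n+s)^k}\sup_{(n+s-1,n+s)}\|f^\vartheta\|_\infty$. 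Choosing $l$ large enough (depending on $\beta$ and $k$) that $\bar n^{\,k\beta-l}$ is summable, iterating in $n$, and then choosing $k>1$ and $Z$ large so that $CZ^{-1}\sum_{\bar n\ge1}\bar n^{-k}\le\varepsilon_0$, I obtain
\[
\sup_{s=1,\dots,n+1}\|f^\vartheta(s)\|_\infty\le\|f^\vartheta(1)\|_\infty+C_{\vartheta,l}\|f_0\|_{2,\vartheta+l}+\varepsilon_0\sup_{s\in[0,n+1]}\|f^\vartheta(s)\|_\infty.
\]
To fill in the non-integer times, for $t\in[0,1]$ Duhamel, the maximum principle, and \eqref{Eq : Kf L^infty norm tilde M} give $\|f^\vartheta(n+t)\|_\infty\le\|f^\vartheta(n)\|_\infty+C\int_0^t\|f^\vartheta(n+s)\|_\infty\,ds$, so Gronwall yields $\sup_{s\in[n,n+1]}\|f^\vartheta(s)\|_\infty\le C\|f^\vartheta(n)\|_\infty$; in particular $\|f^\vartheta(1)\|_\infty\le C\|f_0^\vartheta\|_\infty$. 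Combining, taking $\varepsilon_0$ with $C\varepsilon_0<1/2$ and absorbing, and letting $n\to\infty$, gives $\|f^\vartheta(t)\|_\infty\le C\|f_0^\vartheta\|_\infty+C_{\vartheta,l}\|f_0\|_{2,\vartheta+l}$ for all $t$. Finally, Proposition \ref{Prop : L2Linfty} bounds $\|f_0\|_{2,\vartheta+l}\le C\|f_0\|_{\infty,\vartheta+l+2}$ and trivially $\|f_0^\vartheta\|_\infty=\|f_0\|_{\infty,\vartheta}\le\|f_0\|_{\infty,\vartheta+l+2}$, which yields the last inequality with $l_0=l+2$.

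The points to be careful about rather than a serious obstacle: Lemma \ref{Lemma : L^infty estimate for bdd v tilde M} is only available for $s\ge1$, so the initial interval must be absorbed by the Gronwall step above; and one must check that $U^\vartheta(t,s)h$ is a genuine sub/super-solution of \eqref{Eq : equation for h} (in the sense of Definition \ref{Def : weak sol for h}), so that Lemma \ref{lemma : L^2-L^infty estimate} and the maximum principle apply to $f^\vartheta$ and to the truncated data. Otherwise the argument is in fact \emph{simpler} than that of Theorem \ref{Thm : L^infty estimate}, because \eqref{Eq : Kf L2 norm tilde M} contains no competing $N^2$ and $N^{-1}$ terms — there is no auxiliary parameter $N$ to optimize — so the decay $(1+s)^{-l}\|f_0\|_{2,\vartheta+l}$ from Theorem \ref{Thm : energy estimate tilde Af} feeds directly into a summable series.
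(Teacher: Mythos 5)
Your proposal follows essentially the same route as the paper's proof: Duhamel plus the maximum principle at integer times, the splitting into $|v|<Z(n+s)^{k}$ and $|v|\ge Z(n+s)^{k}$ handled by Lemma \ref{Lemma : L^infty estimate for bdd v tilde M} and \eqref{Eq : Kf L^infty norm unbdd tilde M} respectively, iteration with $k,Z$ chosen to make the series summable and small, a Gronwall step for non-integer times, absorption, and Proposition \ref{Prop : L2Linfty} for the final inequality. Your observation that the absence of the $N$-optimization in \eqref{Eq : Kf L2 norm tilde M} simplifies matters relative to Theorem \ref{Thm : L^infty estimate} is also consistent with how the paper proceeds.
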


\begin{proof}
By Lemma \ref{Lemma : L^infty estimate for bdd v tilde M}, there exists
$l_{0}$ such that for $l> l_{0}$,
\[
\left\|  1_{|v|<Zs^{k}} f^{\vartheta}( s) \right\|  _{L^{\infty}}\le
C_{\vartheta, l, Z} (1+s)^{-2}\|f_{0}\|_{2,\vartheta+l} + \frac{C}{1+Zs^{k}%
}\sup_{s^{\prime}\in(s-1,s)}\|f^{\vartheta}(s^{\prime})\|_{\infty}.
\]

Therefore by the Duhamel principle, the maximum principle, and
\eqref{Eq : Kf L^infty norm unbdd tilde M},
\[%
\begin{split}
\|f^{\vartheta}(n+1)\|_{L^{\infty}}  &  \le\|U^{\vartheta}(n+1,n)f^{\vartheta}(n)\|_{L^{\infty}}\\
&  \quad+ \int_{n}^{n+1} \left\|  U^{\vartheta}(n,n-1+s)\tilde K^{\vartheta
}_{g} \left(  1_{|v|< Z(n+s)^{k}}f^{\vartheta}(n+s) + 1_{|v|>Z(n+s)^{k}}
f^{\vartheta}(n+s) \right)  \right\|  _{L^{\infty}} ds\\
&  \le\|f^{\vartheta}(n)\|_{L^{\infty}} + C_{\vartheta,l,Z} \int_{0}^{1}
(1+n+s)^{-2}\|f_{0}\|_{2,\vartheta+l}\\
&  \quad+ \frac{C}{1+Z(n+s)^{k}} \sup_{s^{\prime}\in(n+s-1,n+s)}%
\|f^{\vartheta}(s^{\prime})\|_{\infty}ds\\
&  \quad+ \int_{0}^{1} (1+Z(n+s)^{k})^{-1}\|f^{\vartheta}(n+s)\|_{\infty
}ds\\
&  \le\|f^{\vartheta}(n)\|_{L^{\infty}} + C_{\vartheta,l,Z} (1+n)^{-2}%
\|f_{0}\|_{2,\vartheta+l}\\
&  \quad+ C (Zn^{k})^{-1}\sup_{s^{\prime}\in[n-1,n+1]} \|f^{\vartheta
}(s^{\prime})\|_{\infty}.
\end{split}
\]
After iteration,
\[%
\begin{split}
\|f^{\vartheta}(n+1)\|_{L^{\infty}}  &  \le\|f^{\vartheta}(1)\|_{L^{\infty}} +
C_{\vartheta,l,Z} \sum_{\bar{n}=1}^{n} (1+\bar{n})^{-2} \|f_{0}\|_{2,\vartheta
+l}\\
&  \quad+ C Z^{-1} \sum_{\bar{n}=1}^{n} \bar{n}^{k(-1)}\sup
_{s\in[0,n+1]} \|f^{\vartheta}(s)\|_{\infty}.
\end{split}
\]
Choose large $k$ and $Z$ such that $k(-1) < -1$ and $CZ^{-1}%
\sum_{\bar{n}=1}^{\infty}\bar{n}^{k(-1)} \le\varepsilon$, where
$\varepsilon$ will be determined later. Then
\[
\|f^{\vartheta}(n+1)\|_{L^{\infty}} \le\|f^{\vartheta}(1)\|_{L^{\infty}} +
C_{\vartheta,l,Z}\|f_{0}\|_{2,\vartheta+l} + \varepsilon\sup_{s\in
[0,n+1]}\|f^{\vartheta}(s)\|_{\infty}.
\]
Since $n$ is an arbitrary integer,
\[
\sup_{s=1,2,... n+1}\|f^{\vartheta}(s)\|_{L^{\infty}} \le\|f^{\vartheta
}(1)\|_{L^{\infty}} + C_{\vartheta,l,Z}\|f_{0}\|_{2,\vartheta,+l} +
\varepsilon\sup_{s\in[0,n+1]}\|f^{\vartheta}(s)\|_{\infty}.
\]
By the Duhamel principle, the maximum principle, and
\eqref{Eq : Kf L^infty norm unbdd tilde M}
\[%
\begin{split}
\|f^{\vartheta}(n+t)\|_{L^{\infty}}  &  \le\|U^{\vartheta}
(n+t,n)f^{\vartheta}(n)\|_{L^{\infty}} + \int_{n}^{n+t} \left\|
U^{\vartheta}(n+t,n+s)\tilde K^{\vartheta}_{g} f^{\vartheta}(n+s)\right\|
_{L^{\infty}} ds\\
&  \le\|f^{\vartheta}(n)\|_{L^{\infty}} + \int_{0}^{t} \left\|  \tilde
K^{\vartheta}_{g} f^{\vartheta}(n+s)\right\|  _{L^{\infty}} ds\\
&  \le\|f^{\vartheta}(n)\|_{L^{\infty}} + C \int_{0}^{t} \left\|
f^{\vartheta}(n+s)\right\|  _{L^{\infty}} ds.
\end{split}
\]
By the Gronwall inequality,
\[
\|f^{\vartheta}(n+t)\|_{L^{\infty}} \le C\|f^{\vartheta}(n)\|_{\infty}, \text{
for all }t\in[0,1].
\]
Therefore,
\[
\sup_{s\in[0,n+1]}\|f^{\vartheta}(s)\|_{L^{\infty}} \le C\|f^{\vartheta}%
_{0}\|_{L^{\infty}} + C_{\vartheta,l}\|f_{0}\|_{2,\vartheta+l} +
C\varepsilon\sup_{s\in[0,n+1]}\|f^{\vartheta}(s)\|_{\infty}.
\]
Now, we choose small $\varepsilon$ satisfying $C\varepsilon< 1/2$, and then
absorb the last term on the RHS to the LHS.
Thus, we obtain the first inequality of \eqref{Eq : L^infty estimate tilde M}.
The seconds inequality of \eqref{Eq : L^infty estimate tilde M} is a consequence of Proposition \ref{Prop : L2Linfty}.
\end{proof}


\section{$L^\infty$ to H\"older Estimate}

\label{sec : Linfty to Holder Estimate} 
\subsection{Local H\"older estimate} 
\label{sub:local_holder_estimate}

In this subsection, we will derive a local H\"older estimate for \eqref{bar Af}.
We redefine $Q_{R}(z_{0}) := (t_{0}-R^{2},t_{0}]\times B(x_{0};R^{3}) \times
B(v_{0};R)$, $z_{0} = (t_{0}, x_{0}, v_{0})$, and $Q_{R} := Q_{R}((0,0,0))$.

Since we consider the local properties of the solution on the interior part,
we can use the technique in \cite{golse2016harnack} for our modified operator
$\tilde{A_{g}}$. 
In this subsection, we assume that $g$ satisfies the conditions in Lemma
\ref{lemma : eigenvalue estimate}.

First, we introduce a De Giorgi-type lemma.
\begin{lemma}[Lemma 13 in \cite{golse2016harnack}]
Assume \eqref{Eq : condition for g}.
Let $\hat{Q}:= Q_{1/4}(0,0,-1)$.
For any (universal) constants $\delta_1 \in (0,1)$ and $\delta_2 \in (0,1)$ there exist $\nu > 0$ and $\vartheta\in (0,1)$ (both universal) such that for any solution $f$ of \eqref{bar Af} in
$Q_{2}$ with $|f|\le1$ and
\[%
\begin{split}
|\{f\ge 1- \vartheta\} \cap Q_{1/4} |  &  \ge\delta_{1}|Q_{1/4}|,\\
|\{f\le0\} \cap \hat{Q} |  &  \ge\delta_{2}|\hat{Q}|,%
\end{split}
\]
we have
\[
|\{0<f<1- \vartheta\}\cap B_{1} \times B_{1} \times (-2,0]| \ge \nu.
\]

\end{lemma}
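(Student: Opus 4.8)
The plan is to follow the classical De Giorgi argument for the "intermediate value" or "isoperimetric" lemma, adapted to the (mildly degenerate) kinetic operator $\bar A_g$ in the interior. The key structural input is that, by Lemma \ref{lemma : eigenvalue estimate}, on the fixed compact velocity ball appearing in $Q_2$ the diffusion matrix $\sigma_G$ is uniformly elliptic with constants depending only on the fixed radius, so that locally the equation \eqref{bar Af} behaves like a uniformly parabolic Kolmogorov equation with bounded lower-order coefficients $a_g$; the smallness of $\|g\|_\infty$ only enters to guarantee nonnegativity and the two-sided bounds. Thus on $Q_2$ the operator $\mathcal M_g^0 = \partial_t + v\cdot\nabla_x - \bar A_g$ satisfies exactly the hypotheses under which Lemma 13 of \cite{golse2016harnack} is proved, and the strategy is to transplant that proof verbatim, tracking that every constant remains universal.

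First I would argue by contradiction: suppose the conclusion fails, so there is a sequence $f_n$ of solutions in $Q_2$ with $|f_n|\le 1$, with $|\{f_n \ge 1-\vartheta_n\}\cap Q_{1/4}| \ge \delta_1 |Q_{1/4}|$ and $|\{f_n\le 0\}\cap \hat Q| \ge \delta_2|\hat Q|$ for a sequence $\vartheta_n \to 0$, yet $|\{0<f_n<1-\vartheta_n\}\cap B_1\times B_1\times(-2,0]| \to 0$. The second and third steps are the two energy estimates underlying compactness: (i) a Caccioppoli/local energy inequality — obtained exactly as in Lemma \ref{lemma : gain of integrability w.r.t. v and t} by testing \eqref{bar Af} against $f_n\Phi^2$ — giving a uniform bound on $\nabla_v f_n$ in $L^2_{loc}$; and (ii) the velocity-averaging / hypoelliptic regularity of Lemma \ref{lemma : Gain of regularity w.r.t. x,t} and Lemma \ref{Lemma : Gain of regularity of singed weak solution}, giving a uniform bound in $H^{1/3}_{t,x,v}$ on a slightly smaller cylinder. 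These two bounds give, along a subsequence, strong $L^2_{loc}$ convergence $f_n \to f_\infty$, with $f_\infty$ a solution of a limiting equation, $0\le f_\infty\le 1$. The measure hypotheses pass to the limit to give $|\{f_\infty = 1\}\cap Q_{1/4}| \ge \delta_1|Q_{1/4}|$, $|\{f_\infty\le 0\}\cap\hat Q|\ge\delta_2|\hat Q|$, while $|\{0<f_\infty<1\}|=0$ forces $f_\infty \in \{0,1\}$ a.e. Then the final step is the rigidity argument: since $f_\infty$ is a nonnegative subsolution equal to $1$ on a set of positive measure, the strong maximum principle (propagation of positivity along the Kolmogorov flow, i.e. the interior Harnack/De Giorgi oscillation machinery, or directly a $\min$-type comparison as in Lemma \ref{lemma : strong maximum principle} applied on a subcylinder) forces $f_\infty \equiv 1$ on a neighborhood of $\hat Q$, contradicting $|\{f_\infty\le 0\}\cap\hat Q|\ge\delta_2|\hat Q| > 0$. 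This produces the contradiction and hence a universal $\nu>0$ and $\vartheta\in(0,1)$.

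The main obstacle I expect is the rigidity/propagation step at the level of the limiting equation: one must know that a characteristic function $\mathbf 1_E$ which solves the degenerate Kolmogorov equation $(\partial_t + v\cdot\nabla_x)\mathbf 1_E = \bar A_{g_\infty}\mathbf 1_E$ (in the weak sense) and equals $1$ on a set of positive measure near $Q_{1/4}$ cannot simultaneously vanish on a set of positive measure in the lower cylinder $\hat Q = Q_{1/4}(0,0,-1)$ — this is precisely where the geometry of the transport field $v\cdot\nabla_x$ coupling the $\{f=1\}$ region forward and backward in time must be exploited, exactly as in \cite{golse2016harnack}. Since the velocity ball in $Q_2$ is compact, Lemma \ref{lemma : eigenvalue estimate} makes $\sigma_{G_\infty}$ genuinely elliptic there, so the proof of Lemma 13 in \cite{golse2016harnack} applies with only cosmetic changes (the extra drift $a_{g_\infty}\cdot\nabla_v$ and the absence of the weight since $\vartheta = 0$ here are harmless, being lower-order with bounded coefficients). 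Accordingly I would carry out the steps above but refer to \cite{golse2016harnack} for the detailed measure-theoretic and propagation computations rather than reproducing them.
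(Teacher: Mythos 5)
The paper itself gives no argument here: its ``proof'' is the single sentence that the proof is exactly the same as in \cite{golse2016harnack}, so the only meaningful comparison is between your sketch and the argument of that reference. Your overall architecture --- contradiction, a sequence $f_n$ with $\vartheta_n\to 0$, compactness from the Caccioppoli estimate (Lemma \ref{lemma : gain of integrability w.r.t. v and t}) plus the averaging/hypoelliptic gain (Lemmas \ref{lemma : Gain of regularity w.r.t. x,t} and \ref{Lemma : Gain of regularity of singed weak solution}), strong $L^2_{\mathrm{loc}}$ convergence to a limit $f_\infty$ valued in $\{0,1\}$, and a rigidity step contradicting the two measure conditions --- is indeed the strategy of Lemma 13 in \cite{golse2016harnack}, and your observation that Lemma \ref{lemma : eigenvalue estimate} makes $\sigma_G$ uniformly elliptic on the compact velocity ball of $Q_2$ is exactly why the transplantation works with universal constants.

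The one place where your sketch substitutes a different (and, as stated, unjustified) mechanism is the rigidity step. You propose to conclude via a strong maximum principle that $f_\infty\equiv 1$ near $\hat Q$; but $f_\infty$ is only an $L^2$ weak limit taking values in $\{0,1\}$, and Lemma \ref{lemma : strong maximum principle} requires $C^{1,1,2}$ regularity, so it cannot be applied to an indicator function. The actual argument in \cite{golse2016harnack} is different: the uniform Caccioppoli bound passes to the limit and gives $\nabla_v f_\infty\in L^2_{\mathrm{loc}}$, and since $f_\infty=\mathbf 1_E$ this forces $\nabla_v f_\infty=0$, i.e.\ $E$ is locally independent of $v$; the equation then degenerates to free transport $\partial_t f_\infty+v\cdot\nabla_x f_\infty=0$, and a set that is simultaneously $v$-independent and invariant under the transport flow for all velocities in a ball must be trivial on the relevant cylinders, which is incompatible with having density $\ge\delta_1$ of the value $1$ in $Q_{1/4}$ and density $\ge\delta_2$ of the value $0$ in the shifted cylinder $\hat Q$. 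Since you explicitly defer the detailed propagation computation to \cite{golse2016harnack}, the gap is one of mechanism rather than of outcome, but as written the maximum-principle route would not close.
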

\begin{proof}
The proof is exactly the same as \cite{golse2016harnack}. We omit the proof.
\end{proof}

\begin{lemma}[Lemma 17 in \cite{golse2016harnack}]
Assume \eqref{Eq : condition for g}.
Let $\hat{Q}:= Q_{1/4}(0,0,-1)$ and $f$ be a weak solution of \eqref{bar Af} in $Q_{2}$ in the sense of Definition \ref{Def : weak sol for h} with $|f| \le1$. If
\[
|\{f\le0\}\cap \hat{Q}| \ge \delta_{2}|\hat{Q}|,
\]
then
\[
\sup_{Q_{1/8}} f \le1- \lambda
\]
for some $\lambda\in(0,1)$, depending only on dimension and the eigenvalue of
$\sigma$.
\end{lemma}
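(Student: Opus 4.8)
**Proof plan for the "sup reduction from measure of $\{f \le 0\}$" lemma (Lemma 17 in \cite{golse2016harnack} adapted here).**

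The strategy is the standard De Giorgi oscillation-decay argument, built on the previous two lemmas (the measure-to-measure De Giorgi lemma and the local $L^2$–$L^\infty$ estimate of Lemma \ref{lemma : L^2-L^infty estimate}). First I would set up a sequence of truncations. For $k \ge 0$ let $c_k := 1 - \lambda_0 2^{-k}$ with $\lambda_0 \in (0,1)$ to be fixed, and consider $f_k := (f - c_k)_+$, together with a decreasing sequence of cylinders $Q^{(k)}$ interpolating between $Q_{1/4}$ (roughly) and $Q_{1/8}$, for instance with time-radii and velocity-radii shrinking geometrically so that $\bigcap_k Q^{(k)} = Q_{1/8}$. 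Since $f$ is a weak solution of \eqref{bar Af} with $|f| \le 1$ and $c_k \in (0,1)$, each $f_k$ is a nonnegative subsolution of the same equation (this uses the convexity lemma already proven in the excerpt, that $u \mapsto u_+$ preserves subsolutions, applied to the shifted function $f - c_k$; note $\bar A_g^{\vartheta}$ with $\vartheta = 0$ is just $\bar A_g$, whose diffusion matrix $\sigma_G$ is nonnegative and whose eigenvalues are bounded above and below on $Q_2$ by Lemma \ref{lemma : eigenvalue estimate} — the bounds depending only on dimension and on the eigenvalue bounds of $\sigma$ since $|v|$ is bounded on $Q_2$).

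Next I would run the energy iteration. Applying the local $L^2$–$L^\infty$ estimate (Lemma \ref{lemma : L^2-L^infty estimate}, or directly the energy/Sobolev estimates of Lemma \ref{lemma : gain of integrability w.r.t. v and t} through Lemma \ref{lemma : gain of integrability}) on the pair of cylinders $Q^{(k)} \supset Q^{(k+1)}$, one gets a recursion of the form
\[
A_{k+1} \le C \, b^{k} \, A_k^{1 + \varepsilon}
\]
for the "energy level" $A_k := \|f_k\|_{L^2(Q^{(k)})}^2$ (or its normalization by $|Q^{(k)}|$), where $\varepsilon > 0$ comes from the gain of integrability (the exponent $q > 2$ in Lemma \ref{lemma : gain of integrability}), $b > 1$ is fixed, and $C$ depends only on dimension and the ellipticity constants of $\sigma_G$. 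This is the classical fast-geometric-convergence lemma: there is a universal $\varepsilon_* > 0$ such that if $A_0 \le \varepsilon_*$ then $A_k \to 0$, which forces $f \le c_\infty = 1 - \lambda_0$ a.e. on $Q_{1/8}$, i.e. $\sup_{Q_{1/8}} f \le 1 - \lambda$ with $\lambda = \lambda_0$.

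It remains to guarantee that the smallness $A_0 \le \varepsilon_*$ can be arranged, and this is where the De Giorgi lemma (Lemma 13 above) enters and is the heart of the matter. Suppose for contradiction that the conclusion fails; then $f$ is close to $1$ on a set of nearly full measure in the relevant cylinder at \emph{every} truncation level, while the hypothesis $|\{f \le 0\} \cap \hat Q| \ge \delta_2 |\hat Q|$ provides a "cold" region in the sub-cylinder $\hat Q = Q_{1/4}(0,0,-1)$. The De Giorgi lemma then says the intermediate set $\{0 < f < 1 - \vartheta\}$ has measure bounded below by a universal $\nu > 0$; summing this lower bound over the dyadic scales between level $0$ and level $N$ contradicts the finiteness of $|Q_2|$ once $N$ is large enough. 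Quantitatively, one picks the level-gap $N$ so that $2^{-N}$ beats $\varepsilon_*$, and the contradiction forces $A_N$ (equivalently the measure of $\{f > c_N\}$ in the inner cylinder) to be small, which seeds the iteration. The only points requiring care beyond \cite{golse2016harnack} are: (i) checking that all constants in Lemmas \ref{lemma : gain of integrability w.r.t. v and t}–\ref{lemma : gain of integrability} and in Lemma 13 depend only on the \emph{bounds} $\lambda_{\min} \le \lambda(v) \le \lambda_{\max}$ on the eigenvalues of $\sigma_G$ over the bounded velocity range $|v| \le 2$ (guaranteed by Lemma \ref{lemma : eigenvalue estimate}), and (ii) that the lower-order drift term $a_g \cdot \nabla_v$ in $\bar A_g$ has a coefficient that is bounded on $Q_2$ (also from the estimates in the proof of Lemma \ref{lemma : eigenvalue estimate}), so it is absorbed harmlessly in the energy estimates. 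The main obstacle is purely bookkeeping: tracking the universal dependence of all the De Giorgi constants through the non-uniformly-elliptic (but locally uniformly elliptic) operator, which is exactly what the earlier lemmas of this section were arranged to supply. Since all the ingredients are in place, I would, as the authors do, invoke the argument of \cite{golse2016harnack} verbatim and omit the routine details.
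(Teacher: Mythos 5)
Your proposal is consistent with the paper, which gives no argument at all here and simply states that the proof is identical to that of Lemma 17 in \cite{golse2016harnack}; your sketch correctly reconstructs that cited De Giorgi argument (dyadic truncations, the energy/gain-of-integrability iteration seeded by a smallness condition, and the intermediate-value lemma summed over levels to force that smallness), and it ends, as the authors do, by deferring the routine details to \cite{golse2016harnack}. No gap to report.
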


\begin{proof}
The proof is exactly the same as \cite{golse2016harnack}. We omit the proof.
\end{proof}

The following lemma can be derived by the previous lemma.

\begin{lemma}
\label{Lemma : decreasing oscillation Q_2} 
Assume \eqref{Eq : condition for g}.
Let $f$ be a weak solution of
\eqref{bar Af} in $Q_{2}$ in the sense of Definition \ref{Def : weak sol for h} with $|f| \le1$. Then
\[
\sup_{Q_{1/8}} f - \inf_{Q_{1/8}} f \le2- \lambda
\]
for some $\lambda\in(0,2)$, depending only on dimension and the eigenvalue of
$\sigma$.
\end{lemma}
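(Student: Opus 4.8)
The plan is to deduce the oscillation decay in $Q_{1/8}$ from the preceding lemma (the analogue of Lemma 17 in \cite{golse2016harnack}) by a measure dichotomy for the sublevel set $\{f\le 0\}$ inside $\hat Q := Q_{1/4}(0,0,-1)$, exploiting that \eqref{bar Af}, for a frozen coefficient function $g$, is linear in $f$. Fix $\delta_2 = 1/2$ in that lemma and let $\lambda_0\in(0,1)$ be the resulting constant, which by construction depends only on the dimension and on the two-sided eigenvalue bounds of $\sigma_G$ --- equivalently of $\sigma$ --- furnished by Lemma \ref{lemma : eigenvalue estimate}. Note $\hat Q \subset Q_2$, so the hypotheses of the preceding lemma are available on a subdomain where $|f|\le 1$.

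\textbf{Case 1: $|\{f\le 0\}\cap\hat Q| \ge \tfrac12|\hat Q|$.} The preceding lemma applies to $f$ directly and gives $\sup_{Q_{1/8}} f \le 1-\lambda_0$. Combined with the trivial lower bound $\inf_{Q_{1/8}} f \ge -1$ (from $|f|\le 1$ on $Q_2 \supset Q_{1/8}$), this yields
\[
\sup_{Q_{1/8}} f - \inf_{Q_{1/8}} f \le (1-\lambda_0) + 1 = 2 - \lambda_0.
\]

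\textbf{Case 2: $|\{f\le 0\}\cap\hat Q| < \tfrac12|\hat Q|$.} Then $|\{-f\le 0\}\cap\hat Q| = |\{f\ge 0\}\cap\hat Q| \ge \tfrac12|\hat Q|$. The key point is that $-f$ is again a weak solution of \eqref{bar Af} on $Q_2$ in the sense of Definition \ref{Def : weak sol for h}, with $|-f|\le 1$: indeed $\bar A_g$ is linear, $g$ is unchanged, and the weak formulation is linear in the unknown. Applying the preceding lemma to $-f$ gives $\sup_{Q_{1/8}}(-f) \le 1-\lambda_0$, i.e. $\inf_{Q_{1/8}} f \ge -(1-\lambda_0)$; together with $\sup_{Q_{1/8}} f \le 1$ we again get $\sup_{Q_{1/8}} f - \inf_{Q_{1/8}} f \le 2-\lambda_0$.

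Since the dichotomy is exhaustive, $\lambda := \lambda_0 \in (0,2)$ works in both cases, proving the claim. There is no real obstacle here: all the analytic content sits in the preceding lemma. The only items meriting a word of care are the inclusion $\hat Q\subset Q_2$, the fact that the two alternatives above cover all possibilities so that a single $\lambda$ suffices, and the reduction of Case 2 to Case 1 by replacing $f$ with $-f$ --- which is legitimate precisely because the equation (and its weak formulation) is linear in the unknown with $g$ held fixed. By the symmetry $f\leftrightarrow -f$ the constants in the two cases coincide, so no minimum needs to be taken.
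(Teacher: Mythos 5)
Your proof is correct and is exactly the standard dichotomy argument (apply the preceding one-sided lemma to $f$ or to $-f$, using linearity of \eqref{bar Af} in the unknown) that the paper implicitly invokes when it states the lemma "can be derived by the previous lemma" without writing out the details.
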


By the scaling argument, $Q_{2}$ and $Q_{1/8}$ can be replaced by $Q_{2r}$ and $Q_{r/8}$.

\begin{lemma}
\label{Lemma : decreasing oscillation Q_2r} 
Assume \eqref{Eq : condition for g}.
Let $f$ be a weak solution of
\eqref{bar Af} in $Q_{2r}$ in the sense of Definition \ref{Def : weak sol for h} with $|f| \le1$. For any subset $Q\subset
\mathbb{R}^{7}$, define
\[
\underset{Q}{\text{Osc}} f := \sup_{(t^{\prime},x^{\prime},v^{\prime})\in Q}
f(t^{\prime},x^{\prime},v^{\prime}) - \inf_{(t^{\prime},x^{\prime},v^{\prime
})\in Q} f(t^{\prime},x^{\prime},v^{\prime}),
\]
Then for every $r\le1$
\[
\underset{Q_{r/8}}{\text{Osc}} f \le\left(  1- \frac{\lambda}{2}\right)
\underset{Q_{2r}}{\text{Osc}} f
\]
for some $\lambda\in(0,2)$, depending only on dimension and the eigenvalue of
$\sigma$.
\end{lemma}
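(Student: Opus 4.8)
The plan is to deduce this from Lemma \ref{Lemma : decreasing oscillation Q_2} by first normalizing the amplitude of $f$ and then applying the kinetic (parabolic) rescaling adapted to the cylinders $Q_R$. If $\underset{Q_{2r}}{\text{Osc}}\,f=0$ there is nothing to prove, so assume $M-m>0$, where $M:=\sup_{Q_{2r}}f$ and $m:=\inf_{Q_{2r}}f$, and note $M-m\le 2$ since $|f|\le 1$. Because \eqref{bar Af} is linear and a constant function is a solution (the operator $\bar A_g$ of \eqref{Eq : bar A} annihilates constants, as does $\partial_t+v\cdot\nabla_x$), the affine image
\[
g_r:=\frac{2}{M-m}\Big(f-\tfrac{M+m}{2}\Big)
\]
is again a weak solution of \eqref{bar Af} on $Q_{2r}$ in the sense of Definition \ref{Def : weak sol for h}, and now $|g_r|\le 1$ on $Q_{2r}$, while $\underset{Q}{\text{Osc}}\,g_r=\frac{2}{M-m}\underset{Q}{\text{Osc}}\,f$ on every subset $Q$.

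Next I would rescale to bring $Q_{2r}$ to $Q_2$. For $(t,x,v)\in Q_2$ set $h(t,x,v):=g_r(r^2t,\,r^3x,\,rv)$; this is well defined because the map $(t,x,v)\mapsto(r^2t,r^3x,rv)$ sends $Q_2$ into $Q_{2r}$, and it sends $Q_{1/8}$ onto $Q_{r/8}$, so $\underset{Q_{1/8}}{\text{Osc}}\,h=\underset{Q_{r/8}}{\text{Osc}}\,g_r$. A direct chain-rule computation, using that the transport term picks up the factor $r^3$ and the diffusion term the factor $r^2$, shows
\[
\partial_t h+v\cdot\nabla_x h=\nabla_v\cdot\big(\sigma_G(rv)\nabla_v h\big)+r\,a_g(rv)\cdot\nabla_v h\qquad\text{in }Q_2,
\]
which is again a kinetic Fokker--Planck equation of the same structure, with $|h|\le 1$. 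On the velocity ball $|v|\le 2$ of $Q_2$ one has $|rv|\le 2$ for $r\le 1$, so by Lemma \ref{lemma : eigenvalue estimate} the eigenvalues of $v\mapsto\sigma_G(rv)$ lie between two positive constants that do not depend on $r$, and the drift $v\mapsto r\,a_g(rv)$ is bounded by $C\varepsilon$ uniformly in $r\le 1$; the finiteness requirement in Definition \ref{Def : weak sol for h} is preserved for the same reason ($\sigma$ and $\sigma(r\,\cdot)$ are comparable on bounded velocity sets). Hence $h$ falls within the hypotheses under which Lemma \ref{Lemma : decreasing oscillation Q_2} was proved, with a constant $\lambda\in(0,2)$ depending only on dimension and these $r$-independent ellipticity bounds, so that $\underset{Q_{1/8}}{\text{Osc}}\,h\le 2-\lambda$. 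Unwinding,
\[
\frac{2}{M-m}\underset{Q_{r/8}}{\text{Osc}}\,f=\underset{Q_{r/8}}{\text{Osc}}\,g_r=\underset{Q_{1/8}}{\text{Osc}}\,h\le 2-\lambda,
\]
i.e. $\underset{Q_{r/8}}{\text{Osc}}\,f\le\big(1-\tfrac{\lambda}{2}\big)(M-m)=\big(1-\tfrac{\lambda}{2}\big)\underset{Q_{2r}}{\text{Osc}}\,f$, which is the claim.

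The main obstacle is exactly the point the paper dismisses as ``the scaling argument'' before the statement: one must verify that the rescaled operator $\partial_t+v\cdot\nabla_x-\nabla_v\cdot(\sigma_G(rv)\nabla_v)-r\,a_g(rv)\cdot\nabla_v$ really does satisfy the structural hypotheses behind Lemma \ref{Lemma : decreasing oscillation Q_2} (equivalently, the De Giorgi Lemmas 13 and 17 of \cite{golse2016harnack} that precede it) with constants \emph{uniform in} $r\le 1$. This is precisely where Lemma \ref{lemma : eigenvalue estimate} is indispensable, since it keeps the ellipticity ratio of $\sigma_G(rv)$ bounded on the compact velocity ball regardless of how small $r$ is, and one should also record that the rescaled drift and the weight-derivative coefficients stay uniformly bounded so that the De Giorgi iteration constants — and hence $\lambda$ — do not degenerate as $r\to 0$. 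Once that uniformity is secured, the remainder is bookkeeping of the two changes of variables.
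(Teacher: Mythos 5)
Your proposal is correct and follows essentially the same route as the paper: the paper's one-line proof defines $\bar F(t,x,v)=\frac{2}{\mathrm{Osc}_{Q_{2r}}f}\bigl(f(r^{2}t,r^{3}x,rv)-\tfrac{\sup+\inf}{2}\bigr)$, observes it solves the rescaled equation with coefficients $\sigma_{G}(r^{2}t,r^{3}x,rv)$ and $r\,a_{g}(r^{2}t,r^{3}x,rv)$, and applies Lemma \ref{Lemma : decreasing oscillation Q_2}, which is exactly your normalization-plus-scaling argument done in one step. Your additional verification that the rescaled coefficients remain uniformly elliptic and uniformly bounded for $r\le 1$ (via Lemma \ref{lemma : eigenvalue estimate}) is the point the paper leaves implicit, and it is carried out correctly.
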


\begin{proof}
Define
\[
\bar F(t,x,v):=\frac{2}{\underset{Q_{2r}}{\text{Osc}} f} \left(  f(r^{2} t, r^{3}
x, r v) - \frac{\sup_{Q_{2r}} f + \inf_{Q_{2r}} f}{2}\right)  .
\]
Then $\bar F$ satisfies
\[
\bar F_{t} + v\cdot\partial_{x} \bar F = \tilde{A_{g}}^{r} \bar F
\]
\[
\tilde{A_{g}}^{r} \bar F(t,x,v) := \nabla_{v}(\sigma_{G}(r^{2} t, r^{3} x,
rv) \nabla_{v} \bar F(t,x,v)) + r a_{g}(r^{2} t, r^{3} x, rv) \cdot
\nabla_{v} \bar F(t,x,v).
\]
and then apply Lemma \ref{Lemma : decreasing oscillation Q_2}.
\end{proof}

Now we establish the H\"older continuity at $v=0$.

\begin{lemma}
[Holder continuity near $v=0$]\label{Lemma : Holder near v=0}
Assume \eqref{Eq : condition for g}.
Let $f$ be a weak solution of \eqref{bar Af} in $\Omega_{R}(t_{0},x_{0},0)$ in the sense of Definition \ref{Def : weak sol for h}. Then there exist
a uniform constant $C>0$ and a constant $\alpha\in(0,1)$ depending only on
dimension and the eigenvalue of $\sigma_{G}$ such that
\[
\|f\|_{C^{\alpha}(Q_{R/128}(t_{0},x_{0},0))} \le\frac{C}{R^{3\alpha}%
}\|f\|_{L^{\infty}(Q_{R}(t_{0},x_{0},0))},
\]
for every $R < 1$.
\end{lemma}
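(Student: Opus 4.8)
The plan is to reduce to the unit scale, exploit that $\sigma_{G}$ is uniformly elliptic on bounded velocity sets (which is exactly where the hypothesis that the base velocity is $0$ enters), iterate the oscillation--decay Lemma \ref{Lemma : decreasing oscillation Q_2r}, and then undo the rescaling. First I would reduce: relabelling $(t,x)\mapsto(t-t_{0},x-x_{0})$ is harmless, since $\sigma_{G}$ and $a_{g}$ enter only through the velocity bounds of Lemma \ref{lemma : eigenvalue estimate} and $|a_{g}|\le C\|g\|_{\infty}(1+|v|)^{-1}$, which are uniform in $(t,x)$; so I may take $(t_{0},x_{0})=(0,0)$, and since \eqref{bar Af} is linear and homogeneous in $f$ I normalise $\|f\|_{L^{\infty}(Q_{R})}=1$. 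Then set $F(t,x,v):=f(R^{2}t,R^{3}x,Rv)$ on $Q_{1}$; a change of variables in the weak formulation shows $F$ is a weak solution of $\partial_{t}F+v\cdot\nabla_{x}F=\nabla_{v}(\tilde\sigma\,\nabla_{v}F)+\tilde a\cdot\nabla_{v}F$ with $\tilde\sigma(t,x,v)=\sigma_{G}(R^{2}t,R^{3}x,Rv)$ and $\tilde a(t,x,v)=R\,a_{g}(R^{2}t,R^{3}x,Rv)$, exactly the type $\tilde A_{g}^{R}$ from the proof of Lemma \ref{Lemma : decreasing oscillation Q_2r}. On $Q_{1}$ one has $|v|\le1$ and $R\le1$, hence $|Rv|\le1$, so Lemma \ref{lemma : eigenvalue estimate} makes $\tilde\sigma$ uniformly elliptic on $Q_{1}$ with universal constants while $|\tilde a|\le C\|g\|_{\infty}\le C\varepsilon$, and $|F|\le1$.

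The core step is the interior Hölder bound $\|F\|_{C^{\alpha}(Q_{1/128})}\le C$ for some $\alpha\in(0,1)$ depending only on dimension and the ellipticity constants. For each $z_{*}=(t_{*},x_{*},v_{*})\in Q_{1/128}$ I would apply the Galilean change of variables $(t,x,v)\mapsto(t_{*}+t,\,x_{*}+x+tv_{*},\,v_{*}+v)$, which maps the equation to another kinetic Fokker--Planck equation of the same type (divergence form in $v$, uniformly elliptic — the velocities stay in $|v|\le2$, where Lemma \ref{lemma : eigenvalue estimate} still gives two-sided bounds — with bounded lower-order terms) centred at $z_{*}$. Applying Lemma \ref{Lemma : decreasing oscillation Q_2r} at the dyadic radii $\rho_{k}:=2^{-1}16^{-k}$ (so that $Q_{\rho_{0}}(z_{*})\subset Q_{1}$ for every $z_{*}\in Q_{1/128}$, with $2r=\rho_{k}$) gives
\[
\operatorname{Osc}_{Q_{\rho_{k+1}}(z_{*})}F\le\Bigl(1-\tfrac{\lambda}{2}\Bigr)\operatorname{Osc}_{Q_{\rho_{k}}(z_{*})}F,\qquad k\ge0,
\]
hence $\operatorname{Osc}_{Q_{\rho_{k}}(z_{*})}F\le2\bigl(1-\tfrac{\lambda}{2}\bigr)^{k}\le C\rho_{k}^{\alpha}$ with $\alpha:=\log\!\bigl(1/(1-\tfrac{\lambda}{2})\bigr)/\log16$. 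A standard Campanato-type argument converts this into an $\alpha$-Hölder bound for $F$ on $Q_{1/128}$ with respect to the kinetic distance, and — after possibly lowering the exponent to $\alpha/3$ to pass to the Euclidean distance in $(t,x,v)$ and renaming it $\alpha$ — into $\|F\|_{C^{\alpha}(Q_{1/128})}\le C$, using also $\|F\|_{L^{\infty}(Q_{1})}\le1$.

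Finally I would undo the scaling: since $f(t,x,v)=F(R^{-2}t,R^{-3}x,R^{-1}v)$ and the map $(t,x,v)\mapsto(R^{-2}t,R^{-3}x,R^{-1}v)$ sends $Q_{R/128}(0,0,0)$ onto $Q_{1/128}(0,0,0)$ while dilating differences by at most $R^{-3}$ (the $x$-variable being the worst case, as $R<1$), for $z_{1},z_{2}\in Q_{R/128}(0,0,0)$ one gets
\[
|f(z_{1})-f(z_{2})|\le[F]_{C^{\alpha}}\,R^{-3\alpha}|z_{1}-z_{2}|^{\alpha}\le CR^{-3\alpha}|z_{1}-z_{2}|^{\alpha},
\]
while $\|f\|_{L^{\infty}(Q_{R/128})}\le\|f\|_{L^{\infty}(Q_{R})}\le R^{-3\alpha}\|f\|_{L^{\infty}(Q_{R})}$; undoing the normalisation and the translation of the first step yields the asserted estimate.

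The main obstacle is the core step: one must verify that the oscillation-decrease lemma applies uniformly at every base point $z_{*}$ of the smaller cylinder, not merely at the origin. This requires the Galilean re-centring and, crucially, the observation that near $v=0$ the matrix $\sigma_{G}$ remains uniformly elliptic over the whole velocity range swept out by the iteration — precisely what Lemma \ref{lemma : eigenvalue estimate} supplies, and the reason the statement is restricted to base velocity $0$. Everything else (the dyadic bookkeeping, the Campanato argument, the passage between kinetic and Euclidean Hölder exponents, and the scaling factors) is routine.
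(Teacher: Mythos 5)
Your proposal is correct and follows essentially the same route as the paper's proof: both rest on iterating the oscillation-decay Lemma \ref{Lemma : decreasing oscillation Q_2r} at geometrically shrinking kinetic cylinders, handle general base points in the smaller cylinder by the Galilean recentring $(t,x,v)\mapsto(t-t_{*},\,x-x_{*}-(t-t_{*})v_{*},\,v-v_{*})$, and obtain the factor $R^{-3\alpha}$ from the anisotropic scaling of the $x$-variable (the paper encodes this via $\varphi(r)=r^{-\alpha_{0}}\operatorname{Osc}_{Q_{r}}f$ and the relation $|t|+|x|+|v|\ge r^{3}$ on $\partial Q_{r}$ with $\alpha_{0}=3\alpha$, which is the same accounting as your dilation bound). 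The only difference is organizational — you rescale to unit scale before iterating, while the paper iterates directly over radii $r\le R$ — and this does not change the substance of the argument.
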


\begin{proof}
We first prove
\[
\sup_{(s,y,w)\in Q_{R/16}} \frac{|f(s,y,w)-f(0,0,0)|}{(|s|+|y|+|w|)^{\alpha}}
\le\frac{C}{R^{3\alpha}}\|f\|_{L^{\infty}(Q_{R})}.
\]
Define $\underset{Q}{\text{Osc}} f$ as in Lemma
\ref{Lemma : decreasing oscillation Q_2r} and
\[
\varphi(r):= r^{-\alpha_{0}} \underset{Q_{r}}{\text{Ocs}} f,
\]
where $\alpha_{0} >0$ can be chosen later. By Lemma
\ref{Lemma : decreasing oscillation Q_2r},
\begin{equation}
\label{Eq : decreasing oscillation}\underset{Q_{r/16}}{\text{Ocs}} f \le\left(
1- \frac{\lambda}{2}\right)  \underset{Q_{r}}{\text{Ocs}} f.
\end{equation}
Choose $\alpha_{0}$ such that $16^{\alpha_{0}}\left(  1- \frac{\lambda}%
{2}\right)  <1$. Then by \eqref{Eq : decreasing oscillation},
\[%
\begin{split}
\varphi\left(  \frac{r}{16}\right)   &  = r^{-\alpha_{0}}16^{\alpha_{0}%
}\underset{Q_{r/4}(t,x,v)}{\text{Ocs}} f\\
&  \le16^{\alpha_{0}}\left(  1- \frac{\lambda}{2}\right)  r^{-\alpha_{0}%
}\underset{Q_{r}}{\text{Ocs}} f\\
&  <\varphi(r).
\end{split}
\]
Therefore, we have
\begin{equation}
\label{Eq : Holder 0}%
\begin{split}
\sup_{0<r\le R/16} \varphi(r)  &  \le\sup_{\frac{R}{16}<r\le R}\varphi(r)\\
&  \le2 \frac{16^{\alpha_{0}}}{R^{\alpha_{0}}}\sup_{(t,x,v) \in Q_{R}%
}|f(t,x,v)|.
\end{split}
\end{equation}
If $(t,x,v)\in\partial Q_{r}$ then $|t| + |x| + |v| \ge r^{3}$. Therefore, for
$3 \alpha= \alpha_{0}$ and $r \le R/16$, by \eqref{Eq : Holder 0}
\begin{equation}
\label{Eq : Holder 1}%
\begin{split}
\sup_{(s,y,w)\in Q_{R/16}} \frac{|f(s,y,w)-f(0,0,0)|}{(|s|+|y|+|w|)^{\alpha}}
&  = \sup_{(s,y,w)\in\partial Q_{r}, r\in(0, R/16)} \frac{|f(s,y,w)-f(0,0,0)|}%
{(|s|+|y|+|w|)^{\alpha}}\\
&  \le\sup_{(s,y,w)\in Q_{r}, r\in(0, R/16)} \frac{|f(s,y,w)-f(0,0,0)|}%
{r^{\alpha_{0}}}\\
&  \le\sup_{r \in(0,R/16)} \varphi(r)\\
&  \le\frac{C}{R^{\alpha_{0}}} \sup_{(t,x,v) \in Q_{R}}|f(t,x,v)|.
\end{split}
\end{equation}

Now we consider the general case. For any $(t_{*},x_{*},v_{*}) \in
Q_{R/32}(t_{0}, x_{0}, 0)$, define the translated function
\[
F(T,X,V) = f(t,x,v),
\]
\[
T=t-t_{*},\quad X = x-x_{*} - Tv_{*}, \quad V = v-v_{*}.
\]
Then $F$ satisfies,
\[
\partial_{T} F + V \cdot \nabla_{X} F = \nabla_{V} \cdot (\Sigma_{G}(t,x,v) \nabla_{V} F)
+ a_{g}(t,x,v) \cdot\nabla_{V} F.
\]
Therefore, by \eqref{Eq : Holder 1},
\[
\sup_{(s,y,w)\in Q_{R_{1}/16}} \frac{|F(s,y,w)-F(0,0,0)|}{(|s|+|y|+|w|)^{\alpha
}} \le\frac{C}{{R_{1}}^{\alpha_{0}}} \sup_{(t,x,v) \in Q_{R_{1}}}|F(t,x,v)|
\]
for every $R_{1} < 1$. Since $|v_{*}| \le R/128$,
\[
(t,x,v) \in Q_{R/64}(t_{*},x_{*},v_{*}) \text{ implies } (T,X,V) \in Q_{R/32}%
\]
and
\[
(T,X,V) \in Q_{R/2} \text{ implies } (t,x,v) \in Q_{R}(t_{*},x_{*},v_{*}).
\]
Therefore, by \eqref{Eq : Holder 1}
\[%
\begin{split}
&  \sup_{(t,x,v)\in Q_{R/64}(t_{*},x_{*},v_{*})}\frac{|f(t,x,v)-f(t_{*}%
,x_{*},v_{*})|}{(|t-t_{*}|+|x-x_{*}|+|v-v_{*}|)^{\alpha}}\\
&  \quad\le(1+|v_{*}|)^{\alpha} \sup_{(t,x,v)\in Q_{R/64}(t_{*},x_{*},v_{*}%
)}\frac{|f(t,x,v)-f(t_{*},x_{*},v_{*})|}{((1+|v_{*}|)|t-t_{*}|+|x-x_{*}%
|+|v-v_{*}|)^{\alpha}}\\
&  \quad\le C \sup_{(T,X,V)\in Q_{R/32}} \frac{|F(T,X,V)-F(0,0,0)|}%
{((1+|v_{*}|)|T|+|X+v_{*} T|+|V|)^{\alpha}}\\
&  \quad\le C \sup_{(T,X,V)\in Q_{R/32}} \frac{|F(T,X,V)-F(0,0,0)|}%
{(|T|+|X|+|V|)^{\alpha}}\\
&  \quad\le\frac{C}{{R}^{\alpha_{0}}} \sup_{(T,X,V) \in Q_{R/2}}|F(T,X,V)|\\
&  \quad\le\frac{C}{{R}^{\alpha_{0}}} \sup_{(t,x,v) \in Q_{R}}|f(t.x.v)|.
\end{split}
\]
So the proof is complete.
\end{proof}

\subsection{Global H\"older estimate} 
\label{sub:global_h"older_estimate}
In this subsection, we will derive a
H\"older continuity for the solution of \eqref{linear landau}. Let $f(t,x,v)$
be a weak solution of \eqref{linear landau} in the sense of Definition \ref{Def : linear weak sol}. Then 
\[
\tilde f(t,x,v) :=
\begin{cases}
f(t,x,v), & \text{ if }t\ge0,\\
f_{0}(x,v), & \text{ if }-1 \le t< 0,
\end{cases}
\]
satisfies
\[
\tilde f_{t} + v \cdot\nabla_{x} \tilde f - \bar A_{g} \tilde f = \tilde
S(t,x,v),
\]
where
$\bar A_g$ and $\bar K_g$ is defined as in \eqref{Eq : bar A}, and \eqref{Eq : bar K},
\[
\tilde S(t,x,v) =
\begin{cases}
(v\cdot\nabla_{x} - \bar A_{f_{0}})f_{0}(x,v), & \text{ if }t\le0,\\
\bar K_{g} f(t,x,v), & \text{ if }t> 0.
\end{cases}
\]
Since $U(t,s)$ is the solution operator of \eqref{bar Af}. Then $f$
satisfies
\[
f(t) = U(t, -1)f_{0} + \int_{-1}^{t} U(t, s)\tilde S(s)ds.
\]
Fisrt, we will obtain a uniform H\"older continuity of $U(t,s)f$. 
Finally, we will derive a uniform H\"older continuity of $f(t)$.

\bigskip

As a starting point, we introduce a technical lemma to obtain a uniform H\"older continuity of $U(t,s)f$.
\begin{lemma}
\label{Lemma : change of variables} Let $(t_{*}, x_{*}, v_{*})\in
\mathbb{R}_{+} \times\mathbb{R}^{3} \times\mathbb{R}^{3}$, $N -1/2 \le |v_{*}| \le N+1/2$,
$m>9$ and $O$ be an orthonormal matrix. Define
\begin{equation}
\label{Eq : dilation matrix}D :=
\begin{bmatrix}
(1+|v_{*}|)^{-3/2} & 0 & 0\\
0 & (1+|v_{*}|)^{-1/2} & 0\\
0 & 0 & (1+|v_{*}|)^{-1/2}%
\end{bmatrix}
,
\end{equation}
\begin{align*}
X := D^{-1}O^{T}(x-v_{*}(t-t_{*})), \quad X_{*} := D^{-1}O^{T}x_{*}, \quad V := D^{-1}O^{T}(v-v_{*}),\\
r_{0} := (2+N)^{-m}, \quad r_{1} := (2+N)^{-\frac{2m}{3}+\frac{5}{6}}, \quad r_{2} := (2+N)^{-\frac{4m}{9}+ \frac{13}{18}}.
\end{align*}

Then if $(t,x,v)\in Q_{r_{0}}(t_{*}, x_{*}, v_{*})$, then $(t,X,V) \in
Q_{r_{1}}(t_{*}, X_{*}, 0)$. Moreover, if $(t,X,V) \in Q_{128 r_{1}}%
(t,X_{*},0)$, then $(t,x,v) \in Q_{128r_{2}}(t_{*}, x_{*}, v_{*})$.
\end{lemma}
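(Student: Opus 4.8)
The plan is to peel the statement apart into the three scalar conditions that define a kinetic cylinder: for $Q_R(z_0)=(t_0-R^2,t_0]\times B(x_0;R^3)\times B(v_0;R)$, membership of $(t,x,v)$ means a bound on $t_0-t$, a bound on $|x-x_0|$, and a bound on $|v-v_0|$. I would then estimate each of the three separately using only: (i) $O$ is an isometry, so $|Oz|=|O^{T}z|=|z|$; (ii) $D=\mathrm{diag}\big((1+|v_*|)^{-3/2},(1+|v_*|)^{-1/2},(1+|v_*|)^{-1/2}\big)$ has operator norm $(1+|v_*|)^{-1/2}$ while $D^{-1}$ has operator norm $(1+|v_*|)^{3/2}$; and (iii) the two-sided bound $\tfrac13(2+N)\le 1+|v_*|\le 2+N$, immediate from $N-\tfrac12\le|v_*|\le N+\tfrac12$. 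Since the change of variables does not touch $t$, both time-conditions reduce to monotonicity of $r\mapsto(2+N)^{-r}$ evaluated on the exponents of $r_0,r_1,r_2$, which are affine in $m$; one checks $r_0\le r_1\le r_2$, i.e. $-m\le-\tfrac{2m}{3}+\tfrac56$ and $-\tfrac{2m}{3}+\tfrac56\le-\tfrac{4m}{9}+\tfrac{13}{18}$ (the latter being $(2m-1)/9\ge0$), so both hold for all $m>\tfrac12$.

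For the first inclusion I would write $V=D^{-1}O^{T}(v-v_*)$ and $X-X_*=D^{-1}O^{T}\big(x-x_*-v_*(t-t_*)\big)$. If $(t,x,v)\in Q_{r_0}(t_*,x_*,v_*)$ then $|V|\le(1+|v_*|)^{3/2}|v-v_*|<(1+|v_*|)^{3/2}r_0\le(2+N)^{3/2-m}$, which is $\le r_1=(2+N)^{-2m/3+5/6}$ as soon as $\tfrac32-m\le-\tfrac{2m}{3}+\tfrac56$, i.e. $m\ge2$. Likewise $|X-X_*|\le(1+|v_*|)^{3/2}\big(|x-x_*|+|v_*|\,|t-t_*|\big)<(1+|v_*|)^{3/2}\big(r_0^3+(N+\tfrac12)r_0^2\big)$, which splits, up to an absolute constant, as $(2+N)^{3/2-3m}+(2+N)^{5/2-2m}$; since $\tfrac32-3m<\tfrac52-2m$, the dominant power is $(2+N)^{5/2-2m}=r_1^3$, so $(t,X,V)\in Q_{r_1}(t_*,X_*,0)$.

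For the second inclusion I would invert the map: $v-v_*=ODV$ and $x-x_*=v_*(t-t_*)+OD(X-X_*)$. Given $(t,X,V)\in Q_{128r_1}(t_*,X_*,0)$, the velocity condition is immediate from $|v-v_*|\le(1+|v_*|)^{-1/2}|V|<128r_1\le128r_2$, and the time condition again from $r_1\le r_2$. The $x$-condition is the one that requires care: $|x-x_*|\le|v_*|\,(128r_1)^2+(1+|v_*|)^{-1/2}(128r_1)^3$, and using $|v_*|\le 2+N$ and $(1+|v_*|)^{-1/2}\le\sqrt3\,(2+N)^{-1/2}$ produces a diffusive contribution of size $(2+N)^{-2m+2}$, which is harmless, against a transport contribution of size $(2+N)^{-4m/3+8/3}$ coming from $v_*(t-t_*)$; it is matching this last term against $(128r_2)^3=128^3(2+N)^{-4m/3+13/6}$ that forces the precise (non-round) exponents $\tfrac56$ and $\tfrac{13}{18}$ in $r_1$ and $r_2$, and explains why the radii are powers of $(2+N)$ rather than of $(1+|v_*|)$.

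I expect this transport term $v_*(t-t_*)$ — the Galilean term whose length scale is set by $|v_*|\sim N$, not by the small diffusive radii — to be the only real obstacle; everything else is bookkeeping with the operator norms of $O$ and $D$ together with comparison of linear-in-$m$ exponents, for which the hypothesis $m>9$ leaves room to absorb the bounded multiplicative constants and also guarantees $r_0,r_1,r_2<1$ so that the local Hölder estimates of the previous subsection can later be applied on these cylinders.
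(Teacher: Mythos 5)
Your route is the same as the paper's: reduce membership in the kinetic cylinders to the three scalar conditions, use $|Oz|=|z|$, $\|D\|=(1+|v_*|)^{-1/2}$, $\|D^{-1}\|=(1+|v_*|)^{3/2}$ together with $1+|v_*|\sim 2+N$, and observe that the only nontrivial contribution is the Galilean shift $v_*(t-t_*)$. The time and velocity conditions and the whole first inclusion go through exactly as in the paper.

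The problem is in the very step you single out as the crux, and your own displayed exponents expose it. Your (correct) bound on the transport contribution is
\begin{equation*}
|v_*(t-t_*)|\le \bigl(N+\tfrac12\bigr)(128 r_1)^2\le 128^2(2+N)^{-\frac{4m}{3}+\frac{8}{3}},
\end{equation*}
and you need this to sit inside $B\bigl(x_*;(128r_2)^3\bigr)$ with $(128r_2)^3=128^3(2+N)^{-\frac{4m}{3}+\frac{13}{6}}$. Since $\frac{8}{3}-\frac{13}{6}=\frac12$, the ratio of the two sides is $(2+N)^{1/2}/128$, which exceeds $1$ once $N>128^2-2$; the ``matching'' you assert therefore fails for large $N$ with the stated $r_2$. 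To absorb the transport term by this direct estimate one needs $r_2\gtrsim(2+N)^{-\frac{4m}{9}+\frac{8}{9}}$ (and then $m$ somewhat larger to keep $128r_2<1$ for the later application). For comparison, the paper's own proof closes this step only by writing $|OD(X-X_*)+v_*(t-t_*)|\le(1/2+N)^{-1/2}(r_1^3+Nr_1^2)$, i.e.\ by attaching the contraction factor $\|OD\|\le(1/2+N)^{-1/2}$ to the transport term as well; since $v_*(t-t_*)$ is not acted on by $OD$, that extra $(2+N)^{-1/2}$ is precisely the factor your more carefully decomposed estimate shows to be missing. So as written your proposal does not close: you must either justify that extra decay of the transport term or enlarge the exponent $\frac{13}{18}$ in $r_2$ to $\frac{8}{9}$.
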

\begin{proof}
	If $(t,x,v) \in Q_{r_{0}}(t_{*}, x_{*}, v_{*})$, then
	\begin{equation*}
		|t-t_{*}| \le r_{0}^2 \le r_{1}^2,
	\end{equation*}
	\begin{equation*}
		\begin{split}
			|X-X_{*}| &= |D^{-1}O^{T}(x-x_{*} - v_{*}(t-t_{*}))|\\
			&\le (2 + N)^{3/2}(r_{0}^3 + Nr_{0}^2)\\
			&\le (2+N)^{3/2}\left((2+N)^{-3m} + N(2+N)^{-2m} \right)\\
			&\le (2+N)^{3/2}(2+N)^{1-2m}\\
			&\le r_{1}^{3},
		\end{split}
	\end{equation*}
	and
	\begin{equation*}
		\begin{split}
			|V| & = |D^{-1}O^{T}(v-v_{*})|\\
			&\le (2+N)^{3/2}r_{0}\\
			&\le r_{1}.
		\end{split}
	\end{equation*}
	Conversely, if $(t,X,V) \in Q_{128r_{1}}(t,X_{*},0)$, then
	\begin{equation*}
		|t-t_{*}| \le (128 r_{1})^2 \le (128 r_{2})^2
	\end{equation*}
	and
	\begin{equation*}
		|v-v*| = |ODV| \le (1/2+N)^{-1/2}r_{1} \le 128 r_{2}.
	\end{equation*}
	Since $128r_{1}\le 1$ and $(1/2 +N)^{-1/2}(1+N) \le 128(2+N)^{1/2}$, we have
	\begin{equation*}
		\begin{split}
			|x-x_{*}| &= |OD(X-X_{*}) + v_{*}(t-t_{*})|\\
			&\le (1/2+N)^{-1/2}(r_{1}^3 + N r_{1}^2)\\
			&\le (1/2+N)^{-1/2}(1+N)r_{1}^2\\
			&\le (128)^3(2+N)^{\frac{1}{2}-\frac{4m}{3} + \frac{5}{3}}\\
			&\le (128 r_{2})^3.
		\end{split}
	\end{equation*}
	So the proof is complete.
\end{proof}

\begin{lemma}
[Uniform H\"older for \eqref{bar Af}]%
\label{Lemma : Uniform Holder for tilde Af} 
Assume \eqref{Eq : condition for g}.
Let $f$ be a solution of
\eqref{bar Af} in $Q_{1}(t_{0},x_{0},v_{0})$. Then there exist $\vartheta
>0$, $\vartheta_{0}>0$, $C_{\vartheta}$, and $\alpha\in(0,1)$ depending only on dimension such that
\begin{equation}
\label{Eq : Holder tilde Af}
	\sup_{(t,x,v),(t^{\prime},x^{\prime},v^{\prime}) \in Q_{1}(t_{0},x_{0},v_{0})} \frac{|f(t,x,v)-f(t^{\prime},x^{\prime},v^{\prime})|}{(|t-t^{\prime}| + |x-x^{\prime}| +|v-v^{\prime}|)^{\alpha}}\le C \|f\|_{\infty, \vartheta}\le C_{\vartheta} \|f_{0}\|_{\infty, \vartheta + \vartheta_{0}}.
\end{equation}

\end{lemma}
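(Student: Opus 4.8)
The plan is to use the change of coordinates of Lemma \ref{Lemma : change of variables} to turn, near a point $(t_*,x_*,v_*)$ of large velocity, the degenerate matrix $\sigma_G$ into a uniformly elliptic one near $V=0$, apply there the local De Giorgi H\"older estimate of Subsection \ref{sub:local_holder_estimate}, and then transfer the estimate back. The anisotropic dilation $D$ of \eqref{Eq : dilation matrix} is chosen so as to match \emph{exactly} the direction-dependent bounds on $\sigma_G$ in Lemma \ref{lemma : eigenvalue estimate}, and the price paid in the transfer is only a polynomial power of $(1+|v_*|)$, absorbed by $\|f\|_{\infty,\vartheta}$ for $\vartheta$ large. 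Concretely, fix a large dimensional constant $m$ (say $m=20$) and a point $(t_*,x_*,v_*)\in Q_1(t_0,x_0,v_0)$; let $N$ satisfy $N-\tfrac12\le|v_*|\le N+\tfrac12$, let $O$ be orthonormal with first column $v_*/|v_*|$ (and $O=I$, $D=I$ if $v_*=0$), and take $D,X,V,r_0,r_1,r_2$ as in Lemma \ref{Lemma : change of variables}. Setting $F(t,X,V):=f\big(t,\,ODX+v_*(t-t_*),\,ODV+v_*\big)$, a chain-rule computation using that $O,D$ are constant (orthogonal resp.\ symmetric) matrices gives, on $Q_{128r_1}(t_*,X_*,0)$,
\[
\partial_t F+V\cdot\nabla_X F=\nabla_V\!\cdot\!\big(\Sigma_G\nabla_V F\big)+b\cdot\nabla_V F,\qquad \Sigma_G:=D^{-1}O^T\sigma_G\,OD^{-1},\quad b:=D^{-1}O^Ta_g,
\]
with $\nabla_V\cdot b=\nabla_v\cdot a_g$; by Lemma \ref{Lemma : Guo2002 lemma2} (after integrating by parts the $\partial_jg$ term in $a_g$ so that only $g\in L^\infty$ is used) one has $|a_g|\le C\varepsilon(1+|v|)^{-1}$ and $|\nabla_v\cdot a_g|\le C\varepsilon(1+|v|)^{-2}$.

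\emph{Uniform ellipticity and the local estimate.} If $(t,X,V)\in Q_{128r_1}(t_*,X_*,0)$ then $(t,x,v)\in Q_{128r_2}(t_*,x_*,v_*)$ by Lemma \ref{Lemma : change of variables}, so for $m$ large $|v-v_*|\le128r_2\ll1$, hence $|v|$ is comparable to $N$ and $P_v$ close to $P_{v_*}$. Writing $\xi^T\Sigma_G\xi=D_G(OD^{-1}\xi;v)$ and using $Oe_1\parallel v_*$, one finds $P_{v_*}(OD^{-1}\xi)=O\big((1+|v_*|)^{3/2}\xi_1,0,0\big)$ and $(I-P_{v_*})(OD^{-1}\xi)=O\big(0,(1+|v_*|)^{1/2}\xi_2,(1+|v_*|)^{1/2}\xi_3\big)$, so that Lemma \ref{lemma : eigenvalue estimate} yields
\[
c\,|\xi|^2\le \xi^T\Sigma_G(t,X,V)\xi\le C\,|\xi|^2\qquad\text{on }Q_{128r_1}(t_*,X_*,0),
\]
with $c,C$ depending only on dimension. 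Rescaling parabolically, $\bar F(s,Y,W):=F(t_*+r_1^2s,X_*+r_1^3Y,r_1W)$ solves a kinetic Fokker--Planck equation on $Q_{1/2}$ whose diffusion matrix lies between $cI$ and $CI$ and whose drift $r_1b$ is bounded (of size $C\varepsilon(2+N)^{-2m/3+4/3}$) with bounded divergence $r_1\nabla_v\cdot a_g$. The De Giorgi theory of Subsection \ref{sub:local_holder_estimate} (Lemmas \ref{Lemma : decreasing oscillation Q_2}, \ref{Lemma : decreasing oscillation Q_2r}, \ref{Lemma : Holder near v=0}) uses only measurability, uniform ellipticity and a bounded drift with bounded divergence, hence applies verbatim to $\bar F$ and gives $\|\bar F\|_{C^\alpha(Q_{1/256})}\le C\|\bar F\|_{L^\infty(Q_{1/2})}$ for a dimensional $\alpha\in(0,1)$.

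\emph{Transfer back and conclusion.} Undoing the two changes of variables costs only powers of $(2+N)$: the parabolic rescaling contributes $r_1^{-3\alpha}=(2+N)^{(2m-5/2)\alpha}$, the linear map $OD^{-1}$ together with the shift $v_*(t-t_*)$ contributes at most $C(2+N)^{5\alpha/2}$, and $\|\bar F\|_{L^\infty(Q_{1/2})}\le\|f\|_{L^\infty(Q_{128r_2}(t_*,x_*,v_*))}\le C(1+N)^{-\vartheta}\|f\|_{\infty,\vartheta}$ since $|v|\gtrsim N$ on that cube. Thus, choosing a fixed dimensional $c_0$ so that the image of $Q_{c_0r_0}(z)$ lies in $Q_{r_1/256}(t_*,X_*,0)$ (using the explicit estimates in the proof of Lemma \ref{Lemma : change of variables}), for $z,z'\in Q_1(t_0,x_0,v_0)$ with $z'\in Q_{c_0r_0}(z)$ one gets
\[
\frac{|f(z)-f(z')|}{(|t-t'|+|x-x'|+|v-v'|)^\alpha}\le C(2+N)^{2m\alpha}(1+N)^{-\vartheta}\|f\|_{\infty,\vartheta},
\]
while for $z'\notin Q_{c_0r_0}(z)$ one has $|t-t'|+|x-x'|+|v-v'|\ge(c_0r_0)^3=c_0^3(2+N)^{-3m}$, and $|f(z)-f(z')|\le2\|f\|_{L^\infty}\le C(1+N)^{-\vartheta}\|f\|_{\infty,\vartheta}$ gives the same type of bound with $(2+N)^{3m\alpha}$ instead. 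Since every point of $Q_1(t_0,x_0,v_0)$ has velocity comparable to $1+|v_0|$ (and for $|v_0|\le2$ all of $N,D,O,r_0,r_1,r_2$ are of order one, so that no weight is needed), it suffices to take $\vartheta>3m\alpha$ to make all these right-hand sides $\le C\|f\|_{\infty,\vartheta}$ uniformly in $(t_*,x_*,v_*)$; taking the supremum over $z,z'\in Q_1(t_0,x_0,v_0)$ proves the first inequality in \eqref{Eq : Holder tilde Af}. The second inequality is exactly Theorem \ref{Thm : L^infty estimate tilde M}, with $\vartheta_0$ the weight loss $l_0$ there.

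\emph{Main obstacle.} The crux is designing the change of variables so that $\Sigma_G$ is uniformly elliptic on an entire cube (not merely at the single point $v_*$) while the rescaled drift stays bounded; this forces the radii $r_0,r_1,r_2$ to be negative powers of $|v_*|$, and one must then check that the resulting blow-up of the H\"older constant is polynomial with an exponent ($\le3m\alpha$) dominated by a single fixed weight $\vartheta$. Verifying that one fixed $m$ makes $r_1,r_2<1/256$ and $r_1b$ uniformly bounded, and the bookkeeping of these exponents, is the only genuinely delicate point; everything else is the De Giorgi theory imported from Subsection \ref{sub:local_holder_estimate} together with routine chain rule.
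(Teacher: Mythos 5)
Your proposal is correct and follows essentially the same route as the paper: the anisotropic dilation $D$ matched to Lemma \ref{lemma : eigenvalue estimate}, the rotation aligning $o_1$ with $v_*$, reduction to the local De Giorgi estimate of Lemma \ref{Lemma : Holder near v=0}, the near/far split at scale $(2+N)^{-3m}$, absorption of the polynomial losses by choosing $\vartheta>3m\alpha$, and Theorem \ref{Thm : L^infty estimate tilde M} for the second inequality. The only differences are presentational (you verify uniform ellipticity directly from the two-sided bounds on $D_G(\nu;v)$ rather than via the paper's splitting $\Sigma_1+\Sigma_2+\Sigma_3$, and you insert an explicit parabolic rescaling that the paper leaves inside Lemma \ref{Lemma : Holder near v=0}).
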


\begin{proof}
By the integration by parts,
\[%
\begin{split}
a_{g}\cdot\nabla_{v} f  &  = -\left\{  \phi^{ij}*[v_{i} \mu^{1/2}g]\right\}
\partial_{j} f - \left\{  \phi^{ij}*[\mu^{1/2}\partial_{j} g]\right\}
\partial_{i} f\\
&  =-\left(  \left\{  \phi^{ij}*[v_{i} \mu^{1/2}g]\right\}  + \left\{
\phi^{ij}*[\mu^{1/2}\partial_{i} g]\right\}  \right)  \partial_{j} f\\
&  = -\left(  2\left\{  \phi^{ij}*[v_{i} \mu^{1/2}g]\right\}  + \left\{
\phi^{ij}*[\partial_{i}(\mu^{1/2} g)]\right\}  \right)  \partial_{j} f\\
&  = -\left(  2\left\{  \phi^{ij}*[v_{i} \mu^{1/2}g]\right\}  + \left\{
\partial_{i} \phi^{ij}*[\mu^{1/2} g]\right\}  \right)  \partial_{j} f\\
&  = -2 v\cdot(\sigma_{\sqrt\mu g} \nabla_{v} f) - \partial_{i} \sigma
_{\sqrt\mu g}^{ij} \partial_{j} f.
\end{split}
\]

Let $N:= |v_{0}|$.

To obtain \eqref{Eq : Holder tilde Af}, we split the proof in two cases;
$|(t,x,v)-(t',x',v')| \le (2+N)^{-3m}$ or $|(t,x,v)-(t',x',v')| > (2+N)^{-3m}$
for some $m>0$ to be determined later.
For the first case, we will consider a new center $(t_{*}, x_{*}, v_{*})\in Q_{1}$, such that $(t,x,v), (t',x',v') \in Q_{(2+N)^{-m}}(t_{*},x_{*},v_{*})$. Note that $N-1/2 \le |v_{*}| \le N+1/2$.

Therefore, it is enough to prove that for every $(t_{*},x_{*},v_{*})\in Q_{1}(t_{0},x_{0},v_{0}%
)$,
\begin{equation}
\sup_{(t,x,v),(t^{\prime},x^{\prime},v^{\prime})\in Q_{(2+N)^{-m}}(t_{*}%
,x_{*},v_{*})\cap Q_{1}(t_{0},x_{0},v_{0})}\frac{|f(t,x,v)-f(t^{\prime
},x^{\prime},v^{\prime})|}{(|t-t^{\prime}|+|x-x^{\prime}|+|v-v^{\prime}|)^{\alpha}%
}\leq C\Vert f\Vert_{\infty,\vartheta}\label{Eq : Holder near}%
\end{equation}
and
\begin{equation}
\sup_{\substack{(t,x,v),(t^{\prime},x^{\prime},v^{\prime})\in Q_{1}%
(t_{0},x_{0},v_{0})\\|t-t^{\prime}|+|x-x^{\prime}|+|v-v^{\prime}| > (2+N)^{-3m}}%
}\frac{|f(t,x,v)-f(t^{\prime},x^{\prime},v^{\prime})|}{(|t-t^{\prime
}|+|x-x^{\prime}|+|v-v^{\prime}|)^{\alpha}}\leq C\Vert f\Vert_{\infty,\vartheta
}.\label{Eq : Holder far}%
\end{equation}

We first focus on \eqref{Eq : Holder near}. Consider the the following
translation
\[
\bar{f}(t,y,w):=f(t,x,v),
\]
where $x=y+v_{*}(t-t_{*})$, $v=v_{*}+w$. Then it is easy to check that
$\bar{f}$ satisfies
\[
\partial_{t}\bar{f}+w\cdot\nabla_{y}\bar{f}=\nabla_{w}\cdot(\bar{\sigma}%
_{G}\nabla_{w}\bar{f})+(v_{*}+w)\cdot(\bar{\sigma}_{\sqrt{\mu}g}\nabla_{w}%
\bar{f})-\sum_{ij}\partial_{i}\bar{\sigma}_{\sqrt{\mu}g}^{ij}\partial_{j}%
\bar{f},
\]
where $\bar{\sigma}_{G}(t,y,w):=\sigma_{G}(t,x,v)$, $\bar{\sigma}_{\mu
}(t,y,w):=\sigma_{\mu}(t,x,v)$, and $\bar{\sigma}_{\sqrt{\mu}g}(t,y,w):=\sigma
_{\sqrt{\mu}g}(t,x,v)$. Let $O$ be an orthonormal constant matrix which will
be determined later. Next consider
\[
\tilde{f}(t,\xi,\nu):=\bar{f}(t,y,w),
\]
where $y=O\xi$, $w=O\nu$. Then we have
\[
\partial_{t}\bar{f}(t,y,w)=\partial_{t}\tilde{f}(t,\xi,\nu).
\]

\begin{equation}%
\begin{split}
w\cdot\nabla_{y}\bar{f}(t,y,w) &  =\sum_{i}w_{i}\partial_{y_{i}}(\tilde
{f}(t,\xi,\nu))\\
&  =\sum_{i,k}w_{i}\partial_{\xi_{k}}\tilde{f}(t,\xi,\nu)\frac{\partial\xi
_{k}}{\partial{y_{i}}}\\
&  =\sum_{i,k}O_{ik}w_{i}\partial_{\xi_{k}}\tilde{f}(t,\xi,\nu)\\
&  =\sum_{k}(O^{T}w)_{k}\cdot\partial_{\xi_{k}}\tilde{f}(t,\xi,\nu)\\
&  =\nu\cdot\nabla_{\xi}\tilde{f}(t,\xi,\nu).
\end{split}
\label{Eq : transport term}%
\end{equation}
where $O_{ik}$ is the $i,k$ component of $O$. We use the following formula to
derive the third equality in \eqref{Eq : transport term}
\[
\frac{\partial\xi_{k}}{\partial{y_{i}}}=\frac{\partial\sum_{l}O_{lk}y_{l}%
}{\partial{y_{i}}}=O_{ik}.
\]

Similarly,
\begin{equation}
\label{Eq : sigma Dv f}%
\begin{split}
\bar\sigma_{G}(t,y,w) \nabla_{w} \bar f(t,y,w)  & = \sum_{j}\bar\sigma
_{G}^{ij}(t,y,w)\partial_{w_{j}} \tilde f(t,\xi,\nu)\\
& = \sum_{j,k} \bar\sigma_{G}^{ij}(t,y,w) O_{jk}\partial_{\nu_{k}} \tilde
f(t,\xi,\nu).
\end{split}
\end{equation}

Define $\tilde{\sigma}_{G}(t,\xi,\nu):=O^{T}\bar{\sigma}_{G}(t,y,w)O$. Note
that
\begin{equation}
O\tilde{\sigma}_{G}(t,\xi,\nu)=\bar{\sigma}_{G}%
(t,y,w)O.\label{O bar sigma = sigma O}%
\end{equation}
Then by \eqref{Eq : sigma Dv f} and \eqref{O bar sigma = sigma O}, we have
\[%
\begin{split}
\nabla_{w}\cdot(\bar{\sigma}_{G}(t,y,w)\nabla_{w}\bar{f}(t,y,w)) &
=\sum_{i,j}\partial_{w_{i}}(\bar{\sigma}_{G}^{ij}(t,y,w)\partial_{w_{j}}%
\tilde{f}(t,\xi,\nu))\\
&  =\sum_{i,j,k}\partial_{w_{i}}(O_{jk}\bar{\sigma}_{G}^{ij}(t,y,w)\partial
_{\nu_{k}}\tilde{f}(t,\xi,\nu))\\
&  =\sum_{i,j,k}\partial_{w_{i}}(O_{ij}\tilde{\sigma}_{G}^{jk}(t,\xi
,\nu)\partial_{\nu_{k}}\tilde{f}(t,\xi,\nu))\\
&  =\sum_{i,j,k,l}O_{il}O_{ij}\partial_{\nu_{*}}(\tilde{\sigma}_{G}^{jk}%
(t,\xi,\nu)\partial_{\nu_{k}}\tilde{f}(t,\xi,\nu))\\
&  =\sum_{k,l}\partial_{\nu_{*}}(\tilde{\sigma}_{G}^{lk}(t,\xi,\nu
)\partial_{\nu_{k}}\tilde{f}(t,\xi,\nu))\\
&  =\nabla_{\nu}\cdot(\tilde{\sigma}_{G}(t,\xi,\nu)\nabla_{\nu}\bar{f}%
(t,\xi,\nu)).
\end{split}
\]
In the next to the last equality, we use $O^{T}O=I$. Similarly, define
$\tilde{\sigma}_{\sqrt{\mu}g}(t,\xi,\nu):=O^{T}\bar{\sigma}_{\sqrt{\mu}%
g}(t,y,w)O$, then
\[%
\begin{split}
v_{*}\cdot(\bar{\sigma}_{\sqrt{\mu}g}(t,y,w)\nabla_{w}\bar{f}(t,y,w)) &
=v_{*}\cdot(O\tilde{\sigma}_{\sqrt{\mu}g}(t,\xi,\nu)\nabla_{\nu}\tilde
{f}(t,\xi,\nu))\\
&  =(O^{T}v_{*})\cdot(\tilde{\sigma}_{\sqrt{\mu}g}(t,\xi,\nu)\nabla_{\nu
}\tilde{f}(t,\xi,\nu))\\
&  =\nu_{*}\cdot(\tilde{\sigma}_{\sqrt{\mu}g}(t,\xi,\nu)\nabla_{\nu}\tilde
{f}(t,\xi,\nu)),
\end{split}
\]%
where $\nu_{*} = O^{T} v_{*}$,
\[%
\begin{split}
w\cdot(\bar{\sigma}_{\sqrt{\mu}g}(t,y,w)\nabla_{w}\bar{f}(t,y,w)) &
=w\cdot(O\tilde{\sigma}_{\sqrt{\mu}g}(t,\xi,\nu)\nabla_{\nu}\tilde{f}%
(t,\xi,\nu))\\
&  =(O^{T}w)\cdot(\tilde{\sigma}_{\sqrt{\mu}g}(t,\xi,\nu)\nabla_{\nu}\tilde
{f}(t,\xi,\nu))\\
&  =\nu\cdot(\tilde{\sigma}_{\sqrt{\mu}g}(t,\xi,\nu)\nabla_{\nu}\tilde
{f}(t,\xi,\nu)),
\end{split}
\]
and
\[%
\begin{split}
\sum_{ij}\partial_{w_{i}}\bar{\sigma}_{\sqrt{\mu}g}^{ij}(t,y,w)\partial
_{w_{j}}\bar{f}(t,y,w) &  =\sum_{ijk}\partial_{w_{i}}\bar{\sigma}_{\sqrt{\mu
}g}^{ij}(t,y,w)\partial_{w_{j}}\tilde{f}(t,\xi,\nu)\\
&  =\sum_{ijk}\partial_{w_{i}}\bar{\sigma}_{\sqrt{\mu}g}^{ij}(t,y,w)O_{jk}%
\partial_{\nu_{k}}\tilde{f}(t,\xi,\nu)\\
&  =\sum_{ijkl}O_{il}\partial_{\nu_{*}}\bar{\sigma}_{\sqrt{\mu}g}%
^{ij}(t,y,w)O_{jk}\partial_{\nu_{k}}\tilde{f}(t,\xi,\nu)\\
&  =\sum_{lk}\partial_{\nu_{*}}\tilde{\sigma}_{\sqrt{\mu}g}^{lk}(t,\xi
,\nu)\partial_{\nu_{k}}\tilde{f}(t,\xi,\nu).
\end{split}
\]
Therefore $\tilde{f}$ satisfies
\[
\partial_{t}\tilde{f}+\nu\cdot\nabla_{\xi}\tilde{f}=\nabla_{\nu}\cdot
(\tilde{\sigma}_{G}\nabla_{\nu}\tilde{f})+(\nu_{*}+\nu)\cdot(\tilde{\sigma
}_{\sqrt{\mu}g}\nabla_{\nu}\tilde{f})-\sum_{lk}\partial_{l}\tilde{\sigma
}_{\sqrt{\mu}g}^{lk}\partial_{k}\tilde{f}.
\]

We split $\tilde\sigma_{G}(t,\xi,\nu)$ in three parts.
\[%
\begin{split}
\tilde\sigma_{G}(t,\xi,\nu)  & = O^{T} \bar\sigma_{\mu}(0) O\\
& \quad+ O^{T}\left( \bar\sigma_{\mu}(w) - \bar\sigma_{\mu}(0)\right) O\\
& \quad+ O^{T} \bar\sigma_{\sqrt{\mu}g} (t,y,w) O\\
& = \tilde\sigma_{1} + \tilde\sigma_{2} + \tilde\sigma_{3}.
\end{split}
\]
Choose orthonormal vectors $o_{1}=v_{*}/|v_{*}|$, $o_{2}$, $o_{3}$ and
\[
O :=%
\begin{bmatrix}
o_{1} & o_{2} & o_{3}%
\end{bmatrix}
.
\]
Note that
\[
\nu_{*} = O^{T} v_{*} =
\begin{bmatrix}
|v_{*}|\\
0\\
0
\end{bmatrix}
.
\]
Moreover $\bar\sigma_{\mu}(0)$ has a simple eigenvalue $\lambda_{1}(v_{*})$
associated with the vector $v_{*}$, and a double eigenvalue $\lambda_{2}%
(v_{*})$ associated with $v^{\perp}$. Therefore,
\[
\tilde\sigma_{1} =
\begin{bmatrix}
\lambda_{1}(v_{*}) & 0 & 0\\
0 & \lambda_{2}(v_{*}) & 0\\
0 & 0 & \lambda_{2}(v_{*})
\end{bmatrix}
.
\]
Note that $\lambda_{1}(v_{*})$ and $\lambda_{2}(v_{*})$ satisfy
\begin{equation}
\label{Eq : lambda_i}%
\begin{split}
\frac{1}{C}(1+N)^{-3}  & \le\lambda_{1}(v_{*}) \le C (1+N)^{-3},\\
\frac{1}{C}(1+N)^{-1}  & \le\lambda_{2}(v_{*}) \le C (1+N)^{-1}.
\end{split}
\end{equation}
Since $\partial_{v_{k}} (\sigma_{\mu})^{ij}(v) \le C(1+|v|)^{-2}$, by
the mean value theorem,
\[
|(\bar\sigma_{\mu})^{ij}(w)-(\bar\sigma_{\mu}(0))| \le C(1+N)^{-3
+1}(2+N)^{-m}.
\]
Therefore,
\[
|(\tilde\sigma_{2})^{ij}| \le C(1+N)^{-2}(2+N)^{-m}.
\]
Define
\[
D_{u}(\nu,\nu^{\prime};v):=\nu^{T}\sigma_{u}(v)\nu^{\prime}.
\]
Then we can easily check that
\[
|D_{u}(\nu,\nu^{\prime};v)|\leq|D_{u}(\nu,\nu;v)|^{1/2}|D_{u}(\nu^{\prime}%
,\nu^{\prime};v)|^{1/2}.
\]
Since $|v-v_{*}|<(2+N)^{-m}$ and $o_{1}=v_{*}/|v_{*}|$, we have
\begin{equation}%
\begin{split}
|(I-P_{v})o_{1}| &  =\frac{|(I-P_{v})v_{*}|}{|v_{*}|}\\
&  =\frac{|-v+v_{*}+v-P_{v}v_{*}|}{|v_{*}|}\\
&  \leq\frac{|v-v_{*}|-|v-P_{v}v_{*}|}{|v_{*}|}\\
&  =\frac{|v-v_{*}|-|P_{v}(v-v_{*})|}{|v_{*}|}\\
&  =2\frac{|v-v_{*}|}{|v_{*}|}\leq C(2+N)^{-m}.
\end{split}
\label{Eq : (I-P_v)o_1}%
\end{equation}
Note that
\begin{equation}
(\tilde{\sigma}_{3})^{ij}=o_{i}^{T}\sigma_{\sqrt{\mu}g}(v)o_{j}%
.\label{Eq : tilde sigma ij}%
\end{equation}
Therefore, by \eqref{Eq : D_(mu^1/2 g)},
\begin{equation}%
\begin{split}
|(\tilde{\sigma}_{3})^{11}| &  =|D_{\sqrt{\mu}g}(o_{1};v)|\\
&  \leq C\Vert g\Vert_{\infty}\left(  (1+|v|)^{-3}|P_{v}o_{1}%
|^{2}+(1+|v|)^{-1}|(I-P_{v})o_{1}|^{2}\right)  \\
&  \leq C\Vert g\Vert_{\infty}\left(  (1+N)^{-3}+(1+N)^{-1}(2+N)^{-2m}\right)  \\
&  \leq C\Vert g\Vert_{\infty}(1+N)^{-3},
\end{split}
\end{equation}
and for $(i,j)\in\{(1,2),(1,3),(2,1),(3,1)\}$,
\[%
\begin{split}
|(\tilde{\sigma}_{3})^{ij}| &  \leq|D_{\sqrt{\mu}g}(o_{1};v)|^{1/2}%
|D_{\sqrt{\mu}g}(o_{k};v)|^{1/2}\\
&  \leq C\Vert g\Vert_{\infty}(1+N)^{-3/2}\left(  (1+|v|)^{-3}%
|P_{v}o_{k}|^{2}+(1+|v|)^{-1}|(I-P_{v})o_{k}|^{2}\right)  ^{1/2}\\
&  \leq C\Vert g\Vert_{\infty}(1+N)^{-3/2}(1+N)^{-1/2}\\
&  =C\Vert g\Vert_{\infty}(1+N)^{-2},
\end{split}
\]
where $k=2$ or $3$. Finally, for $i,j=2$ or $3$,
\[%
\begin{split}
|(\tilde{\sigma}_{3})^{ij}| &  \leq|D_{\sqrt{\mu}g}(o_{i};v)|^{1/2}%
|D_{\sqrt{\mu}g}(o_{j};v)|^{1/2}\\
&  =C\Vert g\Vert_{\infty}\left(  (1+|v|)^{-3}|P_{v}o_{i}|^{2}%
+(1+|v|)^{-1}|(I-P_{v})o_{i}|^{2}\right)  ^{1/2}\\
&  \quad\quad\times\left(  (1+|v|)^{-3}|P_{v}o_{j}|^{2}+(1+|v|)^{-1}|(I-P_{v})o_{j}|^{2}\right)  ^{1/2}\\
&  \leq C\Vert g\Vert_{\infty}(1+N)^{-1}.
\end{split}
\]
Finally, consider the dilation matrix $D$ as in \eqref{Eq : dilation matrix}
and the dilated function
\[
F(t,X,V):=\tilde{f}(t,\xi,\nu),
\]
where $\xi=DX$, $\nu=DV$. Then we can easily check that $F$ satisfies
\[
\partial_{t}F+V\cdot\nabla_{X}F=\nabla_{V}\cdot(\Sigma\nabla_{V}F)+(\nu
_{l}+\nu)^{T}D\Sigma_{3}\nabla_{V}F+\sum_{lk}\partial_{V_{l}}\Sigma_{3}%
^{lk}\partial_{V_{k}}F,
\]
where $\Sigma=\Sigma_{1}+\Sigma_{2}+\Sigma_{3}$, $\Sigma_{i}(t,X,V)=D^{-1}%
\tilde{\sigma}_{i}(t,\xi,\nu)D^{-1}$ for $i=1,2,3$. Then by
\eqref{Eq : lambda_i}, we have
\[
\frac{1}{C}\leq(\Sigma_{1})^{ii}\leq C,\quad(\Sigma_{1})^{ij}=0\text{ for
}i\neq j
\]
and
\[%
\begin{split}
|(\Sigma_{2})^{ij}| &  \leq C(1+N)(2+N)^{-m},\\
|(\Sigma_{3})^{ij}| &  \leq C\Vert g\Vert_{\infty}.
\end{split}
\]
Moreover, since $|\nu|\leq(2+N)^{-m}$,
\[%
\begin{split}
|D(\nu_{*}+\nu)| &  \leq|D\nu_{*}|+|D\nu|\\
&  \leq(1+N)^{-1/2}+C(1+N)^{-1/2}(2+N)^{-m},
\end{split}
\]
and since $\partial_{\nu_{k}}\tilde{\sigma}_{3}(\nu)\leq C\Vert g\Vert
_{\infty}(1+N)^{-2}$, we have
\[%
\begin{split}
|\partial_{V_{l}}\Sigma_{3}^{lk}| &  =|d_{l}^{-1}d_{k}^{-1}\partial_{V_{l}%
}\tilde{\sigma}_{3}(\nu)|\\
&  \leq d_{k}^{-1}|\partial_{\nu_{l}}\tilde{\sigma}_{3}(\nu)|\\
&  \leq\Vert g\Vert_{\infty}(1+N)^{-1/2},%
\end{split}
\]
where $d_{k}$'s are the $k$th diagonal element of $D$. Choose $m>4$ such that
$|(\Sigma_{2})^{ij}|,|(\nu_{*}+\nu)^{T}D\Sigma_{3}|\leq\varepsilon\ll1$. If
$\Vert g\Vert_{\infty}\leq\varepsilon$, then any eigenvalue of $\Sigma$ is
bounded above and below uniformly in $N$. Therefore, by Lemma
\ref{Lemma : Holder near v=0}, there exist a constant $C>0$ uniformly in $N$
and a constant $\alpha\in(0,1)$, depending only on dimension such that
\begin{equation}%
\begin{split}
&  \sup_{(t,X,V),(t^{\prime},X^{\prime},V^{\prime})\in Q_{r_{1}}(t_{*}%
,X_{*},0)}\frac{|F(t,X,V)-F(t^{\prime},X^{\prime},V^{\prime})|}{(|t-t^{\prime
}|+|X-X^{\prime}|+|V-V^{\prime}|)^{\alpha}}\\
&  \quad\leq\frac{C}{(r_{1})^{3\alpha}}\Vert F\Vert_{L^{\infty}(Q_{128r_{1}%
}(t_{*},X_{*},0))},%
\end{split}
\label{Eq : Holder F}%
\end{equation}
where $X_{*}=D^{-1}\xi_{*}$, $\xi_{*}=O^{T}y_{l}$, $y_{l}=x_{*}-v_{*}t_{*}$
and $r_{1}$ is defined as in Lemma \ref{Lemma : Holder near v=0}. Note that
\begin{equation}%
\begin{split}
&  \frac{1}{(1+N)^{\alpha}}\sup_{(t,x,v),(t^{\prime},x^{\prime},v^{\prime})\in
Q_{r_{0}}(t_{*},x_{*},v_{*})\cap Q_{1}(t_{0},x_{0},v_{0})}\frac
{|f(t,x,v)-f(t^{\prime},x^{\prime},v^{\prime})|}{(|t-t^{\prime}|+|x-x^{\prime
}|+|v-v^{\prime}|)^{\alpha}}\\
&  \quad\leq\sup_{(t,x,v),(t^{\prime},x^{\prime},v^{\prime})\in Q_{r_{0}%
}(t_{*},x_{*},v_{*})\cap Q_{1}(t_{0},x_{0},v_{0})}\frac{|f(t,x,v)-f(t^{\prime
},x^{\prime},v^{\prime})|}{((2+N)|t-t^{\prime}|+|x-x^{\prime}|+|v-v^{\prime}|)^{
\alpha}},
\end{split}
\label{Eq : Holder near 1}%
\end{equation}
where $r_{0}$ and $r_{1}$ are defined as in Lemma \ref{Lemma : change of variables}.
Moreover, we have
\begin{equation}%
\begin{split}
&  (2+N)|t-t^{\prime}|+|x-x^{\prime}|+|v-v^{\prime}|\\
&  \quad=(2+N)|t-t^{\prime}|+|ODX+v_{*}(t-t_{*})-(ODX^{\prime}+v_{*}%
(t^{\prime}-t_{*}))|+|ODV+v_{*}-(ODV^{\prime}+v_{*})|\\
&  \quad\geq(2+N)|t-t^{\prime}|+|OD(X-X^{\prime})|-|v_{*}||t-t^{\prime
}|+|OD(V-V^{\prime})|\\
&  \quad\geq|t-t^{\prime}|+|OD(X-X^{\prime})|+|OD(V-V^{\prime})|\\
&  \quad\geq(1+N)^{-3/2}(|t-t^{\prime}|+|(X-X^{\prime})|+|(V-V^{\prime})|).
\end{split}
\label{Eq : Holder near 2}%
\end{equation}
By \eqref{Eq : Holder F}, \eqref{Eq : Holder near 1},
\eqref{Eq : Holder near 2}, and Lemma \ref{Lemma : change of variables}, we
have
\begin{equation}%
\begin{split}
&  \sup_{(t,x,v),(t^{\prime},x^{\prime},v^{\prime})\in Q_{r_{0}}(t_{*}%
,x_{*},v_{*})\cap Q_{1}(t_{0},x_{0},v_{0})}\frac{|f(t,x,v)-f(t^{\prime
},x^{\prime},v^{\prime})|}{((2+N)|t-t^{\prime}|+|x-x^{\prime}|+|v-v^{\prime}|)^{
\alpha}}\\
&  \quad\leq\sup_{(t,X,V),(t^{\prime},X^{\prime},V^{\prime})\in Q_{r_{1}%
}(t_{*},X_{*},0)}\frac{|F(t,X,V)-f(t^{\prime},X^{\prime},V^{\prime}%
)|}{(1+N)^{-3 \alpha /2 }(|t-t^{\prime}|+|(X-X^{\prime})|+|(V-V^{\prime}|)^{
\alpha}}.%
\end{split}
\label{Eq : Holder near 3}%
\end{equation}
Combine \eqref{Eq : Holder F}, \eqref{Eq : Holder near 1}, and
\eqref{Eq : Holder near 3}. Then we have
\begin{equation}%
\begin{split}
&  \sup_{(t,x,v),(t^{\prime},x^{\prime},v^{\prime})\in Q_{r_{0}}(t_{*}%
,x_{*},v_{*})\cap Q_{1}(t_{0},x_{0},v_{0})}\frac{|f(t,x,v)-f(t^{\prime
},x^{\prime},v^{\prime})|}{(|t-t^{\prime}|+|x-x^{\prime}|+|v-v^{\prime}|)^{\alpha}%
}\\
&  \quad\leq(2+N)^{\alpha}\sup_{(t,x,v),(t^{\prime},x^{\prime},v^{\prime})\in
Q_{r_{0}}(t_{*},x_{*},v_{*})\cap Q_{1}(t_{0},x_{0},v_{0})}\frac
{|f(t,x,v)-f(t^{\prime},x^{\prime},v^{\prime})|}{((2+N)|t-t^{\prime
}|+|x-x^{\prime}|+|v-v^{\prime}|)^{\alpha}}\\
&  \quad\leq\frac{(2+N)^{\alpha}}{(1+N)^{-3 \alpha /2}}\sup
_{(t,X,V),(t^{\prime},X^{\prime},V^{\prime})\in Q_{r_{1}}(t_{*},X_{*},0)}%
\frac{|F(t,X,V)-f(t^{\prime},X^{\prime},V^{\prime})|}{(|t-t^{\prime
}|+|X-X^{\prime}|+|(V-V^{\prime}|)^{\alpha}}\\
&  \quad\leq\frac{C(2+N)^{\alpha}}{(1+N)^{-3 \alpha /2}(r_{1})^{3\alpha}%
}\Vert F\Vert_{L^{\infty}(Q_{128r_{1}}(t_{*},X_{*},0))}.%
\end{split}
\label{Eq : Holder near 4}%
\end{equation}
By the Lemma \ref{Lemma : change of variables},
\begin{equation}
\Vert F\Vert_{L^{\infty}(Q_{128r_{1}}(t_{*},X_{*},0))}\leq\Vert f\Vert
_{L^{\infty}(Q_{128r_{2}}(t_{*},x_{*},v_{*}))}.\label{Eq : Holder near 5}%
\end{equation}
Choose $m>9$ such that $128r_{2}<1$. Then we have
\begin{equation}%
\begin{split}
\Vert f\Vert_{L^{\infty}(Q_{128r_{2}}(t_{*},x_{*},v_{*}))} &\leq C(1+N)^{-\vartheta}\Vert(1+|v|)^{\vartheta}f\Vert_{L^{\infty
}(Q_{128r_{2}}(t_{*},x_{*},v_{*}))}\\
&  \leq C(1+N)^{-\vartheta}\Vert(1+|v|)^{\vartheta}f\Vert_{L^{\infty}},
\end{split}
\label{Eq : Holder near 5-1}%
\end{equation}
for every $\vartheta>0$. Finally combining \eqref{Eq : Holder near 4} -
\eqref{Eq : Holder near 5-1}, we have
\begin{equation}%
	\begin{split}
		&  \sup_{(t,x,v),(t^{\prime},x^{\prime},v^{\prime})\in Q_{r_{0}}(t_{*},x_{*},v_{*})\cap Q_{1}(t_{0},x_{0},v_{0})}\frac{|f(t,x,v)-f(t^{\prime},x^{\prime},v^{\prime})|}{(|t-t^{\prime}|+|x-x^{\prime}|+|v-v^{\prime}|)^{\alpha}}\\
		&  \quad\leq\frac{C(1+N)^{\alpha-\vartheta}}{(1+N)^{-3 \alpha/2}(r_{1})^{3\alpha}}\Vert(1+|v|)^{\vartheta}f\Vert_{L^{\infty}}\\
		&  \quad\leq C(1+N)^{\alpha-\vartheta+\frac{3 \alpha}{2}+2m\alpha-\frac{5\alpha}{2}}\Vert(1+|v|)^{\vartheta}f\Vert_{L^{\infty}}\\
		&  \quad=C(1+N)^{-\vartheta+ 2m\alpha}\Vert(1+|v|)^{\vartheta}f\Vert_{L^{\infty}},%
	\end{split}
\label{Eq : Holder near 6}%
\end{equation}
where $\vartheta>0$ will be determined later.

To prove \eqref{Eq : Holder far},
\begin{equation}		\label{Eq : Holder far 1}
	\begin{split}
		&\sup_{\substack{(t,x,v),(t',x',v') \in Q_1(t_0,x_0,v_0)\\ |t-t'| + |x-x'| +|v-v'| > (2+N)^{-3m} } }\frac{|f(t,x,v)-f(t',x',v')|}{(|t-t'| + |x-x'| +|v-v'|)^\alpha}\\
		&\quad \le 2 (2+N)^{3\alpha m }\|f\|_{L^\infty(Q_1(t_0,x_0,v_0))}\\
		&\quad \le 2 C(1+N)^{3 \alpha m - \vartheta}\|(1+|v|)^\vartheta f\|_{L^\infty(Q(t_0,x_0,v_0;1))}\\
		&\quad \le 2 C(1+N)^{3 \alpha m - \vartheta}\|(1+|v|)^\vartheta f\|_{L^\infty}.
	\end{split}
\end{equation}
Now choose
\begin{equation}		\label{Eq : theta}
	\vartheta > 3 \alpha m.
\end{equation}
Then from \eqref{Eq : Holder near 6} - \eqref{Eq : theta}, we prove \eqref{Eq : Holder near} and \eqref{Eq : Holder far}.
Therefore, we have
\begin{equation*}
	\sup_{(t,x,v),(t',x',v') \in Q_1(t_0,x_0,v_1)} \frac{|f(t,x,v)-f(t',x',v')|}{(|t-t'| + |x-x'| +|v-v'|)^\alpha} \le C\|f\|_{\infty, \vartheta}.
\end{equation*}
By Theorem \ref{Thm : L^infty estimate tilde M}, we have \eqref{Eq : Holder tilde Af}.
\end{proof}

Now we will prove Theorem \ref{Thm : Holder landau}.
\begin{proof}[Proof of Theorem \ref{Thm : Holder landau}]
	Since $f$ satisfies \eqref{linear landau}, we have
	\begin{equation}		\label{Eq : linear landau for holder}
		f_t + v\cdot \nabla_x f - \bar A_g f = \bar K_g f.
	\end{equation}
	Define
	\begin{equation*}
		\tilde f(t,x,v) = \begin{cases}
			f(t,x,v), &\text{ if }t\ge 0,\\
			f_0(x,v), &\text{ if }-1 \le t< 0.
		\end{cases}
	\end{equation*}
	Consider $\tilde S(t,x,v) = (\partial_t + v\cdot \nabla_x - \bar A_g)\tilde f$.
	Then for $t\le 0$,
	\begin{equation*}
		\begin{split}
			\tilde S(t,x,v) &= (\partial_t + v\cdot \nabla_x - \bar A_g)\tilde f\\
			&=(v\cdot \nabla_x - \bar A_{f_{0}})f_{0}(x,v)\\
			&=-f_{0t},
		\end{split}
	\end{equation*}
	where $f_{0t}$ was defined in Theorem \ref{Thm : main result}.
	Since $f$ is a weak solution of \eqref{linear landau} in the sense of Definition \ref{Def : linear weak sol}, for $t>0$, $\tilde S(t,x,v) = \bar K_g f(t,x,v)$.
	Thus, $\tilde f$ satisfies,
	\begin{equation*}
		\tilde f_t + v \cdot \nabla_x \tilde f - \bar A_g \tilde f = \tilde S(t,x,v).
	\end{equation*}
	Since $U(t,s)$ is the solution operator for $\partial_t + v \cdot \nabla_x - \tilde A_g = 0$.
	Then $\tilde f$ satisfies
	\begin{equation*}
		\tilde f(t) = U(t,-1)f_0 + \int_{-1}^{t} U(t,s) \tilde S(s) ds.
	\end{equation*}
	Let $0<\bar \varepsilon\ll 1$ be given.
	Note that by Lemma \ref{Lemma : Holder near v=0}, there exists $\alpha>0$ such that $U(t,-1)\tilde f(-1)$ is uniformly H\"older continuous on $(0,\infty)\times \T^3 \times \R^3$.

	For every $0 \le t_2 \le t_1$, $|t_1 - t_2| + |x_2 - x_1| + |v_2-v_1| = \bar \varepsilon$,
	\begin{equation*}
	 	\begin{split}
			&|f(t_1 ,x_1,v_1)- f(t_2,x_2,v_2)|\\
			&\le \left| (U(t_1,-1)f_0)(x_1,v_1) -  (U(t_2,-1)f_0)(x_2,v_2)\right|\\
			& \quad + \left|\int_{t_2}^{t_1} (U(t_1,s)\tilde S(s))(x_1,v_1)ds\right|\\
			& \quad + \left|\int_{t_2-\bar \varepsilon^\alpha}^{t_2}\big( (U(t_1,s)\tilde S(s))(x_1,v_1) - (U(t_2,s)\tilde S(s))(x_2,v_2)\big)ds\right|\\
			& \quad + \left|\int_{-1}^{t_2-\bar \varepsilon^\alpha}\big( (U(t_1,s)\tilde S(s))(x_1,v_1) - (U(t_2,s)\tilde S(s))(x_2,v_2)\big)ds\right|\\
			&\le (I) + (II) +(III) + (IV).
	 	\end{split}
	\end{equation*}
	By Lemma \ref{Lemma : Uniform Holder for tilde Af}, there exist $\vartheta$, $l$, $C$ and $C_{\vartheta, l}$ such that
	\begin{equation*}
		(I) \le (C \|f_0\|_{\infty, \vartheta} + C_{\vartheta,l}\|f_0\|_{2,\vartheta+l}) \bar \varepsilon^{\alpha}.
	\end{equation*}
	By Lemma \ref{lemma : Maximum principle}, \eqref{Eq : L^infty estimate linear}, and \eqref{Eq : Kf L^infty norm tilde M}, there exists $l_0$ such that
	\begin{equation*}
		(II)\le \bar \varepsilon \sup_{s>0}\|\tilde K_g f(s)\|_\infty \le C\bar \varepsilon \sup_{s>0} \|f(s)\| \le C \bar \varepsilon(\|f_0\|_\infty+\|f_0\|_{2,l_0}).
	\end{equation*}
	Note that for $s\le 0$,
	\begin{equation}		\label{Eq : tilde S L^infty s<0}
		\|\tilde S(s)\|_{\infty, \vartheta} \le C\|f_{0t}\|_{\infty, \vartheta} ,
	\end{equation}
	\begin{equation}		\label{Eq : tilde S L^2 s<0}
		\|\tilde S(s)\|_{2, \vartheta} \le C\|f_{0t}\|_{2, \vartheta},
	\end{equation}
	and for $s\ge 0$, by Lemma \ref{lemma : Kf estimate} and Theorem \ref{Thm : energy estimate}, \ref{Thm : L^infty estimate} there exists $l_0$ such that
	\begin{equation}		\label{Eq : tilde S L^2 s>0}
		\|\tilde S(s)\|_{\infty, \vartheta} \le C(1+s)^{-2}(\|f_0\|_{2,\vartheta+l_{0}} + \|f_0\|_{\infty,\vartheta+l_{0}})
	\end{equation}
	and
	\begin{equation}		\label{Eq : tilde S L^infty s>0}
		\|\tilde S(s)\|_{2,\vartheta+l} \le C(1+s)^{-2}(\|f_0\|_{2, \vartheta+l_{0}} + \|f_0\|_{\infty, \vartheta+l_{0} }).
	\end{equation}
	Therefore, by \eqref{Eq : tilde S L^infty s<0} and \eqref{Eq : tilde S L^infty s>0}, we have
	\begin{equation*}
		\begin{split}
	 		(III) &\le \bar \varepsilon^\alpha \sup_{s\in (t_2 - \bar \varepsilon^\alpha, t_2)}\|\tilde S(s)\|_{\infty}\\
	 		& \le C\bar \varepsilon^\alpha(\|f_{0t}\|_{\infty}+\|f_0\|_{2, l_{0}} + \|f_0\|_{\infty, l_{0} }).
		\end{split}
	\end{equation*}
	For $-1 \le s \le t_2 - \bar \varepsilon^\alpha$, we have 
	\begin{equation*}
		\begin{split}
			&|t_1  - (t_2 +1 - \bar \varepsilon^\alpha)| + |x_1 - x_2| + |v_1 - v_2|\\
			&\quad = |t_1 - t_2 -1 + \bar \varepsilon^\alpha| + |x_1 - x_2| + |v_1 - v_2|\\
			& \quad \le 1 - \bar \varepsilon^\alpha + |t_1 - t_2| + |x_1 - x_2| + |v_1 - v_2|\\
			& \quad \le 1,
		\end{split}
	\end{equation*}
	\begin{equation*}
		|t_2  - (t_2 + 1 - \bar \varepsilon^\alpha)| + |x_2 - x_2| + |v_2 - v_2| = 1- \bar \varepsilon^\alpha \le 1.
	\end{equation*}
	Therefore,
	\begin{equation*}
		(t_i , x_i, v_i ) \in Q_1(t_2 + 1 - \bar \varepsilon^\alpha, x_2, v_2) \subset (s,\infty) \times \T^3 \times \R^3,
	\end{equation*}
	for each $i = 1,2$.
	Therefore by Lemma \ref{Lemma : Uniform Holder for tilde Af}, there exist $\vartheta$, $l$, $C$ and $C_{\vartheta, l}$ such that
	\begin{equation*}
		\begin{split}
			&(U(t_1,s)\tilde S(s))(x_1,v_1) - (U(t_2,s)\tilde S(s))(x_2,v_2)\\
			&\le (C \|\tilde S(s)\|_{\infty, \vartheta} + C_{\vartheta,l}\|\tilde S(s)\|_{2,\vartheta+l}) \bar \varepsilon^{\alpha},
		\end{split}
	\end{equation*}
	for $-1\le s\le t_{2}- \bar \varepsilon^{\alpha}$.
	Therefore, by Lemma \ref{Lemma : Uniform Holder for tilde Af}, \eqref{Eq : tilde S L^infty s<0}, \eqref{Eq : tilde S L^2 s>0}, and \eqref{Eq : tilde S L^2 s<0}, \eqref{Eq : tilde S L^infty s>0}, we have
	\begin{equation*}
		\begin{split}
			(IV) &\le \left|\int_{-1}^{0}\big( (U(t_1,s)\tilde S(s))(x_1,v_1) - (U(t_2,s)\tilde S(s))(x_2,v_2)\big)ds\right|\\
			&\quad+ \left|\int_{0}^{t_2-\bar \varepsilon^\alpha}\big( (U(t_1,s)\tilde S(s))(x_1,v_1) - (U(t_2,s)\tilde S(s))(x_2,v_2)\big)ds\right|\\
			&\le C \bar \varepsilon^{\alpha} \left( \sup_{s\in [-1,0]} (\|\tilde S(s)\|_{\infty,\vartheta} + \|\tilde S(s)\|_{2, \vartheta+l} + \int_{0}^{\infty} \|\tilde S(s)\|_{\infty, \vartheta} + \|\tilde S(s)\|_{2, \vartheta + l} ds \right)\\
			&\le C \bar \varepsilon^\alpha\left(\|f_{0t}\|_{\infty, \vartheta} + \|f_{0t}\|_{2, \vartheta + l} + (\|f_0\|_{2, \vartheta+l_{0}} + \|f_0\|_{\infty, \vartheta+l_{0} })\int_{0}^{\infty}(1+s)^{-2}ds\right)\\
			&\le C \bar \varepsilon^\alpha\left(\|f_{0t}\|_{\infty, \vartheta} + \|f_{0t}\|_{2, \vartheta + l} + \|f_0\|_{2, \vartheta+l_{0}} + \|f_0\|_{\infty, \vartheta+l_{0} }\right).
		\end{split}
	\end{equation*}
	Now we update $\vartheta$ to  $\vartheta + 2 + \max\{l, l_{0}\}$.
	Then, by Proposition \ref{Prop : L2Linfty}, we have
	\begin{equation*}
		\begin{split}
			|f(t_1 ,x_1,v_1)- f(t_2,x_2,v_2)| & \le (I) + (II) + (III) + (IV)\\
			&\le C \bar \varepsilon^\alpha\left(\|f_{0t}\|_{\infty, \vartheta}  + \|f_0\|_{\infty, \vartheta }\right).
		\end{split}
	\end{equation*}
	Thus we complete the proof.
\end{proof}


\section{Holder estimate and $S_{p}$ bound} 
\label{sec:holder estimate and sp bound}


Let $f$ be a weak solution of \eqref{linear landau} in the sense of Definition \ref{Def : linear weak sol}.
Define
\begin{equation*}
	\bar f(t,x,v) = \begin{cases}
		f(t,x,v), &\text{ if }t\ge 0\\
		f_0(x,v), &\text{ if }-1 \le t< 0.
	\end{cases}
\end{equation*}
Then $\bar f$ satisfies
\begin{equation}			\label{Eq : Sp eqn}
	\begin{split}
		\partial_t \bar f + v \cdot \nabla_x \bar f - \sigma_G^{ij} \partial_{v_i,v_j} \bar f &= \begin{cases}
			- \partial_{v_i} \sigma_G^{ij}\partial_{v_j} f + a_g \cdot \nabla_v f + K_1 f + J_g f, &\text{ if }t\ge 0,\\
			(v\cdot \nabla_x - \sigma_{\mu + \mu^{1/2}f_0}^{ij} \partial_{v_i,v_j}) f_0, &\text{ if }-1 \le t< 0,
		\end{cases}\\
		&= \begin{cases}
			- \partial_{v_i} \sigma_G^{ij}\partial_{v_j} f + a_g \cdot \nabla_v f + K_1 f + J_g f, &\text{ if }t\ge 0\\
			- f_{0t} + \partial_{v_{i}}\sigma_{\mu + \mu^{1/2}f_0}^{ij} \partial_{v_{j}}f_{0} , &\text{ if }-1 \le t< 0,
		\end{cases}
	\end{split}
\end{equation}
where $\sigma_G$ is defined as in \eqref{Eq : sigma_u} with $G = \mu+ \mu^{1/2}g$ and 
\begin{equation}		\label{Eq : K_1}
	\begin{split}
		K_1 f & = - \mu^{-1/2}\partial_i \left\{\mu\left[\phi^{ij}* \left\{\mu^{1/2}[\partial_j f + v_j f] \right\}\right] \right\}\\
		& = 2v_i \mu \left[\phi^{ij}* \left\{\mu^{1/2}[\partial_j f + v_j f] \right\}\right] - \mu^{1/2}\left[\partial_i \phi^{ij}* \left\{\mu^{1/2}[\partial_j f + v_j f] \right\}\right],
	\end{split}
\end{equation}
and
\begin{equation}		\label{Eq : J_g}
	J_g f = - v\cdot \sigma v f -\partial_i\left\{\phi^{ij}*[\mu^{1/2}\partial_j g] \right\}f + \left\{\phi^{ij}*[v_i \mu^{1/2} \partial_j g] \right\}f.
\end{equation}
\begin{lemma}
	For every $\beta>0$ and $p>3$,
	\begin{equation}	\label{Eq : K_1 f}
		\|K_1f\|_{L^p(n \le |v| \le n+1)} \le \frac{C_{p,\beta}}{n^ \beta}\left(\|f\|_{L^p} + \|D_v f\|_{L^p}\right),
	\end{equation}
	where $K_1$ is defined as in \eqref{Eq : K_1}
\end{lemma}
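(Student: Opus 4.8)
The plan is to exploit the fact that every convolution term appearing in $K_1 f$ in \eqref{Eq : K_1} carries a factor $\mu^{1/2}(v)$ in front and a factor $\mu^{1/2}(v')$ inside the convolution, so that $K_1 f$ decays faster than any polynomial in $|v|$. First I would recall from \eqref{Eq : K_1} that
\[
K_1 f = 2 v_i \mu \big[\phi^{ij} * \{\mu^{1/2}(\partial_j f + v_j f)\}\big] - \mu^{1/2}\big[\partial_i \phi^{ij} * \{\mu^{1/2}(\partial_j f + v_j f)\}\big],
\]
and split the convolution integral, as in the proof of Lemma \ref{lemma : Kf estimate}, into the regions $2|v'| > |v|$ and $2|v'| \le |v|$. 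On $2|v'|>|v|$ one has $\mu^{1/2}(v') \le C \mu(v'/4)\mu(v/4)$, which produces an extra Gaussian factor $\mu(v/4)$; restricted to $n \le |v| \le n+1$ this is bounded by $C_\beta n^{-\beta}$ for every $\beta$. On $2|v'| \le |v|$ one instead uses $\mu(v) \le \mu(v/4)$ (resp.\ $\mu^{1/2}(v) \le \mu(v/8)$) directly from the prefactor, again gaining $C_\beta n^{-\beta}$ on the shell $n \le |v| \le n+1$; note also that $v_i \mu(v)$ is still super-exponentially small there.

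Next I would control the remaining convolution against an $L^p$ norm of $f$ and $D_v f$. Since $|\phi^{ij}(v-v')| \le C|v-v'|^{-1}$ and $|\partial_i \phi^{ij}(v-v')| \le C|v-v'|^{-2}$, and $|v-v'|^{-1}, |v-v'|^{-2}$ are locally integrable in $\mathbb{R}^3$ (with $|v-v'|^{-2}$ integrable because $2 < 3$), Young's convolution inequality together with the cutoff $\mu^{1/2}(v') \in L^{p'}$ gives
\[
\big\| \phi^{ij} * \{\mu^{1/2}(\partial_j f + v_j f)\} \big\|_{L^\infty_v} \le C \big( \|D_v f\|_{L^p} + \|f\|_{L^p} \big),
\]
using $p>3$ so that $\phi^{ij} \in L^{q}_{\mathrm{loc}}$ for the relevant $q$ after a further splitting of the kernel into its near-singularity part (supported where $|v-v'|<1$) and its tail part (absorbed by the Gaussian). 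Combining this bound with the super-polynomial smallness of the prefactor on the shell $n \le |v| \le n+1$, and integrating over the $x$-torus (which contributes only a fixed constant), yields the claimed estimate with $C_{p,\beta}$ depending on $p$ and $\beta$.

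The main obstacle I anticipate is bookkeeping the interplay between the integrability threshold $p>3$ and the singularities of $\phi^{ij}$ and $\partial_i \phi^{ij}$: one must split each kernel into a piece near the diagonal and a piece away from it, handle the near-diagonal piece by Young/Hölder with the precise Lebesgue exponents (this is where $p>3$, equivalently $p' < 3/2$, is used so that $|v-v'|^{-2}$ is in $L^{p'}_{\mathrm{loc}}$), and absorb the far piece into the Gaussian decay. A secondary but routine point is that the factor $\mu^{1/2}(v)$ (or $v_i\mu(v)$) restricted to $|v|\in[n,n+1]$ must be estimated as $O(n^{-\beta})$ uniformly, for which it suffices to note $e^{-|v|^2/2} \le C_\beta (1+|v|)^{-\beta}$; once this is granted the rest is a direct application of the same machinery already used in Lemma \ref{lemma : Kf estimate} and Lemma \ref{lemma : Kf estimate tilde M}.
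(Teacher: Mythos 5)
Your proposal is correct and follows essentially the same route as the paper: a H\"older estimate in $v'$ against the Gaussian-weighted kernel, with $p>3$ (i.e.\ $p'<3/2$) ensuring $|v-v'|^{-2p'}$ is locally integrable, and the external Gaussian prefactor $\mu^{1/2}(v)$ (resp.\ $v_i\mu(v)$) supplying the $n^{-\beta}$ decay on the shell $n\le|v|\le n+1$. The extra splitting into $2|v'|>|v|$ and $2|v'|\le|v|$ is harmless but unnecessary here, since the decay already comes from the prefactor outside the convolution rather than from the convolution itself.
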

\begin{proof}
	Since $K_1$ is defined as in \eqref{Eq : K_1},
	It is enough to show that
	\begin{equation*}
		\left\|\mu(v)\int_{\R^3}|v-v'|^\vartheta  \mu(v')h(v') dv'\right\|_{L^{p}}\le \|h\|_{L^p}.
	\end{equation*}
	By the H\"older inequality,
	\begin{equation*}
		\begin{split}
			&\int_{(0,\infty)\times \T^3\times \R^3} \mu^p(v)\left(\int_{\R^3}|v-v'|^\vartheta  \mu(v') h(v') dv'\right)^p dvdxdt\\
			&\le \int_{(0,\infty)\times \T^3\times \R^3} \mu^p(v)\left(\int_{\R^3}|v-v'|^{\vartheta p'} \mu^{p'}(v')dv'\right)^{p/p'} \left(\int_{\R^3}h^p(v') dv'\right) dvdxdt\\
			&\le \int_{\R^3} \mu^p(v) (1+|v|)^{\vartheta p} dv \int_{(0,\infty)\times \T^3} \left(\int_{\R^3}h^p(v') dv'\right) dxdt\\
			&=C_p \|h\|_{L^p}^p.
		\end{split}
	\end{equation*}
\end{proof}

Clearly, we have
\begin{equation}	\label{Eq : J_g f}
	\|J_g f\|_{L^p(n \le |v| \le n+1)} \le (C+\|g\|_\infty)(\|f\|_{L^p(n \le |v| \le n+1)}) \le Cn^{- \beta} (\|(1+|v|)^{\beta}f\|_{L^p(n \le |v| \le n+1)}).
\end{equation}

\begin{theorem}[Theorem 1.5 in \cite{polidoro1998sobolev}] \label{Thm : polidoro1998 1.5}
	Let $\Omega$ be a bounded open set in $\R^7$ and let $f$ be a strong solution in $\Omega$ to the equation
	\begin{equation*}
		\sum_{i,j=1}^3 \sigma^{ij}(t,x,v)\partial_{v_i,v_j} f + Yf = h,
	\end{equation*}
	where $Y = -\partial_t - v\cdot \nabla_x$.
	Suppose that $\sigma$ is uniformly elliptic,
	\begin{equation}		\label{Eq : sigma Holder}
		\|\sigma^{ij}\|_{C^{\alpha}(\Omega)} \le  C,
	\end{equation}
	and $f, h\in L^p$.

	Then $\partial_{v_i,v_j} f \in L^p_{\text{loc}}$, $Yf \in L^p_{\text{loc}}$ and for every open set $\Omega'\subset \subset \Omega$ there exists a positive constant $c_1$ depending only on $p, \Omega', \Omega, \alpha, C$ and elliptic constant of $\sigma$ such that
	\begin{equation*}
		\begin{split}
			\| \partial_{v_i,v_j} f\|_{L^p(\Omega')} &\le c_1 (\|f\|_{L^p(\Omega)} + \|h\|_{L^p(\Omega)}),\\
			\| Y f\|_{L^p(\Omega')} &\le c_1 (\|f\|_{L^p(\Omega)} + \|h\|_{L^p(\Omega)}).
		\end{split}
	\end{equation*}
\end{theorem}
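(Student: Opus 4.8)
The plan is to prove this as a Calder\'on--Zygmund-type $S^{p}$ estimate for the hypoelliptic Kolmogorov operator $\mathcal{L}=\sum_{i,j}\sigma^{ij}\partial_{v_{i}v_{j}}+Y$ with $Y=-\partial_{t}-v\cdot\nabla_{x}$, following the scheme of \cite{polidoro1998sobolev} (the parabolic--anisotropic analogue of the $W^{2,p}$ theory for second order operators with H\"older coefficients). First I would treat the \emph{constant coefficient} case: fix $z_{0}=(t_{0},x_{0},v_{0})$, freeze $a_{0}:=\sigma(z_{0})$, and consider $\mathcal{L}_{0}=\sum_{i,j}a_{0}^{ij}\partial_{v_{i}v_{j}}+Y$. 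Since $[\partial_{v_{i}},Y]=-\partial_{x_{i}}$, H\"ormander's rank condition holds, so $\mathcal{L}_{0}$ is hypoelliptic and admits an explicit Gaussian fundamental solution $\Gamma_{0}$, homogeneous of the correct degree for the dilations $\delta_{\lambda}(t,x,v)=(\lambda^{2}t,\lambda^{3}x,\lambda v)$ and left-invariant for the attached non-abelian translation group (under which the $x$-slot absorbs a $tv$-type shift); the homogeneous dimension is $Q=2+9+3=14$. From the representation $u=\Gamma_{0}\ast(\mathcal{L}_{0}u)$ for $u\in C_{c}^{\infty}$, the operators $u\mapsto\partial_{v_{i}v_{j}}\Gamma_{0}\ast(\mathcal{L}_{0}u)$ and $u\mapsto Y\Gamma_{0}\ast(\mathcal{L}_{0}u)$ are convolutions against Calder\'on--Zygmund kernels for the quasi-metric $\rho$ of this homogeneous structure (and $\partial_{v_{i}}\Gamma_{0}$ is one order less singular), so the singular-integral theory on spaces of homogeneous type yields $\|\partial_{v_{i}v_{j}}u\|_{L^{p}}+\|Yu\|_{L^{p}}+\|\partial_{v_{i}}u\|_{L^{p}}\le C_{p}\|\mathcal{L}_{0}u\|_{L^{p}}$ for $1<p<\infty$.

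Next I would \emph{localize and freeze}. Given $\Omega'\Subset\Omega$, choose a radius $r>0$ (to be fixed) and, for $z_{0}\in\overline{\Omega'}$, a cutoff $\phi\in C_{c}^{\infty}$ supported in the $\rho$-ball $B_{r}(z_{0})\subset\Omega$ with $\phi\equiv1$ on $B_{r/2}(z_{0})$. With $w:=\phi f$ and $\mathcal{L}_{z_{0}}:=\sum_{i,j}\sigma^{ij}(z_{0})\partial_{v_{i}v_{j}}+Y$, a Leibniz computation gives
\[
\mathcal{L}_{z_{0}}w=\phi h+\phi\big(\sigma^{ij}(z_{0})-\sigma^{ij}(z)\big)\partial_{v_{i}v_{j}}f+\sigma^{ij}(z_{0})\big(f\,\partial_{v_{i}v_{j}}\phi+2\,\partial_{v_{i}}\phi\,\partial_{v_{j}}f\big)+(Y\phi)f .
\]
Applying the constant coefficient bound to $w$ and using \eqref{Eq : sigma Holder} to estimate $\sup_{B_{r}(z_{0})}|\sigma^{ij}(z_{0})-\sigma^{ij}(z)|\le Cr^{\alpha}$, one obtains
\[
\|\partial_{v_{i}v_{j}}w\|_{L^{p}}+\|Yw\|_{L^{p}}\le C_{p}\Big(\|h\|_{L^{p}(B_{r})}+Cr^{\alpha}\|\partial^{2}_{v}f\|_{L^{p}(B_{r})}+(1+r^{-1}+r^{-2})\big(\|f\|_{L^{p}(B_{r})}+\|\nabla_{v}f\|_{L^{p}(B_{r})}\big)\Big),
\]
where $\|\nabla_{v}f\|_{L^{p}}$ is absorbed via $\varepsilon\|\partial^{2}_{v}f\|_{L^{p}}+C_{\varepsilon}\|f\|_{L^{p}}$ (anisotropic interpolation, or by carrying the $\partial_{v_{i}}$ bound from the first step through the estimate).

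Then I would \emph{choose $r$, sum, and upgrade}. Fix $r=r(p,\alpha,C)$ so small that $C_{p}Cr^{\alpha}\le1/4$; cover $\overline{\Omega'}$ by finitely many balls $B_{r}(z_{j})$, $j=1,\dots,M$, with bounded overlap, $M$ depending only on $r$ and $\mathrm{dist}(\Omega',\partial\Omega)$. Summing the local inequalities absorbs the $\|\partial^{2}_{v}f\|_{L^{p}}$ error into the left side and gives the a priori bound $\|\partial_{v_{i}v_{j}}f\|_{L^{p}(\Omega')}+\|Yf\|_{L^{p}(\Omega')}\le c_{1}(\|f\|_{L^{p}(\Omega)}+\|h\|_{L^{p}(\Omega)})$ with $c_{1}$ depending only on $p,\Omega',\Omega,\alpha,C$ and the ellipticity constant. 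To pass from this a priori estimate to the assertion $\partial_{v_{i}v_{j}}f,\ Yf\in L^{p}_{\mathrm{loc}}$, I would regularize---mollify $f$ along the group and approximate $\sigma,h$ by smooth data---apply the uniform estimate to the smooth solutions, extract a weak-$L^{p}$ limit of their second $v$-derivatives and of $Yf$, and identify it with the distributional derivatives of $f$; alternatively, local integrability can be read directly off the representation formula.

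The hard part will be the first step: verifying that $\partial_{v_{i}v_{j}}\Gamma_{0}$ and $Y\Gamma_{0}$ obey the size and H\"ormander cancellation conditions relative to the genuinely non-Euclidean homogeneous structure attached to $Y$ (the drift $v\cdot\nabla_{x}$ couples $x$ and $v$, so the geometry is not a product of Euclidean balls), and transporting the Calder\'on--Zygmund machinery to the resulting space of homogeneous type. This is exactly the content of \cite{polidoro1998sobolev}, which is why the statement is quoted rather than reproved here.
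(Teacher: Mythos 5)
The paper does not prove this statement; it is quoted verbatim as Theorem 1.5 of \cite{polidoro1998sobolev} and used as a black box, so there is no internal proof to compare against. Your outline correctly reproduces the standard argument of that reference (and of \cite{bramanti1996lpestimates}): freezing coefficients in the hypoelliptic Kolmogorov operator, the explicit homogeneous fundamental solution with homogeneous dimension $Q=14$ (consistent with the exponents $\tfrac{2p-14}{p}$ appearing later in the paper), Calder\'on--Zygmund theory on the associated homogeneous group, and localization with absorption of the $r^{\alpha}\|\partial^{2}_{v}f\|_{L^{p}}$ error. The only step you pass over quickly is that the absorption in the covering argument requires $\|\partial^{2}_{v}f\|_{L^{p}}$ to be finite a priori on an intermediate domain (handled either by your regularization step or by the usual interpolation-seminorm iteration on nested domains); with that noted, the approach is sound and is essentially the proof in the cited source.
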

\begin{remark}		\label{Rmk : properties of c_1}
	Especially, from \cite{bramanti1996lpestimates}
	\begin{equation*}
		c_1 = c_2(\lambda_1, \lambda_2)c_3(\text{dist}(\Omega',\Omega),\alpha,C,p),
	\end{equation*}
	where $\lambda_1$, $\lambda_2$ are the smallest and the largest eigenvalue respectively.
	More precisely, there exist $C>0$ and $\alpha'>0$ such that
	\begin{equation*}
		c_2(\lambda_1,\lambda_2) := \max_{|x|^2+|v|^2+|t|^2 =1} |\Gamma^0_{v_i,v_j}(x,v,t)| \le C(\lambda_1^{-1} + \lambda_2)^{\alpha'},
	\end{equation*}
	where $\Gamma^0(\zeta^{-1}\circ z)$ is a fundamental solution of
	\begin{equation}	\label{Eq : freezed FP eqn}
		\sum_{i,j=1}^3 \sigma^{ij}(\tau,\xi,\nu)\partial_{v_i,v_j} f + Yf = 0
	\end{equation}
	and $\circ$ is a Lie group operation corresponding to \eqref{Eq : freezed FP eqn} for some $C>0$.	
\end{remark}
\begin{remark}		\label{Rmk : periodic extension}
	Since $f(t,\cdot,v)$ is a periodic function on $\T^3$, we extend it to a periodic function on $(3\T)^3$.
	Note that
	\begin{equation*}
		\|f\|_{L^p(3\T^3)}^p = 27\|f\|_{L^p(\T^3)}^p.
	\end{equation*}
\end{remark}

Define
\begin{equation*}
	\|f\|_{S^p(\Omega)} := \|f\|_{L^p(\Omega)} + \|D_v f\|_{L^p(\Omega)} + \|D_{vv}f\|_{L^p(\Omega)} + \|Yf\|_{L^p(\Omega)},
\end{equation*}
where $Y = -\partial_t -v\cdot \nabla_x$.

\begin{lemma}	\label{Lemma : S^p estimate}
	Assume \eqref{Eq : condition for g}.
	Let $f$ be a weak solution of \eqref{initial}, \eqref{conservation laws}, and \eqref{linear landau} in the sense of Definition \ref{Def : linear weak sol}.
	Suppose that $g$ satisfies $\|g\|_{C^{\alpha}((0,\infty)\times \T^3 \times \R^3)} \le C'$ for some $0<\alpha<1$ and $C>0$.
	Then there exist $\vartheta>0$, $p>3$, $C_{\vartheta,\alpha, C', p}$ such that 
	\begin{equation}	\label{Eq : S^p estimate}
		\|f\|_{S^p((0,\infty)\times \T^3 \times \R^3)} \le C\left(\|f_{0t}\|_{\infty, \vartheta} + \|D_v f_0\|_{\infty, \vartheta} + \|f_{0}\|_{\infty, \vartheta}\right).
	\end{equation}
\end{lemma}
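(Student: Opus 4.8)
The plan is to apply the $S_p$ theory of Polidoro--Bramanti (Theorem \ref{Thm : polidoro1998 1.5} together with Remark \ref{Rmk : properties of c_1}) to the equation \eqref{Eq : Sp eqn} satisfied by $\bar f$, localized in velocity on the shells $n\le|v|\le n+1$ and in time over unit windows, and then to sum the local estimates against the strong velocity decay from Theorem \ref{Thm : energy estimate} and Theorem \ref{Thm : L^infty estimate}. First I would fix a large $p>3$ and a weight $\vartheta>0$ to be pinned down at the end. On a fixed shell $\{n\le|v|\le n+1\}$ and a fixed unit time interval, the diffusion matrix $\sigma_G$ is, by Lemma \ref{lemma : eigenvalue estimate}, uniformly elliptic with smallest and largest eigenvalues comparable to $(1+n)^{-3}$ and $(1+n)^{-1}$; moreover, by Theorem \ref{Thm : Holder landau} applied to $g$ (which is a solution of \eqref{linear landau} with the hypotheses of that theorem, so it satisfies $\|g\|_{C^\alpha}\le C'$), together with Lemma \ref{lemma : eigenvalue estimate} and Lemma \ref{Lemma : Guo2002 lemma2}, the coefficients $\sigma_G^{ij}$ are uniformly H\"older continuous with the appropriate bound \eqref{Eq : sigma Holder}. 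After the anisotropic rescaling $v\mapsto (1+n)^{3/2}v$ (and the corresponding parabolic scaling in $x,t$), as in Lemma \ref{Lemma : change of variables}, the rescaled matrix has eigenvalues bounded above and below by universal constants, so Theorem \ref{Thm : polidoro1998 1.5} applies on the rescaled unit-size cylinders with a constant $c_1$ that, by Remark \ref{Rmk : properties of c_1}, is uniform in $n$.

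Next I would estimate the right-hand side of \eqref{Eq : Sp eqn} shell by shell. For $t\ge0$ the source is $-\partial_{v_i}\sigma_G^{ij}\partial_{v_j}f + a_g\cdot\nabla_v f + K_1 f + J_g f$. The first two terms are controlled using Lemma \ref{Lemma : Guo2002 lemma2} (the coefficients $\partial_i\sigma_G^{ij}$ and $a_g^i$ decay like $(1+|v|)^{-2}$) together with the (not yet established) $D_v f$ piece of the $S^p$ norm, which is why the estimate must be set up as an absorption/bootstrap; the term $K_1 f$ is handled by \eqref{Eq : K_1 f} and $J_g f$ by \eqref{Eq : J_g f}, both of which gain an arbitrarily large power $n^{-\beta}$ at the cost of weights. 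For $-1\le t<0$ the source is $-f_{0t}+\partial_{v_i}\sigma^{ij}_{\mu+\mu^{1/2}f_0}\partial_{v_j}f_0$, controlled by $\|f_{0t}\|_{\infty,\vartheta}$ and $\|D_v f_0\|_{\infty,\vartheta}$, $\|f_0\|_{\infty,\vartheta}$ via Lemma \ref{Lemma : Guo2002 lemma2} and Proposition \ref{Prop : L2Linfty}. Combining these with the weighted $L^p$-in-time control $\int_0^\infty\|f(s)\|_{p,\beta}^p\,ds$ from Lemma \ref{Lemma : weighted f^p norm} (which already gives summable time decay) and the periodic extension of Remark \ref{Rmk : periodic extension}, the rescaled local estimates read
\begin{equation*}
\|D_{vv}f\|_{L^p(n\le|v|\le n+1,\,[k,k+1])}+\|Yf\|_{L^p(\cdots)}\le C\,n^{-\beta}\Big(\|f\|_{L^p(\widetilde\Omega)}+\|D_v f\|_{L^p(\widetilde\Omega)}+\|\text{source}\|_{L^p(\widetilde\Omega)}\Big),
\end{equation*}
where $\widetilde\Omega$ is the slightly enlarged cylinder. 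Summing over $n$ and over unit time windows $k$, using that $\sum_n n^{-\beta}<\infty$ for $\beta$ chosen large relative to the weight loss, and absorbing the $\|D_v f\|_{L^p}$ terms (estimated by interpolation between $\|f\|_{L^p}$ and $\|D_{vv}f\|_{L^p}$ with a small constant, or directly via \eqref{Eq : gain of regularity w.r.t. v}) into the left-hand side, yields \eqref{Eq : S^p estimate} after finally fixing $\vartheta$ to be larger than all the finite weight losses $\ell_0,l$ incurred along the way.

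The main obstacle I anticipate is the bookkeeping of the anisotropic rescaling together with the weight: one must verify that after dilating by $(1+n)^{3/2}$, the constant $c_1$ in Theorem \ref{Thm : polidoro1998 1.5} is genuinely independent of $n$, which requires the uniform two-sided eigenvalue bound of Lemma \ref{lemma : eigenvalue estimate} and the explicit dependence $c_1=c_2(\lambda_1,\lambda_2)c_3(\dots)$ of Remark \ref{Rmk : properties of c_1} evaluated on the rescaled (order-one) matrix; and that the H\"older seminorm of $\sigma_G^{ij}$ on a shell, when transported to the rescaled cylinder, stays bounded uniformly in $n$ — here one uses that the H\"older estimate from Theorem \ref{Thm : Holder landau} is \emph{global} and uniform, combined with the fact that the anisotropic metric underlying \eqref{Eq : Holder tilde Af} matches the rescaling. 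A secondary technical point is that $f$ is only a weak solution, so Theorem \ref{Thm : polidoro1998 1.5} must be applied after a standard mollification/approximation (approximating $g$ and $f_0$ as in Lemma \ref{lemma : existence of weak sol h} and passing to the limit), with the uniform $S^p$ bound surviving the limit by weak lower semicontinuity of the $L^p$ norms. Once these uniformity issues are settled, the summation over shells and time windows and the final choice of $\vartheta$ are routine.
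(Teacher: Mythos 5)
Your overall architecture matches the paper's: write the extended solution $\bar f$ as in \eqref{Eq : Sp eqn}, apply the Polidoro--Bramanti $S^p$ theory (Theorem \ref{Thm : polidoro1998 1.5}) on the velocity shells $\{n\le|v|\le n+1\}$ with slightly enlarged reference cylinders, treat $K_1f$ and $J_gf$ via \eqref{Eq : K_1 f} and \eqref{Eq : J_g f}, absorb $\|D_vf\|_{L^p}$ by interpolating between $\|f\|_{L^p}$ and $\|D_{vv}f\|_{L^p}$, control the $t<0$ source by $\|f_{0t}\|_{\infty,\vartheta}$ and $\|D_vf_0\|_{\infty,\vartheta}$, and close by summing against the arbitrarily strong weight decay from Lemma \ref{Lemma : weighted f^p norm}. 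The one place you genuinely diverge is in how the degenerate ellipticity of $\sigma_G$ for large $|v|$ enters the constant: the paper does \emph{not} rescale. It keeps the original variables, reads off from Remark \ref{Rmk : properties of c_1} and Lemma \ref{Lemma : Guo2002 lemma3} that $c_1=C_{\alpha,C,p}\,n^{a}$ for some fixed $a>0$, and then simply chooses the weight exponent $\beta>2a+4$ (and the interpolation parameter $\varepsilon'=\varepsilon n^{-p(a+2)}$) so that the polynomial loss is beaten by the $n^{-\beta}$ gain. Your anisotropic-rescaling route (borrowed from Lemma \ref{Lemma : change of variables}) can be made to work, but it buys less than you suggest: even if the rescaled matrix is uniformly elliptic so that the rescaled $c_1$ is independent of $n$, the quantities $\|D_{vv}f\|_{L^p}$ and $\|Yf\|_{L^p}$ in the \emph{original} variables are recovered from the rescaled ones only at the cost of powers of $(1+n)$ coming from $D^{-1}$ and the Jacobian, so a polynomial-in-$n$ loss reappears and must again be absorbed by the weight; on top of that you need a bounded-overlap covering of each shell by the small cylinders $Q_{r_0}(t_*,x_*,v_*)$ and a verification that the H\"older seminorm of $\sigma_G$ survives the dilation, none of which is needed in the paper's version. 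Two smaller points: your fallback of controlling $\|D_vf\|_{L^p}$ ``directly via \eqref{Eq : gain of regularity w.r.t. v}'' does not work as stated, since that is an $L^2$ gain of regularity and here $p>3$ (the interpolation route \eqref{Eq : v interpolation} is the correct one and is what the paper uses); and the decomposition into unit time windows is unnecessary, as the estimates are already global in time once the shells are summed.
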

\begin{proof}
	Since $\|g\|_{C^{\alpha}((0,\infty)\times \T^3 \times \R^3)} \le C$, $\sigma_G$ satisfies \eqref{Eq : sigma Holder}.
	Apply \eqref{Eq : Sp eqn} to Theorem \ref{Thm : polidoro1998 1.5} with $\Omega'=\Omega'_n$ and $\Omega = \Omega_n$, where
	\begin{equation*}
		\Omega' = \Omega'_n:=\{t\ge 0, x\in \T^3, n \le |v| \le n+1\},
	\end{equation*}
	\begin{equation*}
		\Omega = \Omega_n:=\{t\ge -1, x\in 3\T^3, n-1/2 \le |v| \le n+3/2\}.
	\end{equation*}
	Let $\sigma_G$, $K_1$, and $J_g$ are defined as in \eqref{Eq : sigma_u}, \eqref{Eq : K_1}, \eqref{Eq : J_g}.
	Then, we have
	\begin{equation*}
		\begin{split}
			\|f\|_{S^p(\Omega'_n)}^p &\le (c_1)^p \Big(\|\partial_{v_i}\sigma_G^{ij}\partial_{v_j} f\|_{L^p(\Omega'_n)}^p + \| a_g \cdot \nabla_v f\|_{L^p(\Omega'_n)}^p + \|K_1 f + J_g f\|_{L^p(\Omega'_n)}^p\\
			&\quad \quad \quad + \|(v\cdot \nabla_x - \sigma_G^{ij} \partial_{v_i,v_j}) f_0\|_{L^p\big((-1,0)\times 3\T^3 \times B(0;n-1/2,n+3/2)\big)}^p\Big),
		\end{split}
	\end{equation*}
	where $B(z;r_1,r_2) := B(z;r_2)\setminus B(z;r_1)$.
	By Lemma \ref{Lemma : Guo2002 lemma3} and Remark \ref{Rmk : properties of c_1},
	\begin{equation*}
		c_2(\lambda_1, \lambda_2) = C n^a
	\end{equation*}
	for some $a>0$ and
	\begin{equation*}
		c_3(\text{dist}(\Omega', \Omega), \alpha, C, p) = C_{\alpha,C,p}.
	\end{equation*}
	Therefore, 
	\begin{equation*}
		c_1 = C_{\alpha,C,p} n^a
	\end{equation*}
	for some $a>0$.
	Let 
	\begin{equation*}
		\tilde \Omega'_n = \{t\ge 0, x\in \T^3, n-1/2\le |v| \le n+3/2\}.
	\end{equation*}
	Then by Remark \ref{Rmk : periodic extension},
	\begin{equation}	\label{Eq : bdd domain estimate}
		\begin{split}
			\|f\|_{S^p(\Omega'_n)}^p &\le (C_{\alpha,C,p}n^a)^p \Big(\|\partial_{v_i}\sigma_G^{ij}\partial_{v_j} f\|_{L^p(\tilde \Omega'_n)}^p + \| a_g \cdot \nabla_v f\|_{L^p(\tilde \Omega'_n)}^p + \|K_1 f + J_g f\|_{L^p(\tilde \Omega'_n)}^p\\
			&\quad \quad \quad + \|(v\cdot \nabla_x - \sigma_G^{ij} \partial_{v_i,v_j}) f_0\|_{L^p\big((-1,0)\times \T^3 \times B(0;n-1/2,n+3/2)\big)}^p\Big),
		\end{split}
	\end{equation}
	where $\tilde \Omega'_n = \{t\ge 0, x\in \T^3, n-1/2\le |v| \le n+3/2\}$.
	Note that by the standard interpolation, we have
	\begin{equation}		\label{Eq : v interpolation}
		\|D_v f\|_{L^p(\tilde \Omega'_n)}^p \le \varepsilon' \|D_{vv} f\|_{L^p(\tilde \Omega'_n)}^p + \frac{C}{\varepsilon'}\|f\|_{L^p(\tilde \Omega'_n)}^p.
	\end{equation}

	Let $\beta > 2a+4$ and $\varepsilon' = \varepsilon_0$ which can be determined later.
	Then by \eqref{Eq : K_1 f}, \eqref{Eq : J_g f}, and \eqref{Eq : v interpolation}, we have
	\begin{equation}	\label{Eq : sum Kf}
		\begin{split}
			&\sum (C_{\alpha,C,p}n^a)^p \|K_1 f + J_g f\|_{L^p(\tilde \Omega'_n)}^p \\
			&\le \sum (C_{\beta, \alpha, C, p})^p n^{p(a - \beta)}\left(\|f\|_{L^p\big((0,\infty)\times \T^3 \times \R^3)\big)}^p + \|D_v f\|_{L^p\big((0,\infty)\times \T^3 \times \R^3)\big)}^p + \|(1+|v|)^\beta f\|_{L^p(\tilde \Omega'_n)}^p\right)\\
			&\le (C_{\beta, \alpha, C, p})^p\left(\|(1+|v|)^{\beta} f\|_{L^p\big((0,\infty)\times \T^3 \times \R^3)\big)}^p + \|D_v f\|_{L^p\big((0,\infty)\times \T^3 \times \R^3)\big)}^p\right)\\
			&\le(C_{\beta, \alpha, C, p, \varepsilon_0})^p\|(1+|v|)^{\beta} f\|_{L^p\big((0,\infty)\times \T^3 \times \R^3)\big)}^p + \varepsilon_0 \|D_{vv} f\|_{L^p\big((0,\infty)\times \T^3 \times \R^3)\big)}^p.
		\end{split}
	\end{equation}
	for some $\varepsilon_0$.
	Similarly,
	\begin{equation}	\label{Eq : sum initial part}
		\begin{split}
			&\sum (C_{\alpha,C,p}n^a)^p \|(v\cdot \nabla_x - \sigma_G^{ij} \partial_{v_i,v_j})f_0(x,v)\|_{L^p\big((-1,0)\times \T^3 \times B(0;n-1/2,n+3/2)\big)}^p \\
			&\le \sum (C_{\alpha,C,p})^pn^{p(a- \beta)} \|(1+|v|)^\beta (v\cdot \nabla_x - \sigma_G^{ij} \partial_{v_i,v_j})f_0(x,v)\|_{L^p\big(\T^3 \times B(0;n-1/2,n+3/2)\big)}^p\\
			&\le (C_{\alpha,C,p})^p\|(v\cdot \nabla_x - \sigma_G^{ij} \partial_{v_i,v_j})f_0\|_{p, \beta}^p.
		\end{split}
	\end{equation}
	Choose $\varepsilon' = \varepsilon n^{- p (a+2)}$ small enough. 
	Since $\|\partial_{v_i} \sigma_G^{ij}\|_\infty<C$, $\|a_g\|_\infty \le C$, we have
	\begin{equation}		\label{Eq : sum Dv f}
		\begin{split}
			& \sum (C_{\alpha,C,p}n^a)^p \left(\|\partial_{v_i} \sigma_G^{ij} \partial_{v_j}f)\|_{L^p(\tilde \Omega'_n)}^p + \|a_g \cdot \nabla_v f\|_{L^p(\tilde \Omega'_n)}^p\right) \\
			& \le \sum  (C_{\alpha,C,p})^p \left( \varepsilon n^{-2p} \|D_{vv} f\|_{L^p(\tilde \Omega'_n)}^p + \varepsilon^{-1} n^{2p (a + 1)}\|f\|_{L^p(\tilde \Omega'_n)}^p\right)\\
			& \le \sum (C_{\alpha,C,p})^p \left( \varepsilon n^{-2p} \|D_{vv} f\|_{L^p(\tilde \Omega'_n)}^p + n^{p (2a + 2 - \beta)}\|(1+|v|)^{\beta}f\|_{L^p(\tilde \Omega'_n)}^p\right)\\
			&\le (C_{\beta,\alpha,C,p})^p \left(\varepsilon \|D_{vv} f\|_{L^p\big((0,\infty)\times \T^3 \times \R^3)\big)}^p + C_{\varepsilon} \|(1+|v|)^\beta f\|_{L^p\big((0,\infty)\times \T^3 \times \R^3)\big)}^p\right).
		\end{split}
	\end{equation}
Combining \eqref{Eq : bdd domain estimate} - \eqref{Eq : sum Dv f} and absorbing $\|D_{vv}f\|$ term on RHS to the LHS, we have
\begin{equation*}
	\begin{split}
		&\|f\|_{S^p\big((0,\infty)\times \T^3 \times \R^3)\big)}^p\\
		& \le (C_{\beta, \alpha, C, p})^p\left(\|(1+|v|)^\beta f\|_{L^p\big((0,\infty)\times \T^3 \times \R^3)\big)}^p+ \|(v\cdot \nabla_x - \sigma_F^{ij} \partial_{v_i,v_j})f_0\|_{p, \beta}^p\right)\\
		&\le (C_{\beta, \alpha, C, p})^p\left(\|(1+|v|)^\beta f\|_{L^p\big((0,\infty)\times \T^3 \times \R^3)\big)}^p+ \|f_{0t}\|_{p, \beta}^p + \|D_v f_0\|_{p, \beta -2}^p\right).
	\end{split}
\end{equation*}
Finally, by Lemma \ref{lemma : eigenvalue estimate}, Lemma \ref{Lemma : weighted f^p norm}, the standard interpolation, and Proposition \ref{Prop : L2Linfty}, we have \eqref{Eq : S^p estimate}.
\end{proof}

Here we introduce some regularity results in $S^p$ norm.

\begin{theorem}[Theorem 2.1 in \cite{polidoro1998sobolev}] \label{Thm : polidoro1998 2.1}
	Let $f\in S^p(\R^7)$, with $1<p<\infty$.
	\begin{enumerate}
		\item If $2p>14$ and $p<14$, then $f\in C^{\gamma}(\R^7)$, with $\gamma = \frac{2p-14}{p}$;
		\item if $p>14$, then $\partial_{v_i}f \in C^\delta(\R^7)$, with $\delta = \frac{p-14}{p}$.
	\end{enumerate}
\end{theorem}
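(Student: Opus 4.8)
The plan is to prove this as an intrinsic Sobolev embedding for the function space $S^p$ attached to the model Kolmogorov operator $\mathcal{L}_0 := \sum_{i=1}^3 \partial_{v_i v_i} + Y$, $Y = -\partial_t - v\cdot\nabla_x$. The structural fact that drives everything is that $\mathcal{L}_0$ is invariant under the left translations of the homogeneous Lie group on $\R^7$ and under the dilations $\delta_\lambda(t,x,v) := (\lambda^2 t, \lambda^3 x, \lambda v)$, whose Jacobian determinant is $\lambda^{2+9+3} = \lambda^{14}$; hence the homogeneous dimension is $Q = 14$, which is the source of the number $14$ in the statement. Since $f\in S^p$ has $D_{vv}f$ and $Yf$ in $L^p$, it carries ``two units'' of intrinsic regularity, so the scaling heuristic $W^{2,p}\hookrightarrow C^{2 - Q/p}$ predicts $f\in C^{(2p-14)/p}$ when $Q/2 < p < Q$ and $\partial_{v_i}f\in C^{(p-14)/p}$ when $p>Q$; the work is to make this rigorous via the fundamental solution.

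First I would record the explicit fundamental solution $\Gamma^0$ of $\mathcal{L}_0$ (Kolmogorov's Gaussian-type kernel), which is smooth away from the origin and $\delta_\lambda$-homogeneous of degree $-(Q-2) = -12$; consequently $D_v\Gamma^0$ is homogeneous of degree $-(Q-1) = -13$, while $D_{vv}\Gamma^0$ and $Y\Gamma^0$ are homogeneous of degree $-Q = -14$ and are Calder\'on--Zygmund kernels for the group structure. For $f\in S^p(\R^7)$ one then uses the representation $f = \Gamma^0 * (\mathcal{L}_0 f)$ (group convolution), where $\mathcal{L}_0 f = \mathrm{tr}(D_{vv}f) + Yf \in L^p$ by the very definition of $S^p$. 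The reduction from a general $f\in S^p$ to the compactly supported case needed for this identity is handled by a cutoff $\chi$: $\mathcal{L}_0(\chi f) = \chi\,\mathcal{L}_0 f + (\text{terms linear in } f, D_v f)$, all in $L^p$, so no regularity is lost.

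Second I would estimate the H\"older seminorm of the potential $h\mapsto \Gamma^0 * h$ on $L^p$. Fix $z_1, z_2$ with intrinsic distance $d := \|z_2^{-1}\circ z_1\|$ and split $\Gamma^0 * h(z_1) - \Gamma^0 * h(z_2)$ into the part over the intrinsic ball $B(z_1,\kappa d)$ and its complement. On the near part, $|\Gamma^0(w)|\lesssim \|w\|^{-12}$ and H\"older's inequality give $\lesssim \|h\|_{L^p}\,d^{\,2-14/p}$, the exponent coming from $\big(\int_{\|w\|<\kappa' d}\|w\|^{-12p'}dw\big)^{1/p'}$, which is finite exactly when $12p' < Q$, i.e.\ $p>7$. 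On the far part, the mean value inequality on the group (valid because the $\partial_{v_i}$ are the degree-one generators) gives $|\Gamma^0(z_2^{-1}\circ\zeta)-\Gamma^0(z_1^{-1}\circ\zeta)|\lesssim d\,\|z_1^{-1}\circ\zeta\|^{-13}$ when $\|z_1^{-1}\circ\zeta\|\ge\kappa d$, and H\"older's inequality again yields $\lesssim \|h\|_{L^p}\,d^{\,2-14/p}$, now requiring $13p' > Q$, i.e.\ $p<14$. Summing, $|f(z_1)-f(z_2)|\lesssim d^{\,2-14/p}\|\mathcal{L}_0 f\|_{L^p}$ for $7<p<14$, which is case (1). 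For case (2) one differentiates once, $\partial_{v_i}f = (\partial_{v_i}\Gamma^0)*(\mathcal{L}_0 f)$, and reruns the near/far splitting with $\partial_{v_i}\Gamma^0$ (degree $-13$) and $D_{vv}\Gamma^0$ (degree $-14$) replacing $\Gamma^0$ and $D_v\Gamma^0$; the near part now forces $13p' < Q$, i.e.\ $p>14$, and the resulting exponent is $1-14/p = (p-14)/p$. Finally, since the intrinsic quasi-norm satisfies $\|z\|\simeq |t|^{1/2}+|x|^{1/3}+|v|$ on bounded sets, the intrinsic H\"older bounds translate into the Euclidean statement, and tracking the three anisotropic weights recovers precisely $\gamma = (2p-14)/p$ and $\delta = (p-14)/p$.

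I expect the main obstacle to be the analytic control of the group convolution in the far part: one must establish the mean value inequality for $\Gamma^0$ and $\partial_{v_i}\Gamma^0$ with the sharp homogeneity and with constants independent of the base point, which genuinely uses the non-abelian group structure (the interplay of $z\mapsto z^{-1}$ and left translation with $\delta_\lambda$) rather than the naive Euclidean fundamental theorem of calculus. A secondary but unavoidable point is that $\mathcal{L}_0$ is hypoelliptic rather than elliptic, so one does not control $\partial_t f$ and $\partial_{x_i} f$ separately, only the combination $Yf$; the commutator identity $[\partial_{v_i},Y] = \partial_{x_i}$ is what makes ``two units of regularity'' the correct budget, and it enters implicitly through the homogeneous group on which all the kernel estimates are carried out.
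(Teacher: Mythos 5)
The paper offers no proof of this statement: it is imported verbatim from Polidoro--Ragusa, and the subsequent lemmas simply invoke it. Your reconstruction is the standard (and essentially the original) argument: you correctly identify the homogeneous dimension $Q=14$ from the dilations $(\lambda^2 t,\lambda^3 x,\lambda v)$, the homogeneities $-12$, $-13$, $-14$ of $\Gamma^0$, $D_v\Gamma^0$, $D_{vv}\Gamma^0$, the representation $f=\Gamma^0*(\mathcal{L}_0 f)$ after a cutoff whose error terms lie in $L^p$ precisely because $\|D_vf\|_{L^p}$ is part of the $S^p$ norm, and the near/far splitting whose integrability conditions reproduce exactly the ranges $7<p<14$ and $p>14$ and the exponents $2-14/p$ and $1-14/p$. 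The only cosmetic remark is that the final ``translation to the Euclidean statement'' is unnecessary and would actually weaken the exponents in $t$ and $x$: the H\"older continuity is meant, and is used in Lemma \ref{Lemma : regularity}, with respect to the intrinsic quasi-distance $\vertiii{\zeta^{-1}\circ z}$, so the proof should stop at the intrinsic bound.
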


Define $\vertiii{(t,x,v)} = \rho$, where $\rho$ is a unique positive solution to the equation
\begin{equation*}
	\frac{t^{2}}{\rho^{4}}+\frac{|x|^{2}}{\rho^{6}}+\frac{|v|^{2}}{\rho^{2}} = 1
\end{equation*}
and
\begin{equation*}
	(\tau,\xi,\nu)^{-1}\circ(t,x,v) = (t- \tau, x- \xi + (t- \tau)\nu, v- \nu).
\end{equation*}

Now we can deduce the following lemma.
\begin{lemma}	\label{Lemma : regularity}
	Assume \eqref{Eq : condition for g}.
	Let $f$ be a weak solution of \eqref{initial}, \eqref{conservation laws}, and \eqref{linear landau} in the sense of Definition \ref{Def : linear weak sol}.
	Suppose that $g$ satisfies $\|g\|_{C^{\alpha}((0,\infty)\times \T^3 \times \R^3)} \le C$ for some $0<\alpha<1$ and $C>0$.

	If $2p > 14$ then, letting $\alpha_1 = \min\left\{1, \frac{2p - 14}{p}\right\}$, there exist $\vartheta>0$ and $C=C_{\vartheta,\alpha, C, p}$ such that
	\begin{equation*}
		\frac{|f(t,x,v)-f(\tau,\xi,\nu)|}{\vertiii{(\tau,\xi,\nu)^{-1}\circ(t,x,v)}^{\alpha_1}}\le C\left(\|f_{0t}\|_{\infty, \vartheta} + \|D_v f_0\|_{\infty, \vartheta} + \|f_{0}\|_{\infty, \vartheta}\right).
	\end{equation*}
	for every $(t,x,v),(\tau,\xi,\nu)\in (0,\infty)\times \T^3 \times \R^3$, $(t,x,v)\neq (\tau,\xi,\nu)$.

	If $p > 14$ then, letting $\alpha_2 = \frac{p - 14}{p}$, there exist $\vartheta>0$ and $C=C_{\vartheta,\alpha, C, p}$ such that
	\begin{equation*}
		\frac{|\partial_{v_i}f(t,x,v)-\partial_{v_i}f(\tau,\xi,\nu)|}{\vertiii{(\tau,\xi,\nu)^{-1}\circ(t,x,v)}^\beta}\le C\left(\|f_{0t}\|_{\infty, \vartheta} + \|D_v f_0\|_{\infty, \vartheta} + \|f_{0}\|_{\infty, \vartheta}\right).
	\end{equation*}
	for every $(t,x,v),(\tau,\xi,\nu)\in (0,\infty)\times \T^3 \times \R^3$, $(t,x,v)\neq (\tau,\xi,\nu)$.
\end{lemma}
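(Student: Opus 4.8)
The plan is to deduce the statement from the Sobolev-type embedding of Theorem \ref{Thm : polidoro1998 2.1} applied to $\bar f$, fed by the global weighted bound $\|\bar f\|_{S^p((0,\infty)\times\mathbb T^3\times\mathbb R^3)}\le C(\|f_{0t}\|_{\infty,\vartheta}+\|D_vf_0\|_{\infty,\vartheta}+\|f_0\|_{\infty,\vartheta})$ from Lemma \ref{Lemma : S^p estimate}, whose hypotheses are met because $\|g\|_{C^\alpha}\le C$ is assumed (and is legitimately available for the solutions we construct by Theorem \ref{Thm : Holder landau}) and because the weighted $L^p$ norm on the right of Lemma \ref{Lemma : S^p estimate} is finite by Lemma \ref{Lemma : weighted f^p norm} together with Theorem \ref{Thm : energy estimate} and Theorem \ref{Thm : L^infty estimate}. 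Since Theorem \ref{Thm : polidoro1998 2.1} is stated for functions in $S^p(\mathbb R^7)$ while $\bar f$ lives on $(-1,\infty)\times\mathbb T^3\times\mathbb R^3$, the first step is to localize: cover $(0,\infty)\times\mathbb T^3\times\mathbb R^3$ by the translated cylinders $Q_r(z_0)$ adapted to the group operation $\circ$ and the quasi-norm $\vertiii{\cdot}$ (the ones used in Lemma \ref{Lemma : change of variables} and Lemma \ref{Lemma : Uniform Holder for tilde Af}), multiply $\bar f$ by a smooth cutoff on each piece, note that the product still solves an equation of the form covered by Theorem \ref{Thm : polidoro1998 1.5} with additional lower-order terms controlled by $\|\bar f\|_{S^p}$, and invoke the local version of Theorem \ref{Thm : polidoro1998 2.1} there.

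Second, I would make the embedding constants uniform in the base point. The degeneracy of $\sigma_G$ as $|v|\to\infty$ (Lemma \ref{lemma : eigenvalue estimate}) forces the elliptic constants, and hence the constant $c_1$ of Theorem \ref{Thm : polidoro1998 1.5} as described in Remark \ref{Rmk : properties of c_1}, to grow polynomially in $|v|$; this is exactly compensated by the velocity weight $(1+|v|)^\beta$ built into Lemma \ref{Lemma : S^p estimate}, so summing the local estimates over a dyadic-in-$|v|$ decomposition — the same bookkeeping as in the proof of Lemma \ref{Lemma : S^p estimate}, now carried out at the level of the Hölder seminorm — keeps everything finite. Using the translation invariance $(\tau,\xi,\nu)^{-1}\circ(t,x,v)=(t-\tau,x-\xi+(t-\tau)\nu,v-\nu)$ and the natural dilations of the operator, the local Hölder seminorm with respect to $\vertiii{\cdot}$ transfers from the normalized cylinder back to the original variables with a constant that, after the weight is absorbed, is independent of $(\tau,\xi,\nu)$. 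The two cases are then read off directly: when $2p>14$ and $p<14$ apply part (1) of Theorem \ref{Thm : polidoro1998 2.1} to $\bar f$, capping the exponent at $1$ (whence the $\min$ in $\alpha_1$); when $p>14$ apply part (2) to obtain $\partial_{v_i}\bar f\in C^{\alpha_2}$ with $\alpha_2=(p-14)/p$. Finally one converts the right-hand side to pure $L^\infty$ data via Proposition \ref{Prop : L2Linfty} and updates $\vartheta$ to absorb the finite loss of weight incurred along the way.

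The main obstacle I anticipate is precisely this uniformity up to $|v|=\infty$: Theorem \ref{Thm : polidoro1998 2.1} is a purely local statement whose constants depend, through Theorem \ref{Thm : polidoro1998 1.5} and Remark \ref{Rmk : properties of c_1}, on the ellipticity of $\sigma_G$ and on the size of the domain in the intrinsic metric, both of which degenerate for large velocities. One must track the polynomial-in-$|v|$ blow-up of $c_1$ and balance it against the velocity weight (and, if needed for summability in $t$, the time decay of Theorem \ref{Thm : L^infty estimate}), just as in the proof of Lemma \ref{Lemma : S^p estimate}. The remaining ingredients — the covering, the cutoff argument, and the change of variables to the normalized cylinder — are routine and parallel Lemma \ref{Lemma : Uniform Holder for tilde Af}.
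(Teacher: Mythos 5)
Your proposal follows exactly the paper's route: the paper's entire proof of this lemma is the single sentence that it ``immediately follows from Lemma \ref{Lemma : S^p estimate} and Theorem \ref{Thm : polidoro1998 2.1}'', which is precisely the combination you invoke. The additional care you take with localization, the covering by intrinsic cylinders, and the uniformity of the embedding constants in $|v|$ addresses real issues that the paper silently absorbs into the word ``immediately'', so your write-up is, if anything, more complete than the original.
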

\begin{proof}
	It immediately follows from Lemma \ref{Lemma : S^p estimate} and Theorem \ref{Thm : polidoro1998 2.1}.
\end{proof}

\begin{remark}		\label{Rmk : zeta^-1 circ z}
	If $\tau =t$ and $\xi= x$, then $\vertiii{(t,x,\nu)^{-1}\circ (t,x,v)} = |v- \nu|$.
\end{remark}

By Remark \ref{Rmk : zeta^-1 circ z} and Lemma \ref{Lemma : regularity}, we have
\begin{lemma}		\label{lemma : D_v f bddness}
	Assume \eqref{Eq : condition for g}.
	Let $f$ be a weak solution of \eqref{initial}, \eqref{conservation laws}, and \eqref{linear landau} in the sense of Definition \ref{Def : linear weak sol}.
	Suppose that $g$ satisfies $\|g\|_{C^{\alpha}((0,\infty)\times \T^3 \times \R^3)} \le C$ for some $0<\alpha<1$ and $C>0$.
	Let $p=14$, then there exist $\vartheta>0$ and $C=C_{\vartheta,\alpha, C, p}$ such that
	\begin{equation*}
		\|D_v f\|_{L^\infty\big((0,\infty)\times \T^3 \times \R^3)\big)}C\left(\|f_{0t}\|_{\infty, \vartheta} + \|D_v f_0\|_{\infty, \vartheta} + \|f_{0}\|_{\infty, \vartheta}\right).
	\end{equation*}
\end{lemma}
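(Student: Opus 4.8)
The plan is to deduce the $L^\infty$ bound on $D_v f$ from the $S^p$ estimate of Lemma~\ref{Lemma : S^p estimate} by combining Polidoro's ultraparabolic embedding, Theorem~\ref{Thm : polidoro1998 2.1}, with a Hölder-to-$L^\infty$ interpolation carried out in the Kolmogorov metric $\vertiii{\cdot}$. First I would fix an exponent $p$ slightly larger than $14$; here $14=Q+2$ with $Q=12$ the homogeneous dimension of the dilation group $(t,x,v)\mapsto(\rho^2 t,\rho^3 x,\rho v)$, which is precisely why balls of $\vertiii{\cdot}$-radius $\rho$ have Lebesgue measure comparable to $\rho^{14}$ and why $p>14$ is the regime in which Theorem~\ref{Thm : polidoro1998 2.1}(2) yields Hölder control of $\partial_{v_i}f$. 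Applying Lemma~\ref{Lemma : S^p estimate} to the extended solution $\bar f$ (periodized in $x$ as in Remark~\ref{Rmk : periodic extension} and cut off near $t=-1$ so it may be regarded as an $S^p$ function on $\R^7$, with the jump of $\partial_t\bar f$ at $t=0$ absorbed into the source term $-f_{0t}+\partial_{v_i}\sigma^{ij}_{\mu+\mu^{1/2}f_0}\partial_{v_j}f_0$ of \eqref{Eq : Sp eqn}), one gets simultaneously $\|D_v f\|_{L^p}\le C(f_0)$ and, through Lemma~\ref{Lemma : regularity} (second alternative), a bound $C(f_0)$ on the $\vertiii{\cdot}$-Hölder seminorm of each $\partial_{v_i}f$ with exponent $\alpha_2=(p-14)/p$, where I write $C(f_0):=C(\|f_{0t}\|_{\infty,\vartheta}+\|D_vf_0\|_{\infty,\vartheta}+\|f_0\|_{\infty,\vartheta})$ for a suitable weight $\vartheta$.

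I would then invoke the elementary fact that a function in $C^{\alpha_2}\cap L^p$ is bounded, applied in the metric $\vertiii{\cdot}$. If $|\partial_{v_i}f(z_0)|=M$ at an interior point $z_0$, the Hölder bound forces $|\partial_{v_i}f|\ge M/2$ on the $\vertiii{\cdot}$-ball about $z_0$ of radius $r\sim(M/C(f_0))^{1/\alpha_2}$; Remark~\ref{Rmk : zeta^-1 circ z}, which identifies the purely velocity slice of that ball as $\{|v-v_0|<r\}$, makes this neighborhood and its measure explicit. Integrating $|D_v f|^p$ over the ball gives $(M/2)^p\,c\,r^{14}\le\|D_v f\|_{L^p}^p$, hence $M^{\,p+14/\alpha_2}\lesssim C(f_0)^{\,p+14/\alpha_2}$ and therefore $M\le C(f_0)$; since $z_0$ and $i\in\{1,2,3\}$ are arbitrary, this is the claimed bound on $\|D_v f\|_{L^\infty((0,\infty)\times\T^3\times\R^3)}$. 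Points $z_0$ too close to $t=0$ for the ball to fit are handled by noting that $\bar f$ is defined and obeys the same estimates down to $t=-1$, so the ball may be centered in the extended strip; periodicity in $x$ causes no difficulty once one works on $(3\T)^3$ and then reduces back to $\T^3$.

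The genuine obstacle I anticipate is the behavior near the critical exponent. Theorem~\ref{Thm : polidoro1998 2.1}(2) holds only for $p>14$ strictly, so the value ``$p=14$'' in the statement should be read as ``any $p>14$,'' and one must check that for each fixed such $p$ the $S^p$-constant $c_1$ of Remark~\ref{Rmk : properties of c_1}, which grows like a fixed power of $n$ in $|v|$, remains summable against the strong velocity gain of Lemma~\ref{Lemma : weighted f^p norm}, thereby keeping $C(f_0)$ finite --- but this is exactly the mechanism already exploited inside the proof of Lemma~\ref{Lemma : S^p estimate}, so only careful bookkeeping of constants, of the exponent $\beta$ there, and of the weight $\vartheta$ (which after the usual updates is of the form $\vartheta'+2+\max\{l,l_0\}$, finally converted to an $L^\infty$ weight by Proposition~\ref{Prop : L2Linfty}) is required. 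The remaining ingredients --- the orthonormal change of variables, the dependence on $\alpha$ and on $\|g\|_{C^\alpha}$ --- are routine.
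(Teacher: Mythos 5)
Your proposal is correct in substance and rests on the same two pillars the paper uses (Lemma~\ref{Lemma : S^p estimate} together with Theorem~\ref{Thm : polidoro1998 2.1}, i.e.\ Lemma~\ref{Lemma : regularity} with $p>14$ --- and you are right that the ``$p=14$'' in the statement must be read as a strict inequality), but your final interpolation step differs from the one the paper intends. You bound $\|D_vf\|_{L^\infty}$ by playing the $\vertiii{\cdot}$-H\"older seminorm of $\partial_{v_i}f$ against the $L^p$ bound on $D_vf$ contained in the $S^p$ norm: if $|\partial_{v_i}f(z_0)|=M$ then $|\partial_{v_i}f|\ge M/2$ on a Kolmogorov ball of radius $r\sim(M/C(f_0))^{1/\alpha_2}$, whose measure is $\sim r^{14}$ by the homogeneous dimension count $2+3\cdot3+3=14$, and integration gives $M\le C(f_0)$. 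The paper instead cites Remark~\ref{Rmk : zeta^-1 circ z} precisely because it reduces the anisotropic H\"older estimate to plain H\"older continuity of $\partial_{v_i}f(t,x,\cdot)$ in $v$ at each fixed $(t,x)$, which is then interpolated against the uniform bound $\|f\|_\infty\le C(f_0)$ along a one-dimensional segment in the $v_i$ direction ($2\|f\|_\infty\ge \tfrac{M}{2}r$ with $r=\min\{1,(M/2C(f_0))^{1/\alpha_2}\}$, hence $M\le C(f_0)$). Your route buys nothing extra but is equally valid; note, however, two small points of bookkeeping: (i) in your argument the velocity slice of Remark~\ref{Rmk : zeta^-1 circ z} is actually a red herring, since a slice has measure zero in $\mathbb{R}^7$ and your $L^p$ integration needs the full seven-dimensional ball; (ii) the H\"older estimate of Lemma~\ref{Lemma : regularity} and the $S^p$ bound of Lemma~\ref{Lemma : S^p estimate} are stated only on $(0,\infty)\times\mathbb{T}^3\times\mathbb{R}^3$, so rather than appealing to the extension $\bar f$ down to $t=-1$ you should either cap $r$ at $1$ and treat the case $r=1$ separately, or integrate over the forward-in-time half of the ball, which stays in $\{t>0\}$ and still has measure $\gtrsim r^{14}$.
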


\section{Proof of Theorem \ref{Thm : main result}} 
\label{sec:proof_of_thm : main result}
In this section we will use an iteration argument to prove the existence and the uniqueness of the weak solution of \eqref{Landau} - \eqref{conservation laws} in the sense of Definition \ref{Def : weak sol}.
We first construct the function sequence as follows. 
Define $f^{(0)}(t,x,v) := f_{0}(x,v)$.
Since $f_{0}(x,v)$ satisfies \eqref{Eq : initial condition}, by Lemma \ref{lemma : existence of linear weak sol}, we can define $f^{(1)}$ as a solution of \eqref{initial}, \eqref{conservation laws}, and \eqref{linear landau} with $g=f^{(0)}$.
Moreover, by Theorem \ref{Thm : L^infty estimate}, $f^{(1)}$ satisfies the assumption in Lemma \ref{lemma : eigenvalue estimate}.
Thus we can also define $f^{(2)}$ as a solution of \eqref{initial}, \eqref{conservation laws}, and \eqref{linear landau} with $g=f^{(1)}$.
Inductively we can define a function sequence $f^{(n)}$ for $n\ge 0$.
\begin{lemma}
	There exist $C, \vartheta_0>0$, and $0<\varepsilon_0\ll 1$ such that if $f_{0}$ satisfies
	\begin{equation*}
		\|f_{0}\|_{\infty, \vartheta} \le \varepsilon_{0},
	\end{equation*}
	then
	\begin{equation*}
		\sup_{n\in \N, t\ge 0}\|f^{(n)}(t)\|_{\infty, \vartheta} \le C\|f_{0}\|_{\infty, \vartheta + \vartheta_{0}}.
	\end{equation*}
\end{lemma}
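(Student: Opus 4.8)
The plan is to prove the uniform bound $\sup_{n,t}\|f^{(n)}(t)\|_{\infty,\vartheta}\le C\|f_0\|_{\infty,\vartheta+\vartheta_0}$ by induction on $n$, using Theorem \ref{Thm : L^infty estimate} as the engine that propagates smallness along the iteration. First I would fix $\vartheta_0\ge 1$ and $\vartheta'$ as in Theorem \ref{Thm : main result}, and take $\vartheta\ge\vartheta'$. The base cases $n=0,1$ are immediate: $f^{(0)}=f_0$ trivially satisfies the bound with $C=1$, and $f^{(1)}$ is the linear solution of \eqref{linear landau} with $g=f^{(0)}=f_0$, so provided $\|f_0\|_\infty\le\|f_0\|_{\infty,\vartheta}\le\varepsilon_0<\varepsilon$ (the threshold of Lemma \ref{lemma : eigenvalue estimate}), Theorem \ref{Thm : L^infty estimate} with $\vartheta_0$ gives
\[
\|f^{(1)}(t)\|_{\infty,\vartheta}\le\|f^{(1)}(t)\|_{\infty,\vartheta+\vartheta_0}\le C_{\vartheta,\vartheta_0}(1+t)^{-\vartheta_0}\|f_0\|_{\infty,\vartheta+l_0}\le C_{\vartheta,\vartheta_0}\|f_0\|_{\infty,\vartheta+l_0},
\]
where $l_0=l_0(\vartheta_0)$. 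So the natural choice is $\vartheta_0:=l_0(\vartheta_0)$ — more precisely one sets the weight-loss parameter to be exactly $l_0$ and $C:=C_{\vartheta,\vartheta_0}$.

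The inductive step is the key point. Assume $\sup_{t\ge0}\|f^{(n)}(t)\|_{\infty,\vartheta}\le C\|f_0\|_{\infty,\vartheta+l_0}$. I would then choose $\varepsilon_0$ small enough that $C\varepsilon_0\le\varepsilon$, where $\varepsilon$ is the smallness threshold appearing in Lemma \ref{lemma : eigenvalue estimate} and in Theorems \ref{Thm : energy estimate} and \ref{Thm : L^infty estimate} (note $\|f_0\|_{\infty,\vartheta+l_0}$ must itself be controlled — so in fact the hypothesis should be read as $\|f_0\|_{\infty,\vartheta+l_0}\le\varepsilon_0$, i.e. one imposes smallness at the higher weight, which is legitimate since $\vartheta$ in the statement is a free parameter $\ge\vartheta'$). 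Under this hypothesis $g:=f^{(n)}$ satisfies $\sup_t\|g(t)\|_\infty\le\sup_t\|g(t)\|_{\infty,\vartheta}\le C\|f_0\|_{\infty,\vartheta+l_0}\le C\varepsilon_0\le\varepsilon$, so $g$ verifies condition \eqref{Eq : condition for g}. Hence $f^{(n+1)}$, being the solution of \eqref{linear landau} with this $g$, falls under Theorem \ref{Thm : L^infty estimate}, and exactly as in the base case
\[
\sup_{t\ge0}\|f^{(n+1)}(t)\|_{\infty,\vartheta}\le C_{\vartheta,\vartheta_0}\|f_0\|_{\infty,\vartheta+l_0}=C\|f_0\|_{\infty,\vartheta+l_0},
\]
closing the induction with the same constant $C$. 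The crucial observation making this work is that the constant $C_{\vartheta,\vartheta_0}$ in \eqref{Eq : L^infty estimate linear} is independent of $g$ (it depends only on $\vartheta,\vartheta_0$ and the fixed threshold $\varepsilon$), so it does not degrade under iteration.

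The main obstacle is the circular-looking bookkeeping of the weight and smallness parameters: Theorem \ref{Thm : L^infty estimate} produces an estimate at weight $\vartheta$ but consumes the datum at weight $\vartheta+l_0$, so one cannot simply iterate the bound "at weight $\vartheta$" — the hypothesis must be stated at the heavier weight $\vartheta+l_0$ and the conclusion at $\vartheta$, and this asymmetry must be absorbed by renaming $\vartheta$ (which is allowed since $\vartheta$ ranges over all values $\ge\vartheta'$). I would handle this by first fixing $\vartheta_0$, then setting $l_0=l_0(\vartheta_0)$ and $\vartheta_{\mathrm{new}}:=\vartheta+l_0$, so that the running hypothesis is $\sup_t\|f^{(n)}(t)\|_{\infty,\vartheta+l_0}\le C\|f_0\|_{\infty,\vartheta+l_0}$ and the conclusion at level $n+1$ is the same statement — consistency is then automatic because Theorem \ref{Thm : L^infty estimate} can be applied at weight $\vartheta+l_0$ too (at the cost of a further $l_0$, but the bound we \emph{prove} is only the $L^\infty_{\vartheta}$ one, for which the weaker-weight conclusion suffices). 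The only other thing to be careful about is that $\varepsilon_0$ must be chosen once and for all, small enough simultaneously for (i) $C\varepsilon_0\le\varepsilon$ and (ii) $\varepsilon_0\le\varepsilon$, both of which are finitely many conditions, so a valid $\varepsilon_0$ exists. After that the argument is a routine induction and I would not belabor the constants further.
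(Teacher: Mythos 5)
Your proposal is correct and follows essentially the same route as the paper, whose entire proof is the one-line sketch ``apply Theorem \ref{Thm : L^infty estimate} to $f^{(n)}$ inductively on $n$.'' Your expansion supplies exactly the details that sketch omits — the base case, the choice of $\varepsilon_0$ so that $\sup_t\|f^{(n)}\|_\infty\le\varepsilon$ keeps condition \eqref{Eq : condition for g} valid at every stage, the observation that $C_{\vartheta,\vartheta_0}$ does not degrade under iteration, and the renaming of the weight to absorb the finite loss $l_0$ — all of which are the right way to make the induction close.
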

\begin{proof}[Sketch of proof]
	It is clear by applying Theorem \ref{Thm : L^infty estimate} to $f^{(n)}$ inductively on $n$.
\end{proof}

\begin{lemma}		\label{lemma : iteration}
	Let $f_0$ be a given function satisfying \eqref{Eq : initial condition}.
	Let $f_1$ and $f_2$ be weak solutions of \eqref{initial}, \eqref{conservation laws}, and \eqref{linear landau} in the sense of Definition \ref{Def : linear weak sol} with $g=g_1$, $g=g_2$ respectively.
	Suppose that $g_1$ and $g_2$ are uniformly H\"older continuous functions satisfying \eqref{Eq : condition for g}.
	Then we have
	\begin{equation}		\label{Eq : continuity of T}
		\begin{split}
			&\frac{1}{2}\|(f_1-f_2)(t)\|_{2, \bar \vartheta}^2 + \left(\frac{1}{2}-C \varepsilon \right)\int_0^t \|(f_1-f_2)(s)\|_{\sigma, \bar \vartheta}^2 ds\\
			& \le \int_0^t C(\|f_2\|_{\infty} + \|\nabla_{v}f_2\|_{\infty}) \min\left\{ \|(g_1-g_2)(s)\|_{2, \bar \vartheta}^2,\|(g_1-g_2)(s)\|_{\sigma, \bar \vartheta}^2\right\} ds + C\int_0^t \|(f_1-f_2)\|_{2, \bar \vartheta}^2 ds,
		\end{split}
	\end{equation}
	where $\bar \vartheta<0$ is a constant defined as in Theorem \ref{Thm : modified Guo2002 theorem3}.
	Therefore by the Gronwall inequality for every $t_0>0$,
	\begin{equation}		\label{Eq : continuity of T 2}
		\sup_{t\in (0, t_0)} \frac{1}{2}\|(f_1-f_2)(t)\|_{2, \bar \vartheta} \le e^{Ct_0}C_{\varepsilon}t_0 \sup_{s\in(0,t_0)} \|(g_1-g_2)(s)\|_{2, \bar \vartheta}.
	\end{equation}
\end{lemma}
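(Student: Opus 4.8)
The plan is to derive an energy identity for the difference $f_1-f_2$ and then absorb the diffusion-type contributions using Lemma~\ref{lemma : eigenvalue estimate}, while estimating the genuinely nonlinear coupling term via the negative-weight nonlinear bound \eqref{Eq : nonlinear estimate 2} in Theorem~\ref{Thm : modified Guo2002 theorem3}. First I would subtract the two copies of \eqref{linear landau} (equivalently, of \eqref{Landau} with the prescribed $g_1,g_2$): since $f_i$ solves $\partial_t f_i + v\cdot\nabla_x f_i + L f_i = \Gamma(g_i,f_i)$, the difference $F := f_1-f_2$ satisfies
\[
\partial_t F + v\cdot\nabla_x F + LF = \Gamma(g_1,f_1) - \Gamma(g_2,f_2) = \Gamma(g_1-g_2, f_1) + \Gamma(g_2, f_1 - f_2).
\]
Multiplying by $w^{2\bar\vartheta}F$ and integrating over $\mathbb{T}^3\times\mathbb{R}^3$ gives, after using the transport term integrates to zero and Lemma~\ref{Lemma : Guo2002 lemma6} (the lower bound $\langle w^{2\bar\vartheta}LF,F\rangle \ge \frac12\|F\|_{\sigma,\bar\vartheta}^2 - C_{\bar\vartheta}\|F\|_\sigma^2$),
\[
\frac{1}{2}\frac{d}{dt}\|F(t)\|_{2,\bar\vartheta}^2 + \frac12\|F\|_{\sigma,\bar\vartheta}^2 \le C_{\bar\vartheta}\|F\|_\sigma^2 + \big|(w^{2\bar\vartheta}\Gamma(g_1-g_2,f_1),F)\big| + \big|(w^{2\bar\vartheta}\Gamma(g_2,F),F)\big|.
\]

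Next I would handle the two nonlinear terms. For the second one, $(w^{2\bar\vartheta}\Gamma(g_2,F),F)$, the relevant bound is \eqref{Eq : nonlinear estimate 1}, giving $\le C_{\bar\vartheta}\|g_2\|_\infty \|F\|_{\sigma,\bar\vartheta}^2 \le C\varepsilon\|F\|_{\sigma,\bar\vartheta}^2$ by the smallness assumption \eqref{Eq : condition for g} on $g_2$; this is the term that, together with $C_{\bar\vartheta}\|F\|_\sigma^2 \le C\varepsilon \|F\|_{\sigma,\bar\vartheta}^2$ --- wait, that last step is not free, so more carefully one keeps $C_{\bar\vartheta}\|F\|_\sigma^2$ and notes $\|F\|_\sigma \le \|F\|_{\sigma,\bar\vartheta}$ is false for $\bar\vartheta<0$; instead $\|F\|_{\sigma}$ is controlled differently. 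Actually since $\bar\vartheta<0$ the unweighted $\sigma$-norm dominates the weighted one: $\|F\|_{\sigma,\bar\vartheta}\le\|F\|_\sigma$. So the term $C_{\bar\vartheta}\|F\|_\sigma^2$ cannot be absorbed into $\frac12\|F\|_{\sigma,\bar\vartheta}^2$. The resolution, and this is the step I expect to require the most care, is to run the estimate in the \emph{unweighted} $L^2$ framework first (i.e. at $\vartheta=0$, using Corollary~\ref{Coro : coercivity}'s coercivity $\int_s^t(L F,F)\ge \delta'(\int_s^t\|F\|_\sigma^2 - \{\eta(t)-\eta(s)\})$ together with the fact that $F$ inherits the conservation laws \eqref{conservation laws} since both $f_i$ do) to get control of $\int_0^t\|F\|_\sigma^2$, and then feed that back. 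Alternatively, and more simply matching the statement, one absorbs $C\|F\|_\sigma^2$ into the right-hand side as the $C\int_0^t\|(f_1-f_2)\|_{2,\bar\vartheta}^2$ term is too weak --- so in fact the cleanest route is: bound $\frac12\|F\|_{\sigma,\bar\vartheta}^2$ on the left, move $C\varepsilon\|F\|_{\sigma,\bar\vartheta}^2$ (from $\Gamma(g_2,F)$) to the left to get $(\frac12-C\varepsilon)\|F\|_{\sigma,\bar\vartheta}^2$, and for the leftover $C_{\bar\vartheta}\|F\|_\sigma^2$ use that in the difference equation we may equally use the full coercivity after time-integration, replacing the crude Lemma~\ref{Lemma : Guo2002 lemma6} bound; integrating in time and invoking Corollary~\ref{Coro : coercivity} then yields exactly the structure of \eqref{Eq : continuity of T} with the $\eta$ contribution absorbed by $\eta(t)\le C\|F(t)\|_2^2\le C\|F(t)\|_{2,\bar\vartheta}^2$ --- wait again, $\|F\|_2 \ge \|F\|_{2,\bar\vartheta}$ for $\bar\vartheta<0$, so $\eta(t)\le C\|F(t)\|_2^2$ is also not directly the $2,\bar\vartheta$ norm. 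The honest fix is to first establish the unweighted estimate $\sup_{[0,t_0]}\|F\|_2^2 + \int_0^{t_0}\|F\|_\sigma^2 \le C e^{Ct_0}\int_0^{t_0}(\|f_2\|_\infty+\|\nabla_v f_2\|_\infty)\min\{\|g_1-g_2\|_2^2,\|g_1-g_2\|_\sigma^2\}$ using Corollary~\ref{Coro : coercivity} and Theorem~\ref{Thm : modified Guo2002 theorem3}\,(2), and then since $\bar\vartheta<0$ the weighted norms are all dominated by the unweighted ones, so \eqref{Eq : continuity of T} and \eqref{Eq : continuity of T 2} follow a fortiori. I would carry that out as the real argument.

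For the first nonlinear term, $(w^{2\bar\vartheta}\Gamma(g_1-g_2,f_1),F)$, I apply \eqref{Eq : nonlinear estimate 2} with the roles $g_1^{\text{there}} = g_1-g_2$, $g_2^{\text{there}} = f_1$, $g_3^{\text{there}} = F$: for $\bar\vartheta$ as in Theorem~\ref{Thm : modified Guo2002 theorem3}\,(2),
\[
\big|(w^{2\bar\vartheta}\Gamma(g_1-g_2,f_1),F)\big| \le C_{\bar\vartheta}\min\{\|g_1-g_2\|_{2,\bar\vartheta},\|g_1-g_2\|_{\sigma,\bar\vartheta}\}\,(\|f_1\|_\infty + \|D_v f_1\|_\infty)\,\|F\|_{\sigma,\bar\vartheta}.
\]
Here I would note that since $f_1$ (the "background" solution) enjoys the $L^\infty$ bound of Theorem~\ref{Thm : L^infty estimate} and the velocity-derivative bound of Lemma~\ref{lemma : D_v f bddness} (applicable because $g_1$ is uniformly H\"older, which gives, via Theorem~\ref{Thm : Holder landau}, that $f_1$ is H\"older, hence $\sigma_{G_1}$ is H\"older, hence the $S_p$ theory applies), the factor $\|f_1\|_\infty+\|D_v f_1\|_\infty$ is finite; but to match the stated form of the lemma exactly one keeps it as $\|f_2\|_\infty+\|\nabla_v f_2\|_\infty$ --- this requires symmetrizing: write $\Gamma(g_1,f_1)-\Gamma(g_2,f_2) = \Gamma(g_1-g_2,f_2) + \Gamma(g_1,f_1-f_2)$ instead, so that the non-derivative/derivative roles put $f_2$ in the "smooth slot" and $g_1$ (small) multiplies $F$. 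Then $(w^{2\bar\vartheta}\Gamma(g_1,F),F)\le C\|g_1\|_\infty\|F\|_{\sigma,\bar\vartheta}^2 \le C\varepsilon\|F\|_{\sigma,\bar\vartheta}^2$ absorbs to the left, and $(w^{2\bar\vartheta}\Gamma(g_1-g_2,f_2),F)\le C_{\bar\vartheta}\min\{\|g_1-g_2\|_{2,\bar\vartheta},\|g_1-g_2\|_{\sigma,\bar\vartheta}\}(\|f_2\|_\infty+\|D_v f_2\|_\infty)\|F\|_{\sigma,\bar\vartheta}$, and by Young's inequality $\le \frac{1}{8}\|F\|_{\sigma,\bar\vartheta}^2 + C_{\bar\vartheta}(\|f_2\|_\infty+\|D_v f_2\|_\infty)^2\min\{\dots\}$ --- adjusting constants and noting the hypotheses guarantee $f_2\in S_p$ hence $\|D_v f_2\|_\infty<\infty$, after absorbing we arrive at the differential inequality whose integrated form is \eqref{Eq : continuity of T}.

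Finally, \eqref{Eq : continuity of T 2} follows by dropping the nonnegative $(\frac12-C\varepsilon)\int_0^t\|F\|_{\sigma,\bar\vartheta}^2$ term (valid once $\varepsilon$ is small so that $\frac12 - C\varepsilon\ge0$), treating $\|F(0)\|_{2,\bar\vartheta}=0$ since $f_1(0)=f_2(0)=f_0$, applying Gronwall to $\phi(t):=\frac12\|F(t)\|_{2,\bar\vartheta}^2$ against $\phi'(t)\le C\phi(t) + C_\varepsilon(\|f_2\|_\infty+\|D_vf_2\|_\infty)\sup_{s\le t}\|(g_1-g_2)(s)\|_{2,\bar\vartheta}^2$, and bounding $\|f_2\|_\infty+\|D_v f_2\|_\infty$ by a uniform constant via Theorem~\ref{Thm : L^infty estimate} and Lemma~\ref{lemma : D_v f bddness}. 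The main obstacle is the sign/weight bookkeeping for $\bar\vartheta<0$: the crude operator bound Lemma~\ref{Lemma : Guo2002 lemma6} leaves an $\|F\|_\sigma^2$ remainder that is \emph{not} absorbable by the weighted dissipation, so the argument must route through the time-integrated coercivity of Corollary~\ref{Coro : coercivity} (which requires verifying $F$ satisfies the conservation laws, i.e. that $\Gamma(g,f)$ and $L$ integrate against $\sqrt\mu$, $v\sqrt\mu$, $|v|^2\sqrt\mu$ to zero, as recorded in the proof of Lemma~\ref{Lemma : Espotiso 2013 lemma 6.1}) or, equivalently, through first proving the unweighted estimate and then using $w^{\bar\vartheta}\le 1$.
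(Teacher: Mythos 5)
Your main line of attack is the paper's: decompose $\Gamma(g_1,f_1)-\Gamma(g_2,f_2)=\Gamma(g_1,f_1-f_2)+\Gamma(g_1-g_2,f_2)$ (the decomposition you arrive at after your mid-proof correction), test with $w^{2\bar\vartheta}(f_1-f_2)$, absorb the $\Gamma(g_1,f_1-f_2)$ contribution via \eqref{Eq : nonlinear estimate 1} and the smallness of $\|g_1\|_\infty$, estimate $\Gamma(g_1-g_2,f_2)$ via \eqref{Eq : nonlinear estimate 2} together with the finiteness of $\|D_vf_2\|_\infty$ supplied by Lemma \ref{lemma : D_v f bddness} (which is exactly why the H\"older hypothesis on $g_2$ is there), apply Young, and conclude with Gronwall. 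That part is correct and matches the paper.

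However, the place where you hesitate --- the lower-order remainder from the coercivity of $L$ at negative weight --- is resolved incorrectly in your proposal, and both escape routes you offer fail. First, the ``prove the unweighted estimate and deduce the weighted one a fortiori'' route is backwards on the right-hand side: for $\bar\vartheta<0$ the quantity $\min\{\|g_1-g_2\|_{2,\bar\vartheta},\|g_1-g_2\|_{\sigma,\bar\vartheta}\}$ is \emph{smaller} than its unweighted counterpart, so the unweighted inequality does not imply \eqref{Eq : continuity of T}; worse, the unweighted estimate cannot even be closed, because \eqref{Eq : nonlinear estimate 2} is only valid for $\vartheta\le\bar\vartheta<0$ (its proof needs $\int(1+|v|)^{2\vartheta+1}dv<\infty$), so the term $\left(\Gamma(g_1-g_2,f_2),f_1-f_2\right)$ has no admissible bound at $\vartheta=0$. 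Second, the detour through Corollary \ref{Coro : coercivity} likewise lives at $\vartheta=0$ and does not produce the weighted dissipation on the left of \eqref{Eq : continuity of T}. The correct and much simpler fix is to use the sharper content of Lemma \ref{Lemma : Guo2002 lemma6}: tracing its proof, the remainder in $\langle w^{2\bar\vartheta}Lg,g\rangle\ge\frac12|g|_{\sigma,\bar\vartheta}^2-(\text{remainder})$ is of the form $C|\mu g|_2^2$, and since $\mu^2(v)w^{-2\bar\vartheta}(v)$ is bounded for any fixed $\bar\vartheta$ (positive or negative), one has $|\mu g|_2^2\le C\|g\|_{2,\bar\vartheta}^2$. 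This is precisely why the paper's energy identity carries the harmless term $C\int_0^t\|(f_1-f_2)(s)\|_{2,\bar\vartheta}^2\,ds$ on the right of \eqref{Eq : continuity of T}, which is then handled by Gronwall. With that replacement your argument closes; without it, the $C_{\bar\vartheta}\|F\|_\sigma^2$ remainder you correctly flag as non-absorbable is never disposed of.
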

\begin{proof}
	Note that $f_1$, $f_2$, $g_1$, and $g_2$ satisfy
	\begin{equation*}
		\partial_t (f_1-f_2) + v\cdot \nabla_x(f_1-f_2) + L(f_1-f_2) = \Gamma(g_1,f_1-f_2) + \Gamma(g_1-g_2,f_2).
	\end{equation*}
	Multiplying the above equation by $w^{2 \bar \vartheta}(f_1-f_2)$ and integrating both sides of the resulting equation yields
	\begin{equation}	\label{Eq : energy estimate for difference}
		\begin{split}
			&\frac{1}{2}\|(f_1-f_2)(t)\|_{2,\bar \vartheta} + \int_0^t \left( w^{2 \bar \vartheta}L(f_1-f_2)(s),(f_1-f_2)(s) \right) ds\\
			& = \int_0^t \left( w^{2 \bar \vartheta} \Gamma(g_1(s),(f_1-f_2)(s)),(f_1-f_2)(s) \right) ds + \int_0^t \left(w^{2 \bar \vartheta} \Gamma((g_1-g_2)(s),f_2(s)),(f_1-f_2)(s) \right) ds\\
			& = \int_0^t (I)  + (II) ds.
		\end{split}
	\end{equation}
	By Lemma \ref{Lemma : Guo2002 lemma6}, we have
	\begin{equation*}
		\left( w^{2 \vartheta_0} L(f_1-f_2)(s),(f_1-f_2)(s) \right) \ge \frac{1}{2} \|(f_1-f_2)(s)\|_{\sigma, \bar \vartheta}^2 - C\|(f_1-f_2)(s)\|_{2, \bar \vartheta}^2.
	\end{equation*}
	Since $g_1$ satisfies \eqref{Eq : condition for g}, by Theorem \ref{Thm : modified Guo2002 theorem3}, we have
	\begin{equation*}
		\begin{split}
			(I) &\le C\|g_1(s)\|_\infty \|(f_1-f_2)(s)\|_{\sigma, \bar \vartheta}^2\\
			&\le C \varepsilon \|(f_1-f_2)(s)\|_{\sigma, \bar \vartheta}^2.
		\end{split}
	\end{equation*}
	Since we want to control $(II)$ in terms of $\|g_{1} - g_{2}\|_{2, \bar \vartheta}$, $\|g_{1} - g_{2}\|_{\sigma, \bar \vartheta}$, and $\|(f_1-f_2)(s)\|_{\sigma, \bar \vartheta}$, we have to show that
	\begin{equation*}
		\|\nabla_{v} f_2\|_{\infty} < \infty.
	\end{equation*}
	Since $g_2$ is uniformly H\"older continuous by Lemma \ref{lemma : D_v f bddness}, $D_v f_2$ is uniformly bounded.
	Therefore by Theorem \ref{Thm : modified Guo2002 theorem3}, Theorem \ref{Thm : L^infty estimate}, Lemma \ref{lemma : D_v f bddness}, and Young's inequality, we have
	\begin{equation*}
		\begin{split}
			(II)&\le C\left(\|f_2\|_{\infty} + \|\nabla_{v} f_2\|_{\infty} \right)\min\{ \|(g_1-g_2)(s)\|_{2, \bar \vartheta},\|(g_1-g_2)(s)\|_{\sigma, \bar \vartheta}\} \|(f_1-f_2)(s)\|_{\sigma, \bar \vartheta}\\
			&\le C_{\varepsilon}\left(\|f_2\|_{\infty} + \|\nabla_{v} f_2\|_{\infty} \right)\left(\min\left\{ \|(g_1-g_2)(s)\|_{2, \bar \vartheta}^2,\|(g_1-g_2)(s)\|_{\sigma, \bar \vartheta}^2\right\} + C\varepsilon\|(f_1-f_2)(s)\|_{\sigma, \bar \vartheta}^2\right).
		\end{split}
	\end{equation*}
	Then we have
	\begin{multline*}
		\frac{1}{2}\|(f_1-f_2)(t)\|_{2, \bar \vartheta}^2 + \frac{1}{2}\int_0^t \|(f_1-f_2)(s)\|_{\sigma, \bar \vartheta}^2 ds - C\int_0^t \|(f_1-f_2)(s)\|_{2, \bar \vartheta}^2 ds\\
		\le \int_0^t C\varepsilon \|(f_1-f_2)(s)\|_{\sigma, \bar \vartheta}^2+ C_{\varepsilon}\left(\|f_2\|_{\infty} + \|\nabla_{v} f_2\|_{\infty} \right)\min\left\{ \|(g_1-g_2)(s)\|_{2, \vartheta_0}^2,\|(g_1-g_2)(s)\|_{\sigma, \vartheta_0}^2\right\}  ds.
	\end{multline*}
	Therefore we have \eqref{Eq : continuity of T}.
\end{proof}

\begin{proof}[Proof of \ref{Thm : main result}]
	\textbf{Proof of (1)}
	\begin{itemize}
	\item\textbf{Existence}
	
	Let $t_0$ be a given positive constant and $\bar \vartheta<0$ be a constant defined as in Theorem \ref{Thm : modified Guo2002 theorem3}.
	Since $f_0$ satisfies \eqref{Eq : initial condition}, by Theorem \eqref{Thm : Holder landau}, $f^{(n)}$ is H\"older continuous uniformly in $n$.
	Therefore, by \eqref{Eq : continuity of T} we have,
	\begin{align*}
		&\|(f^{(n+1)}-f^{(n)})(t)\|_{2, \vartheta_0}^2\\
		&\quad \le C'\int_0^t \|(f^{(n)} - f^{(n-1)})(s)\|_{2, \vartheta_0}^2 ds + C\int_0^t\|(f^{(n+1)} - f^{(n)})(s)\|_{2,\vartheta_0}^2 ds.
	\end{align*}
	for some $C$ and $C'$.
	Then we will show that 
	\begin{equation}	\label{Eq : induction}
		\|(f^{(n)} - f^{(n-1)})(t)\|_{2, \vartheta_0}^2 \le \frac{e^{Cn t_0}(C't)^n}{n!}
	\end{equation}
	for every $t\in (0,t_0)$ and $n\ge 1$ by the induction on $n$.
	Suppose that \eqref{Eq : induction} holds for $n=k$, then
	\begin{equation*}
		C'\int_0^t \|(f^{(k)} - f^{(k-1)})(s)\|_{2, \vartheta_0}^2 ds \le \frac{e^{Ck t_0}(C't)^{k+1}}{(k+1)!}
	\end{equation*}
	for $t\in (0,t_0)$.
	Therefore, we have
	\begin{equation*}
		\|(f^{(k+1)}-f^{(k)})(t)\|_{2, \vartheta_0}^2 \le \frac{e^{Ck t_0}(C't)^{k+1}}{(k+1)!} + C\int_0^t\|(f^{(k+1)} - f^{(k)})(s)\|_{2,\vartheta_0}^2 ds
	\end{equation*}
	for every $t\in (0,t_0)$.
	Then by the Gronwall inequality, we have
	\begin{equation*}
		\|(f^{(k+1)}-f^{(k)})(t)\|_{2, \vartheta_0}^2 \le \frac{e^{Ck t_0}(C't)^{k+1}}{(k+1)!} e^{Ct} \le \frac{e^{C(k+1) t_0}(C't)^{k+1}}{(k+1)!}
	\end{equation*}
	for every $t\in (0,t_0)$.
	Thus we have \eqref{Eq : induction} for every $n\in \N$.
	Moreover, we have 
	\begin{equation*}
		\lim_{N \rightarrow\infty} \sum_{n>N} \sup_{0\le t\le t_0} \|(f^{(n)} - f^{(n-1)})(t)\|_{2, \vartheta_0} = 0.
	\end{equation*}
	Thus $f^{(n)}$ is a Cauchy sequence in $L^2([0,t_{0}]\times \T^3 \times \R^3, w^{\vartheta_{0}}dtdxdv)$. 
	Let $f = \lim_{n \rightarrow \infty} f^{(n)}$.
	Then by \eqref{Eq : continuity of T 2}, $f$ is a weak solution of \eqref{Landau} - \eqref{conservation laws} in the sense of Definition \ref{Def : weak sol}.
	
	\item \textbf{Uniqueness} 

	Suppose that $f$ and $g$ are weak solutions of \eqref{Landau} - \eqref{conservation laws} in the sense of Definition \ref{Def : weak sol}.
	Then by \eqref{Eq : continuity of T}, we have
	\begin{equation*}
		\frac{1}{2}\|(f-g)(t)\|_{2, \vartheta_0}^2 + \left(\frac{1}{2}-C \varepsilon \right)\int_{0}^{t} \|(f-g)(s)\|_{\sigma, \vartheta_0}^2 ds \le C\int_{0}^{t} \|(f-g)(s)\|_{2, \vartheta_0}^2 ds.
	\end{equation*}
	Since $C \varepsilon' < 1/4$, we have
	\begin{equation*}
		\frac{1}{2}\|(f-g)(t)\|_{2, \vartheta_0}^2 \le C\int_0^t \|(f-g)(s)\|_{2, \vartheta_0}^2 ds.
	\end{equation*}
	Therefore, by the Gronwall inequality, we have
	\begin{equation*}
		\|(f-g)(t)\|_{2, \vartheta_0}^2 = 0
	\end{equation*}
	for every $t\in (0,t_0)$.
	\end{itemize}
	Since $t_0$ is arbitrary, we conclude that the weak solution of \eqref{Landau} - \eqref{conservation laws} in the sense of Definition \ref{Def : weak sol} uniquely exists globally in time.

	\textbf{Proof of (3)} We can apply $f$ to Theorem \ref{Thm : energy estimate}, Theorem \ref{Thm : L^infty estimate}, Theorem \ref{Thm : Holder landau}, and Lemma \ref{lemma : D_v f bddness}.
	Then we have \eqref{Eq : energy estimate} - \eqref{Eq : D_v f bddness}.

	\textbf{Proof of (2)}
	Let $F = \mu + \sqrt \mu + f$, where $f$ is the weak solution of \eqref{Landau} - \eqref{conservation laws} in the sense of Definition \ref{Def : weak sol}.
	Consider
	\begin{equation}		\label{F}
		\partial_{t} F + v \cdot \nabla_{x} F = Q(F,F)  = \sigma_{F}^{ij} \partial_{v_{i}v_{j}}F + 8 \pi F^2.
	\end{equation}
	Similar to Definition \ref{Def : weak sol}, we can define a weak solution to \eqref{F} and we can easily check that $F$ is a weak solution to \eqref{F}.
	Since $f$ satisfies \eqref{Eq : L^infty estimate}, by Lemma \ref{lemma : eigenvalue estimate}, $\sigma_{F}$ is a non-negative definite matrix.
	Therefore, in a similar manner to Section \ref{sec:maximum_principle}, we can obtain a weak minimum principle for \eqref{F}.
	Thus, if $F(0) \ge 0$, then $F(t)\ge0$.
	
\end{proof}

\subsection*{Acknowledgements}
Yan Guo is supported in part by NSF grants \#DMS-1209437, \#DMS-1611695, Chinese NSF grant \#10828103, as well as a Simon Fellowship. Hyung Ju Hwang is partly supported by the Basic Science Research Program through the National Research Foundation of Korea (NRF)(2015R1A2A2A0100251).

\subsection*{Conflict of Interest}
The authors declare that they have no conflict of interest.
\nocite{*}


\begin{thebibliography}{10}

\bibitem{alexandre2004landau}
R.~Alexandre and C.~Villani.
\newblock On the {L}andau approximation in plasma physics.
\newblock {\em Ann. Inst. H. Poincar\'e Anal. Non Lin\'eaire}, 21(1):61--95,
  2004.

\bibitem{boblylev2013particle}
A.~V. Boblylev, M.~Pulvirenti, and C.~Saffirio.
\newblock From particle systems to the {L}andau equation: a consistency result.
\newblock {\em Comm. Math. Phys.}, 319(3):683--702, 2013.

\bibitem{bobylev2015some}
Alexander Bobylev, Irene Gamba, and Irina Potapenko.
\newblock On some properties of the {L}andau kinetic equation.
\newblock {\em J. Stat. Phys.}, 161(6):1327--1338, 2015.

\bibitem{bouchut02}
F.~Bouchut.
\newblock Hypoelliptic regularity in kinetic equations.
\newblock {\em J. Math. Pures Appl. (9)}, 81(11):1135--1159, 2002.

\bibitem{bramanti1996lpestimates}
M.~Bramanti, M.~C. Cerutti, and M.~Manfredini.
\newblock {$\mathcal L^p$} estimates for some ultraparabolic operators with
  discontinuous coefficients.
\newblock {\em J. Math. Anal. Appl.}, 200(2):332--354, 1996.

\bibitem{carrapatoso2016cauchy}
Kleber Carrapatoso, Isabelle Tristani, and Kung-Chien Wu.
\newblock Cauchy problem and exponential stability for the inhomogeneous
  {L}andau equation.
\newblock {\em Arch. Ration. Mech. Anal.}, 221(1):363--418, 2016.

\bibitem{chen2009smoothing}
Yemin Chen, Laurent Desvillettes, and Lingbing He.
\newblock Smoothing effects for classical solutions of the full {L}andau
  equation.
\newblock {\em Arch. Ration. Mech. Anal.}, 193(1):21--55, 2009.

\bibitem{desvillettes2015entropy}
L.~Desvillettes.
\newblock Entropy dissipation estimates for the {L}andau equation in the
  {C}oulomb case and applications.
\newblock {\em J. Funct. Anal.}, 269(5):1359--1403, 2015.

\bibitem{esposito2013non}
R.~Esposito, Y.~Guo, C.~Kim, and R.~Marra.
\newblock Non-isothermal boundary in the {B}oltzmann theory and {F}ourier law.
\newblock {\em Comm. Math. Phys.}, 323(1):177--239, 2013.

\bibitem{golse2016harnack}
F~{Golse}, C.~{Imbert}, C.~{Mouhot}, and A.~{Vasseur}.
\newblock {Harnack inequality for kinetic Fokker-Planck equations with rough
  coefficients and application to the Landau equation}.
\newblock {\em ArXiv e-prints}, jul 2016.

\bibitem{golse2015holder}
F.~{Golse} and A.~{Vasseur}.
\newblock {H{\"o}lder regularity for hypoelliptic kinetic equations with rough
  diffusion coefficients}.
\newblock {\em ArXiv e-prints}, jun 2015.

\bibitem{gressman2011sharp}
Philip~T. Gressman and Robert~M. Strain.
\newblock Sharp anisotropic estimates for the {B}oltzmann collision operator
  and its entropy production.
\newblock {\em Adv. Math.}, 227(6):2349--2384, 2011.

\bibitem{guo2002landau}
Yan Guo.
\newblock The {L}andau equation in a periodic box.
\newblock {\em Comm. Math. Phys.}, 231(3):391--434, 2002.

\bibitem{guo2010decay}
Yan Guo.
\newblock Decay and continuity of the {B}oltzmann equation in bounded domains.
\newblock {\em Arch. Ration. Mech. Anal.}, 197(3):713--809, 2010.

\bibitem{ha2015l2}
Seung-Yeal Ha and Qinghua Xiao.
\newblock {$L^2$}-stability of the {L}andau equation near global {M}axwellians.
\newblock {\em J. Math. Phys.}, 56(8):081505, 16, 2015.

\bibitem{herau2011anisotropic}
F.~H{\'e}rau and K.~Pravda-Starov.
\newblock Anisotropic hypoelliptic estimates for {L}andau-type operators.
\newblock {\em J. Math. Pures Appl. (9)}, 95(5):513--552, 2011.

\bibitem{herau2013global}
Fr{\'e}d{\'e}ric H{\'e}rau and Wei-Xi Li.
\newblock Global hypoelliptic estimates for {L}andau-type operators with
  external potential.
\newblock {\em Kyoto J. Math.}, 53(3):533--565, 2013.

\bibitem{hwang2014fokker}
Hyung~Ju Hwang, Juhi Jang, and Juan J.~L. Vel{\'a}zquez.
\newblock The {F}okker-{P}lanck equation with absorbing boundary conditions.
\newblock {\em Arch. Ration. Mech. Anal.}, 214(1):183--233, 2014.

\bibitem{mouhot2015holder}
C.~{Imbert} and C.~{Mouhot}.
\newblock {H{\"o}lder continuity of solutions to hypoelliptic equations with
  bounded measurable coefficients}.
\newblock {\em ArXiv e-prints}, 2015.

\bibitem{krieger2012global}
Joachim Krieger and Robert~M. Strain.
\newblock Global solutions to a non-local diffusion equation with quadratic
  non-linearity.
\newblock {\em Comm. Partial Differential Equations}, 37(4):647--689, 2012.

\bibitem{liu2014regularizing}
Shuangqian Liu and Xuan Ma.
\newblock Regularizing effects for the classical solutions to the {L}andau
  equation in the whole space.
\newblock {\em J. Math. Anal. Appl.}, 417(1):123--143, 2014.

\bibitem{luo2016spectrum}
Lan Luo and Hongjun Yu.
\newblock Spectrum analysis of the linearized relativistic {L}andau equation.
\newblock {\em J. Stat. Phys.}, 163(4):914--935, 2016.

\bibitem{polidoro1998sobolev}
Sergio Polidoro and Maria~Alessandra Ragusa.
\newblock Sobolev-{M}orrey spaces related to an ultraparabolic equation.
\newblock {\em Manuscripta Math.}, 96(3):371--392, 1998.

\bibitem{strain2006almost}
Robert~M. Strain and Yan Guo.
\newblock Almost exponential decay near {M}axwellian.
\newblock {\em Comm. Partial Differential Equations}, 31(1-3):417--429, 2006.

\bibitem{strain2013vlasov}
Robert~M. Strain and Keya Zhu.
\newblock The {V}lasov-{P}oisson-{L}andau system in {$\mathbb{R}^3_x$}.
\newblock {\em Arch. Ration. Mech. Anal.}, 210(2):615--671, 2013.

\bibitem{wu2014global}
Kung-Chien Wu.
\newblock Global in time estimates for the spatially homogeneous {L}andau
  equation with soft potentials.
\newblock {\em J. Funct. Anal.}, 266(5):3134--3155, 2014.

\bibitem{wu2014pointwise}
Kung-Chien Wu.
\newblock Pointwise behavior of the linearized {B}oltzmann equation on a torus.
\newblock {\em SIAM J. Math. Anal.}, 46(1):639--656, 2014.

\end{thebibliography}
\end{document}